\raggedcolumns\begin{multicols}{2},
\raggedcolumns\begin{multicols}{3},
\newtheorem{binex}{Example}
\newtheorem{theorem}{Theorem}
\newtheorem{lemma}[theorem]{Lemma}
\newtheorem{proposition}[theorem]{Proposition}
\newtheorem{corollary}[theorem]{Corollary}
\newtheorem{definition}[theorem]{Definition}
\newtheorem*{counterex}{Counterexample for \cref{prop:CoeffOfErgod:ErgodicUpperBound}}
\newcommand{\abs}[1]{\left\vert {#1} \right\vert}
\newcommand{\norm}[1]{\left\Vert {#1} \right\Vert}
\newcommand{\card}[1]{\left\vert {#1} \right\vert}
\newcommand{\reals}{\mathbb{R}}
\newcommand{\nonnegreals}{\reals_{\geq 0}}
\newcommand{\strictlyposreals}{\reals_{> 0}}
\newcommand{\nats}{\mathbb{N}}
\newcommand{\natz}{\nats_{0}}
\newcommand{\statespace}{\mathcal{X}}
\newcommand{\statespacesub}[1]{\statespace_{\mathit{#1}}}
\newcommand{\setoffn}{\mathcal{L}}
\newcommand{\setoffna}[1][\statespace]{\setoffn(#1)}
\newcommand{\cent}[1]{\tilde{#1}}
\newcommand{\indic}[1]{\mathbb{I}_{#1}}
\newcommand{\indica}[2]{\indic{#1}(#2)}
\newcommand{\lowtranop}{\underline{T}}
\newcommand{\lowtranopa}[1]{\lowtranop_{#1}}
\newcommand{\uptranop}{\overline{T}}
\newcommand{\uptranopa}[1]{\uptranop_{#1}}
\newcommand{\lowrateop}{\underline{Q}}
\newcommand{\uprateop}{\overline{Q}}
\newcommand{\setofdomratemat}[1][\lowrateop]{\mathcal{Q}_{#1}}
\newcommand{\lowq}[1]{\underline{q}_{#1}}
\newcommand{\upq}[1]{\overline{q}_{#1}}
\newcommand{\prob}{\mathrm{P}}
\newcommand{\prev}{\mathrm{E}}
\newcommand{\linpreva}[1]{\prev(#1)}
\newcommand{\lowprev}{\underline{\prev}}
\newcommand{\lowpreva}[1]{\lowprev(#1)}
\newcommand{\lowprevacond}[2]{\lowprev(#1|#2)}
\newcommand{\upprev}{\overline{\prev}}
\newcommand{\uppreva}[1]{\upprev(#1)}
\newcommand{\upprevacond}[2]{\upprev(#1|#2)}
\newcommand{\credset}{\mathcal{M}}
\newcommand{\credseta}[1]{\credset(#1)}
\newcommand{\coefferg}{\rho}
\newcommand{\coefferga}[1]{\coefferg(#1)}
\newcommand{\upreachd}{\rightsquigarrow}
\newcommand{\upreachda}[1]{\overset{#1}{\rightsquigarrow}}
\newcommand{\upreachc}[1][\uprateop]{\mathrel{\ooalign{\hss$#1$\hss\cr$\longrightarrow$}}}
\newcommand{\lowreachc}[1][\lowrateop]{\mathrel{\ooalign{\hss$#1$\hss\cr$\longrightarrow$}}}
\title{Imprecise Continuous-Time Markov Chains: \\ Efficient Computational Methods with Guaranteed Error Bounds}
\author{%
	\name Alexander~Erreygers \email alexander.erreygers@ugent.be\\
	\addr Ghent University, SMACS Research Group \\
	\AND
	\name Jasper~De~Bock \email jasper.debock@ugent.be\\
	\addr Ghent University - imec, IDLab, ELIS \\
}
\begin{document}
\maketitle

\begin{abstract}
	Imprecise continuous-time Markov chains are a robust type of continuous-time Markov chains that allow for partially specified time-dependent parameters.
	Computing inferences for them requires the solution of a non-linear differential equation.
	As there is no general analytical expression for this solution, efficient numerical approximation methods are essential to the applicability of this model.
	We here improve the uniform approximation method of \cite{2016Krak} in two ways and propose a novel and more efficient adaptive approximation method.
	For ergodic chains, we also provide a method that allows us to approximate stationary distributions up to any desired maximal error.
\end{abstract}
\begin{keywords}
	Imprecise continuous-time Markov chain; lower transition operator; lower transition rate operator; approximation method; ergodicity; coefficient of ergodicity.
\end{keywords}

	\section{Introduction}
\label{sec:Intro}
Markov chains are a popular type of stochastic processes that can be used to model a variety of systems with uncertain dynamics, both in discrete and continuous time.
In many applications, however, the core assumption of a Markov chain---i.e., the Markov property---is not entirely justified.
Moreover, it is often difficult to exactly determine the parameters that characterise the Markov chain.
In an effort to handle these modelling errors in an elegant manner, several authors have recently turned to imprecise probabilities \citep{decooman2009,2013Skulj,2012Hermans,2015Skulj,2016Krak,2017DeBock}.

As \cite{2016Krak} thoroughly demonstrate, making inferences about an imprecise continuous-time Markov chain---determining lower and upper expectations or probabilities---requires the solution of a non-linear vector differential equation.
To the best of our knowledge, this differential equation cannot be solved analytically, at least not in general.
\cite{2016Krak} proposed a method to numerically approximate the solution of the differential equation, and argued that it outperforms the approximation method that \cite{2015Skulj} previously introduced.
One of the main results of this contribution is a novel approximation method that outperforms that of \cite{2016Krak}.

An important property---both theoretically and practically---of continuous-time Markov chains is the behaviour of the solution of the differential equation as the time parameter recedes to infinity.
If regardless of the initial condition the solution converges, we say that the chain is ergodic.
We show that in this case the approximation is guaranteed to converge as well.
This constitutes the second main result of this contribution and serves as a motivation behind the novel approximation method.
Furthermore, we also quantify a worst-case convergence rate for the approximation.
This unites the work of \cite{2015Skulj}, who studied the rate of convergence for discrete-time Markov chains, and \cite{2017DeBock}, who studied the ergodic behaviour of continuous-time Markov chains from a qualitative point of view.
One of the uses of our worst-case convergence rate is that it allows us to approximate the limit value of the solution up to a guaranteed error.

This paper is an extended preprint of \citep{2017erreygers}.
Recently, it has come to our attention that one of the results in that paper, namely \cref{prop:CoeffOfErgod:ErgodicUpperBound}, is false.
Fortunately, none of the other results in \citep{2017erreygers}---and hence also in this preprint---depend on \cref{prop:CoeffOfErgod:ErgodicUpperBound} and the main conclusions and contributions of the paper therefore remain intact.
For that reason, we have only made the following two modifications with respect to the previous version: we have omitted the proof of \cref{prop:CoeffOfErgod:ErgodicUpperBound}, and we have added a counterexample to show that the statement is indeed incorrect.

To ensure the readability of the main text, we have gathered the proofs of all the results in the Appendix.
In this Appendix, we also discuss the ergodicity of both discrete and continuous-time Markov chains more thoroughly.

	\section{Mathematical preliminaries}
\label{sec:Preliminaries}
Throughout this contribution, we denote the set of real, non-negative real and strictly positive real numbers by $\reals$, $\nonnegreals$ and $\strictlyposreals{}$, respectively.
The set of natural numbers is denoted by $\nats$, if we include zero we write $\natz \coloneqq \nats \cup \{ 0 \}$.
For any set $S$, we let $\card{S}$ denote its cardinality.
If $a$ and $b$ are two real numbers, we say that $a$ is lower (greater) than $b$ if $a \leq b$ ($a \geq b$), and that $a$ is strictly lower (greater) than $b$ if $a < b$ ($a > b$).


	\subsection{Gambles and norms}
We consider a finite \emph{state space} $\statespace$, and are mainly concerned with real-valued functions on $\statespace$.
All of these real-valued functions on $\statespace$ are collected in the set $\setoffna$, which is a vector space.
If we identify the state space $\statespace$ with $\{1, \dots, \card{\statespace}\}$, then any function $f \in \setoffna$ can be identified with a vector: for all $x\in\statespace$, the $x$-component of this vector is $f(x)$.
A special function on $\statespace$ is the indicator $\indic{A}$ of an event $A$.
For any $A \subseteq \statespace$, it is defined for all $x \in \statespace$ as $\indica{A}{x} = 1$ if $x \in A$ and $\indica{A}{x} = 0$ otherwise.
In order not to obfuscate the notation too much, for any $y \in \statespace$ we write $\indic{y}$ instead of $\indic{\{y\}}$.
If it is required from the context, we will also identify the real number $\gamma \in \reals$ with the map $\gamma$ from $\statespace$ to $\reals$, defined as $\gamma(x) = \gamma$ for all $x \in \statespace$.

We provide the set $\setoffna$ of functions with the standard maximum norm $\norm{\cdot}$, defined for all $f\in\setoffna$ as $\norm{f}	\coloneqq \max \left\{ \abs{f(x)} \colon x \in \statespace \right\}$.
A seminorm that captures the variation of $f \in \setoffna$ will also be of use; we therefore define the variation seminorm $\norm{f}_{v} \coloneqq \max f - \min f$.
Since the value $\norm{f}_{v} / 2$ occurs often in formulas, we introduce the shorthand notation $\norm{f}_{c} \coloneqq \norm{f}_{v}/2$.

	\subsection{Non-negatively homogeneous operators}
An operator $A$ that maps $\setoffna$ to $\setoffna$ is \emph{non-negatively homogeneous} if for all $\mu \in \nonnegreals$ and all $f \in \setoffna$, $A (\mu f) = \mu A f$.
The maximum norm $\norm{\cdot}$ for functions induces an operator norm:
\[
	\norm{A}
	\coloneqq \sup \{ \norm{A f} \colon f \in \setoffna, \norm{f} = 1 \}.
\]
If for all $\mu \in \reals$ and all $f,g \in \setoffna$, $A(\mu f + g) = \mu A f + A g$, then the operator $A$ is \emph{linear}.
In that case, it can be identified with a matrix of dimension $\card{\statespace}\times\card{\statespace}$, the $(x,y)$-component of which is $[A \indic{y}](x)$.
The identity operator $I$ is an important special case, defined for all $f \in \setoffna$ as $I f \coloneqq f$.

Two types of non-negatively homogeneous operators play a vital role in the theory of imprecise Markov chains: lower transition operators and lower transition rate operators.
\begin{definition}
\label{def:LowerTransitionOperator}
	An operator $\lowtranop{}$ \/from $\setoffna$ to $\setoffna$ is called a \emph{lower transition operator} if for all $f \in \setoffna$ and all $\mu \in \nonnegreals$:
	\begin{enumerate}[threecol, label=L\arabic*:, ref=(L\arabic*), series=LTO]
		\item \label{def:LTO:DominatesMin}
		$\lowtranop f \geq \min f$;
		\item \label{def:LTO:SuperAdditive}
		$\lowtranop(f + g) \geq \lowtranop f + \lowtranop g$;
		\item \label{def:LTO:NonNegativelyHom}
		$\lowtranop (\mu f) = \mu \lowtranop f$.
	\end{enumerate}
\end{definition}
Every lower transition operator $\lowtranop$ has a conjugate upper transition operator $\uptranop$, defined for all $f \in \setoffna$ as $\uptranop f \coloneqq - \lowtranop (- f)$.

\begin{definition}
\label{def:LowerTransitionRateOperator}
	An operator $\lowrateop{}\,$ from $\setoffna$ to $\setoffna$ is called a \emph{lower transition rate operator} if for any $f,g \in \setoffna$, any $\mu \in \nonnegreals$, any $\gamma\in\reals$ and any $x,y \in \statespace$ such that $x\neq y$:
	\begin{enumerate}[twocol, label=R\arabic*:, ref=(R\arabic*), series=LTRO]
		\item \label{def:LTRO:Constant}
			$\lowrateop \gamma = 0$;
		\item \label{def:LTRO:SuperAdditive}
			${\lowrateop(f + g) \geq \lowrateop f + \lowrateop g}$;
		\item \label{def:LTRO:NonNegativelyHom}
			$\lowrateop (\mu f) = \mu \lowrateop f$;
		\item \label{def:LTRO:Sign}
			$[\lowrateop \indic{x}](y) \geq 0$.
	\end{enumerate}
\end{definition}
The conjugate lower transition rate operator $\uprateop$ is defined for all $f \in \setoffna$ as $\uprateop f \coloneqq - \lowrateop (-f)$.

As will become clear in Section~\ref{sec:MCs}, lower transition operators and lower transition rate operators are tightly linked.
For instance, we can use a lower transition rate operator to construct a lower transition operator.
One way is to use Eqn.~\eqref{eqn:TDLTO:FunctionDifferentialEquation} further on.
Another one is given in the following proposition, which is a strengthened version of \citep[Proposition~5]{2017DeBock}.
\begin{proposition}
\label{prop:IPlusDeltaQLowTranOp}
	Consider any lower transition rate operator $\lowrateop$ and any $\delta \in \nonnegreals$.
	Then the operator $(I + \delta \lowrateop)$ is a lower transition operator if and only if $\delta \norm{\lowrateop} \leq 2$.
\end{proposition}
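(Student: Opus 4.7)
The plan is to prove the two implications separately. Properties \ref{def:LTO:SuperAdditive} and \ref{def:LTO:NonNegativelyHom} for $A \coloneqq I + \delta \lowrateop$ are immediate from \ref{def:LTRO:SuperAdditive}, \ref{def:LTRO:NonNegativelyHom} and the linearity of $I$; the entire content of the proposition therefore concerns when \ref{def:LTO:DominatesMin} holds. Two preliminary facts will be used throughout: axioms \ref{def:LTRO:Constant} and \ref{def:LTRO:SuperAdditive} yield the translation invariance $\lowrateop(f + \gamma) = \lowrateop f$ for every $\gamma \in \reals$, and the non-negative homogeneity of $\lowrateop$ gives the operator-norm inequality $\norm{\lowrateop g} \leq \norm{\lowrateop} \norm{g}$ for every $g \in \setoffna$.

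For the forward direction, fix $f \in \setoffna$ and apply \ref{def:LTO:DominatesMin} to $f$ and to $-f$. The first gives $\delta [\lowrateop f](x) \geq \min f - f(x)$; the second, together with the conjugacy $\uprateop = -\lowrateop(-\cdot)$, gives $\delta [\uprateop f](x) \leq \max f - f(x)$. Since $\lowrateop f \leq \uprateop f$ (an easy consequence of \ref{def:LTRO:Constant} and \ref{def:LTRO:SuperAdditive} applied to $f$ and $-f$), combining the two bounds produces $\abs{\delta [\lowrateop f](x)} \leq \max f - \min f \leq 2 \norm{f}$ for every $x$, hence $\delta \norm{\lowrateop} \leq 2$.

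For the converse, assume $\delta \norm{\lowrateop} \leq 2$. By translation invariance it suffices to show $A f \geq 0$ whenever $f \geq 0$ and $\min f = 0$. Fix $x \in \statespace$ and write $f = f(x) \indic{x} + \hat{f}$, where $\hat{f}(x) = 0$ and $\hat{f}(y) = f(y)$ for $y \neq x$. Applying \ref{def:LTRO:SuperAdditive} and \ref{def:LTRO:NonNegativelyHom}---first to this decomposition and then to $\hat{f} = \sum_{y \neq x} \hat{f}(y) \indic{y}$---together with \ref{def:LTRO:Sign} for the off-diagonal contributions yields
\begin{equation*}
	[\lowrateop f](x) \geq f(x) [\lowrateop \indic{x}](x) + \sum_{y \neq x} \hat{f}(y) [\lowrateop \indic{y}](x) \geq f(x) [\lowrateop \indic{x}](x).
\end{equation*}

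The main obstacle is the sharp lower bound on $[\lowrateop \indic{x}](x)$, and it is here that the factor of $2$ in $\delta \norm{\lowrateop} \leq 2$ enters. Translation invariance gives $\lowrateop \indic{x} = \lowrateop(\indic{x} - 1/2)$, and since $\norm{\indic{x} - 1/2} = 1/2$ the operator-norm inequality yields $[\lowrateop \indic{x}](x) \geq -\norm{\lowrateop \indic{x}} \geq -\norm{\lowrateop}/2 \geq -1/\delta$. Multiplying by the non-negative quantity $\delta f(x)$ and substituting back in the previous display gives $[Af](x) = f(x) + \delta [\lowrateop f](x) \geq f(x) - f(x) = 0$, which is the required inequality. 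Without the sharper bound $\norm{\lowrateop \indic{x}} \leq \norm{\lowrateop}/2$---that is, using only $\norm{\lowrateop \indic{x}} \leq \norm{\lowrateop}$---one would recover only the weaker sufficient condition $\delta \norm{\lowrateop} \leq 1$ of \citep[Proposition~5]{2017DeBock}, which is precisely the strengthening claimed in the statement.
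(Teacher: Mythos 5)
Your proof is correct, and in both directions it takes a route that is genuinely more self-contained than the paper's. For the sufficiency of $\delta \norm{\lowrateop} \leq 2$, the overall structure (decompose $f - \min f$ into indicator functions, apply \ref{def:LTRO:SuperAdditive}, \ref{def:LTRO:NonNegativelyHom} and \ref{def:LTRO:Sign}, and reduce everything to a lower bound on the diagonal value $[\lowrateop \indic{x}](x)$) coincides with the paper's, but the crucial bound $[\lowrateop \indic{x}](x) \geq -\norm{\lowrateop}/2$ is obtained differently: the paper derives it as property \ref{prop:LTRO:Ixx} by passing to a dominating transition rate matrix $Q \in \setofdomratemat$ and invoking the norm formula for rate matrices together with $\norm{Q} \leq \norm{\lowrateop}$ (Lemmas from \citealp{2016Krak}), whereas your centering trick $\lowrateop \indic{x} = \lowrateop(\indic{x} - \nicefrac{1}{2})$ with $\norm{\indic{x} - \nicefrac{1}{2}} = \nicefrac{1}{2}$ gets the same factor of two directly from translation invariance and the operator norm, with no appeal to dominating matrices. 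For necessity, the paper uses the fact that the bound is \emph{attained} at some state, i.e.\ that there exists $x$ with $[\lowrateop \indic{x}](x) = -\norm{\lowrateop}/2$ (property \ref{prop:LTRO:Norm}, which itself leans on a cited result of \citealp{2017DeBock}), and then evaluates $(I+\delta\lowrateop)\indic{x}$ at $x$; you instead apply \ref{def:LTO:DominatesMin} to both $f$ and $-f$ for arbitrary $f$ and squeeze $\delta[\lowrateop f](x)$ between $\min f - f(x)$ and $\max f - f(x)$, giving $\delta\norm{\lowrateop f} \leq \norm{f}_{v} \leq 2\norm{f}$ without any attainment argument. The only cosmetic blemish is the expression $-\nicefrac{1}{\delta}$ in the last step, which is undefined for $\delta = 0$; that case is trivial (the operator is then $I$), and multiplying the inequality $\delta[\lowrateop\indic{x}](x) \geq -\delta\norm{\lowrateop}/2 \geq -1$ by $f(x) \geq 0$ avoids the division altogether.
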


We end this section with the first---although minor---novel result of this contribution.
The norm of a lower transition rate operator is essential for all the approximation methods that we will discuss.
The following proposition supplies us with an easy formula for determining it.
\begin{proposition}
\label{prop:LTRO:PropositionNorm}
	Let $\lowrateop$ be a lower transition rate operator.
	Then $\norm{\lowrateop} = 2 \max \{ \abs{[\lowrateop \indic{x}](x)} \colon x \in \statespace \}$.
\end{proposition}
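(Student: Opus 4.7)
Denote $C := \max\{\abs{[\lowrateop\indic{x}](x)} : x \in \statespace\}$; the goal is $\norm{\lowrateop} = 2C$. For the lower bound, I test against $f_x := 2\indic{x} - 1$, which satisfies $\norm{f_x} = 1$. Translation invariance of $\lowrateop$---a direct consequence of combining \ref{def:LTRO:Constant} with superadditivity \ref{def:LTRO:SuperAdditive}---together with non-negative homogeneity \ref{def:LTRO:NonNegativelyHom} gives $\lowrateop f_x = \lowrateop(2\indic{x}) = 2\lowrateop\indic{x}$. Hence $\norm{\lowrateop} \geq \norm{\lowrateop f_x} = 2\norm{\lowrateop\indic{x}} \geq 2\abs{[\lowrateop\indic{x}](x)}$, and maximising over $x$ delivers $\norm{\lowrateop} \geq 2C$.

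The reverse inequality hinges on the pointwise estimate
\[
[\lowrateop f](x) \;\geq\; -C\bigl(f(x) - \min f\bigr) \quad\text{for every } f \in \setoffna \text{ and } x \in \statespace. \qquad (\star)
\]
To prove $(\star)$, I reduce by translation invariance to the case $g := f - \min f \geq 0$ and decompose $g = g(x)\indic{x} + h$, where $h(x) = 0$ and $h(y) = g(y) \geq 0$ for $y \neq x$. Iterated applications of \ref{def:LTRO:SuperAdditive} and \ref{def:LTRO:NonNegativelyHom} then yield
\[
[\lowrateop g](x) \;\geq\; g(x)\,[\lowrateop\indic{x}](x) \,+\, \sum_{y \neq x} g(y)\,[\lowrateop\indic{y}](x);
\]
on the right-hand side the diagonal term is $\geq -Cg(x)$ by the very definition of $C$, and every off-diagonal term is non-negative by the sign property \ref{def:LTRO:Sign}, which establishes $(\star)$.

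Applying $(\star)$ to $-f$ and using the conjugacy $\uprateop = -\lowrateop(-\cdot)$ delivers the dual bound $[\uprateop f](x) \leq C(\max f - f(x))$; since $\lowrateop f \leq \uprateop f$ always holds---a two-line consequence of applying \ref{def:LTRO:SuperAdditive} to $f + (-f)$ and invoking \ref{def:LTRO:Constant}---we obtain $\abs{[\lowrateop f](x)} \leq C\norm{f}_v \leq 2C\norm{f}$, whence $\norm{\lowrateop} \leq 2C$. The main obstacle is the decomposition in $(\star)$: superadditivity is only an inequality, so the split has to be arranged so that the ``remainder'' $[\lowrateop h](x)$ can actually be pushed down to something non-negative using \ref{def:LTRO:Sign}. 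Once $(\star)$ is secured, the duality step is routine. As an aside, $(\star)$ is precisely \ref{def:LTO:DominatesMin} for the operator $I + (1/C)\lowrateop$, so after proving $(\star)$ one may alternatively invoke \cref{prop:IPlusDeltaQLowTranOp} with $\delta = 1/C$ to conclude $\norm{\lowrateop} \leq 2C$ directly (with the degenerate case $C = 0$ handled by letting $\delta$ be arbitrarily large).
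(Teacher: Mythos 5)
Your proof is correct, but it takes a genuinely different and more self-contained route than the paper. The paper obtains the bound $2\abs{[\lowrateop \indic{x}](x)} \leq \norm{\lowrateop}$ from property \ref{prop:LTRO:Ixx}, whose proof goes through the set of dominating transition rate matrices $\setofdomratemat$ (writing $[\lowrateop\indic{x}](x) = Q(x,x)$ for some $Q\in\setofdomratemat$ and invoking $\norm{Q}\leq\norm{\lowrateop}$), and it imports the reverse inequality $\norm{\lowrateop} \leq 2\max_x\abs{[\lowrateop\indic{x}](x)}$ wholesale from an external reference. You instead prove both directions directly from the axioms \ref{def:LTRO:Constant}--\ref{def:LTRO:Sign}: the test function $2\indic{x}-1$ is exactly the right choice to extract the factor $2$ in the lower bound (plain $\indic{x}$ would only give $\norm{\lowrateop}\geq C$), and your estimate $(\star)$ reproduces, with $C$ in place of $\norm{\lowrateop}/2$, the decomposition $f - \min f = \sum_{y}(f(y)-\min f)\indic{y}$ that the paper uses in its proof of \cref{prop:IPlusDeltaQLowTranOp}. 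What your approach buys is independence from the envelope/duality machinery (non-emptiness of $\setofdomratemat$ and the norm comparison lemma) and from the external citation; what the paper's approach buys is brevity, since it reuses results it needs elsewhere anyway. Your closing observation---that $(\star)$ is precisely \ref{def:LTO:DominatesMin} for $I + (1/C)\lowrateop$, so the ``only if'' direction of \cref{prop:IPlusDeltaQLowTranOp} immediately yields $\norm{\lowrateop}\leq 2C$---is a nice structural shortcut, and your handling of the degenerate case $C=0$ (where the direct route gives $\lowrateop f = 0$ for all $f$) is fine.
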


\begin{binex}
\label{binex:LTRO}
	Consider a binary state space $\statespace = \{0, 1\}$ and two closed intervals $[\lowq{0}, \upq{0}] \subset \nonnegreals$ and $[\lowq{1}, \upq{1}] \subset \nonnegreals$.
	Let\vspace{-5pt}
	\begin{equation*}
		\lowrateop f
		\coloneqq \min \left\{
		\begin{bmatrix}
			q_0 (f(1) - f(0)) \\
			q_1 (f(0) - f(1))
		\end{bmatrix}
		\colon q_0 \in [\lowq{0}, \upq{0}], q_1 \in [\lowq{1}, \upq{1}] \right\}
		\text{ for all } f \in \setoffna.
	\end{equation*}
	Then one can easily verify that $\lowrateop$ is a lower transition rate operator.

	\cite{2016Krak} also consider a running example with a binary state space, but they let $\statespace \coloneqq \{ \texttt{healthy}, \texttt{sick} \}$.
	We here identify \texttt{healthy} with $0$ and \texttt{sick} with $1$.
	In \cite[Example~18]{2016Krak}, they propose the following values for the transition rates: $[\lowq{0}, \upq{0}] \coloneqq [1/52, 3/52]$ and $[\lowq{1}, \upq{1}] \coloneqq [1/2,2]$.
	It takes \citeauthor{2016Krak} a lot of work to determine the exact value of the norm of $\lowrateop$, see \cite[Example~19]{2016Krak}.
	We simply use Proposition~\ref{prop:LTRO:PropositionNorm}: $\smash{\norm{\lowrateop} = 2 \max\{ 3/52, 2 \} = 4}$.
\end{binex}

	\section{Imprecise continuous-time Markov chains}
\label{sec:MCs}
For any lower transition rate operator $\lowrateop$ and any $f \in \setoffna$, \cite{2015Skulj} has shown that the differential equation
\vspace{-4pt}
\begin{equation}
\label{eqn:TDLTO:FunctionDifferentialEquation}
	\frac{\mathrm{d}}{\mathrm{d} t} \lowtranopa{t} f = \lowrateop \lowtranopa{t} f
\vspace{3pt}
\end{equation}
with initial condition $\lowtranopa{0} f \coloneqq f$ has a unique solution for all $t \in \nonnegreals$.
Later, \cite{2017DeBock} proved that the time-dependent operator $\lowtranopa{t}$ itself satisfies a similar differential equation, and that it is a lower transition operator.
Finding the unique solution of Eqn.~\eqref{eqn:TDLTO:FunctionDifferentialEquation} is non-trivial.
Fortunately, we can approximate this solution, as by \cite[Proposition~10]{2017DeBock}
\begin{equation}
\label{eqn:TDLTO:LimitFormula}
	\lowtranopa{t}
	= \lim_{n \to \infty} \left( I + \frac{t}{n} \lowrateop \right)^{n}.
\end{equation}


\begin{binex}
\label{binex:AnalyticalExpressionsForAppliedLTO}
	In the simple case of Example~\ref{binex:LTRO}, we can use Eqn.~\eqref{eqn:TDLTO:LimitFormula} to obtain analytical expressions for the solution of Eqn.~\eqref{eqn:TDLTO:FunctionDifferentialEquation}.
	Assume that $\lowq{0} + \upq{1}> 0$ and fix some $t \in \nonnegreals$.
Then
	\begin{align*}
		[\lowtranopa{t} f](0)
		= f(0) + \lowq{0} h(t)
		~~\text{and}~~
		[\lowtranopa{t} f](1)
		= f(1) - \upq{1} h(t)
		~~\text{for all $f\in\setoffna$ with $f(0)\leq f(1)$,}
	\end{align*}
	where $h(t) \coloneqq \norm{f}_{v} (\lowq{0} + \upq{1})^{-1} \big(1 - e^{-t (\lowq{0} + \upq{1})}\big)$.
	The case $f(0) \geq f(1)$ yields similar expressions.
\end{binex}

For a linear lower transition rate operator $\lowrateop$---i.e., if it is a transition rate matrix $Q$---Eqn.~\eqref{eqn:TDLTO:LimitFormula} reduces to the definition of the matrix exponential.
It is well-known---see \citep{1991Anderson}---that this matrix exponential $T_t = e^{t Q}$ can be interpreted as the transition matrix at time $t$ of a time-homogeneous or stationary continuous-time Markov chain: the $(x,y)$-component of $T_t$ is the probability of being in state $y$ at time $t$ if the chain started in state $x$ at time $0$.
Therefore, it follows that the expectation of the function $f \in \setoffna$ at time $t \in \nonnegreals$ conditional on the initial state $x \in \statespace$, denoted by $\prev(f(X_{t})|X_{0} = x)$, is equal to $[T_t f](x)$.

As Eqn.~\eqref{eqn:TDLTO:LimitFormula} is a non-linear generalisation of the definition of the matrix exponential, we can interpret $\lowtranopa{t}$ as the non-linear generalisation of the matrix exponential $T_t = e^{t Q}$.
Extending this parallel, we might interpret $\lowtranopa{t}$ as the non-linear generalisation of the transition matrix---i.e., as the lower transition operator---at time $t$ of a generalised continuous-time Markov chain.
In fact, \cite{2016Krak} prove that this is indeed the case.
They show that---under some conditions on $\lowrateop$---$[\lowtranopa{t} f](x)$ can be interpreted as the tightest lower bound for $\prev(f(X_{t})|X_{0} = x)$ with respect to a set of---not necessarily Markovian---stochastic processes that are consistent with $\lowrateop$.
\cite{2016Krak} argue that, just like a transition rate matrix $Q$ characterises a (precise) continuous-time Markov chain, a lower transition rate operator $\lowrateop$ characterises a so-called imprecise continuous-time Markov chain.

The main objective of this contribution is to determine $\lowtranopa{t} f$ for some $f \in \setoffna$ and some $t \in \strictlyposreals$.
Our motivation is that, from an applied point of view on imprecise continuous-time Markov chains, what one is most interested in are tight lower and upper bounds on expectations of the form $\prev(f(X_t)|X_{0} = x)$.
As explained above, the lower bound is given by $\lowprevacond{f(X_t)}{X_0 = x} = [\lowtranopa{t} f](x)$.
Similarly, the upper bound is given by $\upprevacond{f(X_t)}{X_0 = x} = -[\lowtranopa{t} (-f)](x)$.
Note that the lower (or upper) probability of an event $A \subseteq \statespace$ conditional on the initial state $x$ is a special case of a lower (or upper) expectation: $\underline{\prob}(X_{t} \in A | X_0 = x) = \lowprevacond{\indica{A}{X_t}}{X_0 = x}$ and similarly for the upper probability.
Hence, for the sake of generality we can focus on $\lowtranopa{t} f$ and forget about its interpretation.
As in most cases analytically solving Eqn.~\eqref{eqn:TDLTO:FunctionDifferentialEquation} is infeasible or even impossible, we resort to methods that yield an approximation up to some guaranteed maximal error.

	\section{Approximation methods}
\label{sec:EfficientComputation}
\cite{2015Skulj} was, to the best of our knowledge, the first to propose methods that approximate the solution $\lowtranopa{t} f$ of Eqn.~\eqref{eqn:TDLTO:FunctionDifferentialEquation}.
He proposes three methods: one with a uniform grid, a second with an adaptive grid and a third that is a combination of the previous two.
In essence, he determines a step size $\delta$ and then approximates $\lowtranopa{t + \delta} f$ with $e^{\delta Q} \lowtranopa{t} f$, where $Q$ is a transition rate matrix determined from \smash{$\lowrateop$ and $\lowtranopa{t} f$}.
One drawback of this method is that it needs the matrix exponential $e^{\delta Q}$, which---in general---needs to be approximated as well.
\cite{2015Skulj} mentions that his methods turn out to be quite computationally heavy, even if the uniform and adaptive methods are combined.

We consider two alternative approximation methods---one with a uniform grid and one with an adaptive grid---that both work in the same way.
First, we pick a small step $\delta_1 \in \nonnegreals$ and apply the operator $(I + \delta_1 \lowrateop)$ to the function $g_0 = f$, resulting in a function $g_1 \coloneqq (I + \delta_1 \lowrateop) f$.
Recall from Proposition~\ref{prop:IPlusDeltaQLowTranOp} that if we want $(I + \delta_1 \lowrateop)$ to be a lower transition operator, then we need to demand that $\delta_1 \norm{\lowrateop} \leq 2$.
Next, we pick a (possibly different) small step $\delta_2 \in \nonnegreals$ such that $\delta_2 \norm{\lowrateop} \leq 2$ and apply the lower transition operator $(I + \delta_2 \lowrateop)$ to the function $g_1$, resulting in a function $g_2 \coloneqq (I + \delta_2 \lowrateop) g_1$.
If we continue this process until the sum of all the small steps is equal to $t$, then we end up with an approximation for $\lowtranopa{t} f$.
More formally, let $s \coloneqq (\delta_1, \dots, \delta_k)$ denote a sequence in $\nonnegreals$ such that, for all $i \in \{ 1, \dots, k \}$, $\delta_i \norm{\lowrateop} \leq 2$.
Using this sequence $s$ we define the \emph{approximating lower transition operator}
\vspace{-6pt}
\begin{equation*}
	\Phi(s)
	\coloneqq (I + \delta_k \lowrateop) \cdots (I + \delta_1 \lowrateop).\vspace{3pt}
\end{equation*}
What we are looking for is a convenient way to determine the sequence $s$ such that the error $\norm{\lowtranopa{t} f - \Phi(s) f}$ is guaranteed to be lower than some desired maximal error $\epsilon \in \strictlyposreals$.

	\subsection{Using a uniform grid}
\label{ssec:UniformGrid}
\cite{2016Krak} provide one way to determine the sequence $s$.
They assume a uniform grid, in the sense that all elements of the sequence $s$ are equal to $\delta$.
The step size $\delta$ is completely determined by the desired maximal error $\epsilon$, the time $t$, the variation norm of the function $f$ and the norm of $\lowrateop$; \cite[Proposition~8.5]{2016Krak} guarantees that the actual error is lower than $\epsilon$.
Algorithm~\ref{alg:Uniform} provides a slightly improved version of \cite[Algorithm~1]{2016Krak}.
The improvement is due to Proposition~\ref{prop:IPlusDeltaQLowTranOp}: we demand that $n \geq t \norm{\lowrateop} / 2$ instead of $n \geq t \norm{\lowrateop}$.
\begin{algorithm}
	\caption{Uniform approximation \label{alg:Uniform}}
	\DontPrintSemicolon

	\KwData{A lower transition rate operator $\lowrateop$, a function $f \in \setoffna$, a maximal error $\epsilon \in \strictlyposreals$, and a time point $t \in \nonnegreals$.}
	\KwResult{$\lowtranopa{t} f \pm \epsilon$}

	$g_{0} \gets f$\; \nllabel{line:Uniform:First}
	\lIf{$\norm{f}_{c} = 0$ \Or $\norm{\lowrateop} = 0$ \Or $t = 0$}{$(n, \delta) \gets (0, 0)$}
	\Else{
		$n \gets \big\lceil \max \{ t \norm{\lowrateop} / 2, t^2 \norm{\lowrateop}^2 \norm{f}_{c} / \epsilon \}\big\rceil$\; \nllabel{line:Uniform:DetermineN}
		$\delta \gets t / n$\;
		\For{$i = 0, \dots, n-1$}{
			$g_{i+1} \gets g_{i} + \delta \lowrateop g_{i}$\; \nllabel{line:Uniform:IncrementOfG}
		}
	}
	\Return $g_{n}$
\end{algorithm}

More formally, for any $t \in \nonnegreals$ and any $n \in \nats$ such that $t \norm{\lowrateop} \leq 2 n$, we consider the \emph{uniformly approximating lower transition operator}
\vspace{-3pt}
\begin{equation*}
	\Psi_{t}(n)
	\coloneqq \left(I + \frac{t}{n} \lowrateop\right)^{n}.\vspace{3pt}
\end{equation*}
As a special case, we define $\Psi_{t}(0) \coloneqq I$.
The following theorem then guarantees that the choice of $n$ in Algorithm~\ref{alg:Uniform} results in an error $\norm{\lowtranopa{t} f - \Psi_{t}(n) f}$ that is lower than the desired maximal error $\epsilon$.
\begin{theorem}
\label{the:UniformApproximationWithError}
	Let $\lowrateop$ be a lower transition rate operator and fix some $f\in\setoffna$, $t \in \nonnegreals$ and $\epsilon \in \strictlyposreals$.
	If we use Algorithm~\ref{alg:Uniform} to determine $n$, $\delta$ and $g_{0}, \dots, g_n$, then we are guaranteed that
	\[
		\norm{\lowtranopa{t} f - \Psi_{t}(n) f}
		= \norm{\lowtranopa{t} f - g_n}
		\leq \epsilon'
		\coloneqq
		\delta^2 \norm{\lowrateop}^2 \sum_{i=0}^{n-1} \norm{g_{i}}_{c}
		\leq \epsilon.
	\]
\end{theorem}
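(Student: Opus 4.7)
Since $g_n = \Psi_t(n) f$ by construction, the goal is to bound $\norm{\lowtranopa{t} f - \Psi_t(n) f}$ and then to check that this bound is at most $\epsilon$. My plan has three steps: a telescoping decomposition of the total error into $n$ one-step errors, a second-order bound for each one-step error, and a bookkeeping argument based on the variation-preserving behaviour of lower transition operators.

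For the decomposition, I would invoke the semigroup property $\lowtranopa{t} = \lowtranopa{\delta}^{n}$ (with $\delta = t/n$), which follows from the uniqueness of the solution of Eqn.~\eqref{eqn:TDLTO:FunctionDifferentialEquation} together with the limit formula~\eqref{eqn:TDLTO:LimitFormula}. Writing $\tilde{g}_i \coloneqq \lowtranopa{i\delta} f$ for the exact iterates, I add and subtract $\lowtranopa{\delta} g_i$ to get
\[
	\tilde{g}_{i+1} - g_{i+1} = \bigl(\lowtranopa{\delta} \tilde{g}_i - \lowtranopa{\delta} g_i\bigr) + \bigl(\lowtranopa{\delta} g_i - (I + \delta \lowrateop) g_i\bigr).
\]
The lower transition operator $\lowtranopa{\delta}$ is non-expansive with respect to $\norm{\cdot}$---an immediate consequence of \ref{def:LTO:DominatesMin} and \ref{def:LTO:SuperAdditive}, since $\lowtranopa{\delta} u - \lowtranopa{\delta} v \geq \lowtranopa{\delta}(u-v) \geq \min(u-v) \geq -\norm{u-v}$---so the first summand is bounded in norm by $\norm{\tilde{g}_i - g_i}$. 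An easy induction starting from $\tilde{g}_0 = g_0 = f$ then yields
\[
	\norm{\lowtranopa{t} f - g_n} \leq \sum_{i=0}^{n-1} \norm{\lowtranopa{\delta} g_i - (I + \delta \lowrateop) g_i}.
\]

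The crux is the one-step bound $\norm{\lowtranopa{\delta} g - (I + \delta \lowrateop) g} \leq \delta^{2} \norm{\lowrateop}^{2} \norm{g}_c$, which I expect to be the main technical obstacle. The strategy is to use the limit formula~\eqref{eqn:TDLTO:LimitFormula}, rewriting $\lowtranopa{\delta} g$ as $\lim_{m \to \infty} (I + (\delta/m) \lowrateop)^{m} g$, and to control $\norm{(I + (\delta/m) \lowrateop)^{m} g - (I + \delta \lowrateop) g}$ uniformly in $m$ by a finer telescoping comparing consecutive grid refinements---along the lines of Proposition~8.5 of \cite{2016Krak}. The centered norm $\norm{g}_c$ appears (rather than $\norm{g}$) because \ref{def:LTRO:Constant} makes $\lowrateop$ invariant under the addition of constants, so $\norm{\lowrateop h} \leq \norm{\lowrateop} \norm{h}_c$; combined with Proposition~\ref{prop:LTRO:PropositionNorm}, this absorbs all second-order terms into $\delta^{2} \norm{\lowrateop}^{2} \norm{g}_c$ in the limit $m \to \infty$. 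Substituting into the telescoped bound gives exactly $\epsilon' = \delta^{2} \norm{\lowrateop}^{2} \sum_{i=0}^{n-1} \norm{g_i}_c$.

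Finally, to see that $\epsilon' \leq \epsilon$, I would observe that since each $(I + \delta \lowrateop)$ is a lower transition operator by Proposition~\ref{prop:IPlusDeltaQLowTranOp} (thanks to the demand $\delta\norm{\lowrateop} \leq 2$), both $\min g_i \leq g_{i+1}$ (by \ref{def:LTO:DominatesMin}) and $g_{i+1} \leq \max g_i$ (via the conjugate upper transition operator applied to $-g_i$) hold, so the variation seminorm does not grow along the iterates. Hence $\norm{g_i}_c \leq \norm{f}_c$ for every $i$, which gives $\epsilon' \leq n \delta^{2} \norm{\lowrateop}^{2} \norm{f}_c = t^{2} \norm{\lowrateop}^{2} \norm{f}_c / n$; and this is at most $\epsilon$ by the definition of $n$ on Line~\ref{line:Uniform:DetermineN} of Algorithm~\ref{alg:Uniform}.
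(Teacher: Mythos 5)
Your proposal is correct in substance and follows essentially the same route as the paper: a telescoping decomposition via the semigroup property $\lowtranopa{t}=\lowtranopa{\delta}^{n}$, a per-step bound $\norm{\lowtranopa{\delta}g_i-(I+\delta\lowrateop)g_i}\leq\delta^{2}\norm{\lowrateop}^{2}\norm{g_i}_{c}$, the observation that $\norm{g_i}_{c}\leq\norm{f}_{c}$ because each $(I+\delta\lowrateop)$ is a lower transition operator, and the choice of $n$ on line~\ref{line:Uniform:DetermineN}. The one-step bound that you flag as the main obstacle is handled in the paper by importing $\norm{\lowtranopa{\delta}-(I+\delta\lowrateop)}\leq\delta^{2}\norm{\lowrateop}^{2}$ as an external result (Lemma~F.9 of \citealp{2016Krak}) and then passing to the centred seminorm via non-negative homogeneity and constant-additivity of the two operators, so your sketch is in the right place and need not be redone from scratch. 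The only genuine omission is the degenerate branch of Algorithm~\ref{alg:Uniform}: when $\norm{f}_{c}=0$, $\norm{\lowrateop}=0$ or $t=0$ the algorithm sets $(n,\delta)=(0,0)$ and returns $f$, so your telescoping with $\delta=t/n$ does not apply and you must separately argue that $\lowtranopa{t}f=f$ in each of these cases (constants are fixed points by \ref{prop:LTO:BoundedByMinAndMax}, and $\lowrateop=0$ or $t=0$ freeze the differential equation), which the paper does in a dedicated lemma.
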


Theorem~\ref{the:UniformApproximationWithError} is an extension of \cite[Proposition~8.5]{2016Krak}.
We already mentioned that the demand $n \geq t \norm{\lowrateop}$ can be relaxed to $n \geq t \norm{\lowrateop} / 2$.
Furthermore, it turns out that we can compute an upper bound $\epsilon'$ on the error that is (possibly) lower than the desired maximal error $\epsilon$.
If we want to determine this $\epsilon'$ while running Algorithm~\ref{alg:Uniform}, we simply need to add $\epsilon' \gets 0$ to line~\ref{line:Uniform:First} and insert $\epsilon' \gets \epsilon' + \delta^2 \norm{\lowrateop}^2 \norm{g_{i}}_{c}$ just before line~\ref{line:Uniform:IncrementOfG}.

\begin{binex}
\label{binex:UniformApproximation}
	We again consider the simple case of Example~\ref{binex:LTRO} and illustrate the use of Theorem~\ref{the:UniformApproximationWithError} with a numerical example based on \cite[Example~20]{2016Krak}.
	\cite{2016Krak} use Algorithm~\ref{alg:Uniform} to approximate $\lowtranopa{1} \indic{1}$, and find that $n = \num{8000}$ guarantees an error lower than the desired maximal error $\epsilon \coloneqq \num{1e-3}$.
	As reported in Table~\ref{tab:ComparisonOfCompDuration}, we use Theorem~\ref{the:UniformApproximationWithError} to compute $\epsilon'$.
	We find that $\epsilon' \approx \num{0.430e-3}$, which is approximately a factor two smaller than the desired maximal error $\epsilon$.
	\begin{table}
		\caption{Comparison of the presented approximation methods, obtained using a naive, unoptimised implementation of the algorithms in Python.
		$N$ is the total number of iterations, $D_{\epsilon}$ ($D_{\epsilon'}$) is the average duration---in seconds, averaged over 50 independent runs---without (with) keeping track of $\epsilon'$, and $\epsilon_a$ is the actual error.
		 The Python code is made available at \href{https://github.com/alexander-e/ictmc}{github.com/alexander-e/ictmc}.}
		\label{tab:ComparisonOfCompDuration}
		\begin{center}
		\begin{tabular}{rS[table-format=4.]S[table-format=1.4]S[table-format=1.4]S[table-format=1.4]S[table-format=1.4]}
			\toprule
			{Method} & {$N$} & {$D_{\epsilon}$} & {$D_{\epsilon'}$} & {$\epsilon' \times 10^3$} & {$\epsilon_a \times 10^3$} \\
			\midrule
			Uniform                      & 8000 & 0.0345 & 0.0574 & 0.430 & 0.0335 \\
			Uniform                      &  250 & 0.00171 & 0.0264 & 13.8  & 1.07 \\
			Adaptive with $m = 1$        & 3437 & 0.0371 & 0.0428 & 1.000 & 0.108 \\
			Adaptive with $m = 20$       & 3456 & 0.0143 & 0.0254 & 0.992 & 0.107 \\
			Uniform ergodic with $m = 1$ & 6133 & 0.0264 & 0.0449 & 0.560 & 0.0437 \\
			\bottomrule
		\end{tabular}
		\end{center}
	\end{table}

	In this case, since we know the analytical expression for $\lowtranopa{1} \indic{1}$ from Example~\ref{binex:AnalyticalExpressionsForAppliedLTO}, we can determine the actual error $\epsilon_{a} = \norm{\lowtranopa{1} \indic{1} - \Psi_{1}(8000) \indic{1}}$.
	Quite remarkably, the actual error is approximately $\num{3.35e-5}$, which is roughly 30 times smaller than the desired maximal error.
	This leads us to think that the number of iterations used by the uniform method is too high.
	In fact, we find that using as few as \num{250} iterations---roughly \num{8000 / 30}---already results in an actual error that is approximately equal to the desired one: $\norm{\lowtranopa{1} \indic{1} - \Psi_{1}(250) \indic{1}} \approx \num{1.07e-3}$.
\end{binex}

	\subsection{Using an adaptive grid}
\label{ssec:AdaptiveGrid}
In Example~\ref{binex:UniformApproximation}, we noticed that the maximal desired error was already satisfied for a uniform grid that was much coarser than that constructed by Algorithm~\ref{alg:Uniform}.
Because of this, we are led to believe that we can find a better approximation method than the uniform method of Algorithm~\ref{alg:Uniform}.

To this end, we now consider grids where, for some integer $m$, every $m$ consecutive time steps in the grid are equal.
In particular, we consider a sequence $\delta_1, \dots, \delta_n$ in $\nonnegreals$ and some $k \in \nats$ such that $1 \leq k \leq m$ and, for all $i \in \{ 1, \dots, n \}$, $\delta_i \norm{\lowrateop} \leq 2$.
From such a sequence, we then construct the \emph{$m$-fold approximating lower transition operator}:
\[
	\Phi_{m,k}(\delta_1, \dots, \delta_n)
	\coloneqq (I + \delta_n \lowrateop)^{k} (I + \delta_{n-1} \lowrateop)^{m} \cdots (I + \delta_{1} \lowrateop)^{m},
\]
where if $n = 1$ only $(I + \delta_1 \lowrateop)^{k}$ remains and if $n = 2$ only $(I + \delta_2 \lowrateop)^{k} (I + \delta_{1} \lowrateop)^{m}$ remains.

The uniform approximation method of before is a special case of the $m$-fold approximating lower transition operator; a more interesting method to construct an $m$-fold approximation is Algorithm~\ref{alg:Adaptive}.
In this algorithm, we re-evaluate the time step every $m$ iterations, possibly increasing its length.
\begin{algorithm}
	\caption{Adaptive approximation \label{alg:Adaptive}}
	\DontPrintSemicolon

	\KwData{A lower transition rate operator $\lowrateop$, a gamble $f \in \setoffna$, an integer $m \in \nats$, a tolerance $\epsilon \in \strictlyposreals$, and a time period $t \in \nonnegreals$.}
	\KwResult{$\lowtranopa{t} f \pm \epsilon$}
	$(g_{(0,m)}, \Delta, i) \gets (f, t, 0)$\;
	\lIf{$\norm{f}_{c} = 0$ \Or $\norm{\lowrateop} = 0$ \Or $t = 0$}{$(n, k) \gets (0, m)$}
	\Else{
		\While{$\Delta > 0$ \And $\norm{g_{(i,m)}}_{c} > 0$}{
			$i \gets i+1$\;
			$\delta_i \gets \min \{ \Delta, 2 / \norm{\lowrateop}, \epsilon / (t \norm{\lowrateop}^2 \norm{g_{(i-1,m)}}_{c}) \}$\; \nllabel{line:Adaptive:delta}
			\If{$m \delta_i > \Delta$}{
				$k_i \gets \lceil \Delta / \delta_i \rceil$\;
				$\delta_i \gets \Delta / k_i$\;
			}\lElse{$k_i \gets m$}
			$g_{(i,0)} \gets g_{(i-1,m)}, \Delta\gets\Delta - k_i \delta_i$\;
			\For{$j = 0, \dots, k_i-1$}{
				$g_{(i,j+1)} \gets g_{(i,j)} + \delta_i \lowrateop g_{(i,j)}$\;
			}
		}
		$(n, k) \gets (i, k_i)$\;
	}
	\Return $g_{(n,k)}$
\end{algorithm}

From the properties of lower transition operators, it follows that for all $\smash{i \in \{ 2, \dots, n-1 \}}$, $\smash{\norm{g_{(i-1,m)}}_{c} \leq \norm{g_{(i-2,m)}}_{c}}$.
Hence, the re-evaluated step size $\delta_i$ is indeed larger than (or equal to) the previous step size $\delta_{i-1}$.
The only exception to this is the final step size $\delta_n$: it might be that the remaining time $\Delta$ is smaller than $m \delta_n$, in which case we need to choose $k$ and $\delta_n$ such that $k \delta_n = \Delta$.

Theorem~\ref{the:AdaptiveApproximation} guarantees that the adaptive approximation of Algorithm~\ref{alg:Adaptive} indeed results in an actual error lower than the desired maximal error $\epsilon$.
Even more, it provides a method to compute an upper bound $\epsilon'$ of the actual error that is lower than the desired maximal error.
Finally, it also states that the adaptive method of Algorithm~\ref{alg:Adaptive} needs at most an equal number of iterations than the uniform method of Algorithm~\ref{alg:Uniform}.

\begin{theorem}
\label{the:AdaptiveApproximation}
	Let $\lowrateop$ be a lower transition rate operator, $f\in\setoffna$, $t \in \nonnegreals$, $\epsilon \in \strictlyposreals$ and $m \in \nats$.
	We use Algorithm~\ref{alg:Adaptive} to determine $n$ and $k$, and if applicable also $k_i$, $\delta_{i}$ and $g_{(i,j)}$.
	If $\norm{f}_{c} = 0$, $\norm{\lowrateop} = 0$ or $t = 0$, then $\norm{\lowtranopa{t} f - g_{(n,k)}} = 0$.
	Otherwise, we are guaranteed that
	\begin{align*}
		\norm{\lowtranopa{t} f - \Phi_{m,k}(\delta_1 \dots, \delta_n) f}
		=
		\norm{\lowtranopa{t} f - g_{(n,k)}}
		\leq \epsilon'
		&\coloneqq \sum_{i = 1}^{n} \delta_i^2 \norm{\lowrateop}^2 \sum_{j=0}^{k_i - 1} \norm{g_{(i,j)}}_{c}
		\leq \epsilon
	\end{align*}
	and that the total number of iterations has an upper bound:
	\begin{equation*}
		\sum_{i=1}^{n} k_i
		= (n-1) m + k
		\leq \left\lceil \max \left\{ \norm{\lowrateop} t/2, t^2 \norm{\lowrateop}^2 \norm{f}_{c}/ \epsilon \right\} \right\rceil.
	\end{equation*}
\end{theorem}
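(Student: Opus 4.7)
The plan is to reduce the statement to (i) a block-wise application of \cref{the:UniformApproximationWithError} combined with a telescoping argument, and (ii) a careful case analysis of the adaptive step choice in \cref{alg:Adaptive} for the iteration-count bound. The three degenerate cases are immediate: if $t = 0$, $\norm{\lowrateop} = 0$, or $\norm{f}_{c} = 0$, then $\lowtranopa{t} f = f$ (using property \ref{def:LTRO:Constant} in the last case, since then $f$ is constant, so $\lowrateop f = 0$), and \cref{alg:Adaptive} returns $f$ unchanged, so the error vanishes trivially.

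For the main case, I would partition $[0, t]$ into $n$ sub-intervals of lengths $T_i \coloneqq k_i \delta_i$, so that $\sum_{i=1}^{n} T_i = t$ by construction. Setting $\Phi_i \coloneqq (I + \delta_i \lowrateop)^{k_i}$ and invoking the semigroup identity $\lowtranopa{t} = \lowtranopa{T_n} \cdots \lowtranopa{T_1}$, which follows from uniqueness of the solution of \eqref{eqn:TDLTO:FunctionDifferentialEquation} (cf.\ \cite{2017DeBock}), a telescoping argument through the intermediate functions $A_i \coloneqq \lowtranopa{T_n} \cdots \lowtranopa{T_{i+1}} \Phi_i \cdots \Phi_1 f$ (so that $A_0 = \lowtranopa{t} f$ and $A_n = \Phi_{m,k}(\delta_1, \dots, \delta_n) f$), combined with the non-expansiveness $\norm{\lowtranop u - \lowtranop v} \leq \norm{u - v}$ of any lower transition operator (itself an easy consequence of \ref{def:LTO:DominatesMin} and \ref{def:LTO:SuperAdditive}), bounds the total error by the sum of the per-block errors $\norm{\lowtranopa{T_i} g_{(i-1,m)} - \Phi_i g_{(i-1,m)}}$. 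Each of these is in turn bounded by \cref{the:UniformApproximationWithError} applied locally with data $(T_i, k_i, \delta_i, g_{(i-1,m)})$, since $\delta_i \norm{\lowrateop} \leq 2$ by construction; summing gives precisely the expression claimed for $\epsilon'$.

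To verify $\epsilon' \leq \epsilon$, I would combine the algorithm's choice $\delta_i \leq \epsilon / \bigl( t \norm{\lowrateop}^2 \norm{g_{(i-1,m)}}_{c} \bigr)$ with the monotonicity $\norm{g_{(i,j)}}_{c} \leq \norm{g_{(i,0)}}_{c} = \norm{g_{(i-1,m)}}_{c}$, which holds because each $(I + \delta_i \lowrateop)$ is a lower transition operator by \cref{prop:IPlusDeltaQLowTranOp} and hence contracts the centered variation seminorm. A one-line estimate then gives $\delta_i^2 \norm{\lowrateop}^2 \sum_{j=0}^{k_i-1} \norm{g_{(i,j)}}_{c} \leq T_i \, \epsilon / t$, and summation over $i$ delivers $\epsilon$ exactly.

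For the iteration count, the key observation is that for every non-terminal block $i < n$ the algorithm neither shrinks nor clips against the remaining time (since then $m \delta_i < \Delta$ strictly, for otherwise the loop would have exited), so the minimum in line~\ref{line:Adaptive:delta} is attained by one of the two constant bounds and $\delta_i \geq \delta^{*} \coloneqq \min\{ 2 / \norm{\lowrateop},\, \epsilon / (t \norm{\lowrateop}^2 \norm{f}_{c}) \}$, using $\norm{g_{(i-1,m)}}_{c} \leq \norm{f}_{c}$. This gives $(n-1) m \, \delta^{*} \leq t - \Delta_n$, where $\Delta_n > 0$ is the remaining time at the terminal block, and since $(n-1) m \in \natz$ the integer refinement $(n-1) m \leq \lfloor (t - \Delta_n)/\delta^{*} \rfloor$ holds. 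A short sub-case analysis (shrinking versus no shrinking in the terminal block) yields $k_n \leq \lceil \Delta_n / \delta^{*} \rceil$, after which the elementary inequality $\lfloor a \rfloor + \lceil b \rceil \leq \lceil a + b \rceil$ (for $a, b \geq 0$) delivers $(n-1)m + k \leq \lceil t / \delta^{*} \rceil$, which matches the stated uniform bound. I expect this last step to be the main obstacle: obtaining an \emph{exact} match with the uniform bound, rather than a weaker $O(\cdot)$ estimate, forces one to keep careful track of floors and ceilings in order to close the gap introduced by the possibly shrunk final block.
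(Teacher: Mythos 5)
Your overall architecture matches the paper's: the error bound comes from telescoping through the semigroup property and non-expansiveness, with each step controlled by the quadratic bound $\norm{\lowtranopa{\delta}-(I+\delta\lowrateop)}\leq\delta^{2}\norm{\lowrateop}^{2}$ (the paper telescopes once globally via a single lemma rather than block by block, but the computation is identical), and your floor/ceiling accounting for the iteration count is a legitimate and in fact somewhat leaner alternative to the paper's sub-case analysis with the auxiliary index $n_u'$. There is, however, one genuine gap: you assert that $\sum_{i=1}^{n} T_i = t$ ``by construction'', and both your semigroup factorisation $\lowtranopa{t}=\lowtranopa{T_n}\cdots\lowtranopa{T_1}$ and your claim that the final summation ``delivers $\epsilon$ exactly'' rest on it. This is false in general: the while-loop of \cref{alg:Adaptive} also terminates when $\norm{g_{(i,m)}}_{c}=0$, i.e.\ when the iterate becomes a constant function before the time budget is exhausted, in which case $t'\coloneqq\sum_{i=1}^{n}k_i\delta_i<t$ and your telescoping only bounds $\norm{\lowtranopa{t'}f-g_{(n,k)}}$, not the quantity $\norm{\lowtranopa{t}f-g_{(n,k)}}$ appearing in the statement.

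The missing step is short but requires an observation you have not made: when $t'<t$ the returned iterate is a constant, $g_{(n,k)}=\mu$ for some $\mu\in\reals$, so by \cref{eqn:TDLTO:SemiGroup}, \ref{prop:LTO:AdditionOfConstant} and \ref{prop:LTO:NonExpansiveness},
\[
	\norm{\lowtranopa{t}f-\mu}
	=\norm{\lowtranopa{t-t'}\lowtranopa{t'}f-\lowtranopa{t-t'}\mu}
	\leq\norm{\lowtranopa{t'}f-\mu},
\]
which reduces the error at time $t$ to the error at time $t'$ that you already control (and the summation then yields $t'\epsilon/t\leq\epsilon$, which is all that is needed). Note that this early-termination case is entirely compatible with your iteration-count argument: there the final block is unclipped, so $k_n=m\leq\Delta_n/\delta_n\leq\Delta_n/\delta^{*}$ still holds and your bound $(n-1)m+k\leq\lceil t/\delta^{*}\rceil$ goes through unchanged. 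Once you add the constant-iterate argument, your proof is complete.
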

Again, we can determine $\epsilon'$ while running Algorithm~\ref{alg:Adaptive}.
An alternate---less tight---version of $\epsilon'$ can be obtained by replacing the sum of $\norm{g_{(i,j)}}_{c}$ for $j$ from $0$ to $k_{i}-1$ by $k_i \norm{g_{(i,0)}}_{c} = k_i \norm{g_{(i-1,m)}}_{c}$.
Determining this alternative $\epsilon'$ while running Algorithm~\ref{alg:Adaptive} adds negligible computational overhead compared to the $\epsilon'$ of Theorem~\ref{the:AdaptiveApproximation}, as $\norm{g_{(i-1,m)}}_{c}$ is needed to re-evaluate the step size anyway.

The reason why we only re-evaluate the step size $\delta$ after every $m$ iterations is twofold.
First and foremost, all we currently know for sure is that for all $\delta \in \nonnegreals$ such that $\delta \norm{\lowrateop} \leq 2$, all $m \in \nats$ and all $f\in\setoffna$, $\norm{(I + \delta \lowrateop)^{m} f}_{c} \leq \norm{f}_{c}$.
Re-evaluating the step size every $m$ iterations is therefore only justified if a priori we are certain that $\smash{\norm{(I + \delta_{i} \lowrateop)^{m} g_{(i-1,m)}}_{c} < \norm{g_{(i-1,m)}}_{c}}$.
We come back to this in Section~\ref{sec:ergodicity}.
A second reason is that there might be a trade-off between the time it takes to re-evaluate the step size and the time that is gained by the resulting reduction of the number of iterations.
The following numerical example illustrates this trade off.
\begin{binex}
\label{binex:AdaptiveApproximation}
	Recall that in Example~\ref{binex:UniformApproximation} we wanted to approximate $\lowtranopa{1} \indic{1}$ up to a maximal desired error $\epsilon = \num{1e-3}$.
	Instead of using the uniform method of Algorithm~\ref{alg:Uniform}, we now use the adaptive method of Algorithm~\ref{alg:Adaptive} with $m = 1$.
	The initial step size is the same as that of the uniform method, but because we re-evaluate the step size we only need \num{3437} iterations, as reported in Table~\ref{tab:ComparisonOfCompDuration}.
	We also find that in this case $\epsilon' = \num{1.00e-3}$, which is a coincidence.
	Nevertheless, the actual error of the approximation is $\num{0.108e-3}$, which is about ten times smaller than what we were aiming for.

	However, fewer iterations do not necessarily imply a shorter duration of the computations.
	Qualitatively, we can conclude the following from Table~\ref{tab:ComparisonOfCompDuration}.
	First, keeping track of $\epsilon'$ increases the duration, as expected.
	Second, the adaptive method is faster than the uniform method, at least if we choose $m$ large enough.
	And third, both methods yield an actual error that is at least an order of magnitude lower than the desired maximal error.
\end{binex}

	\section{Ergodicity}
\label{sec:ergodicity}
Let $\Phi_{m,k}(\delta_1, \dots, \delta_n) f$ be an approximation constructed using the adaptive method of Algorithm~\ref{alg:Adaptive}.
Re-evaluating the step size is then only justified if a priori we are sure that
\begin{equation*}
	\nicefrac{1}{2} \norm{(I + \delta_{i} \lowrateop)^m \Phi_{i-1} f}_{v} = \norm{g_{(i,m)}}_{c} < \norm{g_{(i-1,m)}}_{c} = \nicefrac{1}{2} \norm{\Phi_{i-1} f}_{v} \text{ for all } i \in \{ 1, \dots, n-1 \},
	\vspace{2pt}
\end{equation*}
where $\Phi_0 \coloneqq I$ and $\Phi_{i} \coloneqq (I + \delta_i \lowrateop)^m \Phi_{i-1}$.
As $(\Phi_{i-1} f) \in \setoffna$, this is definitely true if we require that
\vspace{-4pt}
\begin{equation}
\label{eqn:Ergodicity:IntroEqn}
	(\forall \delta \in \{ \delta_1, \dots, \delta_{n-1} \}) (\forall f \in \setoffna) ~ \norm{(I + \delta \lowrateop)^{m} f}_{v} < \norm{f}_{v}. 
	\vspace{2pt}
\end{equation}
In fact, since this inequality is invariant under translation or positive scaling of $f$, it suffices if
\vspace{1pt}
\begin{equation*}
	(\forall \delta \in \{ \delta_1, \dots, \delta_{n-1} \}) (\forall f \in \setoffna \colon 0 \leq f \leq 1) ~ \norm{(I + \delta \lowrateop)^{m} f}_{v} < 1.
	\vspace{1pt}
\end{equation*}
Readers that are familiar with (the ergodicity of) imprecise discrete-time Markov chains---see \citep{2012Hermans} or \smash{\citep{2013Skulj}}---will probably recognise this condition, as it states that the (weak) coefficient of ergodicity of $\smash{(I + \delta \lowrateop)^{m}}$ should be strictly smaller than 1.
For all lower transition operators $\lowtranop$, \cite{2013Skulj} define this (weak) \emph{coefficient of ergodicity} as
\begin{equation}
\label{eqn:CoeffOfErgod}
	\coefferga{\lowtranop}
	\coloneqq \max \left\{ \norm{\lowtranop f}_{v} \colon f \in \setoffna, 0 \leq f \leq 1 \right\}.
\end{equation}

	\subsection{Ergodicity of lower transition rate operators}
As will become apparent, whether or not combinations of $m \in \nats$ and $\delta \in \nonnegreals$ exist such that $\delta \norm{\lowrateop} \leq 2$ and $\coefferga{(I + \delta \lowrateop)^m} < 1$ is tightly connected with the behaviour of $\lowtranopa{t} f$ for large $t$.
\cite{2017DeBock} proved that for all lower transition rate operators $\lowrateop$ and all $f \in \setoffna$, the limit $\lim_{t \to \infty} \lowtranopa{t} f$ exists.
An important case is when this limit is a constant function for all $f$.
\begin{definition}[Definition~2 of \citep{2017DeBock}]
	The lower transition rate operator $\lowrateop$ is \emph{ergodic} if for all $f\in\setoffna$, $\lim_{t \to \infty} \lowtranopa{t} f$ exists and is a constant function.
\end{definition}

As shown by \cite{2017DeBock}, ergodicity is easily verified in practice: it is completely determined by the signs of $[\uprateop \indic{x}](y)$ and $[\lowrateop \indic{A}](z)$, for all $x,y \in \statespace$ and certain combinations of $z \in \statespace$ and $A \subset \statespace$.
It turns out that an ergodic lower transition rate operator $\lowrateop$ does not only induce a lower transition operator $\lowtranopa{t}$ that converges, it also induces discrete approximations---of the form $(I + \delta_{k} \lowrateop) \cdots (I + \delta_1 \lowrateop)$---with special properties.
The following theorem, which we consider to be one of the main results of this contribution, highlights this.
\begin{theorem}
\label{the:ContinuousErgodicity:CoefficientOfErgodicityOfApproximation}
	The lower transition rate operator $\lowrateop$ is ergodic if and only if there is some $n<\card{\statespace}$ such that $\coefferga{\Phi(\delta_1,\dots,\delta_{k})} < 1$ for one (and then all) $k \geq n$ and one (and then all) sequence(s) $\delta_1, \dots, \delta_k$ in $\strictlyposreals$ such that $\delta_i \norm{\lowrateop} < 2$ for all $i \in \{1, \dots, k\}$.
\end{theorem}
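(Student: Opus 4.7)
My plan is to reduce both directions of the equivalence to a single accessibility condition on the directed graph $G$ on $\statespace$ with edges $\{(x,x) : x \in \statespace\} \cup \{(x,y) : [\uprateop \indic{y}](x) > 0\}$, which depends only on $\lowrateop$.

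Since each $\delta_i \norm{\lowrateop} < 2$, \cref{prop:IPlusDeltaQLowTranOp} makes every factor $(I + \delta_i \lowrateop)$ a lower transition operator, so $\Phi(\delta_1,\ldots,\delta_k)$ is one as well and its coefficient of ergodicity is well defined. The conjugate upper operator of a single factor is $I + \delta_i \uprateop$, hence
\[
[\overline{(I + \delta_i \lowrateop)}\indic{y}](x) = \indica{y}{x} + \delta_i [\uprateop \indic{y}](x).
\]
For $x \neq y$ this is strictly positive if and only if $[\uprateop \indic{y}](x) > 0$; for $x = y$, the bound $|[\uprateop \indic{y}](y)| \leq \norm{\lowrateop}/2$ (which follows from \cref{prop:LTRO:PropositionNorm} together with the conjugate inequality $\uprateop \geq \lowrateop$) combined with the strict inequality $\delta_i \norm{\lowrateop} < 2$ forces strict positivity. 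So the positivity pattern of each factor is exactly the edge set of $G$, independently of the particular $\delta_i$. Iterating, $[\overline{\Phi(\delta_1,\ldots,\delta_k)}\indic{y}](x) > 0$ if and only if $G$ admits a walk of length $k$ from $x$ to $y$, and because every vertex of $G$ carries a self-loop, this walk-reachability stabilises to ordinary reachability in $G$ for all $k \geq \card{\statespace} - 1$. In particular, the upper-reachability relation of $\Phi(\delta_1,\ldots,\delta_k)$ is the same for every such $k$ and every valid sequence.

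I would then invoke two structural characterisations: for lower transition operators (see, e.g., \citep{2012Hermans,2013Skulj}), $\coefferga{\lowtranop} < 1$ if and only if every pair of states shares a common upper-$\lowtranop$-reachable state; for lower transition rate operators (\citep{2017DeBock}), $\lowrateop$ is ergodic if and only if every pair of states shares a common vertex reachable in $G$. The preceding paragraph shows that these two conditions coincide once $k \geq \card{\statespace} - 1$, so taking $n = \card{\statespace} - 1 < \card{\statespace}$ witnesses the equivalence, and the graph-stabilisation observation delivers both "one (and then all)" clauses (independence of the sequence comes from the sign-pattern argument, invariance in $k \geq n$ from the self-loops).

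The main obstacle will be precisely aligning the two structural characterisations on $G$: one has to verify that the accessibility condition \citep{2017DeBock} uses to characterise ergodicity of $\lowrateop$ is literally the pairwise-common-reachable-vertex condition in the graph $G$ above (rather than a superficially different condition on a closely related graph), and that the self-loops contributed by the identity part of each $I + \delta_i \lowrateop$ neither strengthen nor weaken the resulting condition once one iterates. The strict positivity of the diagonal entries and the stabilisation of walks both reduce to routine estimates.
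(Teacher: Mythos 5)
The preparatory part of your argument is sound and matches machinery the paper also uses: each factor's conjugate is $I+\delta_i\uprateop$, its strict-positivity pattern is $\{(x,x)\colon x\in\statespace\}\cup\{(x,y)\colon[\uprateop\indic{y}](x)>0\}$ independently of $\delta_i$ (the diagonal because $[\uprateop\indic{x}](x)\geq[\lowrateop\indic{x}](x)\geq-\norm{\lowrateop}/2$ and $\delta_i\norm{\lowrateop}<2$), and the self-loops make $k$-step walk-reachability stabilise to plain reachability for $k\geq\card{\statespace}-1$; this is essentially Lemma~\ref{lem:LowRateOp:PathOfArbitraryLength}. The gap is in the two ``structural characterisations'' you then plan to invoke: neither is a theorem, and both are false in the imprecise setting. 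Take the operator of Example~\ref{binex:LTRO} with $\lowq{0}=0$, $\upq{0}>0$ and $\lowq{1}=\upq{1}=0$. Your graph $G$ has the edge $0\to1$ and both self-loops, so state $1$ is reachable from every state and the pairwise-common-reachable-vertex condition holds. Nevertheless $\lowrateop\indic{1}=0$, so $\lowtranopa{t}\indic{1}=\indic{1}$ for all $t$: the limit is not constant, $\lowrateop$ is not ergodic, and $\coefferga{\Phi(\delta_1,\dots,\delta_k)}\geq\norm{\Phi(\delta_1,\dots,\delta_k)\indic{1}}_{v}=1$ for every valid sequence and every $k$. The same phenomenon kills the discrete-time claim: a lower transition operator can satisfy the scrambling/common-upper-reachable-state condition while $\coefferga{\lowtranop^k}=1$ for all $k$ (e.g.\ $[\lowtranop f](0)=\min f$, $[\lowtranop f](1)=f(1)$ on a binary space).

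The reason is that in the imprecise setting ergodicity is \emph{not} determined by the support of $\uprateop$ alone. The correct characterisation (Theorem~\ref{the:ContinuousErgodicity:NecessaryAndSufficient}) is ``regularly absorbing'', which couples an upper-reachability condition (top class regularity --- this \emph{is} your graph condition) with a genuinely different lower-reachability condition (top class absorption, built from the signs of $[\lowrateop\indic{B}](y)$), and the second half cannot be read off $G$. The same asymmetry is why $\coefferga{\lowtranop}<1$ cannot be decided from the positivity pattern of $\uptranop\indic{y}$: the coefficient involves a supremum over all $f$ with $0\leq f\leq1$ and is sensitive to the lower operator assigning value $0$ where the upper operator is positive (the paper's counterexample to Proposition~\ref{prop:CoeffOfErgod:ErgodicUpperBound} is a warning about exactly this kind of indicator-level reasoning). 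The paper's proof accordingly has to work with both relations: the forward direction splits on whether the zero set of an extremal gamble $f^{*}$ meets the top class $\statespacesub{R}$, using upper reachability in one case and the lower-reachability recursion (Lemma~\ref{lem:LowTranOp:SignOfCompositionWithEvent}) in the other; the reverse direction passes through discrete-time ergodicity ($\coefferga{\lowtranop^k}<1$ for some $k$ iff regularly absorbing) and translates \emph{both} halves back to $\lowrateop$. To repair your argument you would at minimum need to add a lower-reachability graph and prove quantitative lower bounds of the form $c_1\cdots c_k\indic{A_k}\leq\Phi\indic{A}$, which is where the real work lies.
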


	\subsection{Ergodicity and the uniform approximation method}
\label{ssec:Ergodicity:UniformImprovement}
Theorem~\ref{the:ContinuousErgodicity:CoefficientOfErgodicityOfApproximation} guarantees that the conditions that were discussed at the beginning of this section are satisfied.
In particular, if the lower transition rate operator is ergodic, then there is some $n < \card{\statespace}$ such that $\coefferga{(I + \delta \lowrateop)^{m}} < 1$ for all $m \geq n$ and all $\delta \in \strictlyposreals$ such that $\delta \norm{\lowrateop} < 2$.
Consequently, if we choose $m \geq \card{\statespace}-1$ then re-evaluating the step size $\delta$ will---except maybe for the last re-evaluation---result in a new step size that is strictly greater than the previous one.
Therefore, we conclude that if the lower transition rate operator is ergodic, then using the adaptive method of Algorithm~\ref{alg:Adaptive} is certainly justified; it will result in fewer iterations, provided we choose a large enough $m$.

Another nice consequence of the ergodicity of a lower transition rate operator $\lowrateop$ is that we can prove an alternate a priori guaranteed upper bound for the error of uniform approximations.
\begin{proposition}
\label{prop:UniformApproximationErgodicError}
	Let $\lowrateop$ be a lower transition rate operator and fix some $f \in \setoffna$, $m, n \in \nats$ and $\delta \in \strictlyposreals$ such that $\delta \norm{\lowrateop} < 2$.
	If $\beta \coloneqq \coefferga{(I + \delta \lowrateop)^{m}} < 1$, then
	\vspace{-7pt}
\begin{equation*}
		\norm{\lowtranopa{t} f - \Psi_{t}(n)}
		\leq \epsilon_{e} \coloneqq m \delta^2 \norm{\lowrateop}^2 \norm{f}_{c} \frac{1 - \beta^{k}}{1 - \beta}
		\leq \epsilon_{d} \coloneqq \frac{m \delta^2 \norm{\lowrateop}^2 \norm{f}_{c}}{1 - \beta},
\end{equation*}
	where $t \coloneqq n \delta$ and $k \coloneqq \lceil \nicefrac{n}{m} \rceil$.
	The same is true for $\beta = \coefferga{\lowtranopa{m\delta}}$.
\end{proposition}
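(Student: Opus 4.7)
The plan is to reduce both versions of the claim to the uniform one-step error bound already proved in Theorem~\ref{the:UniformApproximationWithError}, and then to exploit $\beta<1$ to sum a geometric series. Since $t=n\delta$, we have $\Psi_t(n)=(I+\delta\lowrateop)^n$, and the argument behind Theorem~\ref{the:UniformApproximationWithError} (which holds whenever $\delta\norm{\lowrateop}\leq 2$) yields
\[
	\norm{\lowtranopa{t}f-\Psi_t(n)f}
	\leq \delta^2\norm{\lowrateop}^2\sum_{i=0}^{n-1}\norm{(I+\delta\lowrateop)^i f}_c.
\]
The inequality $\epsilon_e\leq \epsilon_d$ is then trivial because $\beta^k\geq 0$, so the whole game reduces to showing that the right-hand side above is at most $\epsilon_e$.

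For $\beta=\coefferga{(I+\delta\lowrateop)^m}$, I would first record the standard consequence of the definition in~\eqref{eqn:CoeffOfErgod}: thanks to the translation and positive-scaling invariance of $\norm{\cdot}_v$, every lower transition operator $\lowtranop$ satisfies $\norm{\lowtranop g}_v\leq\coefferga{\lowtranop}\norm{g}_v$ for all $g\in\setoffna$, and hence $\norm{\lowtranop^j g}_c\leq\coefferga{\lowtranop}^j\norm{g}_c$ by a one-line induction on $j$. Applying this with $\lowtranop=(I+\delta\lowrateop)^m$---which is a lower transition operator by Proposition~\ref{prop:IPlusDeltaQLowTranOp} together with closure of lower transition operators under composition---and using that each remaining $(I+\delta\lowrateop)^\ell$ with $0\leq\ell<m$ is itself a lower transition operator with coefficient of ergodicity at most $1$, I obtain $\norm{(I+\delta\lowrateop)^{jm+\ell}f}_c\leq\beta^j\norm{f}_c$. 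Partitioning $\{0,\dots,n-1\}$ into $k=\lceil n/m\rceil$ consecutive blocks of size at most~$m$ then gives
\[
	\sum_{i=0}^{n-1}\norm{(I+\delta\lowrateop)^i f}_c
	\leq m\norm{f}_c\sum_{j=0}^{k-1}\beta^j
	= m\norm{f}_c\,\frac{1-\beta^k}{1-\beta},
\]
which is exactly $\epsilon_e/(\delta^2\norm{\lowrateop}^2)$.

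For $\beta=\coefferga{\lowtranopa{m\delta}}$ the simple contraction above no longer applies to the approximation, so I would instead telescope on the exact semigroup. Writing $n=qm+r$ with $0\leq r<m$ and defining $F_j\coloneqq(I+\delta\lowrateop)^{n-jm}\lowtranopa{jm\delta}$ for $j=0,\dots,q$, one has $F_0f=\Psi_t(n)f$ and, by the semigroup property $\lowtranopa{jm\delta}=(\lowtranopa{m\delta})^j$, $F_qf=(I+\delta\lowrateop)^r\lowtranopa{qm\delta}f$. Non-expansiveness of $(I+\delta\lowrateop)^{n-(j+1)m}$ under $\norm{\cdot}$ combined with Theorem~\ref{the:UniformApproximationWithError} applied to $\lowtranopa{jm\delta}f$ over $[0,m\delta]$ gives
\[
	\norm{F_jf-F_{j+1}f}
	\leq\norm{(I+\delta\lowrateop)^m\lowtranopa{jm\delta}f-\lowtranopa{m\delta}\lowtranopa{jm\delta}f}
	\leq m\delta^2\norm{\lowrateop}^2\norm{\lowtranopa{jm\delta}f}_c,
\]
and the iterated contraction $\norm{\lowtranopa{jm\delta}f}_c\leq\beta^j\norm{f}_c$ then bounds each telescope step by $m\delta^2\norm{\lowrateop}^2\beta^j\norm{f}_c$; the analogous estimate handles the residual $\norm{F_qf-\lowtranopa{t}f}\leq r\delta^2\norm{\lowrateop}^2\beta^q\norm{f}_c$. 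Summing and bounding $r\leq m$ collapses the geometric series together with the stray $r\beta^q$ term into $m\norm{f}_c(1-\beta^{q+1})/(1-\beta)$ when $r>0$, and into $m\norm{f}_c(1-\beta^q)/(1-\beta)$ when $r=0$; in either case the resulting exponent of $\beta$ is exactly $k=\lceil n/m\rceil$. The main obstacle I anticipate is precisely this final bookkeeping: ensuring that the incomplete length-$r$ tail of the telescope folds cleanly into the same closed-form geometric expression $\epsilon_e$ rather than leaving behind an additive residual that depends separately on $r$.
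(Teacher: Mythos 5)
Your argument is correct, and for the first choice of $\beta$ it coincides with the paper's proof: both start from the per-step error sum $\delta^2\norm{\lowrateop}^2\sum_{i=0}^{n-1}\norm{(I+\delta\lowrateop)^i f}_c$ (the paper gets it from the first inequality of Lemma~\ref{lem:ExplicitErrorBound}), group the indices into $k=\lceil n/m\rceil$ blocks of length at most $m$, use \ref{prop:LTO:VarNormTf} inside each block and \ref{prop:CoeffOfErgod:BoundOnNormTf} together with \ref{prop:CoeffOfErgod:Composition} across blocks, and sum the geometric series. For $\beta=\coefferga{\lowtranopa{m\delta}}$ the paper takes a shortcut you did not have available: the second inequality of Lemma~\ref{lem:ExplicitErrorBound} already bounds the error by $\delta^2\norm{\lowrateop}^2\sum_{i=0}^{n-1}\norm{\lowtranopa{i\delta}f}_c$, with the \emph{exact} semigroup inside the seminorm, after which the same blocking and the contraction $\norm{(\lowtranopa{m\delta})^j f}_c\leq\beta^j\norm{f}_c$ apply verbatim. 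Your block-level telescoping on the hybrids $F_j=(I+\delta\lowrateop)^{n-jm}\lowtranopa{jm\delta}$ is in effect a re-derivation of that second inequality aggregated over blocks of size $m$; the tail bookkeeping you flag does close up, since $\frac{1-\beta^{q}}{1-\beta}+\frac{r}{m}\beta^{q}\leq\frac{1-\beta^{q+1}}{1-\beta}$ for $0<r\leq m$, giving exactly the exponent $k=\lceil n/m\rceil$. The paper's route buys a cleaner proof by keeping the telescoping at the single-step level once and for all inside Lemma~\ref{lem:ExplicitErrorBound}; yours is self-contained but has to handle the incomplete final block explicitly.
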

Interestingly enough, the upper bound $\epsilon_{d}$ is not dependent on $t$ (or $n$) at all!
This is a significant improvement on the upper bound of Theorem~\ref{the:UniformApproximationWithError}, as that upper bound is proportional to $t^2$.

By Theorem~\ref{the:ContinuousErgodicity:CoefficientOfErgodicityOfApproximation}, there always is an $m < \card{\statespace}$ such that $\coefferga{(I + \delta \lowrateop)^{m}} < 1$ for all $\delta \in \strictlyposreals$ such that $\delta \norm{\lowrateop} < 2$.
Thus, given such an $m$, we can easily improve Algorithm~\ref{alg:Uniform}.
After we have determined $n$ and $\delta$ with Algorithm~\ref{alg:Uniform}, we can simply determine the upper bound of Proposition~\ref{prop:UniformApproximationErgodicError}.
If $m (1 - \beta^k) < n (1 - \beta)$ (or $m < n(1 - \beta)$), then this upper bound is smaller than the desired maximal error $\epsilon$, and we have found a tighter upper bound on the actual error.
We can even go the extra mile and replace line \ref{line:Uniform:DetermineN} with a method that looks for the smallest possible $n \in \nats$ that yields
\vspace{-5pt}
\begin{equation*}
	m \delta^2 \norm{\lowrateop}^2 \norm{f}_{c} (1 - \beta^{k}) \leq (1 - \beta) \epsilon,
	\vspace{2pt}
\end{equation*}
where $k=\lceil \nicefrac{n}{m} \rceil$ and $\delta=\nicefrac{t}{n}$---and therefore also $\beta$---are dependent of $n$.
This method could yield a smaller $n$, but the time we gain by having to execute fewer iterations does not necessarily compensate the time lost by looking for a smaller $n$.
In any case, to actually implement these improvements we need to be able to compute $\beta\coloneqq\coefferga{(I + \delta \lowrateop)^m}$.

\begin{binex}
\label{binex:UniformErgodic}
	For the simple case of Example~\ref{binex:LTRO}, we can derive an analytical expression for $\coefferga{(I + \delta \lowrateop)}$ that is valid for all $\delta\in\reals_{\geq0}$ such that $\delta\norm{\lowrateop}\leq 2$.
	Therefore, we can use Proposition~\ref{prop:UniformApproximationErgodicError} to a priori determine an upper bound for the error.
	If we choose $m = 1$, then $\epsilon_{e} = \num{0.767e-3}$ and $\epsilon_{d} = \num{1.79e-3}$.
	Note that $\epsilon_{e} < \epsilon$, so we can probably decrease the number of iterations $n$.
	As reported in Table~\ref{tab:ComparisonOfCompDuration}, we find that $n = \num{6133}$ still suffices, and that this results in an approximation correct up to $\epsilon' = \num{0.560e-3}$, roughly two times smaller than the desired maximal error $\epsilon$.
	The actual error is \num{0.0437e-3}, roughly ten times smaller than $\epsilon$.
\end{binex}

	\subsection{Approximating the coefficient of ergodicity}
\label{ssec:CoeffOfErgod:Approximation}
Unfortunately, determining the exact value of $\coefferga{(I + \delta \lowrateop)^{m}}$---and of $\coefferga{\lowtranop}$ in general---turns out to be non-trivial and is often even impossible.
Nevertheless, the following theorem gives some---actually computable---lower and upper bounds for the coefficient of ergodicity.
\begin{theorem}
\label{the:CoeffOfErgod:Approximation}
	Let $\lowtranop$ be a lower transition operator.
	Then
	\begin{align}
		\coefferga{\lowtranop}
		&\leq \max \big\{ \max \{ [\uptranop \indic{A}](x) - [\lowtranop \indic{A}](y) \colon x,y \in \statespace \} \colon \emptyset \neq A \subset \statespace\big\}, \label{eqn:CoeffOfErgod:UpperBound} \\
		\coefferga{\lowtranop}
		&\geq \max \big\{ \max \{ [\lowtranop \indic{A}](x) - [\lowtranop \indic{A}](y) \colon x,y \in \statespace \} \colon \emptyset \neq A \subset \statespace\big\}. \label{eqn:CoeffOfErgod:LowerBound}
	\end{align}
\end{theorem}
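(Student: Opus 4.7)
Proof plan for \cref{the:CoeffOfErgod:Approximation}.

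The lower bound \eqref{eqn:CoeffOfErgod:LowerBound} is the easy half. For any $A$ with $\emptyset \neq A \subsetneq \statespace$, the function $f = \indic{A}$ satisfies $0 \leq f \leq 1$, so by the definition \eqref{eqn:CoeffOfErgod} we have
\[
\coefferga{\lowtranop} \geq \norm{\lowtranop\indic{A}}_v = \max_{x,y \in \statespace}\bigl([\lowtranop\indic{A}](x) - [\lowtranop\indic{A}](y)\bigr).
\]
Taking the maximum over all such $A$ yields \eqref{eqn:CoeffOfErgod:LowerBound}.

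The upper bound \eqref{eqn:CoeffOfErgod:UpperBound} is where the real work happens. The plan is a discrete layer-cake argument. Fix any $f \in \setoffna$ with $0 \leq f \leq 1$ and enumerate its distinct values as $0 \leq v_1 < v_2 < \cdots < v_m \leq 1$. Writing $A_i \coloneqq \{x : f(x) \geq v_i\}$ and $v_0 \coloneqq 0$, one checks that $f = \sum_{i=1}^m (v_i - v_{i-1})\indic{A_i}$, where $A_1 = \statespace$ and, for $i \geq 2$, $A_i$ is a non-empty proper subset of $\statespace$. Using the superadditivity and non-negative homogeneity of $\lowtranop$ (and the dual properties for $\uptranop$), I would derive
\[
\lowtranop f \geq \sum_{i=1}^m (v_i - v_{i-1})\,\lowtranop\indic{A_i} \quad\text{and}\quad \uptranop f \leq \sum_{i=1}^m (v_i - v_{i-1})\,\uptranop\indic{A_i}.
\]

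To finish, I would combine these with the easily verified fact that $\lowtranop f \leq \uptranop f$ (which follows from superadditivity applied to $f + (-f) = 0$ together with $\lowtranop 0 = 0$) to get, for every $x,y \in \statespace$,
\[
[\lowtranop f](x) - [\lowtranop f](y) \leq [\uptranop f](x) - [\lowtranop f](y) \leq \sum_{i=1}^m (v_i - v_{i-1})\bigl([\uptranop\indic{A_i}](x) - [\lowtranop\indic{A_i}](y)\bigr).
\]
The $i=1$ term vanishes because $A_1 = \statespace$ forces $\indic{A_1} = 1$ and, as a short calculation from L1, L3 and the super/subadditivity shows, $\lowtranop 1 = \uptranop 1 = 1$. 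For $i \geq 2$, each $A_i$ is admissible in the right-hand side of \eqref{eqn:CoeffOfErgod:UpperBound}, so each bracket is bounded by the claimed maximum $M$. Since $\sum_{i=2}^m (v_i - v_{i-1}) = v_m - v_1 \leq 1$, the whole sum is bounded by $M$. Taking the maximum over $x,y$ yields $\norm{\lowtranop f}_v \leq M$, and then taking the supremum over $f$ gives \eqref{eqn:CoeffOfErgod:UpperBound}.

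The main obstacle is the non-linearity of $\lowtranop$ and $\uptranop$: one cannot simply decompose $\lowtranop f$ as a linear combination. The trick is that super-/subadditivity and non-negative homogeneity suffice to \emph{sandwich} the required expression between the corresponding sums over indicators, and the boundary terms (corresponding to $A_i = \statespace$) contribute nothing because $\lowtranop$ and $\uptranop$ preserve the constant $1$. Verifying $\lowtranop 1 = \uptranop 1 = 1$ and $\lowtranop f \leq \uptranop f$ from the axioms L1--L3 is the small auxiliary lemma that makes the whole argument work.
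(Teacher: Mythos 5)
Your proof is correct, but it takes a genuinely different route from the paper's. The paper proves the upper bound~\eqref{eqn:CoeffOfErgod:UpperBound} by first observing that each component $[\lowtranop \cdot](x)$ is a coherent lower prevision (Corollary~\ref{cor:LTOisCLP}), then invoking the lower envelope theorem to write $\lowtranop$ and $\uptranop$ as envelopes of dominating linear expectations, reducing the optimisation over $\{f \colon 0 \leq f \leq 1\}$ to the linear case, where the maximum of $\prev_1(f)-\prev_2(f)$ is attained at an indicator (Lemmas~\ref{lem:MaxDifferenceBetweenLinearPrevisionsForIndicators} and~\ref{lem:MaxDifferenceBetweenLowerPrevisionsUpperBoundWithIndicators}). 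You instead avoid the envelope machinery entirely: your layer-cake decomposition $f=\sum_i (v_i-v_{i-1})\indic{A_i}$ combined with superadditivity~\ref{def:LTO:SuperAdditive}, non-negative homogeneity~\ref{def:LTO:NonNegativelyHom} and conjugacy sandwiches $[\uptranop f](x)-[\lowtranop f](y)$ by the corresponding sum over indicator brackets, and the constant term $A_1=\statespace$ drops out because $\lowtranop 1 = \uptranop 1 = 1$. Your argument is more elementary and self-contained (it uses only the axioms L1--L3), whereas the paper's leverages standard imprecise-probability results that it needs elsewhere anyway. Two small points worth making explicit in a write-up: the final step $\sum_{i\geq 2}(v_i-v_{i-1})\,M \leq M$ needs $M\geq 0$, which follows by taking $x=y$ in the definition of $M$ together with $\lowtranop g \leq \uptranop g$; and the degenerate case of constant $f$ (where the sum over $i\geq 2$ is empty) should be noted to give $\norm{\lowtranop f}_v = 0 \leq M$ directly.
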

The upper bound in Theorem~\ref{the:CoeffOfErgod:Approximation} is particularly useful in combination with Proposition~\ref{prop:UniformApproximationErgodicError}, as it allows us to replace $\beta\coloneqq\coefferga{(I + \delta \lowrateop)^{m}}$ with a guaranteed upper bound.

Of course, this only makes sense if this upper bound is strictly smaller than one.
In the previous versions of this pre-print, we claimed that for ergodic lower transition rate operators $\lowrateop$, this is always the case.
Unfortunately---and to our great regret---we have since then discovered that this result is in fact incorrect.
We have nonetheless included the (incorrect) statement so that we can easily refer to it, and have added a counterexample that demonstrates that it is indeed incorrect.
\begin{proposition}[Incorrect]
\label{prop:CoeffOfErgod:ErgodicUpperBound}
	Let $\lowrateop$ be an ergodic lower transition rate operator.
	Then there is some $n < \card{\statespace}$ such that, for all $k \geq n$ and $\delta_{1}, \dots, \delta_{k}$ in $\strictlyposreals$ such that $\delta_{i} \norm{\lowrateop} < 2$ for all $i \in \{ 1, \dots, k \}$, the upper bound for $\coefferga{\Phi(\delta_{1}, \dots, \delta_{k})}$ that is given by Eqn.~\eqref{eqn:CoeffOfErgod:UpperBound} is strictly smaller than one.
\end{proposition}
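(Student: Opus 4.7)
The plan is to reduce the claim to a positivity statement about iterated lower transition operators and then invoke \cref{the:ContinuousErgodicity:CoefficientOfErgodicityOfApproximation} for the uniform bound on the number of factors. First, I would rewrite the quantity inside \cref{eqn:CoeffOfErgod:UpperBound} using conjugacy and the fact that lower transition operators preserve constants: since $\uptranop \indic{A} = -\lowtranop(-\indic{A}) = -\lowtranop(\indic{A^c}-1) = 1 - \lowtranop \indic{A^c}$, one obtains, for $\lowtranop = \Phi(\delta_1, \dots, \delta_k)$,
\[
    [\uptranop \indic{A}](x) - [\lowtranop \indic{A}](y) = 1 - [\lowtranop \indic{A^c}](x) - [\lowtranop \indic{A}](y).
\]
Hence the claim becomes the statement that $[\lowtranop \indic{A^c}](x) + [\lowtranop \indic{A}](y) > 0$ for every $\emptyset \neq A \subsetneq \statespace$ and every $x, y \in \statespace$.

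Next, I would try to establish this positivity via the graph-theoretic characterisation of ergodicity from \citep{2017DeBock}. The hope is that for every pair $(A, y)$ either $y$ lower-reaches some state of $A$, so $[\lowtranop \indic{A}](y) > 0$ after sufficiently many compositions, or, by swapping the roles of $A$ and $A^c$, the state $x$ lower-reaches $A^c$ and $[\lowtranop \indic{A^c}](x) > 0$; in either case the sum above is strictly positive. The threshold $n < \card{\statespace}$ would then follow from the fact that any such lower-reachability chain has length at most $\card{\statespace} - 1$, mirroring the argument that underlies \cref{the:ContinuousErgodicity:CoefficientOfErgodicityOfApproximation}.

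The main obstacle—and, as the erratum to the paper announces, the reason this strategy cannot succeed—is the asymmetry between lower- and upper-reachability in the imprecise setting. The ergodicity of $\lowrateop$ is compatible with configurations in which some set $A$ is only upper-reachable (and not lower-reachable) from $y$ while, simultaneously, $A^c$ is only upper-reachable from $x$. In that case $[\lowtranop \indic{A}](y) = [\lowtranop \indic{A^c}](x) = 0$ for every $k$, so the right-hand side of \cref{eqn:CoeffOfErgod:UpperBound} remains $1$ no matter how many factors are composed. Closing this gap appears to require a strictly stronger hypothesis on $\lowrateop$ than ergodicity alone, which is exactly why a counterexample is available.
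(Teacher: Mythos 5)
Your reading of the situation is correct: the proposition is false, and the paper accordingly contains no proof of it, only a counterexample. Your conjugacy reduction is the right lens: since $\uptranop \indic{A} = 1 - \lowtranop \indic{A^{c}}$, the bound of \cref{eqn:CoeffOfErgod:UpperBound} is strictly smaller than one for $\lowtranop = \Phi(\delta_1,\dots,\delta_k)$ exactly when $[\lowtranop \indic{A^{c}}](x) + [\lowtranop \indic{A}](y) > 0$ for every $\emptyset \neq A \subset \statespace$ and all $x,y \in \statespace$, and the obstruction you identify---a configuration in which $A$ is upper- but not lower-reachable from $y$ while $A^{c}$ is upper- but not lower-reachable from $x$, which ergodicity does not exclude---is precisely what the paper's counterexample instantiates. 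Concretely, take the binary operator of \cref{binex:LTRO} with $\lowq{0} = \lowq{1} = 0$ and $\upq{0}, \upq{1} > 0$: it is ergodic because each state is upper reachable from the other, yet $[\lowrateop \indic{0}](1) = \lowq{1} = 0$ and $[\lowrateop \indic{1}](0) = \lowq{0} = 0$, and since $\Phi(\delta_1,\dots,\delta_k)\indic{0}$ is, by induction and non-negative homogeneity, a non-negative multiple of $\indic{0}$ (and symmetrically for $\indic{1}$), one gets $[\Phi(\delta_1,\dots,\delta_k)\indic{0}](1) = [\Phi(\delta_1,\dots,\delta_k)\indic{1}](0) = 0$ for every $k$; hence with $A = \{0\}$, $x = 0$, $y = 1$ the right-hand side of \cref{eqn:CoeffOfErgod:UpperBound} equals one no matter how many factors are composed. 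Note that $\coefferga{\Phi(\delta_1,\dots,\delta_k)}$ itself is still strictly smaller than one here by \cref{the:ContinuousErgodicity:CoefficientOfErgodicityOfApproximation}; it is only the computable upper bound that fails to see this. The one thing missing from your write-up is this explicit instance, which would turn your structural diagnosis into a complete refutation.
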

\begin{counterex}
	Consider the lower transition rate operator defined in \cref{binex:LTRO}, with \(\lowq{0} = 0 = \lowq{1}\), \(\upq{0} > 0\) and \(\upq{1} > 0\).
	One can easily verify that this lower transition rate operator is ergodic.

	Note that if \cref{prop:CoeffOfErgod:ErgodicUpperBound} were to be true, then for all \(\delta \in \strictlyposreals{}\) such that \(\delta \norm{\lowrateop{}} < 2\),
	\[
		\max \big\{ \max \{ [(I + \delta \uprateop) \indic{A}](x) - [(I + \delta \lowrateop) \indic{A}](y) \colon x,y \in \statespace \} \colon \emptyset \neq A \subset \statespace\big\}
		< 1.
	\]
	However, after some straightforward computations we obtain that
	\begin{multline*}
		\max \big\{ \max \{ [(I + \delta \uprateop) \indic{A}](x) - [(I + \delta \lowrateop) \indic{A}](y) \colon x,y \in \statespace \} \colon \emptyset \neq A \subset \statespace\big\} \\
		\geq [(I + \delta \uprateop) \indic{0}](0) - [(I + \delta \lowrateop) \indic{0}](1)
		= 1.
	\end{multline*}
\end{counterex}

\subsection{Approximating limit values}
The results that we have obtained earlier in this section naturally lead to a method to approximate $\lowtranopa{\infty} f \coloneqq \lim_{t \to \infty} \lowtranopa{t} f$ up to some maximal error.
This is an important problem in applications; for instance, \cite{2015Troffaes} try to determine $\lowtranopa{\infty}f$ for an ergodic lower transition rate operator that arises in their specific reliability analysis application.
The method they use is rather ad hoc: they pick some $t$ and $n$ and then determine the uniform approximation $\Psi_{t}(n) f$.
As $\norm{\Psi_{t}(n) f}_{v}$ is small, they suspect that they are close to the actual limit value.
They also observe that $\Psi_{2t}(4n) f$ only differs from $\Psi_{t}(n) f$ after the fourth significant digit, which they regard as further empirical evidence for the correctness of their approximation.
While this ad hoc method seemingly works, the initial values for $t$ and $n$ have to be chosen somewhat arbitrarily.
Also, this method provides no guarantee that the actual error is lower than some desired maximal error.

Theorem~\ref{the:ContinuousErgodicity:CoefficientOfErgodicityOfApproximation}, Proposition~\ref{prop:UniformApproximationErgodicError}, Theorem~\ref{the:CoeffOfErgod:Approximation} and the following stopping criterion allow us to propose a method that corrects these two shortcomings.
\begin{proposition}
\label{prop:StoppingCriterionWithErgodicity}
	Let $\smash{\lowrateop}$ be an ergodic lower transition rate operator and let $f \in \setoffna$, \smash{$t \in \nonnegreals$} and $\epsilon \in \strictlyposreals$.
	Let $s$ denote a sequence $\delta_1, \dots, \delta_k$ in $\nonnegreals$ such that $\sum_{i = 1}^{k} \delta_i = t$ and, for all $i \in \{1,\dots,k\}$, $\delta_{i} \norm{\lowrateop} \leq 2$.
	If $\norm{\lowtranopa{t} f - \Phi(s) f} \leq\nicefrac{\epsilon}{2}$ and $\norm{\Phi(s) f}_{c} \leq\nicefrac{\epsilon}{2}$, then for all $\Delta \in \nonnegreals$:
	\begin{align*}
		\abs{\lowtranopa{t + \Delta} f - \frac{\max \Phi(s) f + \min \Phi(s) f}{2}}
		\leq \epsilon
		~~~\text{and }~~
		\abs{\lowtranopa{\infty} f - \frac{\max \Phi(s) f + \min \Phi(s) f}{2}}
		\leq \epsilon.
	\end{align*}
\end{proposition}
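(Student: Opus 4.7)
\textbf{Proof plan for \cref{prop:StoppingCriterionWithErgodicity}.} Write $g \coloneqq \Phi(s) f$ and $c \coloneqq (\max g + \min g)/2$, so that $\norm{g - c} = \norm{g}_{c} \leq \epsilon/2$ by the second hypothesis. The first inequality to prove rewrites as $\norm{\lowtranopa{t+\Delta} f - c} \leq \epsilon$. The plan is to introduce $\lowtranopa{\Delta} g$ as an intermediate quantity and apply the triangle inequality
\[
	\norm{\lowtranopa{t+\Delta} f - c}
	\leq \norm{\lowtranopa{t+\Delta} f - \lowtranopa{\Delta} g} + \norm{\lowtranopa{\Delta} g - c},
\]
bounding each summand by $\epsilon/2$.

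For the first summand, I would invoke the semigroup identity $\lowtranopa{t+\Delta} = \lowtranopa{\Delta} \lowtranopa{t}$ and the non-expansivity of the lower transition operator $\lowtranopa{\Delta}$ with respect to $\norm{\cdot}$ (both standard consequences of \ref{def:LTO:DominatesMin}--\ref{def:LTO:NonNegativelyHom} that are established in the appendix), giving
\[
	\norm{\lowtranopa{t+\Delta} f - \lowtranopa{\Delta} g}
	= \norm{\lowtranopa{\Delta}(\lowtranopa{t} f) - \lowtranopa{\Delta} g}
	\leq \norm{\lowtranopa{t} f - g}
	\leq \epsilon/2,
\]
where the last step uses the first hypothesis. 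For the second summand, observe that $\lowtranopa{\Delta}$ preserves constants---indeed, \ref{def:LTO:DominatesMin} and conjugacy give $c \leq \lowtranopa{\Delta} c \leq \uptranopa{\Delta} c \leq c$---so $\lowtranopa{\Delta} c = c$, and non-expansivity once more yields $\norm{\lowtranopa{\Delta} g - c} = \norm{\lowtranopa{\Delta} g - \lowtranopa{\Delta} c} \leq \norm{g - c} \leq \epsilon/2$. Combining the two bounds proves the first inequality.

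For the second inequality, since $\lowrateop$ is ergodic, $\lowtranopa{t+\Delta} f$ converges as $\Delta \to \infty$ to the constant function $\lowtranopa{\infty} f$. The bound $\norm{\lowtranopa{t+\Delta} f - c} \leq \epsilon$ holds for every $\Delta \in \nonnegreals$, so passing to the limit yields $\abs{\lowtranopa{\infty} f - c} \leq \epsilon$, as required. The main obstacle is really just ensuring that the two ``black-box'' properties of $\lowtranopa{\Delta}$---the semigroup identity and non-expansivity with constant-preservation---are available; both are properties of the solution of \cref{eqn:TDLTO:FunctionDifferentialEquation} that have been established (or can be derived directly from the limit formula \cref{eqn:TDLTO:LimitFormula} together with \cref{prop:IPlusDeltaQLowTranOp}), so no new machinery is needed beyond what is already in the excerpt.
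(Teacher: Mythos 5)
Your proof is correct, but it takes a genuinely different route from the paper's. The paper proves the result via \cref{lem:StoppigCriterionWithConvergence}, which is an \emph{order-theoretic} sandwich argument: by the semigroup property~\eqref{eqn:TDLTO:SemiGroup} and \ref{prop:LTO:BoundedByMinAndMax}, the map $t \mapsto [\min \lowtranopa{t} f, \max \lowtranopa{t} f]$ is a nested family of intervals, so $\min \Phi(s) f - \nicefrac{\epsilon}{2} \leq \lowtranopa{t+\Delta} f \leq \max \Phi(s) f + \nicefrac{\epsilon}{2}$ and likewise for $\lowtranopa{\infty}f$; the distance from anything in that interval to its midpoint is at most $\norm{\Phi(s) f}_{c} + \nicefrac{\epsilon}{2} \leq \epsilon$. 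You instead run a \emph{metric} argument: insert $\lowtranopa{\Delta}\Phi(s) f$, use the semigroup property together with non-expansiveness \ref{prop:LTO:NonExpansiveness} on the first term, and non-expansiveness plus constant-preservation (which indeed follows from \ref{prop:LTO:BoundedByMinAndMax}) on the second, then let $\Delta\to\infty$. Both arguments are elementary, use only properties already established in the appendix, and yield exactly the same $\nicefrac{\epsilon}{2}+\nicefrac{\epsilon}{2}$ split. The paper's version has the minor advantage of proving the $t+\Delta$ and $\infty$ claims simultaneously from one chain of inequalities (and isolates the convergence hypothesis in a lemma that applies to any $f$ with constant limit, not just ergodic $\lowrateop$); yours is more modular and makes the role of non-expansiveness explicit, at the small cost of a separate limiting step for the $\lowtranopa{\infty}f$ claim. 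No gap either way.
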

Without actually stating it, we mention that a similar---though less useful---stopping criterion can be proved for non-ergodic transition rate matrices as well.

Our method for determining $\lowtranopa{\infty} f$ is now relatively straightforward.
Let $\lowrateop$ be an ergodic lower transition rate operator and fix some $f \in \setoffna$.
We can then approximate $\lowtranopa{\infty} f$ up to any desired maximal error $\epsilon \in \strictlyposreals$ as follows.
First, we look for some $m \in \nats$ and some---preferably large---$\delta  \in \strictlyposreals$ such that $\delta \norm{\lowrateop}<2$ and
\begin{equation*}
	 2m \delta^2 \norm{\lowrateop}^2 \norm{f}_{c} \leq (1 - \beta)\epsilon,
\end{equation*}
where $\beta \coloneqq \coefferga{(I + \delta \lowrateop)^{m}}$.
From Theorem~\ref{the:ContinuousErgodicity:CoefficientOfErgodicityOfApproximation}, we know that a possible starting point for $m$ is $\card{\statespace} - 1$.
If we do not have an analytical expression for $\coefferga{(I + \delta \lowrateop)^{m}}$, then we can instead use the guaranteed upper bound of Theorem~\ref{the:CoeffOfErgod:Approximation}---provided it is strictly smaller than one.
If no such $m$ and $\delta$ exist---for instance because the guaranteed upper bound on $\beta$ is too conservative---then this method does not work.
If on the other hand we do find such an $m$ and $\delta$, then we can keep on running the iterative step (line \ref{line:Uniform:IncrementOfG}) of Algorithm~\ref{alg:Uniform} until we reach the first index $i \in \nats$ such that $\norm{g_{i}}_{c} \leq \nicefrac{\epsilon}{2}$.
By Propositions~\ref{prop:UniformApproximationErgodicError} and \ref{prop:StoppingCriterionWithErgodicity}, we are now guaranteed that $(\max g_{i} + \min g_{i}) / 2$ is an approximation of $\lowtranopa{\infty} f$ up to a maximal error $\epsilon$.

Alternatively, we can fix a step size $\delta$ ourselves and use the method of Theorem~\ref{the:UniformApproximationWithError} to compute~$\epsilon'$.
In that case, we simply need to run the iterative scheme until we reach the first index $i$ such that $\norm{g_{i}}_{c} \leq \epsilon'$.
By Proposition~\ref{prop:StoppingCriterionWithErgodicity}, we are then guaranteed that the error $(\max g_{i} + \min g_{i})/2$ is an approximation of $\lowtranopa{\infty} f$ up to a maximal error $\epsilon=2 \epsilon'$.
The same is true if we replace $\epsilon'$ by the error $\epsilon_{e}$ that is used in Proposition~\ref{prop:UniformApproximationErgodicError}.

\begin{binex}
	Using the analytical expressions of Example~\ref{binex:AnalyticalExpressionsForAppliedLTO}, we obtain $\lowtranopa{\infty} \indic{1} \approx \num{9.5238095e-3}$.

	We want to approximate $\lowtranopa{\infty} \indic{1}$ up to a maximum error $\epsilon \coloneqq \num{1e-6}$.
	We observe that $m = \num{1}$ and $\delta \approx \num{3.485e-8}$ yield an $\epsilon_{d}$ that is lower than $\nicefrac{\epsilon}{2}$.
	After \num{196293685} iterations, the norm of the approximation is sufficiently small, resulting in the approximation $\lowtranopa{\infty} \indic{1} = \num{9.524(1)e-3}$.
	Alternatively, choosing $\delta = \num{1e-7}$ and continuing until $\norm{g_{i}}_{c} \leq \epsilon'$ yields the approximation $\lowtranopa{\infty} \indic{1} = \num{9.5242(8)e-3}$ after only \num{69572154} iterations.

	Mimicking \cite{2015Troffaes}, we also tried the heuristic method of increasing $t$ and $n$ until we observe empirical convergence.
	After some trying, we find that $t = \num{7}$ and $n = 7 \cdot \num{250} = 1750$ already yield an approximation with sufficiently small error: $\norm{\lowtranopa{\infty} \indic{1} - \Psi_{7}(1750) \indic{1}} \approx \num{7e-7} < \epsilon$.
	Note however that for non-binary examples, where $\lowtranopa{\infty} f$ cannot be computed analytically, this heuristic approach is unable to provide a guaranteed bound.
\end{binex}

	\section{Conclusion}
\label{sec:Conclusion}
We have improved an existing method and proposed a novel method to approximate $\lowtranopa{t} f$ up to any desired maximal error, where $\lowtranopa{t}f$ is the solution of the non-linear differential equation~\eqref{eqn:TDLTO:FunctionDifferentialEquation} that plays an essential role in the theory of imprecise continuous-time Markov chains.
As guaranteed by our theoretical results, and as verified by our numerical examples, our methods outperform the existing method by~\cite{2016Krak}, especially if the lower transition rate operator is ergodic.
For these ergodic lower transition rate operators, we also proposed a method to approximate $\lim_{t \to \infty} \lowtranopa{t} f$ up to any desired maximal error.

For the simple case of a binary state space, we observed in numerical examples that there is a rather large difference between the theoretically required number of iterations and the number of iterations that are empirically found to be sufficient.
Similar differences can---although this falls beyond the scope of our present contribution---also be observed for the lower transition rate operator that is studied in \citep{2015Troffaes}.
The underlying reason for these observed differences remains unclear so far.
On the one hand, it could be that our methods are still on the conservative side, and that further improvements are possible.
On the other hand, it might be that these differences are unavoidable, in the sense that guaranteed theoretical bounds come at the price of conservatism.
We leave this as an interesting line of future research.
Additionally, the performance of our proposed methods for systems with a larger state space deserves further inquiry.


\appendix
\acks{Jasper~De~Bock is a Postdoctoral Fellow of the Research Foundation - Flanders (FWO) and wishes to acknowledge its financial support.
The work in this paper was also partially supported by the H2020-MSCA-ITN-2016 UTOPIAE, grant agreement 722734.
Finally, the authors would like to express their gratitude to three anonymous reviewers, for their time, effort and constructive feedback.}

\bibliography{extended-references}

	\section{Extra material and proofs for Section~\ref{sec:Preliminaries}}
\label{app:Preliminaries}

\begin{definition}
	An operator $\norm{\cdot}$ on a linear vector space $\setoffn$ is a \emph{norm} if it maps $\setoffn$ to $\nonnegreals$ and if for all $a, b \in \setoffn$ and all $\mu \in \reals$,
	\begin{enumerate}[twocol, label=N\arabic*:, ref=(N\arabic*), series=Norm]
		\item \label{def:Norm:ScalarMult}
		$\norm{\mu a} = \abs{\mu} \norm{a}$,
		\item \label{def:Norm:TriangleInequality}
		$\norm{a + b} \leq \norm{a} + \norm{b}$,
		\item \label{def:Norm:NormZeroOnly}
		$\norm{a} = 0 \Leftrightarrow a = 0$.
	\end{enumerate}
	If an operator only satisfies \ref{def:Norm:ScalarMult} and \ref{def:Norm:TriangleInequality}, then it is called a \emph{seminorm}.
\end{definition}
It can be immediately checked that the maximum norm $\norm{\cdot}$ on $\setoffna$ is a proper norm, and similarly for the induced operator norm on non-negatively homogeneous operators from $\setoffna$ to $\setoffna$.
For all $f \in \setoffna$ we define the variation seminorm $\norm{\cdot}_{v}$ and the centred seminorm $\norm{\cdot}_{c}$ as
\begin{equation}
	\label{eqn:VariationNorm}
	\norm{f}_{v}
	\coloneqq \norm{f - \min{f}}
	= \max \{ \abs{f(x) - \min{f}} \colon x \in \statespace \}
	= \max f - \min f
\end{equation}
and
\begin{equation}
	\label{eqn:CentredNorm}
	\norm{f}_{c}
	\coloneqq \norm{f - \cent{f}}
	= \max \left\{ \abs{f(x) - \cent{f}} \colon x \in \statespace \right\}
	= (\max f - \min f) / 2,
\end{equation}
where $\cent{f} \coloneqq (\max{f} + \min{f})/2$.
Verifying that $\norm{\cdot}_{v}$ and $\norm{\cdot}_{c}$ are seminorms and not norms is straightforward.

\begin{proposition}
\label{prop:norms:properties}
	For all $f\in\setoffna$, all $\mu\in\reals$ and any non-negatively homogeneous operator $A$,
	\begin{enumerate}[label=N\arabic*:, ref=(N\arabic*), start=4]
		\item \label{prop:norm:CenteredEqVar}
			$\norm{f}_{c} = \norm{f}_{v} / 2$,
		\item \label{prop:norm:CenteredLeqNormal}
			$\norm{f}_{c} \leq \norm{f}$,
		\item \label{prop:norm:VarAddConstant}
			$\norm{f + \mu}_{v} = \norm{f}_{v}$,
		\item \label{prop:norms:BoundOnNormOf_Af}
			$\norm{A f} \leq \norm{A} \norm{f}$,
		\item \label{prop:norms:NormOfAB}
			$\norm{A B} \leq \norm{A} \norm{B}$.
	\end{enumerate}
\end{proposition}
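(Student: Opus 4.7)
The plan is to dispatch the five items in sequence, since each reduces to elementary manipulations of $\max$ and $\min$ or to the defining sup characterisation of the operator norm. The only item that requires any real thought is~\ref{prop:norms:BoundOnNormOf_Af}, because $A$ is merely non-negatively homogeneous rather than linear, so one has to be slightly careful about the allowed rescalings.

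For \ref{prop:norm:CenteredEqVar}, I would simply compare the closed-form expressions: by Eqn.~\eqref{eqn:VariationNorm} one has $\norm{f}_{v} = \max f - \min f$, while Eqn.~\eqref{eqn:CentredNorm} yields $\norm{f}_{c} = (\max f - \min f)/2$, so the identity is immediate. For \ref{prop:norm:CenteredLeqNormal} I would argue that $(\max f - \min f)/2 \leq \max\{\max f,-\min f\} = \max\{\abs{f(x)}\colon x\in\statespace\} = \norm{f}$, which follows from the elementary inequality $(a+b)/2 \leq \max\{a,b\}$ applied with $a = \max f$ and $b = -\min f$, noting that at least one of these is non-negative (and the other then bounded by its absolute value). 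For \ref{prop:norm:VarAddConstant} I would use that $\max(f+\mu) = \max f + \mu$ and $\min(f+\mu) = \min f + \mu$ so that the constant $\mu$ cancels in the difference.

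The slightly delicate part is \ref{prop:norms:BoundOnNormOf_Af}. The plan is: if $f = 0$ then $Af = A(0\cdot 0) = 0$ by non-negative homogeneity, so both sides vanish. Otherwise, set $g \coloneqq f/\norm{f}$, which is well-defined and satisfies $\norm{g}=1$. Since $1/\norm{f} \in \nonnegreals$, non-negative homogeneity yields $Ag = (1/\norm{f})\,Af$, and therefore $\norm{Af} = \norm{f}\,\norm{Ag} \leq \norm{f}\,\norm{A}$ by the defining supremum of $\norm{A}$. The crucial point---and what I expect to be the only genuine obstacle---is that we must scale by a \emph{non-negative} factor; linearity is not required, only non-negative homogeneity, which is precisely what the definition provides.

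Finally, \ref{prop:norms:NormOfAB} follows by iterating \ref{prop:norms:BoundOnNormOf_Af}: for any $f\in\setoffna$ with $\norm{f}=1$, applying \ref{prop:norms:BoundOnNormOf_Af} twice gives $\norm{ABf} \leq \norm{A}\,\norm{Bf} \leq \norm{A}\,\norm{B}\,\norm{f} = \norm{A}\,\norm{B}$, and taking the supremum over such $f$ yields the claim. Observe that this argument only uses non-negative homogeneity of $A$ and $B$, which is inherited from the assumption on both operators, so no additional hypothesis is needed.
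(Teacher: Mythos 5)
Your proof is correct and matches the intended (standard) arguments: the paper itself merely notes that \ref{prop:norm:CenteredEqVar}--\ref{prop:norm:VarAddConstant} are immediate from Eqns.~\eqref{eqn:VariationNorm} and \eqref{eqn:CentredNorm} and cites \citet{2017DeBock} for \ref{prop:norms:BoundOnNormOf_Af} and \ref{prop:norms:NormOfAB}, and your self-contained derivations of the latter two---in particular handling $f=0$ separately and rescaling by the non-negative factor $1/\norm{f}$ so that only non-negative homogeneity is needed---are exactly the standard route.
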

\begin{proof}
	Properties \ref{prop:norm:CenteredEqVar}, \ref{prop:norm:CenteredLeqNormal} and \ref{prop:norm:VarAddConstant} follow almost immediately from the definitions of the centred and variation seminorms.
	Proofs for \ref{prop:norms:BoundOnNormOf_Af} and \ref{prop:norms:NormOfAB} can be found in \citep{2017DeBock}.
\end{proof}

The following properties of lower transition operators will turn out to be useful in the proofs.
\begin{proposition}
\label{prop:LowerTransitionOperator:Properties}
	Let $\lowtranop$, $\lowtranopa{1}$, $\lowtranopa{2}$, $\underline{S}_{1}$ and $\underline{S}_{2}$ be lower transition operators.
	Then for all $f, g \in \setoffna$ and all $\mu \in \reals$:
	\begin{enumerate}[twocol,resume*=LTO]
		\item \label{prop:LTO:BoundedByMinAndMax}
			$\min f \leq \lowtranop f \leq \uptranop f \leq \max f$;
		\item \label{prop:LTO:AdditionOfConstant}
			$\lowtranop (f + \mu) = \lowtranop (f) + \mu$;
		\item \label{prop:LTO:Monotonicity}
			$f \geq g \Rightarrow \lowtranop f \geq \lowtranop g$ and $\uptranop f \geq \uptranop g$;
		\item
			$\abs{\lowtranop f - \lowtranop g} \leq \uptranop (\abs{f - g})$;
		\item \label{prop:LTO:NormLowerThan1}
			$\norm{\lowtranop} \leq 1$;
		\item \label{prop:LTO:NonExpansiveness}
			$\norm{\lowtranop f - \lowtranop g} \leq \norm{f - g}$;
		\item \label{prop:LTO:BoundOnNormTBTB}
			$\norm{\lowtranop A - \lowtranop B} \leq \norm{A - B}$;
		\item \label{prop:LTO:VarNormTf}
			$\norm{\lowtranop f}_{v} \leq \norm{f}_{v}$;
	\end{enumerate}
	\begin{enumerate}[label=L\arabic*:, ref=(L\arabic*),resume=LTO]
		\item \label{prop:LTO:CompositionIsAlsoLTO}
			$\lowtranopa{1} \lowtranopa{2}$ is a lower transition operator;
		\item \label{prop:LTO:DifferenceIsNonNegativeHomogeneous}
			$(\lowtranopa{1} - \lowtranopa{2})$ is a non-negatively homogeneous operator;
		\item \label{prop:LTO:BoundOnDifferenceTfSf}
			$\norm{\lowtranopa{1} f - \underline{S}_{1} f}_{c} \leq \norm{\lowtranopa{1} f - \underline{S}_{1} f} \leq \norm{\lowtranopa{1} - \underline{S}_{1}} \norm{f}_{c}$;
		\item \label{prop:LTO:BoundOnDifferenceTTfSSf}
			$\norm{\lowtranopa{1} \lowtranopa{2} f - \underline{S}_{1} \underline{S}_{2} f}_{c} \leq \norm{\lowtranopa{1} \lowtranopa{2} f - \underline{S}_{1} \underline{S}_{2} f} \leq \norm{\lowtranopa{2} f - \underline{S}_{2} f} + \norm{\lowtranopa{1} - \underline{S}_{1}} \norm{\underline{S}_{2} f}_{c}$.
	\end{enumerate}
\end{proposition}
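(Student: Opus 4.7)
The plan is to verify the properties essentially in the order listed, since the later ones build on the earlier ones. Many steps are routine once two basic facts are in hand: first, that $\lowtranop{} 0 = 0$, which follows from \ref{def:LTO:NonNegativelyHom} with $\mu = 0$; and second, that $\lowtranop{} \mu = \mu$ for all real constants $\mu$, which follows by combining \ref{def:LTO:DominatesMin} with its conjugate (applied to $-\mu$).

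For \ref{prop:LTO:BoundedByMinAndMax}, the outer inequalities are just \ref{def:LTO:DominatesMin} and its conjugate, while the middle inequality $\lowtranop{}f \leq \uptranop{}f$ follows by applying \ref{def:LTO:SuperAdditive} to $f$ and $-f$ together with $\lowtranop{}0 = 0$. Property \ref{prop:LTO:AdditionOfConstant} follows from \ref{def:LTO:SuperAdditive} combined with $\lowtranop{}\mu = \mu$, first showing $\lowtranop{}(f + \mu) \geq \lowtranop{}f + \mu$ and then reversing the roles. Property \ref{prop:LTO:Monotonicity} follows because $f \geq g$ forces $\min(f - g) \geq 0$, hence $\lowtranop{}(f - g) \geq 0$ by \ref{def:LTO:DominatesMin}, and then \ref{def:LTO:SuperAdditive} gives $\lowtranop{}f \geq \lowtranop{}(f - g) + \lowtranop{}g \geq \lowtranop{}g$; the conjugate claim follows by applying this to $-f, -g$. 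For the Lipschitz-type bound, super-additivity applied to $g$ and $f - g$ gives $\lowtranop{}f - \lowtranop{}g \leq \uptranop{}(f - g)$, and interchanging the roles of $f$ and $g$ together with monotonicity of $\uptranop{}$ in the form $\uptranop{}(f - g) \leq \uptranop{}\abs{f - g}$ yields $\abs{\lowtranop{}f - \lowtranop{}g} \leq \uptranop{}\abs{f-g}$.

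The norm bounds follow immediately from the above: \ref{prop:LTO:NormLowerThan1} is a consequence of \ref{prop:LTO:BoundedByMinAndMax} (which forces every coordinate of $\lowtranop{}f$ to lie in $[-\norm{f}, \norm{f}]$), \ref{prop:LTO:NonExpansiveness} follows from the previous Lipschitz-type bound together with \ref{prop:LTO:BoundedByMinAndMax} applied to $\uptranop{}\abs{f-g}$, and \ref{prop:LTO:BoundOnNormTBTB} is immediate from \ref{prop:LTO:NonExpansiveness} and the definition of the operator norm. Property \ref{prop:LTO:VarNormTf} follows since \ref{prop:LTO:BoundedByMinAndMax} gives $\min f \leq \min \lowtranop{}f$ and $\max \lowtranop{}f \leq \max f$, so $\max \lowtranop{}f - \min \lowtranop{}f \leq \max f - \min f$.

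The closure property \ref{prop:LTO:CompositionIsAlsoLTO} is checked by verifying the three defining axioms for $\lowtranopa{1} \lowtranopa{2}$: axiom \ref{def:LTO:DominatesMin} via $\lowtranopa{1} \lowtranopa{2} f \geq \min \lowtranopa{2} f \geq \min f$; axiom \ref{def:LTO:SuperAdditive} via \ref{def:LTO:SuperAdditive} for $\lowtranopa{2}$ followed by \ref{prop:LTO:Monotonicity} and then \ref{def:LTO:SuperAdditive} for $\lowtranopa{1}$; and \ref{def:LTO:NonNegativelyHom} directly from \ref{def:LTO:NonNegativelyHom} applied to each factor. Property \ref{prop:LTO:DifferenceIsNonNegativeHomogeneous} is immediate from \ref{def:LTO:NonNegativelyHom}. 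The main bound \ref{prop:LTO:BoundOnDifferenceTfSf} is the one nontrivial step: using \ref{prop:LTO:AdditionOfConstant} applied to $\lowtranopa{1}$ and to $\underline{S}_{1}$ with $\mu = -\cent{f}$, one obtains $\lowtranopa{1} f - \underline{S}_{1} f = (\lowtranopa{1} - \underline{S}_{1})(f - \cent{f})$; the operator norm bound \ref{prop:norms:BoundOnNormOf_Af} applied to the non-negatively homogeneous operator $\lowtranopa{1} - \underline{S}_{1}$ from \ref{prop:LTO:DifferenceIsNonNegativeHomogeneous} then gives the desired inequality, since $\norm{f - \cent{f}} = \norm{f}_{c}$. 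Finally, \ref{prop:LTO:BoundOnDifferenceTTfSSf} follows by inserting the mixed term $\lowtranopa{1} \underline{S}_{2} f$, applying the triangle inequality, bounding the first summand via \ref{prop:LTO:NonExpansiveness} and the second summand via \ref{prop:LTO:BoundOnDifferenceTfSf} with $\underline{S}_{2} f$ in place of $f$. The only subtlety in the whole sequence is keeping careful track of the difference between super-additivity and additivity, since the wrong direction of inequality in \ref{def:LTO:SuperAdditive} can easily trip up the derivation of \ref{prop:LTO:AdditionOfConstant} and the Lipschitz bound.
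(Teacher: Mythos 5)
Your proof is correct and, for the items the paper itself proves (\ref{prop:LTO:VarNormTf} and \ref{prop:LTO:DifferenceIsNonNegativeHomogeneous}--\ref{prop:LTO:BoundOnDifferenceTTfSSf}), takes exactly the same route: the variation bound via \ref{prop:LTO:BoundedByMinAndMax}, the centring trick $\lowtranopa{1} f - \underline{S}_{1} f = (\lowtranopa{1} - \underline{S}_{1})(f - \cent{f})$ for \ref{prop:LTO:BoundOnDifferenceTfSf}, and insertion of the mixed term $\lowtranopa{1}\underline{S}_{2}f$ for \ref{prop:LTO:BoundOnDifferenceTTfSSf}. The only difference is that you also derive \ref{prop:LTO:BoundedByMinAndMax}--\ref{prop:LTO:BoundOnNormTBTB} and \ref{prop:LTO:CompositionIsAlsoLTO} from the axioms, whereas the paper simply defers these to \citep{2017DeBock}; your derivations are the standard ones and are sound (just note that the upper half of the Lipschitz bound comes from super-additivity applied to the decomposition $g = f + (g-f)$ rather than $f = g + (f-g)$, which your ``interchanging the roles'' step indeed supplies).
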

\begin{proof}
	Proofs for \ref{prop:LTO:BoundedByMinAndMax}--\ref{prop:LTO:BoundOnNormTBTB} and \ref{prop:LTO:CompositionIsAlsoLTO} can be found in \citep{2017DeBock}.

	\ref{prop:LTO:VarNormTf} follows almost immediately from \ref{prop:LTO:BoundedByMinAndMax} and Eqn.~\eqref{eqn:VariationNorm}:
	\[
		\norm{\lowtranop f}_{v}
		= \max \lowtranop f - \min \lowtranop f
		\leq \max f - \min f
		= \norm{f}_{v}.
	\]


	Note that for all $f \in \setoffna$ and all $\gamma \in \nonnegreals$,
	\[
		(\lowtranopa{1} - \lowtranopa{2})(\gamma f)
		= \lowtranopa{1}(\gamma f) - \lowtranopa{2} (\gamma f)
		= \gamma (\lowtranopa{1} f - \lowtranopa{2} f)
		= \gamma (\lowtranopa{1} - \lowtranopa{2}) (f),
	\]
	which proves \ref{prop:LTO:DifferenceIsNonNegativeHomogeneous}.

	Next, we prove \ref{prop:LTO:BoundOnDifferenceTfSf}.
	The first inequality follows from \ref{prop:norm:CenteredLeqNormal}.
	By \ref{prop:LTO:DifferenceIsNonNegativeHomogeneous}, $(\lowtranopa{1} - \underline{S}_{1})$ is a non-negatively homogeneous operator, such that
	\begin{align*}
		\norm{\lowtranopa{1} f - \underline{S}_{1} f}
		&= \norm{\lowtranopa{1} f - \cent{f} - \underline{S}_{1} f + \cent{f}}
		= \norm{\lowtranopa{1} (f - \cent{f}) - \underline{S}_{1} (f - \cent{f})} \\
		&= \norm{(\lowtranopa{1} - \underline{S}_{1}) (f - \cent{f})}
		\leq \norm{\lowtranopa{1} - \underline{S}_{1}} \norm{f - \cent{f}}
		= \norm{\lowtranopa{1} - \underline{S}_{1}} \norm{f}_{c},
	\end{align*}
	where the second equality follows from \ref{prop:LTO:AdditionOfConstant}, the inequality follows from \ref{prop:LTO:DifferenceIsNonNegativeHomogeneous} and \ref{prop:norms:BoundOnNormOf_Af} and the last equality follows from Eqn.~\eqref{eqn:CentredNorm}.

	\ref{prop:LTO:BoundOnDifferenceTTfSSf} can be proved similarly.
	Again, the first inequality of \ref{prop:LTO:BoundOnDifferenceTTfSSf} follows from \ref{prop:norm:CenteredLeqNormal}.
	To prove the second inequality of \ref{prop:LTO:BoundOnDifferenceTTfSSf}, we observe that
	\begin{align*}
		\norm{\lowtranopa{1} \lowtranopa{2} f - \underline{S}_{1} \underline{S}_{2} f}
		&= \norm{\lowtranopa{1} \lowtranopa{2} f - \lowtranopa{1} \underline{S}_2 f + \lowtranopa{1} \underline{S}_2 f - \underline{S}_{1} \underline{S}_{2} f} \\
		&\leq \norm{\lowtranopa{1} \lowtranopa{2} f - \lowtranopa{1} \underline{S}_2 f} + \norm{\lowtranopa{1} \underline{S}_2 f - \underline{S}_{1} \underline{S}_{2} f} \\
		&\leq \norm{\lowtranopa{2} f - \underline{S}_2 f} + \norm{\lowtranopa{1} \underline{S}_2 f - \underline{S}_{1} \underline{S}_{2} f} \\
		&\leq \norm{\lowtranopa{2} f - \underline{S}_2 f} + \norm{\lowtranopa{1} - \underline{S}_{1}} \norm{\underline{S}_{2} f}_{c},
	\end{align*}
	where the first inequality follows from \ref{def:Norm:TriangleInequality}, the second inequality follows from \ref{prop:LTO:NonExpansiveness} and the third inequality follows from \ref{prop:LTO:BoundOnDifferenceTfSf}.
\end{proof}


A linear lower transition rate operator $\lowrateop$---one for which \ref{def:LTRO:SuperAdditive} holds with equality---can be identified with a matrix $Q$ of dimension $\card{\statespace} \times \card{\statespace}$.
This matrix is called a \emph{transition rate matrix}, the $(x,y)$-component $Q(x,y)$ of which is equal to $[\lowrateop \indic{y}](x)$.
\begin{lemma}
\label{lem:BoundsOnElementsOfTransitionMatrix}
	Let $Q$ be a transition rate matrix.
	Then for all $x,y \in \statespace$ such that $x \neq y$,
	\begin{enumerate}[twocol, label=Q\arabic*:, ref=(Q\arabic*)]
		\item $Q(x,y) \geq 0$, \label{lem:RateMatrix:XY}
		\item $Q(x,x) = - \sum_{y\neq x} Q(x,y)$. \label{lem:RateMatrix:XX}
	\end{enumerate}
	Also,
	\[
		\norm{Q} = 2 \max \left\{ \abs{Q(x,x)} \colon x\in\statespace \right\}.
	\]
\end{lemma}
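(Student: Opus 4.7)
The plan is to handle the three claims in sequence, using only the defining properties \ref{def:LTRO:Constant}--\ref{def:LTRO:Sign} of a lower transition rate operator together with the fact that, as $\lowrateop$ is linear, \ref{def:LTRO:SuperAdditive} holds with equality and the operator is determined by its action on the indicators~$\indic{y}$.

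First, \ref{lem:RateMatrix:XY} is immediate: by definition, $Q(x,y) = [\lowrateop \indic{y}](x)$, and for $x \neq y$ this is non-negative by~\ref{def:LTRO:Sign} (applied with the roles of $x$ and $y$ swapped). For \ref{lem:RateMatrix:XX}, I would start from \ref{def:LTRO:Constant} with $\gamma = 1$: since $1 = \sum_{y\in\statespace} \indic{y}$, linearity gives $0 = [\lowrateop 1](x) = \sum_{y\in\statespace} [\lowrateop \indic{y}](x) = \sum_{y\in\statespace} Q(x,y)$, so that $Q(x,x) = -\sum_{y\neq x} Q(x,y)$ for every $x \in \statespace$. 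Note that combined with \ref{lem:RateMatrix:XY} this also yields $Q(x,x) \leq 0$, and hence $\abs{Q(x,x)} = \sum_{y\neq x} Q(x,y)$.

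For the norm formula, I would prove the two inequalities separately. For the upper bound, fix any $f \in \setoffna$ with $\norm{f} = 1$ and any $x \in \statespace$. Using \ref{lem:RateMatrix:XX} to substitute for $Q(x,x)$ and rearranging yields
\[
	[Q f](x) = \sum_{y \in \statespace} Q(x,y) f(y) = \sum_{y \neq x} Q(x,y) \bigl( f(y) - f(x) \bigr).
\]
Since $\abs{f(y) - f(x)} \leq 2 \norm{f} = 2$ and $Q(x,y) \geq 0$ for $y \neq x$, we get $\abs{[Q f](x)} \leq 2 \sum_{y \neq x} Q(x,y) = 2 \abs{Q(x,x)}$, and taking the maximum over $x$ yields $\norm{Q f} \leq 2 \max\{\abs{Q(x,x)} \colon x \in \statespace\}$; the bound on $\norm{Q}$ then follows by taking the supremum over such $f$.

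For the matching lower bound, I would exhibit a specific $f$ that attains it. Let $x^{*} \in \statespace$ be a state at which $\abs{Q(x,x)}$ is maximal and define $f \coloneqq \indic{\statespace \setminus \{x^{*}\}} - \indic{x^{*}}$, so that $f(x^{*}) = -1$ and $f(y) = 1$ for $y \neq x^{*}$; clearly $\norm{f} = 1$. Plugging this into the rewritten expression above gives $[Q f](x^{*}) = \sum_{y \neq x^{*}} Q(x^{*},y) \cdot 2 = -2 Q(x^{*}, x^{*}) = 2 \abs{Q(x^{*}, x^{*})}$, so $\norm{Q f} \geq 2 \max\{\abs{Q(x,x)} \colon x \in \statespace\}$, which gives the reverse inequality.

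The first two parts are essentially bookkeeping from the axioms; the only mildly non-trivial step is the norm calculation, and the main subtlety there is spotting the rewriting $[Q f](x) = \sum_{y \neq x} Q(x,y)(f(y) - f(x))$ which both gives the factor of~$2$ and makes the extremising choice of $f$ transparent.
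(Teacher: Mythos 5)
Your proposal is correct and follows essentially the same route as the paper: \ref{lem:RateMatrix:XY} from \ref{def:LTRO:Sign}, \ref{lem:RateMatrix:XX} from $\lowrateop 1 = 0$ plus linearity, and the norm formula via the maximum absolute row sum. The paper dismisses the norm computation as ``straightforward verification''; your two-sided bound (rewriting $[Qf](x)=\sum_{y\neq x}Q(x,y)(f(y)-f(x))$ and extremising with $f=\indic{\statespace\setminus\{x^{*}\}}-\indic{x^{*}}$) is just that verification carried out explicitly, and it is sound.
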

\begin{proof}
	Note that \ref{lem:RateMatrix:XY} follows immediately from \ref{def:LTRO:Sign}.
	From \ref{def:LTRO:Constant}, we find that for all $x\in\statespace$, $[Q \indic{\statespace}](x) = 0$.
	Using the linearity and \ref{def:LTRO:Constant} yields
	\[
		Q(x,x)
		= [Q \indic{x}](x)
		= \left[Q \left(1 - \sum_{y \neq x} \indic{y}\right)\right](x)
		= - \sum_{y \neq x} [Q \indic{y}](x) = \sum_{y \neq x} Q(x,y).
	\]

	It is a matter of straightforward verification to prove that
		\[
			\norm{Q} = \max \left\{ \sum_{y \in \statespace} \abs{Q(x,y)} \colon x\in\statespace \right\} = 2 \max \left\{ \abs{Q(x,x)} \colon x\in\statespace \right\}. \qedhere
		\]
\end{proof}

\begin{proposition}[Proposition~7.6 in \citep{2016Krak}]
	Let $\lowrateop$ be a lower transition rate operator.
	The associated set of dominating rate matrices $\setofdomratemat$, defined as
	\[
		\setofdomratemat
		\coloneqq \left\{ Q \text{ a transition rate matrix} \colon (\forall f \in \setoffna)~\lowrateop f \leq Q f \right\},
	\]
	is non-empty and bounded, and for all $f\in\setoffna$ there is some $Q\in\setofdomratemat$ such that $\lowrateop f = Q f$.
\end{proposition}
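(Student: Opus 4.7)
The plan is to reduce all three claims---non-emptiness, boundedness, and attainment---to a single construction: for each $f \in \setoffna$, I would build a transition rate matrix $Q \in \setofdomratemat$ satisfying $Q f = \lowrateop f$. Non-emptiness then follows immediately by applying the construction to any fixed $f$ (say $f = 0$), and the attainment statement is precisely the construction itself.

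The key observation is that, for each $x \in \statespace$, the functional $\lowrateop_{x} \colon g \mapsto [\lowrateop g](x)$ is super-linear on the finite-dimensional vector space $\setoffna$: super-additivity follows from \ref{def:LTRO:SuperAdditive} and positive homogeneity from \ref{def:LTRO:NonNegativelyHom}. I would then invoke a Hahn--Banach-type extension (equivalently, the supporting-hyperplane theorem applied to the hypograph of $\lowrateop_{x}$) to obtain, for the chosen $f$, a linear functional $\ell_{x}$ on $\setoffna$ such that $\ell_{x} \geq \lowrateop_{x}$ pointwise and $\ell_{x}(f) = \lowrateop_{x}(f)$. Assembling the rows $Q(x, y) \coloneqq \ell_{x}(\indic{y})$ across all $x \in \statespace$ yields the candidate matrix $Q$.

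Verifying that $Q$ is a transition rate matrix is then a routine check using the remaining axioms of $\lowrateop$. From \ref{def:LTRO:Sign} I obtain $Q(x, y) \geq [\lowrateop \indic{y}](x) \geq 0$ for $y \neq x$, which is \ref{lem:RateMatrix:XY}. Applying the dominance $\ell_{x} \geq \lowrateop_{x}$ to both the constant $+1$ and the constant $-1$, together with \ref{def:LTRO:Constant}, forces $\ell_{x}(1) = 0$, hence $\sum_{y} Q(x, y) = \ell_{x}(1) = 0$, giving \ref{lem:RateMatrix:XX}. Linearity of $\ell_{x}$ then implies $[Q g](x) = \ell_{x}(g) \geq [\lowrateop g](x)$ for every $g \in \setoffna$, so $Q \in \setofdomratemat$, and $[Q f](x) = \ell_{x}(f) = [\lowrateop f](x)$ holds by construction. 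For boundedness, I would exploit conjugacy: any $Q \in \setofdomratemat$ satisfies $-[Q g](x) = [Q(-g)](x) \geq [\lowrateop(-g)](x) = -[\uprateop g](x)$, so $\lowrateop g \leq Q g \leq \uprateop g$ pointwise for every $g$; combined with $\norm{\uprateop} = \norm{\lowrateop}$, this yields $\norm{Q} \leq \norm{\lowrateop}$, proving the set is norm-bounded.

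The main obstacle is the Hahn--Banach step. Although elementary in finite dimensions, some care is needed because the dominating linear functional must meet $\lowrateop_{x}$ exactly at the prescribed point $f$ rather than merely dominate it; the standard workaround is to first define a candidate linear functional on the one-dimensional subspace spanned by $f$ that agrees with $\lowrateop_{x}(f)$ there, verify that this partial functional is dominated by the sublinear conjugate $g \mapsto -\lowrateop_{x}(-g)$, and then extend via the classical Hahn--Banach theorem. Everything else is a straightforward bookkeeping exercise with the axioms of $\lowrateop$ and conjugacy.
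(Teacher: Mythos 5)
Your argument is correct. Note first that the paper does not prove this statement at all: it is imported verbatim as Proposition~7.6 of \citet{2016Krak}, so there is no in-paper proof to compare against. Your route --- fix $x$, observe that $g\mapsto[\lowrateop g](x)$ is superlinear with sublinear conjugate $g\mapsto[\uprateop g](x)$, extend from $\mathrm{span}\{f\}$ by Hahn--Banach to get a dominating linear functional touching at $f$, and assemble the rows into $Q$ --- is essentially the standard lower-envelope argument that the cited source also relies on, and all the verifications go through: \ref{def:LTRO:Sign} gives the off-diagonal sign condition, \ref{def:LTRO:Constant} applied to $+1$ and $-1$ pins $\ell_x(1)=0$ so the rows sum to zero, and the sandwich $\lowrateop g\leq Qg\leq\uprateop g$ gives $\norm{Q}\leq\norm{\lowrateop}$. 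The one point that needs (and gets) care is the Hahn--Banach seed: on $\mathrm{span}\{f\}$ one must check $\lambda\,[\lowrateop f](x)\leq[\uprateop(\lambda f)](x)$, which holds for $\lambda\geq0$ because $[\lowrateop f](x)\leq[\uprateop f](x)$ (a consequence of superadditivity applied to $f+(-f)$) and holds with equality for $\lambda<0$; you flagged exactly this, so the proof is complete.
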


\begin{lemma}[Lemma~G.3 in \citep{2016Krak}]
\label{lem:NormRateMatixLowerThanNormRateOperator}
	Let $\lowrateop$ be a lower rate operator, then for any $Q \in \setofdomratemat$, $\norm{Q} \leq \norm{\lowrateop}$.
\end{lemma}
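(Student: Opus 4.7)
The plan is to reduce the claim to an inequality between diagonal entries by invoking the two closed-form expressions for operator norms that have already been established: \cref{prop:LTRO:PropositionNorm} gives $\norm{\lowrateop} = 2 \max\{\abs{[\lowrateop \indic{x}](x)} \colon x \in \statespace\}$, while \cref{lem:BoundsOnElementsOfTransitionMatrix} gives $\norm{Q} = 2\max\{\abs{Q(x,x)} \colon x \in \statespace\}$. So it suffices to show that for every $x\in\statespace$, $\abs{Q(x,x)} \leq \abs{[\lowrateop \indic{x}](x)}$.

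Next I would argue that both quantities are non-positive, which turns the inequality of absolute values into a reverse inequality of real numbers. For $Q(x,x)$ this is immediate from \ref{lem:RateMatrix:XX} together with \ref{lem:RateMatrix:XY}: the off-diagonal entries $Q(x,y)$ are non-negative, so $Q(x,x) = -\sum_{y\neq x} Q(x,y) \leq 0$. For $[\lowrateop \indic{x}](x)$, I would use $1 = \indic{x} + \sum_{y\neq x}\indic{y}$ combined with \ref{def:LTRO:Constant} and \ref{def:LTRO:SuperAdditive} to get $0 = \lowrateop 1 \geq \lowrateop \indic{x} + \sum_{y\neq x} \lowrateop \indic{y}$; evaluating at $x$ and using \ref{def:LTRO:Sign} to discard the non-negative terms $[\lowrateop\indic{y}](x)$ for $y\neq x$ then yields $[\lowrateop \indic{x}](x) \leq 0$.

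Finally, the defining property of $\setofdomratemat$ applied to the specific gamble $f = \indic{x}$ gives $[\lowrateop \indic{x}](x) \leq [Q \indic{x}](x) = Q(x,x)$. Combined with the two sign observations, this yields $\abs{Q(x,x)} = -Q(x,x) \leq -[\lowrateop \indic{x}](x) = \abs{[\lowrateop \indic{x}](x)}$. Taking the maximum over $x\in\statespace$ and multiplying by $2$ then delivers $\norm{Q} \leq \norm{\lowrateop}$.

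There is no real obstacle here: all the heavy lifting is already done by \cref{prop:LTRO:PropositionNorm} and \cref{lem:BoundsOnElementsOfTransitionMatrix}, and the only conceptual step is recognising that both diagonal quantities have the same sign, so that the dominance inequality flips under taking absolute values. The argument would in fact go through verbatim with $\lowrateop$ replaced by any non-negatively homogeneous, super-additive operator that annihilates constants and whose off-indicator values are non-negative.
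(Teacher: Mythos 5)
The paper never actually proves this lemma---it is imported verbatim from Krak et al.\ as their Lemma~G.3---so there is no in-paper proof to compare against; what matters is whether your argument stands on its own. Its overall shape (domination at indicator gambles plus a sign analysis of the diagonal entries) is sound, but as written it contains a circular dependency. Your final step invokes \cref{prop:LTRO:PropositionNorm} to identify $\norm{\lowrateop}$ with $2 \max \{ \abs{[\lowrateop \indic{x}](x)} \colon x \in \statespace \}$. In this paper, however, \cref{prop:LTRO:PropositionNorm} is property \ref{prop:LTRO:Norm} of \cref{prop:LowerTransitionRateOperator:Properties}, whose proof rests on \ref{prop:LTRO:Ixx}; and the lower bound $[\lowrateop \indic{x}](x) \geq -\norm{\lowrateop}/2$ in \ref{prop:LTRO:Ixx} is itself derived from the very lemma you are proving, via the chain $[\lowrateop \indic{x}](x) = Q(x,x) \geq -\norm{Q}/2 \geq -\norm{\lowrateop}/2$ with the last step being an appeal to \cref{lem:NormRateMatixLowerThanNormRateOperator}. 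So appealing to \cref{prop:LTRO:PropositionNorm} here begs the question.

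The gap is easy to close, because you only need the inequality $\abs{[\lowrateop \indic{x}](x)} \leq \norm{\lowrateop}/2$ rather than the full equality of \cref{prop:LTRO:PropositionNorm}, and that inequality has a direct proof: by \ref{prop:LTRO:AdditionOfConstant}, $\lowrateop \indic{x} = \lowrateop(\indic{x} - \nicefrac{1}{2})$, and since $\norm{\indic{x} - \nicefrac{1}{2}} = \nicefrac{1}{2}$, property \ref{prop:norms:BoundOnNormOf_Af} gives $\abs{[\lowrateop \indic{x}](x)} \leq \norm{\lowrateop \indic{x}} \leq \norm{\lowrateop}/2$. Combined with your correct and non-circular observations---that $Q(x,x)$ and $[\lowrateop \indic{x}](x)$ are both non-positive, that $[\lowrateop \indic{x}](x) \leq Q(x,x)$ by the definition of $\setofdomratemat$, and that $\norm{Q} = 2 \max \{ \abs{Q(x,x)} \colon x \in \statespace \}$ by \cref{lem:BoundsOnElementsOfTransitionMatrix}---this yields $\norm{Q} \leq 2 \max \{ \abs{[\lowrateop \indic{x}](x)} \colon x \in \statespace \} \leq \norm{\lowrateop}$ without any circularity. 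With that one repair your proof is complete.
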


\begin{proposition}
\label{prop:LowerTransitionRateOperator:Properties}
	Let $\lowrateop$ be a lower transition rate operator.
	Then for all $f\in\setoffna$, all $\mu \in \reals$ and all $x,y\in\statespace$ such that $x\neq y$:
	\begin{enumerate}[twocol, resume*=LTRO]
		\item \label{prop:LTRO:LowUp}
			$\lowrateop f \leq \uprateop f$;
		\item \label{prop:LTRO:AdditionOfConstant}
			$\lowrateop (f + \mu) = \lowrateop f$;
		\item \label{prop:LTRO:Ixx}
			$- \norm{\lowrateop} / 2 \leq [\lowrateop \indic{x}](x) \leq [\uprateop \indic{x}](x) \leq 0$;
		\item
			$0 \leq \sum_{y \neq x} [\lowrateop \indic{x}](y) \leq \norm{\lowrateop} / 2$;
		\item \label{prop:LTRO:Norm}
			$\norm{\lowrateop} = 2 \max \{ \abs{[\lowrateop \indic{x}](x)} \colon x \in \statespace \}$.
	\end{enumerate}
\end{proposition}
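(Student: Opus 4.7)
The plan is to prove the five items of \cref{prop:LowerTransitionRateOperator:Properties} in order, with each item providing machinery for the next; the only genuinely non-trivial claim is the norm formula \ref{prop:LTRO:Norm} (which is exactly \cref{prop:LTRO:PropositionNorm}).

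I would dispose of the first four items by repeated use of super-additivity and the constancy axiom. For \ref{prop:LTRO:LowUp}, apply \ref{def:LTRO:SuperAdditive} to the decomposition $0 = f + (-f)$ and use \ref{def:LTRO:Constant} to obtain $\lowrateop f + \lowrateop (-f) \leq 0$, i.e., $\lowrateop f \leq \uprateop f$. For \ref{prop:LTRO:AdditionOfConstant}, apply \ref{def:LTRO:SuperAdditive} together with $\lowrateop \mu = 0$ in both directions (once to $f + \mu$, once to $(f + \mu) + (-\mu)$) to deduce equality. The middle inequality of \ref{prop:LTRO:Ixx} is just \ref{prop:LTRO:LowUp} evaluated at $x$; the right-hand one follows by rewriting $\lowrateop (-\indic{x}) = \lowrateop (\indic{\statespace \setminus \{ x \}})$ via \ref{prop:LTRO:AdditionOfConstant} and then using super-additivity on $\indic{\statespace \setminus \{ x \}} = \sum_{y \neq x} \indic{y}$ together with \ref{def:LTRO:Sign} to conclude $[\lowrateop \indic{\statespace \setminus \{ x \}}](x) \geq 0$, whence $[\uprateop \indic{x}](x) \leq 0$. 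The remaining bounds (the left inequality of \ref{prop:LTRO:Ixx} and the upper half of the sum bound) all rest on a single auxiliary estimate $\norm{\lowrateop g} \leq \norm{\lowrateop} \cdot \norm{g}_{c}$, which I would prove by writing $\lowrateop g = \lowrateop (g - \cent{g})$ via \ref{prop:LTRO:AdditionOfConstant} and applying \ref{prop:norms:BoundOnNormOf_Af}; instantiating at the appropriate indicator (with centred seminorm $1/2$) yields the bound $\norm{\lowrateop}/2$, and for the sum bound one first collapses $\sum_{y \neq x} \lowrateop \indic{y} \leq \lowrateop \indic{\statespace \setminus \{ x \}}$ via super-additivity before applying this estimate.

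The main step is \ref{prop:LTRO:Norm}, for which I would prove the two inequalities separately. For the lower bound, use the test gamble $f_{x} \coloneqq 2 \indic{x} - 1$: since $\norm{f_{x}} = 1$ and $\lowrateop f_{x} = 2 \lowrateop \indic{x}$ (combining \ref{prop:LTRO:AdditionOfConstant} with \ref{def:LTRO:NonNegativelyHom}), we immediately obtain $\norm{\lowrateop} \geq \norm{\lowrateop f_{x}} \geq 2 \abs{[\lowrateop \indic{x}](x)}$, and a maximum over $x \in \statespace$ completes this direction. For the upper bound I would invoke the dominating-rate-matrix representation (Proposition~7.6 of \cite{2016Krak}): for any $f \in \setoffna$, pick $Q \in \setofdomratemat$ with $\lowrateop f = Q f$. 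Since $Q$ dominates $\lowrateop$ and both $Q(z, z) = [Q \indic{z}](z)$ and $[\lowrateop \indic{z}](z)$ are non-positive (by the already-proved \ref{prop:LTRO:Ixx}), we get $\abs{Q(z, z)} \leq \abs{[\lowrateop \indic{z}](z)}$ for every $z$; \cref{lem:BoundsOnElementsOfTransitionMatrix} then gives $\norm{Q} = 2 \max_{z \in \statespace} \abs{Q(z, z)} \leq 2 \max_{z \in \statespace} \abs{[\lowrateop \indic{z}](z)}$. Hence $\norm{\lowrateop f} = \norm{Q f} \leq \norm{Q} \cdot \norm{f} \leq 2 \max_{z \in \statespace} \abs{[\lowrateop \indic{z}](z)} \cdot \norm{f}$, and the supremum over $\norm{f} = 1$ yields the upper bound.

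The main obstacle is the upper bound in \ref{prop:LTRO:Norm}: a naive pointwise decomposition of $f$ into indicators only produces a bound that scales with $\card{\statespace}$, since super-additivity goes the wrong way once signed coefficients appear. The dominating-rate-matrix representation is the key structural input that sidesteps this issue by reducing the non-linear estimate, pointwise, to a purely linear one for which \cref{lem:BoundsOnElementsOfTransitionMatrix} already supplies the sharp factor of two.
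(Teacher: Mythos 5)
Your proof is correct, and in two places it takes a genuinely different route from the paper's. For the lower bound $[\lowrateop \indic{x}](x) \geq -\norm{\lowrateop}/2$ in \ref{prop:LTRO:Ixx}, the paper selects $Q \in \setofdomratemat$ with $\lowrateop\indic{x} = Q\indic{x}$ and chains \cref{lem:BoundsOnElementsOfTransitionMatrix} with \cref{lem:NormRateMatixLowerThanNormRateOperator}; your translation-invariance estimate $\norm{\lowrateop g}\leq\norm{\lowrateop}\norm{g}_{c}$, applied to $g=\indic{x}$, reaches the same bound more cheaply and without the envelope (it is the same centring trick the paper uses for \ref{prop:LTO:BoundOnDifferenceTfSf}). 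More substantially, for the inequality $\norm{\lowrateop}\leq 2\max\{\abs{[\lowrateop\indic{x}](x)}\colon x\in\statespace\}$ in \ref{prop:LTRO:Norm} the paper simply cites property R9 of \cite{2017DeBock} and only argues the reverse inequality itself, reading it off from \ref{prop:LTRO:Ixx} (your $2\indic{x}-1$ test gamble is an equivalent direct argument for that direction). You instead reprove the hard direction: for each $f$ you take $Q\in\setofdomratemat$ with $\lowrateop f=Qf$ via \cite[Proposition~7.6]{2016Krak}, note that domination together with the non-positivity of the diagonal entries (from the already-established \ref{prop:LTRO:Ixx} and \cref{lem:BoundsOnElementsOfTransitionMatrix}) gives $\abs{Q(x,x)}\leq\abs{[\lowrateop\indic{x}](x)}$ pointwise, and then apply the exact norm formula for rate matrices. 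This is sound---the dependence of $Q$ on $f$ is harmless because the resulting bound on $\norm{Q}$ is uniform---and your closing remark correctly identifies why a naive indicator decomposition cannot work. The net effect is a proof that is more self-contained than the paper's, at the price of invoking the dominating-matrix envelope in the main step rather than an external norm inequality.
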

\begin{proof}
	The properties \ref{prop:LTRO:LowUp} and \ref{prop:LTRO:AdditionOfConstant} are proved in \cite{2017DeBock}.
	Hence, we only prove the remaining properties.
	\begin{enumerate}[label=R\arabic*:,start=7]
		\item
			By the conjugacy of $\lowrateop$ and $\uprateop$,
			\begin{align*}
				[\uprateop \indic{x}](x)
				&= \left[\uprateop \left( 1 - \sum_{z \neq x} \indic{z} \right)\right](x)
				= - \left[\lowrateop \left( -1 + \sum_{z \neq x} \indic{z} \right)\right](x) \\
				&\leq - [\lowrateop (-1)](x) - \sum_{z \neq x} [\lowrateop \indic{z}](x),
			\end{align*}
			where the inequality follows from \ref{def:LTRO:SuperAdditive}.
			By \ref{def:LTRO:Constant} the first term is zero, such that
			\[
				[\uprateop \indic{x}](x) \leq - \sum_{z \neq x} [\lowrateop \indic{z}](x) \leq 0,
			\]
			where the second inequality follows from \ref{def:LTRO:Sign}.

			Recall that there is some $Q \in \setofdomratemat$ such that $\lowrateop \indic{x} = Q \indic{x}$.
			It holds that
			\begin{align*}
				[\lowrateop \indic{x}](x) = [Q \indic{x}](x) = Q(x,x) \geq -\frac{\norm{Q}}{2} \geq -\frac{\norm{\lowrateop}}{2},
			\end{align*}
			where for the first inequality we used Lemma~\ref{lem:BoundsOnElementsOfTransitionMatrix} and for the second inequality we used Lemma~\ref{lem:NormRateMatixLowerThanNormRateOperator}.

			The property now follows by combining the obtained lower bound for $[\lowrateop \indic{x}](x)$ and the obtained upper bound for $[\uprateop \indic{x}](x)$ with \ref{prop:LTRO:LowUp}.
		\item
			Recall from \ref{def:LTRO:Sign} that $[\lowrateop \indic{y}](x)$ is non-negative if $y \neq x$, such that $\sum_{y \neq x} [\lowrateop \indic{y}](x)$ is non-negative.
			Some manipulations yield
			\begin{align*}
				0
				\leq \sum_{y \neq x} [\lowrateop \indic{y}](x)
				\leq \left[\lowrateop \left(\sum_{y \neq x} \indic{y}\right)\right](x)
				&= - \left[\uprateop \left(- \sum_{y \neq x} \indic{y}\right)\right](x) \\
				&= - \left[\uprateop \left(1 - \sum_{y \neq x} \indic{y}\right)\right](x)
				= - [\uprateop \indic{x}](x) \\
				&\leq - [\lowrateop \indic{x}](x),
			\end{align*}
			where the second inequality follows from \ref{def:LTRO:SuperAdditive}, the first equality follows from conjugacy, the second equality follows from \ref{prop:LTRO:AdditionOfConstant}, and the final inequality follows from \ref{prop:LTRO:Ixx}.
			Also by \ref{prop:LTRO:Ixx}, we know that $- [\lowrateop \indic{x}](x)$ is non-negative and bounded above by $\norm{\lowrateop}/2$, hence
			\[
				0 \leq \sum_{y \neq x} [\lowrateop \indic{y}](x) \leq \frac{\norm{\lowrateop}}{2}.
			\]
		\item
			Let $\lowrateop$ be a lower transition rate operator.
			From \cite[R9]{2017DeBock} it follows that
			\[
				\norm{\lowrateop} \leq 2 \max_{x \in \statespace} \abs{ [\lowrateop \indic{x}](x) }.
			\]
			From \ref{prop:LTRO:Ixx}, however, we know that for all $x \in \statespace$, $\abs{ [\lowrateop \indic{x}](x) } \leq \norm{\lowrateop} / 2$.
			Combining these two inequalities yields $\norm{\lowrateop} = 2 \max \{\abs{[\lowrateop \indic{x}](x)} \colon x \in \statespace \}$. \qedhere
	\end{enumerate}
\end{proof}

\begin{proof}[Proof of Proposition~\ref{prop:IPlusDeltaQLowTranOp}]
	Fix some lower transition rate operator $\lowrateop$ and some $\delta \in \nonnegreals$.
	We first prove that $\delta \norm{\lowrateop} \leq 2$ implies that the operator $(I + \delta \lowrateop)$ is a lower transition operator.
	The operator $(I + \delta \lowrateop)$ trivially satisfies \ref{def:LTO:SuperAdditive} and \ref{def:LTO:NonNegativelyHom}, such that we only need to prove that it satisfies \ref{def:LTO:DominatesMin}.
	In order to do so, we fix some arbitrary $x \in \statespace$ and $f \in \setoffna$.
	It holds that
	\begin{align*}
		[(I + \delta \lowrateop)f](x)
		&= f(x) + \delta [\lowrateop f](x) \\
		&= f(x) + \delta [\lowrateop(f - \min f)](x) \\
		&= f(x) + \delta \left[\lowrateop \left(\sum_{y\in\statespace} (f(y) - \min f) \indic{y}\right)\right](x) \\
		&\geq f(x) + \delta (f(x) - \min f) [\lowrateop \indic{x}](x) + \delta \sum_{y \neq x} (f(y) - \min f) [\lowrateop \indic{y}](x) \\
		&\geq f(x) + \delta (f(x) - \min f) [\lowrateop \indic{x}](x) \\
		&\geq f(x) - \delta (f(x) - \min f) \frac{\norm{\lowrateop}}{2},
	\intertext{%
	where the second equality follows \ref{prop:LTRO:AdditionOfConstant}, the first inequality follows from \ref{def:LTRO:SuperAdditive}, the second inequality follows from \ref{def:LTRO:Sign} and the third inequality follows from \ref{prop:LTRO:Ixx}.
	Recall that by assumption $\delta \norm{\lowrateop} \leq 2$, and therefore
	}
		[(I + \delta \lowrateop)f](x)
		&\geq \min f.
	\end{align*}

	Next, we prove the reverse implication.
	Assume that $(I + \delta \lowrateop)$ is a transition rate operator.
	By \ref{prop:LTRO:Ixx} and \ref{prop:LTRO:Norm}, there is some $x \in \statespace$ such that $[\lowrateop \indic{x}](x) = - \norm{\lowrateop}/2$.
	Hence,
	\begin{align*}
		[(I + \delta \lowrateop) \indic{x}](x)
		&= \indic{x}(x) + \delta [\lowrateop\indic{x}](x)
		= 1 - \delta \frac{\norm{\lowrateop}}{2}.
	\intertext{
	If we now assume that $\delta \norm{\lowrateop} > 2$, then
	}
		[(I + \delta \lowrateop) \indic{x}](x)
		&< 0 \leq \min \indic{x},
	\end{align*}
	which, by \ref{def:LTO:DominatesMin}, contradicts the initial assumption that $(I + \delta \lowrateop)$ is a lower transition operator.
	This allows us to conclude that if $(I + \delta \lowrateop)$ is a lower transition operator, then $\delta \norm{\lowrateop} \leq 2$ .
\end{proof}
\begin{proof}[Proof of Proposition~\ref{prop:LTRO:PropositionNorm}]
	This proposition simply states \ref{prop:LTRO:Norm} of Proposition~\ref{prop:LowerTransitionRateOperator:Properties}.
\end{proof}

\begin{proof}[Proof of Example~\ref{binex:LTRO}]
	We can immediately verify that $\lowrateop$ satisfies \ref{def:LTRO:Constant}--\ref{def:LTRO:Sign}, such that it is indeed a lower transition rate operator.
\end{proof}

	\section{Extra material for Section~\ref{sec:MCs}}
We here give a slightly more detailed description of the differential equation of interest.
Recall from the beginning of Section~\ref{sec:MCs} that \cite{2015Skulj} proved that for any lower transition rate operator $\lowrateop$ and any $f \in \setoffna$, the differential equation
\begin{equation*}
	\frac{\mathrm{d}}{\mathrm{d} t} f_{t} = \lowrateop f_{t}
\end{equation*}
with initial condition $f_{0} \coloneqq f$ has a unique solution for all $t \in \nonnegreals$.
As mentioned by \cite{2017DeBock}, this differential equation actually determines a time-dependent operator $\lowtranopa{t}$: for all $t \in \nonnegreals$, $\lowtranopa{t} f \coloneqq f_{t}$.
Even more, \cite[Proposition~9]{2017DeBock} states that for all $t \in \nonnegreals$, the time-dependent operator $\lowtranopa{t}$ itself satisfies the differential equation
\begin{equation}
\label{eqn:TDLTO:DifferentialEquation}
	\frac{\mathrm{d} }{\mathrm{d} t} \lowtranopa{t} = \lowrateop \lowtranopa{t}
\end{equation}
with initial condition $\lowtranopa{0} \coloneqq I$.
\cite{2017DeBock} also shows that this operator $\lowtranopa{t}$ is a lower transition operator, and that it satisfies the semi-group property: for all $t_1, t_2 \in\nonnegreals$,
\begin{equation}
\label{eqn:TDLTO:SemiGroup}
	\lowtranopa{t_1+t_2} = \lowtranopa{t_1} \lowtranopa{t_2}.
\end{equation}

For a transition rate matrix, Eqn.~\eqref{eqn:TDLTO:DifferentialEquation} reduces to the linear differential equation
\[
	\frac{\mathrm{d}}{\mathrm{d} t} T_t = Q T_t
\]
with initial condition $T_{0} \coloneqq I$.
This differential equation is essential to precise continuous-time Markov chains, and is often referred to as the \emph{forward Kolmogorov} equation.
The solution to this differential equation is called the \emph{matrix exponential}, and is denoted by $T_t = e^{t Q}$.

\begin{proof}[Proof of \cref{binex:AnalyticalExpressionsForAppliedLTO}]
	Fix any $\delta \in \nonnegreals$ such that $\delta \norm{\lowrateop} \leq 2$, and let $f$ be an arbitrary element of $\setoffna$.
	We immediately obtain that if $f(0) \geq f(1)$, then
	\begin{align*}
		[\Phi(\delta) f](0)
		&= f(0) - \delta \upq{0} (f(0) - f(1))
		= f(0) - \delta \upq{0} \norm{f}_{v}, \\
		[\Phi(\delta) f](1)
		&= f(1) + \delta \lowq{1} (f(0) - f(1))
		= f(1) + \delta \lowq{1} \norm{f}_{v}.
	\intertext{Similarly, if $f(0) \leq f(1)$, then}
		[\Phi(\delta) f](0)
		&= f(0) + \delta \lowq{0} \norm{f}_{v}, \\
		[\Phi(\delta) f](1)
		&= f(1) - \delta \upq{1} \norm{f}_{v}.
	\end{align*}
	Therefore, if $f(0) \geq f(1)$ then
	\begin{align*}
		[\Phi(\delta) f](0) - [\Phi(\delta) f](1)
		&= \norm{f}_{v} (1 - \delta (\upq{0} + \lowq{1})),
	\intertext{and similarly if $f(0) \leq f(1)$, then}
		[\Phi(\delta) f](1) - [\Phi(\delta) f](0)
		&= \norm{f}_{v} (1 - \delta (\lowq{0} + \upq{1})).
	\end{align*}
	Consequently
	\begin{align*}
		f(0) \geq f(1) &\Rightarrow
		\begin{cases}
			[\Phi(\delta) f](0) \geq [\Phi(\delta) f](1) &\text{if } \delta (\upq{0} + \lowq{1}) \leq 1, \\
			[\Phi(\delta) f](0) \leq [\Phi(\delta) f](1) &\text{if } \delta (\upq{0} + \lowq{1}) \geq 1,
		\end{cases}
	\intertext{and}
		f(0) \leq f(1) &\Rightarrow
		\begin{cases}
			[\Phi(\delta) f](0) \leq [\Phi(\delta) f](1) &\text{if } \delta (\lowq{0} + \upq{1}) \leq 1, \\
			[\Phi(\delta) f](0) \geq [\Phi(\delta) f](1) &\text{if } \delta (\lowq{0} + \upq{1}) \geq 1.
		\end{cases}
	\end{align*}

	Fix some $f \in \setoffna$, some $t \in \nonnegreals$ and let $n \in \nats$ such that
	\begin{align*}
		t (\upq{0} + \lowq{1}) \leq n,
		t (\lowq{0} + \upq{1}) \leq n
		~\text{and}~
		t \norm{\lowrateop}\leq 2 n.
	\end{align*}
	In this case, we can use the results obtained above to obtain an analytical expression for $\Psi_{t}(n) f$.
	If $f(0) \geq f(1)$, then
	\begin{align*}
		[\Psi_{t}(n) f](0)
		&= f(0) - \frac{t}{n} \upq{0} \norm{f}_{v} \sum_{i = 0}^{n-1} \left(1 - \frac{t}{n} (\upq{0} + \lowq{1})\right)^{i}, \\
		[\Psi_{t}(n) f](1)
		&= f(1) + \frac{t}{n} \lowq{1} \norm{f}_{v} \sum_{i = 0}^{n-1} \left(1 - \frac{t}{n} (\upq{0} + \lowq{1})\right)^{i}.
	\intertext{Similarly, if $f(0) \leq f(1)$, then}
		[\Psi_{t}(n) f](0)
		&= f(0) + \frac{t}{n} \lowq{0} \norm{f}_{v} \sum_{i = 0}^{n-1} \left(1 - \frac{t}{n} (\lowq{0} + \upq{1})\right)^{i}, \\
		[\Psi_{t}(n) f](1)
		&= f(1) - \frac{t}{n} \upq{1} \norm{f}_{v} \sum_{i = 0}^{n-1} \left(1 - \frac{t}{n} (\lowq{0} + \upq{1})\right)^{i}.
	\end{align*}

	We now use Eqn.~\eqref{eqn:TDLTO:LimitFormula} to derive analytical expressions for the components of $\lowtranopa{t} f$.
	If $f(0) \geq f(1)$, then
	\begin{align*}
		[\lowtranopa{t} f](0)
		&= \lim_{n \to \infty} [\Psi_{t}(n) f](0) \\
		&= \lim_{n \to \infty}\Bigg( f(0) - \frac{t}{n} \upq{0} \norm{f}_{v} \sum_{i = 0}^{n-1} \left(1 - \frac{t}{n} (\upq{0} + \lowq{1})\right)^{i} \Bigg)\\
		&= f(0) - \upq{0} \norm{f}_{v} \lim_{n \to \infty} \frac{t}{n} \sum_{i = 0}^{n-1} \left(1 - \frac{t}{n} (\upq{0} + \lowq{1})\right)^{i}.
	\intertext{%
		Let us now assume that $\upq{0} + \lowq{1}>0$.
		If $t\neq0$ and $n$ is greater than the lower bounds mentioned above, the expression inside the parenthesis is bounded below by $0$ and strictly bounded above by $1$.
		Therefore,
	}
		[\lowtranopa{t} f](0)
		&= f(0) - \upq{0} \norm{f}_{v} \lim_{n \to \infty} \frac{t}{n} \frac{1 - \left(1 - \frac{t}{n} (\upq{0} + \lowq{1})\right)^{n}}{1 - \left(1 - \frac{t}{n} (\upq{0} + \lowq{1})\right)} \\
		&= f(0) - \frac{\upq{0}}{\upq{0} + \lowq{1}} \norm{f}_{v} \lim_{n \to \infty} \left(1 - \left(1 - \frac{t}{n} (\upq{0} + \lowq{1})\right)^{n} \right) \\
		&= f(0) - \frac{\upq{0}}{\upq{0} + \lowq{1}} \norm{f}_{v} \left(1 - e^{-t (\upq{0} + \lowq{1})} \right),
	\intertext{and}
		[\lowtranopa{t} f](1)
		&= f(1) + \frac{\lowq{1}}{\upq{0} + \lowq{1}} \norm{f}_{v} \left(1 - e^{-t (\upq{0} + \lowq{1})} \right).
	\end{align*}
	If $t=0$, the obtained expressions hold trivially.
	Completely analogous, if $\lowq{0} + \upq{1}>0$, the case $f(0) \leq f(1)$ yields
	\begin{align*}
		[\lowtranopa{t} f](0)
		&= f(0) + \frac{\lowq{0}}{\lowq{0} + \upq{1}} \norm{f}_{v} \left(1 - e^{-t (\lowq{0} + \upq{1})} \right) \\
		[\lowtranopa{t} f](1)
		&= f(1) - \frac{\upq{1}}{\lowq{0} + \upq{1}} \norm{f}_{v} \left(1 - e^{-t (\lowq{0} + \upq{1})} \right). \qedhere
	\end{align*}
\end{proof}

	\section{Extra material and proofs for Section~\ref{sec:EfficientComputation}}
\label{app:EfficientComputation}

In many of the following proofs, we frequently use the following lemma.
\begin{lemma}[Lemma~F.9 in \citep{2016Krak}]
\label{lem:BoundForErrorOfIPlusDeltaQ}
	Let $\lowrateop$ be a lower transition rate operator.
	For any $\delta \in \nonnegreals$, $\norm{\lowtranopa{\delta} - (I + \delta \lowrateop)} \leq \delta^2 \norm{\lowrateop}^2$.
\end{lemma}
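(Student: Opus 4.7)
The plan is to exploit the integral form of the differential equation defining $\lowtranopa{\cdot}$. Integrating $\frac{\mathrm{d}}{\mathrm{d}t} \lowtranopa{t} f = \lowrateop \lowtranopa{t} f$ from $0$ to $\delta$ yields, for every $f \in \setoffna$,
\[
\lowtranopa{\delta} f - (I + \delta \lowrateop) f = \int_0^\delta \bigl[\lowrateop \lowtranopa{s} f - \lowrateop f\bigr] \, \mathrm{d} s,
\]
so the task reduces to controlling the integrand by something linear in $s$; integration then supplies the $\delta^2$ factor automatically.

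The central difficulty is that $\lowrateop$ is non-linear, so one cannot simply replace $\lowrateop \lowtranopa{s} f - \lowrateop f$ by $\lowrateop(\lowtranopa{s} f - f)$. I would sidestep this with a Lipschitz-type sandwich: applying superadditivity \ref{def:LTRO:SuperAdditive} to both $\lowrateop g = \lowrateop((g-h)+h)$ and $\lowrateop h = \lowrateop((h-g)+g)$, together with the conjugacy $\uprateop = -\lowrateop(-\cdot)$, one obtains $\lowrateop(g-h) \leq \lowrateop g - \lowrateop h \leq \uprateop(g-h)$, and hence
\[
\|\lowrateop g - \lowrateop h\| \leq \|\lowrateop\| \|g - h\|,
\]
using that $\|\uprateop\| = \|\lowrateop\|$ (again by conjugacy and \ref{def:LTRO:NonNegativelyHom}). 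Applied with $g = \lowtranopa{s} f$ and $h = f$, this gives $\|\lowrateop \lowtranopa{s} f - \lowrateop f\| \leq \|\lowrateop\| \|\lowtranopa{s} f - f\|$.

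Next I would bound $\|\lowtranopa{s} f - f\|$ by recycling the integral representation and the fact that $\lowtranopa{u}$ is a lower transition operator, so $\|\lowtranopa{u}\| \leq 1$ by \ref{prop:LTO:NormLowerThan1}:
\[
\|\lowtranopa{s} f - f\| = \Bigl\|\int_0^s \lowrateop \lowtranopa{u} f \, \mathrm{d} u\Bigr\| \leq \int_0^s \|\lowrateop\| \|\lowtranopa{u} f\| \, \mathrm{d} u \leq s \|\lowrateop\| \|f\|.
\]
Combining the two estimates,
\[
\|\lowtranopa{\delta} f - (I + \delta \lowrateop) f\| \leq \int_0^\delta s \|\lowrateop\|^2 \|f\| \, \mathrm{d} s = \tfrac{1}{2} \delta^2 \|\lowrateop\|^2 \|f\|,
\]
and taking the supremum over $f$ with $\|f\| = 1$ yields $\|\lowtranopa{\delta} - (I+\delta \lowrateop)\| \leq \tfrac{1}{2} \delta^2 \|\lowrateop\|^2$, which is even a factor of two sharper than the claim. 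The only delicate point to verify carefully is that the pointwise integral representation and the standard triangle-inequality bound $\bigl\|\int_0^s g_u \, \mathrm{d} u\bigr\| \leq \int_0^s \|g_u\| \, \mathrm{d} u$ remain valid in this non-linear setting, which they do because everything is applied pointwise in $f$ to a continuously differentiable function of $s$.
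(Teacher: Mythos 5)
Your proof is correct, and it is worth noting that the paper itself does not prove this lemma at all: it is imported verbatim as Lemma~F.9 of \citep{2016Krak}, whose argument is a discrete one based on the limit formula \eqref{eqn:TDLTO:LimitFormula} (bounding $\norm{(I+\nicefrac{\delta}{n}\lowrateop)^n - (I+\delta\lowrateop)}$ uniformly in $n$ and passing to the limit). Your route is genuinely different: you work directly with the integral form of the differential equation \eqref{eqn:TDLTO:FunctionDifferentialEquation}, use the superadditivity sandwich $\lowrateop(g-h)\leq\lowrateop g-\lowrateop h\leq\uprateop(g-h)$ to get the Lipschitz bound $\norm{\lowrateop g-\lowrateop h}\leq\norm{\lowrateop}\norm{g-h}$, bound $\norm{\lowtranopa{s}f-f}\leq s\norm{\lowrateop}\norm{f}$ via $\norm{\lowtranopa{u}}\leq 1$, and integrate once more. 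Every step checks out (the integral representation is legitimate because $s\mapsto\lowrateop\lowtranopa{s}f$ is continuous in the finite-dimensional space $\setoffna$, and $\lowtranopa{\delta}-(I+\delta\lowrateop)$ is non-negatively homogeneous so the final supremum over $\norm{f}=1$ does yield the operator norm). The payoff of your approach is the constant: you obtain $\tfrac{1}{2}\delta^2\norm{\lowrateop}^2$, a factor of two better than the cited bound, which would propagate through Lemma~\ref{lem:ExplicitErrorBound} and halve the iteration counts in Theorems~\ref{the:UniformApproximationWithError} and~\ref{the:AdaptiveApproximation}; the price is reliance on the differentiability of $t\mapsto\lowtranopa{t}f$, whereas the discrete argument needs only the limit formula. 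One cosmetic remark: the identity $\norm{\uprateop}=\norm{\lowrateop}$ follows from conjugacy alone (the map $f\mapsto -f$ preserves the unit sphere); non-negative homogeneity is not needed there.
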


\begin{lemma}
\label{lem:ExplicitErrorBound}
	Let $\lowrateop$ be a lower transition rate operator, $f\in\setoffna$ and $t \in \nonnegreals$.
	Let $s \coloneqq (\delta_1, \dots, \delta_k)$ be any sequence in $\nonnegreals$ such that $\sum_{i = 1}^{k} \delta_i = t$ and, for all $i \in \{ 1, \dots, k \}$, $\delta_{i} \norm{\lowrateop} \leq 2$.
	Then
	\begin{align*}
		\norm{\lowtranopa{t} f - \Phi(s) f}
		&\leq \sum_{i = 1}^{k} \delta_i^2 \norm{\lowrateop}^2 \norm{\Phi_{i-1} f}_{c}
	\intertext{and}
		\norm{\lowtranopa{t} f - \Phi(s) f}
		&\leq \sum_{i = 1}^{k} \delta_i^2 \norm{\lowrateop}^2 \norm{\lowtranopa{\Delta_{i-1}} f}_{c},
	\end{align*}
	where $\Phi_{0} \coloneqq I$ and $\Delta_{0} = 0$, and for all $i \in \{1, \dots, k \}$, $\Phi_{i} \coloneqq (I + \delta_i \lowrateop) \Phi_{i-1}$ and $\Delta_{i} \coloneqq \Delta_{i-1} + \delta_i$.
\end{lemma}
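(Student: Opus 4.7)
The plan is to express the total error as a telescoping sum of differences between the two kinds of one-step operators $\lowtranopa{\delta_i}$ and $(I + \delta_i \lowrateop)$, and to bound each term using the one-step estimate of Lemma~\ref{lem:BoundForErrorOfIPlusDeltaQ} together with the non-expansiveness properties of lower transition operators. The key enabling observation is that, since $\sum_{i=1}^{k} \delta_i = t$, the semigroup property \eqref{eqn:TDLTO:SemiGroup} gives $\lowtranopa{t} = \lowtranopa{\delta_k} \cdots \lowtranopa{\delta_1}$. Write $S_i \coloneqq \lowtranopa{\delta_i}$ and $A_i \coloneqq I + \delta_i \lowrateop$; both are lower transition operators ($A_i$ by Proposition~\ref{prop:IPlusDeltaQLowTranOp}, using $\delta_i \norm{\lowrateop} \leq 2$), so by \ref{prop:LTO:CompositionIsAlsoLTO} any composition of them is as well. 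The total error is then $\lowtranopa{t} f - \Phi(s) f = (S_k \cdots S_1 - A_k \cdots A_1) f$.

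For the first bound I would apply the telescoping identity
\[
    S_k \cdots S_1 - A_k \cdots A_1 = \sum_{i=1}^{k} (S_k \cdots S_{i+1})\,(S_i - A_i)\,(A_{i-1} \cdots A_1),
\]
with empty products interpreted as $I$. Applied to $f$, the rightmost factor becomes $\Phi_{i-1} f$. The leftmost factor $S_k \cdots S_{i+1}$ is a lower transition operator, so it is non-expansive by \ref{prop:LTO:NormLowerThan1} and \ref{prop:norms:BoundOnNormOf_Af}. After a triangle inequality this gives
\[
    \norm{\lowtranopa{t} f - \Phi(s) f} \leq \sum_{i=1}^{k} \norm{(S_i - A_i)\Phi_{i-1} f}.
\]
Since $S_i$ and $A_i$ are both lower transition operators, \ref{prop:LTO:BoundOnDifferenceTfSf} bounds each summand by $\norm{S_i - A_i}\,\norm{\Phi_{i-1} f}_c$, and Lemma~\ref{lem:BoundForErrorOfIPlusDeltaQ} supplies $\norm{S_i - A_i} \leq \delta_i^2 \norm{\lowrateop}^2$. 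Combining these inequalities produces the first bound.

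For the second bound I would telescope in the opposite direction,
\[
    S_k \cdots S_1 - A_k \cdots A_1 = \sum_{i=1}^{k} (A_k \cdots A_{i+1})\,(S_i - A_i)\,(S_{i-1} \cdots S_1),
\]
so that the right-hand factor applied to $f$ becomes $(S_{i-1} \cdots S_1) f = \lowtranopa{\Delta_{i-1}} f$ by the semigroup property. The leftmost factor $A_k \cdots A_{i+1}$ is still a composition of lower transition operators, so the same chain of inequalities goes through and yields the second bound. There is no substantive obstacle here; the only subtlety is selecting the telescoping direction so that the rightmost factor matches the form prescribed by the statement ($\Phi_{i-1} f$ or $\lowtranopa{\Delta_{i-1}} f$), while keeping the leftmost factor a composition of lower transition operators so that non-expansiveness applies.
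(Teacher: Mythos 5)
Your argument is essentially the paper's: the paper derives the same bound by recursively applying \ref{prop:LTO:BoundOnDifferenceTTfSSf}, which is precisely your telescoping carried out one step at a time, and it likewise handles the second inequality by telescoping through the exact operators instead. One caution, though: the operator-level identity $S_k \cdots S_1 - A_k \cdots A_1 = \sum_{i=1}^{k} (S_k \cdots S_{i+1})(S_i - A_i)(A_{i-1}\cdots A_1)$ is not valid here, because the outer factors $S_k \cdots S_{i+1}$ are only non-negatively homogeneous and superadditive, not linear, so they do not distribute over the difference $S_i - A_i$; for the same reason, justifying the contraction step via \ref{prop:LTO:NormLowerThan1} and \ref{prop:norms:BoundOnNormOf_Af} applied to a sandwiched ``difference operator'' does not go through. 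The repair is immediate: telescope at the level of functions by adding and subtracting the intermediate terms $S_k\cdots S_{i+1}A_i\cdots A_1 f$, and bound each resulting summand $\norm{S_k\cdots S_{i+1}(S_i\Phi_{i-1}f) - S_k\cdots S_{i+1}(A_i\Phi_{i-1}f)}$ by the non-expansiveness property \ref{prop:LTO:NonExpansiveness}; after that, \ref{prop:LTO:BoundOnDifferenceTfSf} and Lemma~\ref{lem:BoundForErrorOfIPlusDeltaQ} finish each term exactly as you describe, and the same repair applies verbatim to your second telescoping direction.
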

\begin{proof}
	By the semi-group property of Eqn.~\eqref{eqn:TDLTO:SemiGroup},
	\begin{align*}
		\norm{\lowtranopa{t} f - \Phi(s) f}
		&= \norm{\lowtranopa{\delta_k} \lowtranopa{t - \delta_k} f - (I + \delta_k \lowrateop) \Phi_{k-1} f}.
	\intertext{%
		By Proposition~\ref{prop:IPlusDeltaQLowTranOp}, the operator $(I + \delta_{i} \lowrateop)$ is a lower transition operator for all $i \in \{ 1, \dots, k \}$.
		Even more, \ref{prop:LTO:CompositionIsAlsoLTO} implies that the operator $\Phi_{i-1}$ is a lower transition transition operator for all $i \in \{ 1, \dots, k \}$.
		Recall that $\lowtranopa{\delta_k}$ and $\lowtranopa{t-\delta_{k}}$ are lower transition operators by definition, such that using \ref{prop:LTO:BoundOnDifferenceTTfSSf} and Lemma~\ref{lem:BoundForErrorOfIPlusDeltaQ} yields
	}
		\norm{\lowtranopa{t} f - \Phi(s) f}
		&\leq \norm{\lowtranopa{\delta_k} - (I + \delta_k \lowrateop)} \norm{\Phi_{k-1} f}_{c} + \norm{\lowtranopa{t - \delta_k} f - \Phi_{k-1} f} \\
		&\leq \delta_k^2 \norm{\lowrateop}^2 \norm{\Phi_{k-1} f}_{c} + \norm{\lowtranopa{t - \delta_k} f - \Phi_{k-1} f}.
	\intertext{Repeated application of the same trick yields}
		\norm{\lowtranopa{t} f - \Phi(s) f}
		&\leq \sum_{i = 1}^{k} \delta_i^2 \norm{\lowrateop}^2 \norm{\Phi_{i-1} f}_{c}.
	\end{align*}

	The second inequality of the statement can be proved in a completely similar manner.
\end{proof}

\begin{lemma}
\label{lem:LTRO:SpecialNoApproximationCase}
	Let $\lowrateop$ be a lower transition rate operator, $t \in \nonnegreals$ and $f \in \setoffna$.
	If $\norm{f}_{c} = 0$, $\norm{\lowrateop} = 0$ or $t = 0$, then $\norm{\lowtranopa{t} f - \Psi_{t}(0) f}=\norm{\lowtranopa{t} f - f} = 0$.
\end{lemma}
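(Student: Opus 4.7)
The plan is to observe that $\Psi_{t}(0)=I$ by definition, so $\Psi_{t}(0)f=f$, and then to show that under each of the three hypotheses we also have $\lowtranopa{t}f=f$. Once this is established, both norms in the statement collapse to $\norm{f-f}=0$.

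I would handle the three cases separately. The case $t=0$ is immediate from the initial condition $\lowtranopa{0}f\coloneqq f$ of the differential equation~\eqref{eqn:TDLTO:FunctionDifferentialEquation}. For the case $\norm{\lowrateop}=0$, I would first argue that $\lowrateop g=0$ for every $g\in\setoffna$: for $g=0$ this follows from \ref{def:LTRO:NonNegativelyHom} with $\mu=0$, and for $g\neq 0$ the function $g/\norm{g}$ has unit maximum norm, so $\norm{\lowrateop(g/\norm{g})}\leq\norm{\lowrateop}=0$, and non-negative homogeneity \ref{def:LTRO:NonNegativelyHom} then yields $\lowrateop g=\norm{g}\,\lowrateop(g/\norm{g})=0$. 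Substituting into~\eqref{eqn:TDLTO:FunctionDifferentialEquation} gives $\frac{\mathrm{d}}{\mathrm{d}t}\lowtranopa{t}f=0$, so the unique solution with initial value $f$ is the constant map $t\mapsto f$.

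For the remaining case $\norm{f}_{c}=0$, by~\eqref{eqn:CentredNorm} we have $\max f=\min f$, so $f$ is a constant function $\gamma$. Since $\lowtranopa{t}$ is a lower transition operator, property~\ref{prop:LTO:BoundedByMinAndMax} gives $\gamma=\min\gamma\leq\lowtranopa{t}\gamma\leq\max\gamma=\gamma$, whence $\lowtranopa{t}f=f$. Combining the three cases with $\Psi_{t}(0)f=f$ finishes the proof.

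No step is really an obstacle here; the only subtlety worth being careful about is that $\norm{\lowrateop}$ is the norm of a non-negatively (but not necessarily positively) homogeneous operator, so the passage from $\norm{\lowrateop}=0$ to $\lowrateop g=0$ for all $g$ must be done via the scaling trick above rather than by appealing to ordinary linearity.
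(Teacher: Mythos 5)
Your proposal is correct and follows essentially the same route as the paper's proof: in each of the three cases one shows $\lowtranopa{t}f=f$ (via the initial condition, via the vanishing of $\lowrateop$ in the differential equation, and via \ref{prop:LTO:BoundedByMinAndMax} for constant $f$, respectively), and then both norms vanish. The extra care you take in deriving $\lowrateop g=0$ for all $g$ from $\norm{\lowrateop}=0$ using non-negative homogeneity is a detail the paper simply asserts, and your argument for it is sound.
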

\begin{proof}
	If $\norm{f}_{c} = 0$, then $\min f = \max f$, or equivalently $f$ is a constant function.
	From \ref{prop:LTO:BoundedByMinAndMax} it follows that in this case $\lowtranopa{t} f = f$ for all $t \in \nonnegreals$.
	If $\norm{\lowrateop} = 0$, then $\lowrateop g = 0$ for all $g \in \setoffna$.
	Therefore
	\[
		\frac{\mathrm{d}}{\mathrm{d} t} \lowtranopa{t} f = \lowrateop \lowtranopa{t} f = 0 \text{ for all } t \in \nonnegreals.
	\]
	Consequently, $\lowtranopa{t} f = \lowtranopa{0} f = I f = f$.
	If $t = 0$, then we can simply use the initial condition: $\lowtranopa{t} f = \lowtranopa{0} f = I f = f$.

	In all three cases we find that $\lowtranopa{t} f = f$, and hence
	\[
		\norm{\lowtranopa{t} f - \Psi_{t}(0) f} = \norm{\lowtranopa{t} f - f} = \norm{f-f} = 0. \qedhere
	\]
\end{proof}

\begin{lemma}
\label{lem:UniformApproximationWithError}
	Let $\lowrateop$ be a lower transition rate operator, $f\in\setoffna$, $t \in \nonnegreals$, $\epsilon \in \strictlyposreals$ and $n \in \nats$, and define $\delta \coloneqq t / n$.
	If
	\[
		n \geq \max \left\{ \frac{t \norm{\lowrateop}}{2}, \frac{t^2 \norm{\lowrateop}^2 \norm{f}_{c} }{\epsilon} \right\},
	\]
	then we are guaranteed that
	\[
		\norm{\lowtranopa{t} f - \Psi_{t}(n) f}
		\leq \epsilon'
		\coloneqq \delta^2 \norm{\lowrateop}^2 \sum_{i=0}^{n-1} \norm{\left(I + \delta \lowrateop\right)^{i} f}_{c}
		\leq \epsilon.
	\]
\end{lemma}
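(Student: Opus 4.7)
The plan is to obtain this result as a direct specialisation of Lemma~\ref{lem:ExplicitErrorBound} to the uniform grid, followed by a coarse upper bound that exploits the non-expansiveness of lower transition operators with respect to the centred seminorm.

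First, I would set $s \coloneqq (\delta, \dots, \delta)$ with $n$ entries, noting that the hypothesis $n \geq t \norm{\lowrateop}/2$ is equivalent to $\delta \norm{\lowrateop} \leq 2$, so the prerequisites of Lemma~\ref{lem:ExplicitErrorBound} are met and $\Phi(s) = (I + \delta \lowrateop)^{n} = \Psi_{t}(n)$. Moreover, for this uniform sequence, the iterates $\Phi_{i}$ appearing in that lemma are exactly $(I + \delta \lowrateop)^{i}$. Applying the lemma therefore yields the first inequality of the statement:
\[
	\norm{\lowtranopa{t} f - \Psi_{t}(n) f}
	\leq \delta^2 \norm{\lowrateop}^2 \sum_{i=0}^{n-1} \norm{(I + \delta \lowrateop)^{i} f}_{c}
	= \epsilon'.
\]

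Next, I would bound $\epsilon'$ by $\epsilon$. By Proposition~\ref{prop:IPlusDeltaQLowTranOp}, the condition $\delta \norm{\lowrateop} \leq 2$ ensures that $(I + \delta \lowrateop)$ is a lower transition operator, and by \ref{prop:LTO:CompositionIsAlsoLTO} so is every power $(I + \delta \lowrateop)^{i}$. Property \ref{prop:LTO:VarNormTf} together with \ref{prop:norm:CenteredEqVar} then gives $\norm{(I + \delta \lowrateop)^{i} f}_{c} \leq \norm{f}_{c}$ for every $i \in \{0, \dots, n-1\}$. Substituting this into the sum and using $\delta = t/n$ yields
\[
	\epsilon'
	\leq n \delta^2 \norm{\lowrateop}^2 \norm{f}_{c}
	= \frac{t^2 \norm{\lowrateop}^2 \norm{f}_{c}}{n}.
\]
The second hypothesis $n \geq t^2 \norm{\lowrateop}^2 \norm{f}_{c}/\epsilon$ finally gives $\epsilon' \leq \epsilon$, completing the chain of inequalities.

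There is no real obstacle here: once Lemma~\ref{lem:ExplicitErrorBound} is in hand, the argument is essentially bookkeeping, with the only subtlety being the verification that $\delta \norm{\lowrateop} \leq 2$ so that Proposition~\ref{prop:IPlowrateop}, sorry, Proposition~\ref{prop:IPlusDeltaQLowTranOp} can be invoked to turn $(I + \delta \lowrateop)$ into a lower transition operator and thereby justify the telescoping bound $\norm{(I + \delta \lowrateop)^{i} f}_{c} \leq \norm{f}_{c}$.
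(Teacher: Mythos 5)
Your proposal is correct and follows essentially the same route as the paper: specialise Lemma~\ref{lem:ExplicitErrorBound} to the uniform sequence $(\delta,\dots,\delta)$ after checking $\delta\norm{\lowrateop}\leq 2$ via Proposition~\ref{prop:IPlusDeltaQLowTranOp}, then bound each $\norm{(I+\delta\lowrateop)^{i}f}_{c}$ by $\norm{f}_{c}$ using \ref{prop:LTO:CompositionIsAlsoLTO} and \ref{prop:LTO:VarNormTf} to obtain $\epsilon'\leq t^{2}\norm{\lowrateop}^{2}\norm{f}_{c}/n\leq\epsilon$. No gaps.
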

\begin{proof}
	By Proposition~\ref{prop:IPlusDeltaQLowTranOp}, the operator $(I + \delta \lowrateop)$ is a lower transition operator if and only if $\delta \norm{\lowrateop} \leq 2$, or equivalently if and only if
	\begin{equation}
	\label{eqn:UniformApproximationWithError:Ineq1}
		n \geq \frac{t \norm{\lowrateop}}{2}.
	\end{equation}
	From now on, we assume that $n$ satisfies this inequality.
	Therefore, we may use Lemma~\ref{lem:ExplicitErrorBound} to yield
	\begin{equation}
	\label{eqn:UniformApproximationWithError:UpperBoundError}
		\norm{\lowtranopa{t} f - \Psi_{t}(n) f}
		\leq \sum_{i = 0}^{n-1} \delta^2 \norm{\lowrateop}^2 \norm{(I+\delta \lowrateop)^{i} f}_{c}.
	\end{equation}
	Note that for any $i \in \{0, \dots, n-1\}$, $(I + \delta \lowrateop)^{i}$ is a lower transition operator by \ref{prop:LTO:CompositionIsAlsoLTO}; hence it follows from \ref{prop:LTO:VarNormTf} that $\norm{(I+\delta\lowrateop)^{i} f}_{c} \leq \norm{f}_{c}$.
	Therefore
	\begin{align*}
		\norm{\lowtranopa{t} f - \Psi_{t}(n) f}
		\leq \sum_{i = 0}^{n-1} \delta^2 \norm{\lowrateop}^2 \norm{f}_{c} = \frac{t^2 \norm{\lowrateop}^2 \norm{f}_{c}}{n}.
	\end{align*}
	It is now obvious that if
	\begin{equation}
	\label{eqn:UniformApproximationWithError:Ineq2}
		n \geq \frac{t^2 \norm{\lowrateop}^2 \norm{f}_{c}}{\epsilon},
	\end{equation}
	then $\norm{\lowtranopa{t} f - \Psi_{t}(n) f} \leq \epsilon$.
	It also follows almost immediately from Eqn.~\eqref{eqn:UniformApproximationWithError:UpperBoundError} that if $n$ satisfies both Eqns.~\eqref{eqn:UniformApproximationWithError:Ineq1} and \eqref{eqn:UniformApproximationWithError:Ineq2}, then
	\[
		\norm{\lowtranopa{t} f - \Psi_t(n) f}
		\leq \epsilon'
		\coloneqq \delta^2 \norm{\lowrateop}^2 \sum_{i=0}^{n-1} \norm{(I + \delta \lowrateop)^{i} f}_{c}
		\leq \epsilon. \qedhere
	\]
\end{proof}

\begin{proof}[Proof of Theorem~\ref{the:UniformApproximationWithError}]
	First, we assume $t = 0$, $\norm{\lowrateop} = 0$ or $\norm{f}_{c} = 0$.
	In this case, $n = 0$ and $\delta = 0$.
	By Lemma~\ref{lem:LTRO:SpecialNoApproximationCase}, we find that
	\[
		\norm{\lowtranopa{t} f - g_{(0)}} = \norm{\lowtranopa{t} f - \Psi_{t}(0) f} = 0 < \epsilon.
	\]

	Next, we assume $t > 0$, $\norm{\lowrateop} > 0$ and $\norm{f}_{c} > 0$.
	In this case, the integer $n$ that is determined on line~\ref{line:Uniform:DetermineN} of Algorithm~\ref{alg:Uniform} is just the lowest natural number that satisfies the requirement of Lemma~\ref{lem:UniformApproximationWithError}, from which the stated follows immediately.
\end{proof}

\begin{lemma}
\label{lem:AdaptiveApproximation}
	Let $\lowrateop$ be a lower transition operator, $f \in \setoffna$, $t' \in \nonnegreals$, $\epsilon \in \strictlyposreals$, $n, m, k \in \nats$ and let $\delta_{1}, \dots, \delta_{n}$ be a sequence in $\nonnegreals$.
	If (i) $k \leq m$, (ii) $k \delta_{n} + \sum_{i = 1}^{n-1} m \delta_i = t'$, and (iii) for all $i \in \{ 1, \dots, n \}$, $\delta_{i} \norm{\lowrateop} \leq 2$ and
	\[
		t' \norm{\lowrateop}^2 \norm{\Phi_{i-1} f}_{c} \delta_i \leq \epsilon,
	\]
	where $\Phi_0 \coloneqq I$ and for all $i \in \{ 1, \dots, n-1 \}$, $\Phi_i \coloneqq (I + \delta_i \lowrateop)^{m} \Phi_{i-1}$; then
	\begin{align*}
		\norm{\lowtranopa{t'} f - \Phi_{m,k}(\delta_1,\dots,\delta_n) f}
		&\leq \epsilon'
		\coloneqq \sum_{i = 1}^{n} \delta_i^2 \norm{\lowrateop}^2 \sum_{j=0}^{k_i - 1} \norm{(I + \delta_{i} \lowrateop)^{j} \Phi_{i-1} f}_{c} \\
		&\leq \sum_{i = 1}^{n} k_{i} \delta_{i}^{2} \norm{\lowrateop}^2 \norm{\Phi_{i-1} f}_{c}
		\leq \epsilon,
	\end{align*}
	where $k_i \coloneqq m$ for all $i \in \{ 1, \dots, n-1 \}$ and $k_{n} \coloneqq k$.
\end{lemma}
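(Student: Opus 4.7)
The plan is to reduce \cref{lem:AdaptiveApproximation} directly to \cref{lem:ExplicitErrorBound} by unfolding the $m$-fold approximating operator $\Phi_{m,k}(\delta_1,\dots,\delta_n)$ into an ordinary product of $(I+\delta \lowrateop)$-factors of length $(n-1)m + k$. Concretely, I would consider the sequence $s \coloneqq (\tilde{\delta}_1, \dots, \tilde{\delta}_{(n-1)m+k})$ obtained by repeating $\delta_i$ exactly $m$ times for $i < n$ and then $\delta_n$ exactly $k$ times. Condition~(iii) ensures $\tilde{\delta}_j \norm{\lowrateop} \leq 2$ for every $j$, and condition~(ii) ensures $\sum_j \tilde{\delta}_j = t'$. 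Moreover, by construction $\Phi(s) = \Phi_{m,k}(\delta_1, \dots, \delta_n)$ and $\Phi_{i-1}$ appears as one of the partial products.

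Applying \cref{lem:ExplicitErrorBound} to $s$ then yields
\[
    \norm{\lowtranopa{t'} f - \Phi_{m,k}(\delta_1, \dots, \delta_n) f}
    \leq \sum_{i=1}^{n} \sum_{j = 0}^{k_i - 1} \delta_i^2 \norm{\lowrateop}^2 \norm{(I + \delta_i \lowrateop)^{j} \Phi_{i-1} f}_{c},
\]
where the double sum simply re-groups the partial products of $\Phi(s)$ into the $n$ blocks matching $\Phi_{m,k}$. This establishes the first inequality defining $\epsilon'$.

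For the second inequality, I would invoke \ref{prop:LTO:CompositionIsAlsoLTO} together with \cref{prop:IPlusDeltaQLowTranOp} to conclude that $(I + \delta_i \lowrateop)^j$ is a lower transition operator, and then apply \ref{prop:LTO:VarNormTf} (via \ref{prop:norm:CenteredEqVar}) to bound $\norm{(I + \delta_i \lowrateop)^j \Phi_{i-1} f}_{c} \leq \norm{\Phi_{i-1} f}_{c}$. Summing over $j$ and collecting by $i$ gives the middle bound $\sum_{i=1}^{n} k_i \delta_i^2 \norm{\lowrateop}^2 \norm{\Phi_{i-1} f}_{c}$.

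Finally, to obtain $\epsilon' \leq \epsilon$, I would rewrite $k_i \delta_i^2 \norm{\lowrateop}^2 \norm{\Phi_{i-1} f}_{c} = k_i \delta_i \cdot (\delta_i \norm{\lowrateop}^2 \norm{\Phi_{i-1} f}_{c})$, bound the parenthesised factor by $\epsilon/t'$ using condition~(iii), and then use condition~(ii) to note that $\sum_{i=1}^n k_i \delta_i = t'$, giving exactly $\epsilon$. No step looks genuinely hard; the only minor obstacle is the bookkeeping needed to verify that the flat sequence $s$ really reproduces $\Phi_{m,k}$ and that the indices of the intermediate products $\Phi_{i-1}$ match those appearing in the statement of \cref{lem:ExplicitErrorBound}. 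Once the indexing is set up cleanly, the remainder is routine.
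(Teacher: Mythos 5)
Your proposal is correct and follows essentially the same route as the paper's own proof: unfold $\Phi_{m,k}(\delta_1,\dots,\delta_n)$ into a flat product so that Lemma~\ref{lem:ExplicitErrorBound} applies, regroup the resulting sum into the $n$ blocks, bound each inner term via \ref{prop:LTO:VarNormTf}, and close with condition~(iii) and $\sum_i k_i\delta_i = t'$. Nothing is missing.
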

\begin{proof}
	Assume that (i) $1 \leq k \leq m$, (ii) $k \delta_{n} + \sum_{i=1}^{n-1} m \delta_{i} = t'$, and (iii) for all $i \in \{ 1, \dots, n \}$, $\delta_{i} \norm{\lowrateop} \leq 2$.
	Observe that by Proposition~\ref{prop:IPlusDeltaQLowTranOp} and \ref{prop:LTO:CompositionIsAlsoLTO}, the operators $\Phi_0, \dots, \Phi_{n-1}$ are all lower transition operators.
	From Lemma~\ref{lem:ExplicitErrorBound}, it follows that
	\begin{align}
	\label{eqn:AdaptiveApproximation:BoundOnError}
		\norm{\lowtranopa{t'} f - \Phi_{m,k}(\delta_1,\dots,\delta_n) f}
		\leq \sum_{i = 1}^{n} \delta_{i}^2 \norm{\lowrateop}^2 \sum_{j = 0}^{k_{i}-1} \norm{(I + \delta_{i} \lowrateop)^{j} \Phi_{i-1} f}_{c}.
	\end{align}
	Hence, it is obvious that the contribution of the $i$-th approximation step to (the upper bound of) the error is
	\begin{equation}
	\label{eqn:AdaptiveApproximation:UpperBoundOnError}
		\delta_i^2 \norm{\lowrateop}^2 \sum_{j= 0}^{k_{i}-1} \norm{(I + \delta_{i} \lowrateop)^{j} \Phi_{i-1} f}_{c}
		\leq k_{i} \delta_i^2 \norm{\lowrateop}^2 \norm{ \Phi_{i-1} f}_{c},
	\end{equation}
	where the inequality follows from \ref{prop:LTO:VarNormTf}.
	We want that the contribution of the $i$-th approximation step to the error is proportional to its length $k_{i} \delta_i$.
	Therefore, we demand that the contribution of the $i$-th approximation step is bounded above by $k_{i} \delta_i \epsilon / t'$, which yields the condition
	\begin{equation}
	\label{eqn:AdaptiveApproximation:ConditionOnDelta}
		t' \delta_i \norm{\lowrateop}^2 \norm{\Phi_{i-1} f}_{c} \leq \epsilon.
	\end{equation}
	It is obvious that the conditions we have imposed on $\delta_{1}, \dots, \delta_{n}$ are those of the statement.
	Combining Eqns.~\eqref{eqn:AdaptiveApproximation:BoundOnError}, \eqref{eqn:AdaptiveApproximation:UpperBoundOnError} and \eqref{eqn:AdaptiveApproximation:ConditionOnDelta} yields
	\begin{align*}
		\norm{\lowtranopa{t} f - \Phi_{m,k}(\delta_1,\dots,\delta_n) f}
		&\leq \epsilon' \coloneqq \sum_{i = 1}^{n} \delta_i^2 \norm{\lowrateop}^2 \sum_{j = 0}^{k_{i}-1} \norm{(I + \delta_{i} \lowrateop)^{j} \Phi_{i-1} f}_{c} \\
		&\leq \sum_{i = 1}^{n} k_{i} \delta_{i}^{2} \norm{\lowrateop}^2 \norm{\Phi_{i-1} f}_{c}
		\leq \epsilon. \qedhere
	\end{align*}
\end{proof}

\begin{proof}[Proof of Theorem~\ref{the:AdaptiveApproximation}]
	We use Algorithm~\ref{alg:Adaptive} to determine $n$ and $k$, and if applicable also $k_i$, $\delta_{i}$ and $g_{(i,j)}$.
	If $\norm{f}_{c} = 0$, $\norm{\lowrateop} = 0$ or $t = 0$, then by Lemma~\ref{lem:LTRO:SpecialNoApproximationCase}
	\[
		\norm{\lowtranopa{t} f - g_{(0,m)}} = \norm{\lowtranopa{t} f - f} = 0 < \epsilon.
	\]

	We therefore assume that $\norm{f}_{c} > 0$, $\smash{\norm{\lowrateop} > 0}$ and $t > 0$, and let $\delta_1, \dots, \delta_n \in \strictlyposreals$ and $k \in \nats$ be determined by running Algorithm~\ref{alg:Adaptive}.
	Let $t' \coloneqq k \delta_n + \sum_{i=1}^{n-1} m \delta_{i} \leq t$.
	It is then a matter of straightforward verification that $\delta_1,\dots, \delta_n$ and $k$ satisfy the requirements of Lemma~\ref{lem:AdaptiveApproximation}: (i) $1 \leq k \leq m$, (ii) $k \delta_n + \sum_{j = 1}^{n-1} m \delta_j = t'$, and (iii) for all $i \in \{ 1, \dots, n\}$, $\delta_i\norm{\lowrateop}\leq2$ and
	\[
		t' \delta_{i} \norm{\lowrateop}^2 \norm{\Phi_{i-1} f}_{c}\leq t \delta_{i} \norm{\lowrateop}^2 \norm{\Phi_{i-1} f}_{c} = t \delta_{i} \norm{\lowrateop}^2 \norm{g_{(i-1,m)}}_{c} \leq \epsilon.
	\]
	Therefore,
	\begin{equation}
		\norm{\lowtranopa{t'} f - g_{(n,k)}}
		\leq \sum_{i=1}^{n} \delta_i^2 \norm{\lowrateop}^2 \sum_{j = 0}^{k_i - 1} \norm{g_{(i,j)}}_{c}
		\leq \sum_{i=1}^{n} k_i \delta_i^2 \norm{\lowrateop}^2 \norm{g_{(i-1,m)}}_{c}
		\leq \epsilon.\label{eq:tprimeformula}
	\end{equation}
	If $t'=t$, this concludes the proof of the first part of the statement.
	If $t'<t$, we have that $\norm{g_{(n,k)}}_{c} = 0$, which implies that there is some $\mu\in\reals$ such that $g_{(n,k)}=\mu$.
	Hence, it follows that
	\begin{equation*}
		\norm{\lowtranopa{t}f-g_{(n,k)}}
		=
		\norm{\lowtranopa{t}f-\mu}
		=\norm{\lowtranopa{t-t'}\lowtranopa{t'}f-\lowtranopa{t-t'}\mu}
		\leq\norm{\lowtranopa{t'}f-\mu}
		=\norm{\lowtranopa{t'}f-g_{(n,k)}},
	\end{equation*}
	where the second equality follows from Eqn.~\eqref{eqn:TDLTO:SemiGroup} and \ref{prop:LTO:AdditionOfConstant} and where the inequality follows from \ref{prop:LTO:NonExpansiveness}.
	Combined with Eqn.~\eqref{eq:tprimeformula}, this again implies the first part of the statement.

	To prove the final part of the statement, we assume that $\norm{f}_{c} > 0$, $\norm{\lowrateop} > 0$ and $t > 0$, and let $\delta_1, \dots, \delta_n \in \strictlyposreals$ and $k \in \nats$ be constructed by running Algorithm~\ref{alg:Adaptive}.
	We let $n_{u}$ denote the number of iterations of the uniform method:
	\[
		n_{u}
		\coloneqq \left\lceil \max \left\{ \frac{t \norm{\lowrateop}}{2}, \frac{t^2 \norm{\lowrateop}^2 \norm{f}_{c}}{\epsilon} \right\} \right\rceil.
	\]
	If we let $\delta_u \coloneqq t / n_{u}$, then obviously
	\[
		0 < \delta_{u} \leq \min \left\{ t, \frac{2}{\norm{\lowrateop}}, \frac{\epsilon}{t \norm{\lowrateop}^2 \norm{f}_{c}} \right\}.
	\]


	We now consider two cases: $n = 1$ and $n > 1$.
	We start with the case $n=1$.
	Let
	\[
		\delta_{1}^{*} \coloneqq \min \left\{ t, \frac{2}{\norm{\lowrateop}}, \frac{\epsilon}{t \norm{\lowrateop}^2 \norm{f}_{c}} \right\}.
	\]
	Since $n=1$, it then holds that $t \leq m \delta_{1}^{*}$ and/or $\norm{g_{(1,m)}}_{c} = 0$.
	We first assume that $t \leq m \delta_{1}^{*}$.
	Note that $\delta_{1}^{*}$ is strictly positive as we have assumed that $\norm{f}_{c}$, $\norm{\lowrateop}$ and $t$ are strictly positive.
	We let $k \coloneqq \left\lceil t / \delta_{1}^{*} \right\rceil$ and $\delta_{1} \coloneqq t / k$, such that
	\[
		k = \left\lceil \frac{t}{\delta_{1}^{*}} \right\rceil = \left\lceil \max \left\{ 1, \frac{t \norm{\lowrateop}}{2}, \frac{t^{2} \norm{\lowrateop}^2 \norm{f}_{c}}{\epsilon} \right\} \right\rceil.
	\]
	As in this case the definitions of $n_{u}$ and $k$ are equivalent, we find that $k + m (n-1) = k = n_{u}$.

	Next, we assume that $n = 1$ but $t > m \delta_{1}^{*}$.
	This can only be the case if $\norm{g_{(1,m)}}_{c} = 0$ and
	\[
		\delta_{1} \coloneqq \delta_{1}^{*} = \min \left\{ \frac{2}{\norm{\lowrateop}}, \frac{\epsilon}{t \norm{\lowrateop}^2 \norm{f}_{c}} \right\}.
	\]
	Therefore $\delta_{u} \leq \delta_{1}$, such that $n_u \geq t / \delta_{1} > m$.
	As the total number of iterations is $k = m$, it immediately follows that $m (n-1) + k = m < n_u$.

	Next, we consider the case $n > 1$.
	For all $i \in \{ 1, \dots, n-1 \}$,
	\[
		\delta_{i}
		\coloneqq \min \left\{ \frac{2}{\norm{\lowrateop}}, \frac{\epsilon}{t \norm{\lowrateop}^2 \norm{\Phi_{i-1} f}_{c}} \right\},
	\]
	where $\Phi_0 \coloneqq I$ and $\Phi_{i} \coloneqq (I + \delta_{i}\lowrateop)^{m} \Phi_{i-1}$ and this definition is valid because we previously assumed that $\norm{f}_{c} > 0$, $\norm{\lowrateop} > 0$ and $t > 0$.
	Note that our definition of $\delta_{i}$ differs from that of line~\ref{line:Adaptive:delta} in Algorithm~\ref{alg:Adaptive}: we have left out the upper bound $\Delta = t - \sum_{j = 1}^{i-1} m \delta_{j}$ because this upper bound only plays a part for the final step $\delta_{n}$.
	As by \ref{prop:LTO:BoundedByMinAndMax} $\norm{\Phi_{i} f}_{c} \leq \norm{\Phi_{i-1} f}_{c}$, we find that
	\[
		\delta_{u} \leq \delta_1 \leq \delta_2 \leq \cdots \leq \delta_{n-1},
	\]
	where the first inequality follows from the definition of $\delta_u$.
	As the step sizes that are used are all larger than the uniform step size, we intuitively expect that the number of necessary iterations will be bounded above by $n_{u}$.
	To formally prove this, we again distinguish two sub-cases: $k \delta_{n} + \sum_{i = 1}^{n-1} m \delta_{i} < t$ and $k \delta_{n} + \sum_{i = 1}^{n-1} m \delta_{i} = t$.

	We first consider the sub-case $k \delta_{n} + \sum_{i = 1}^{n-1} m \delta_{i} < t$.
	This can only occur if $\norm{g_{(n,m)}}_{c} = 0$ and $k = m$.
	As $m \delta_{n} < t - \sum_{i = 1}^{n-1} m \delta_{i}$ and $\norm{g_{(n-1,m)}}_{c} = \norm{\Phi_{n-1} f}_{c} > 0$,
	\[
		\delta_{n} = \left\{ \frac{2}{\norm{\lowrateop}}, \frac{\epsilon}{t \norm{\lowrateop}^2 \norm{\Phi_{n-1} f}_{c}} \right\} \geq \delta_{n-1},
	\]
	where the inequality follows from $\norm{\Phi_{n-2} f}_{c} \geq \norm{\Phi_{n-1} f}_{c}$.
	Note that
	\[
		m n \delta_{1} = \left(k + m (n-1)\right) \delta_{1} \leq k \delta_{n} + \sum_{i=1}^{n-1} m \delta_{i} < t = n_{u} \delta_{u},
	\]
	where the first inequality follows from the increasing character of $\delta_1, \dots, \delta_n$.
	If we divide both sides of the inequality by $\delta_{1}$, then we find that $m n < n_{u} \delta_{u} / \delta_{1}$.
	Using that $\delta_{u} \leq \delta_{1}$ now yields that the total number of iterations $k + (n-1) m = m n$ is strictly smaller than $n_{u}$.

	Next, we consider the sub-case $k \delta_{n} + \sum_{i = 1}^{n-1} m \delta_{i} = t$.
	Because $1 \leq k \leq m$ and $\delta_{n}>0$, $\sum_{i=1}^{n-1} m \delta_{i} < t = n_{u} \delta_{u}$.
	Hence, there is some $n_{u}' < n_{u}$ such that $n_{u}' \delta_{u} < \sum_{i=1}^{n-1} m \delta_{i} \leq (n_{u}' + 1) \delta_{u}$.
	The final step size $\delta_{n}$ is derived from the remaining time
	\begin{align*}
		t - \sum_{i=1}^{n-1} m \delta_{i}
		\eqqcolon \Delta
		&\geq n_{u} \delta_{u} - (n_{u}' + 1) \delta_{u} = (n_{u} - n_{u}' - 1) \delta_{u}, \\
		\Delta
		&< n_{u} \delta_{u} - n_{u}' \delta_{u} = (n_{u} - n_{u}') \delta_{u},
	\end{align*}
	where the first inequality follows from $\sum_{i=1}^{n-1} m\delta_{i} \leq (n_{u}' + 1) \delta_{u}$ and the second inequality follows from $\sum_{i=1}^{n-1} m\delta_{i} > n_{u}' \delta_{u}$.
	We first determine the maximal allowable final step size
	\[
		\delta_{n}^{*}
		\coloneqq \min \left\{ \Delta, \frac{2}{\norm{\lowrateop}}, \frac{\epsilon}{t \norm{\lowrateop}^{2} \norm{\Phi_{n-1} f}_{c}} \right\},
	\]
	and then determine the actual final step size as $\delta_{n} \coloneqq \Delta / k$, with $1 \leq k \coloneqq \lceil \Delta / \delta_{n}^{*} \rceil \leq m$.

	If $(n_{u} - n_{u}' - 1) > 0$, then $\Delta \geq (n_{u} - n_{u}' - 1) \delta_{u} \geq \delta_{u}$.
	Therefore, and because the two other upper bounds of $\delta_{n}^{*}$ are also greater than $\delta_{u}$, we find that $\delta_{n}^{*} \geq \delta_{u}$.
	From this, we infer that $k = \left\lceil \nicefrac{\Delta}{\delta_{n}^{*}} \right\rceil \leq \left\lceil \nicefrac{\Delta}{\delta_{u}} \right\rceil$.
	As $\Delta < (n_{u} - n_{u}') \delta_{u}$, we now find that $k \leq (n_{u} - n_{u}')$.
	Note that
	\[
		m(n-1) \delta_{1} \leq \sum_{i = 1}^{n-1} m \delta_{i} \leq (n_{u}' + 1) \delta_{u},
	\]
	where the first inequality follows from the non-decreasing character of $\delta_{1}, \dots, \delta_{n-1}$.
	Dividing both sides of the inequality by $\delta_{1}$ and using $\delta_{u} \leq \delta_{1}$ yields $m (n-1) \leq n_{u}' + 1$.

	If $m (n-1) < n_{u}' + 1$, then combining this strict inequality with the obtained upper bound for $k$ yields
	\[
		k + m(n-1) < (n_{u} - n_{u}') + (n_{u}' + 1) = n_{u} + 1,
	\]
	which implies that $k+m(n-1)\leq n_u$, as desired.

	If $m (n-1) = n_{u}' + 1$, then
	\[
		\Delta = t - \sum_{i = 1}^{n-1} m \delta_{i} \leq t - \sum_{i = 1}^{n-1} m \delta_{u} = (n_{u} - n_{u}' - 1) \delta_{u},
	\]
	where the inequality is allowed because $\delta_{u} \leq \delta_{1}, \dots, \delta_{n-1}$.
	As we previously proved that $\Delta \geq (n_{u} - n_{u}' - 1) \delta_{u}$, we obtain that $m (n-1) = n_{u}' + 1$ implies that $\Delta = (n_{u} - n_{u}' - 1) \delta_{u}$.
	As $\delta_{n}^{*} \geq \delta_{u}$, in this case we are guaranteed that $k = \lceil \nicefrac{\Delta}{\delta_{n}^{*}} \rceil =  \lceil \nicefrac{(n_{u} - n_{u}' - 1) \delta_{u}}{\delta_{n}^{*}} \rceil \leq (n_{u} - n_{u}' - 1)$.
	Hence, we again find that
	\[
		k + m(n-1) \leq (n_{u} - n_{u}' - 1) + (n_{u}' + 1) = n_{u},
	\]
	as desired.

	If $(n_{u} - n_{u}' - 1) = 0$, then $\Delta < (n_{u} - n_{u}') \delta_{u} = \delta_{u}$.
	As the two other upper bounds on $\delta_{n}^{*}$ are greater than $\delta_{u}$, this implies that $\delta_{n}^{*} = \Delta$.
	Consequently, $k = \lceil \nicefrac{\Delta}{\delta_{n}^{*}} \rceil = \lceil \nicefrac{\Delta}{\Delta} \rceil = 1$.
	Note that
	\[
		m(n-1) \delta_{1} \leq \sum_{i = 1}^{n-1} m \delta_{i} < n_{u} \delta_{u},
	\]
	from which it follows that $m (n-1) < n_{u}$.
	Hence, we find that $k + m (n-1) < 1 + n_{u}$, and therefore also, once more, that $k + m (n-1) \leq n_{u}$.
	This concludes the proof.
\end{proof}




	\section{A more thorough look at ergodicity}
\label{app:Extra:QualitativeErgodicity}
Before we prove the results of Section~\ref{sec:ergodicity}, we need to properly introduce the ergodicity of lower transition (rate) operators.
We explicitly chose not to do this in the main text, as the main focus of this contribution is approximating $\lowtranopa{t} f$.
Nevertheless, we now give a brief overview of the relevant literature, limiting ourselves to the qualitative point of view of \cite{decooman2009}, \cite{2012Hermans} and \cite{2017DeBock}.

	\subsection{Qualitatively characterising ergodicity of lower transition operators}
Recall that a lower transition rate operator is ergodic if and only if $\lowtranopa{t} f$ converges to a constant function for all $f \in \setoffna$.
\cite{2012Hermans} say something similar for lower transition operators.
\begin{definition}
	A lower transition operator $\lowtranop$ is \emph{ergodic} if, for all $f \in \setoffna$, the limit $\lim_{n \to \infty} \lowtranop^{n} f$ exists and is a constant function.
\end{definition}

The condition of this definition can, in general, not be checked in practice.
Nonetheless, \citet{2012Hermans} provide a necessary and sufficient condition for the ergodicity of a lower transition operator, based on the following definition.
\begin{definition}
\label{def:LowTranOp:RegularlyAbsorbing}
	The lower transition operator $\lowtranop$ is \emph{regularly absorbing} if it is (i) \emph{top class regular}, i.e.
	\[
		\statespacesub{PA}
		\coloneqq \left\{ x \in \statespace \colon (\exists n \in \nats)(\forall y \in \statespace)~[\uptranop^n \indic{x}](y) > 0 \right\} \neq 0,
	\]
	and (ii) \emph{top class absorbing}, i.e.
	\[
		(\forall y \in \statespace \setminus \statespacesub{PA})(\exists n \in \nats)~[\lowtranop^n \indic{\statespacesub{PA}}](y) > 0.
	\]
\end{definition}

\begin{proposition}[Proposition~3 from \citep{2012Hermans}]
\label{prop:DiscreteErgodicity:NecAndSuff}
	The lower transition operator $\lowtranop$ is ergodic if and only if it is regularly absorbing.
\end{proposition}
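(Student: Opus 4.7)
Proof proposal: I would prove both directions by passing through the (weak) coefficient of ergodicity $\coefferga{\cdot}$ defined in Eqn.~\eqref{eqn:CoeffOfErgod}. The key observation is that ergodicity of $\lowtranop$ is equivalent to $\coefferga{\lowtranop^N} < 1$ for some $N \in \nats$: composition of lower transition operators is again a lower transition operator (Proposition~\ref{prop:LowerTransitionOperator:Properties}\ref{prop:LTO:CompositionIsAlsoLTO}), and a submultiplicativity bound $\coefferga{\lowtranop^{k+\ell}} \leq \coefferga{\lowtranop^k}\coefferga{\lowtranop^\ell}$ then forces $\norm{\lowtranop^{kN} f}_{v} \to 0$ for all $f$, which in turn (together with non-expansiveness \ref{prop:LTO:NonExpansiveness}) gives convergence of $\lowtranop^n f$ to a constant function for every $f$.

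For the sufficient direction (regularly absorbing $\Rightarrow$ ergodic), I would use top class regularity to pick some $x^* \in \statespacesub{PA}$ and $n^*$ with $[\uptranop^{n^*} \indic{x^*}](y) \geq \eta > 0$ uniformly in $y \in \statespace$, the uniformity coming from finiteness of $\statespace$. Next, using top class absorbing and finiteness once more, I would patch together the separate hitting integers $n_y$ for $y \notin \statespacesub{PA}$ into a single $N_0$ such that $[\lowtranop^{N_0} \indic{\statespacesub{PA}}](y) > 0$ uniformly in $y$; the patching relies on super-additivity \ref{def:LTO:SuperAdditive}, monotonicity \ref{prop:LTO:Monotonicity}, and \ref{prop:LTO:CompositionIsAlsoLTO}. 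Chaining $N_0$ with $n^*$ should yield a Doeblin-style minorization $[\uptranop^{N_0+n^*} \indic{x^*}](y) \geq \eta'$ for some $\eta' > 0$ uniform in $y$, which I would translate into $\coefferga{\lowtranop^{N_0+n^*}} \leq 1 - \eta'$ by decomposing any test function $f$ with $0 \leq f \leq 1$ around its centre $\cent{f}$ and exploiting non-negative homogeneity \ref{def:LTO:NonNegativelyHom} together with the conjugacy identity.

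For the necessary direction (ergodic $\Rightarrow$ regularly absorbing), I would start from the fact that ergodicity propagates to $\uptranop^n$ via the conjugacy identity $\uptranop^n f = -\lowtranop^n(-f)$, so $\uptranop^n \indic{A}$ also converges to a constant $\beta_A$ for every $A \subseteq \statespace$. Define $\statespacesub{PA} \coloneqq \{x \in \statespace : \beta_{\{x\}} > 0\}$. This set is non-empty, because sub-additivity of $\uptranop^n$ applied to $1 = \sum_x \indic{x}$ yields $\sum_x \uptranop^n \indic{x} \geq 1$, so the constants $\beta_{\{x\}}$ cannot all vanish. Top class regularity then follows at any $n$ large enough that $\uptranop^n \indic{x^*}$ has come within $\beta_{\{x^*\}}$ of its strictly positive limit everywhere. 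For top class absorbing, I would show $\lim_n \lowtranop^n \indic{\statespacesub{PA}} > 0$ by contradiction: otherwise the identity $\uptranop^n \indic{\statespace \setminus \statespacesub{PA}} = 1 - \lowtranop^n \indic{\statespacesub{PA}}$ (an instance of \ref{prop:LTO:AdditionOfConstant} and conjugacy) would force that limit to equal $1$, whereas sub-additivity of $\uptranop^n$ combined with $\beta_{\{y\}} = 0$ for $y \notin \statespacesub{PA}$ forces it to equal $0$.

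The main obstacle will be the Doeblin-style step in the sufficient direction: turning uniform componentwise positivity of $\uptranop^{N_0+n^*} \indic{x^*}$ into the strict contraction $\coefferga{\lowtranop^{N_0+n^*}} < 1$. Because $\lowtranop$ is only super-additive and not additive, the classical Markov-chain minorization argument does not transfer verbatim; instead, one must argue using the centred decomposition of $f$, non-negative homogeneity \ref{def:LTO:NonNegativelyHom}, and the conjugacy relation $\uptranop^n \indic{x^*} = 1 - \lowtranop^n(1 - \indic{x^*})$ to cap the oscillation of $\lowtranop^{N_0+n^*} f$ by $(1 - \eta') \norm{f}_v$ for all $f$ with $0 \leq f \leq 1$.
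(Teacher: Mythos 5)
The paper does not actually prove this statement---it is imported verbatim as Proposition~3 of \cite{2012Hermans}---so there is no internal proof to compare against; I can only assess your argument on its own terms. Your overall architecture (reduce ergodicity to $\coefferga{\lowtranop^N}<1$ for some $N$, which is Theorem~\ref{the:CoeffOfErg:StrictlySmallerThanOneIsNecAndSuffForErg}, and argue the necessary direction directly from the limits) is reasonable, and the necessary direction is essentially sound: sub-additivity of $\uptranop$ gives $\sum_{x}\uptranop^{n}\indic{x}\geq\uptranop^{n}1=1$, so not all limit constants vanish, and the identity $\uptranop^{n}\indic{\statespacesub{PA}^{c}}=1-\lowtranop^{n}\indic{\statespacesub{PA}}$ together with $\sum_{y\notin\statespacesub{PA}}\uptranop^{n}\indic{y}\to0$ forces $\lowtranop^{n}\indic{\statespacesub{PA}}\to1$; you only need to add a line checking that your set $\{x\colon\lim_{n}\uptranop^{n}\indic{x}>0\}$ sits inside the $\statespacesub{PA}$ of Definition~\ref{def:LowTranOp:RegularlyAbsorbing}.

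The sufficient direction, however, has a genuine gap, precisely at the step you yourself flag as the main obstacle. The hypothesis your chaining produces---$[\uptranop^{N}\indic{x^{*}}](y)\geq\eta'>0$ for all $y$, with $N=N_{0}+n^{*}$---does \emph{not} imply $\coefferga{\lowtranop^{N}}<1$, no matter how the test function is decomposed. Counterexample: on $\statespace=\{0,1\}$ define $[\lowtranop f](0)\coloneqq f(0)$ and $[\lowtranop f](1)\coloneqq\min\{f(0),f(1)\}$; this satisfies \ref{def:LTO:DominatesMin}--\ref{def:LTO:NonNegativelyHom}. Then $\uptranop\indic{0}=1$, so $[\uptranop^{n}\indic{0}](y)=1$ for every $n$ and $y$---the minorization holds with the best possible constant $\eta'=1$---and yet $\lowtranop^{n}\indic{0}=\indic{0}$ for all $n$, so $\coefferga{\lowtranop^{n}}=1$ for all $n$. (This operator is top class regular but not top class absorbing, so it does not contradict the proposition; it only refutes your intermediate lemma.) The structural problem is that your use of absorption funnels everything into yet another positivity statement about $\uptranop$ applied to $\indic{x^{*}}$, at which point the information carried by the lower operator is lost. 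The absorption condition has to act on the lower side directly: the standard route---visible in the paper's proof of the forward implication of Theorem~\ref{the:ContinuousErgodicity:CoefficientOfErgodicityOfApproximation}, which is the continuous-time analogue of exactly this step---is to suppose $\coefferga{\lowtranop^{N}}=1$, take an extremal $f^{*}$ with $\min f^{*}=0$, $\max f^{*}=1$ and states $y_{0},y_{1}$ with $[\lowtranop^{N}f^{*}](y_{0})=0$ and $[\lowtranop^{N}f^{*}](y_{1})=1$, and split on whether $\{f^{*}=0\}$ meets $\statespacesub{PA}$: if it does, then $[\uptranop^{N}\indic{x^{*}}](y_{1})=0$ contradicts top class regularity; if it does not, then $c\,\indic{\statespacesub{PA}}\leq f^{*}$ for some $c>0$ and $\lowtranop^{N}\indic{\statespacesub{PA}}>0$ (your uniform $N_{0}$ statement, i.e.\ Lemma~\ref{lem:LowTranOp:StrongerNecAndSuffTopClassAbsorption}) contradicts $[\lowtranop^{N}f^{*}](y_{0})=0$. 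Both reachability facts are needed, but deployed against the two extremes of $f^{*}$ separately rather than composed into a single Doeblin chain.
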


\cite{decooman2009} mention an equivalent way of looking at top class regularity that uses the ternary accessibility relation $\cdot \upreachda{\cdot} \cdot$.
\begin{definition}
\label{def:LowTranOp:PossiblyAccesible}
	Let $\lowtranop$ be any lower transition operator.
	For all $x,y \in \statespace$ and all $n \in \natz$, we say that \emph{$x$ is possibly accessible from $y$ in $n$ steps}, denoted by $y \upreachda{n} x$, if and only if $[\uptranop^n \indic{x}](y)>0$.
	If there is some $n \in \natz$ such that $y \upreachda{n} x$, then the state $x$ is simply said to be \emph{possibly accessible} from the state $y$, denoted by $y \upreachd x$.
\end{definition}

\begin{lemma}
\label{lem:LowTranOp:PossiblyAccesibleSequence}
	Let $\lowtranop$ be a lower transition operator, $x, y \in \statespace$ and $n \in \nats$.
	Then $y \upreachda{n} x$ if and only if there is a sequence $y = x_0, \dots, x_{n} = x$ in $\statespace$ such that for all $k \in \{ 1,\dots, n \}$, $[\uptranop \indic{x_{k}}](x_{k-1})>0$.
\end{lemma}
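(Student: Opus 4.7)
The plan is to prove the claim by induction on $n \in \nats$. For the base case $n = 1$, both sides of the equivalence reduce to the statement $[\uptranop \indic{x}](y) > 0$ (the only sequence of length two is $x_{0} = y$, $x_{1} = x$), so there is nothing to prove.

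For the inductive step, assume the statement holds for some $n \in \nats$ and consider $[\uptranop^{n+1} \indic{x}](y) = [\uptranop (\uptranop^{n} \indic{x})](y)$. Let $f \coloneqq \uptranop^{n} \indic{x}$; since $\indic{x} \geq 0$ and upper transition operators preserve non-negativity (as $\uptranop g \geq \lowtranop g \geq \min g$ by \ref{prop:LTO:BoundedByMinAndMax}), we have $f \geq 0$, and in particular $f = \sum_{z \in \statespace} f(z) \indic{z}$ with all coefficients non-negative. For the forward direction (existence of a sequence implies $y \upreachda{n+1} x$), given a sequence $y = x_{0}, x_{1}, \dots, x_{n+1} = x$ satisfying the required positivity, set $z \coloneqq x_{1}$ and apply the induction hypothesis to the shifted sequence $z = x_{1}, \dots, x_{n+1} = x$ to conclude $f(z) > 0$. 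Then $f \geq f(z) \indic{z}$, so by the monotonicity of $\uptranop$ (the conjugate of \ref{prop:LTO:Monotonicity}) and by non-negative homogeneity,
\[
	[\uptranop f](y)
	\geq [\uptranop (f(z) \indic{z})](y)
	= f(z) [\uptranop \indic{z}](y)
	> 0,
\]
which is what we need.

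For the reverse direction, suppose $[\uptranop^{n+1} \indic{x}](y) > 0$. Using the sub-additivity of $\uptranop$ (conjugate to \ref{def:LTO:SuperAdditive}) together with non-negative homogeneity and the decomposition $f = \sum_{z \in \statespace} f(z) \indic{z}$ with $f(z) \geq 0$, we obtain
\[
	0
	< [\uptranop f](y)
	\leq \sum_{z \in \statespace} f(z) \, [\uptranop \indic{z}](y),
\]
so there must be some $z \in \statespace$ with both $f(z) = [\uptranop^{n} \indic{x}](z) > 0$ and $[\uptranop \indic{z}](y) > 0$. Applying the induction hypothesis to $z \upreachda{n} x$ yields a sequence $z = z_{0}, z_{1}, \dots, z_{n} = x$ with $[\uptranop \indic{z_{k}}](z_{k-1}) > 0$ for all $k \in \{1, \dots, n\}$; prepending $y$ to this sequence (so $x_{0} = y$, $x_{k} = z_{k-1}$ for $k \in \{1, \dots, n+1\}$) produces the required sequence of length $n+2$.

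The main subtlety I expect is making the decomposition step precise: strictly speaking, sub-additivity of $\uptranop$ must be iterated a finite number of times to split $\uptranop(\sum_z f(z) \indic{z})$ into a sum, and the non-negative-homogeneity step requires the coefficients $f(z)$ to be non-negative, which is why the preliminary observation $f \geq 0$ must be stated explicitly. Everything else is a direct bookkeeping exercise on finite sequences.
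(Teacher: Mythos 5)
Your proof is correct. Note, though, that the paper does not actually prove this lemma itself: its ``proof'' is a one-line citation of Proposition~4 in \citep{2012Hermans}, so you have supplied a self-contained argument where the paper defers to the literature. Your induction is the natural one, and it is essentially the same mechanism the paper \emph{does} spell out later for the more general Lemma~\ref{lem:LowTranOp:SignOfCompositionWithIndicator} (which handles compositions $\uptranopa{k}\cdots\uptranopa{1}$ of possibly distinct upper transition operators): there the forward bound comes from $\uptranop \indic{z_i} = \sum_z [\uptranop\indic{z_i}](z)\indic{z} \geq [\uptranop\indic{z_i}](z_{i+1})\indic{z_{i+1}}$ plus monotonicity, and the reverse from iterated sub-additivity, exactly mirroring your two directions. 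All the delicate points are handled properly in your write-up: you justify $f = \uptranop^n\indic{x} \geq 0$ via \ref{prop:LTO:BoundedByMinAndMax} before invoking non-negative homogeneity on the coefficients $f(z)$, you use monotonicity \ref{prop:LTO:Monotonicity} for the lower bound $\uptranop f \geq \uptranop(f(z)\indic{z})$, and in the reverse direction you correctly observe that all terms $f(z)[\uptranop\indic{z}](y)$ are non-negative so that a positive sum forces a positive term. The only thing you gain over the citation route is self-containedness; the only thing you lose is that your lemma is a special case of the more general composite statement the paper proves anyway.
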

\begin{proof}
	Follows immediately from \cite[Proposition~4]{2012Hermans}.
\end{proof}

It can be almost immediately verified---for instance using Lemma~\ref{lem:LowTranOp:PossiblyAccesibleSequence}---that $\cdot \upreachda{\cdot} \cdot$ satisfies the three defining properties of a ternary accessibility relation:
\begin{enumerate}[label=A\arabic*:,ref=(A\arabic*)]
	\item $(\forall x,y \in \statespace)~x \upreachda{0} y \Leftrightarrow x = y$,
	\item \label{def:AccesRelation:xyz}
	$(\forall x,y,z \in \statespace) (\forall n,m \in \natz)~x \upreachda{n} y \text{ and } y \upreachda{m} z \Rightarrow x \upreachda{n+m} z$,
	\item $(\forall x \in \statespace)(\forall n \in \nats)(\exists y \in \statespace) x \upreachda{n} y$.
\end{enumerate}

The following proposition is the reason why we introduced the accessibility relation $\cdot \upreachda{\cdot} \cdot$.
\begin{proposition}[Proposition~4.3 from \citep{decooman2009}]
\label{prop:LowTranOp:AlternativeDefinitionOfTopClassRegularity}
	The lower transition operator $\lowtranop$ is top class regular if and only if
	\[
		\statespacesub{PA} = \{ x\in\statespace \colon (\exists n \in \nats)(\forall k \geq n)(\forall y \in \statespace)~y \upreachda{k} x \} \neq \emptyset.
	\]
\end{proposition}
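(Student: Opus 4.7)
The plan is to prove the set equality between the two definitions of $\statespacesub{PA}$, from which the proposition follows immediately since ``top class regular'' just means $\statespacesub{PA} \neq \emptyset$. By Definition~\ref{def:LowTranOp:PossiblyAccesible}, the condition $[\uptranop^n \indic{x}](y) > 0$ appearing in Definition~\ref{def:LowTranOp:RegularlyAbsorbing} is literally $y \upreachda{n} x$, so the set from Definition~\ref{def:LowTranOp:RegularlyAbsorbing} can be rewritten as $\{ x \in \statespace : (\exists n \in \nats)(\forall y \in \statespace)~y \upreachda{n} x \}$. The task therefore reduces to showing that this set coincides with $\{ x \in \statespace : (\exists n \in \nats)(\forall k \geq n)(\forall y \in \statespace)~y \upreachda{k} x \}$.

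The inclusion $\supseteq$ is immediate: any $n$ witnessing membership in the right-hand set also witnesses membership in the left-hand set, via the specialization $k = n$. For the converse, I would fix some $x$ in the left-hand set with witness $n \in \nats$, and prove by induction on $k \geq n$ that $y \upreachda{k} x$ holds for every $y \in \statespace$. The base case $k = n$ is the hypothesis. For the inductive step, fix any $y \in \statespace$: by property \ref{def:AccesRelation:xyz}'s companion A3 (applied with $n = 1$), there exists some $z \in \statespace$ such that $y \upreachda{1} z$; by the inductive hypothesis applied to $z$, we have $z \upreachda{k} x$; and then transitivity \ref{def:AccesRelation:xyz} yields $y \upreachda{1+k} x$, as desired. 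Thus the same $n$ witnesses membership in the right-hand set.

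The argument is essentially a ``path padding'' trick: A3 guarantees that every state has at least one one-step successor, so any path of length $n$ to $x$ that is guaranteed from every starting state can be prepended by an arbitrary one-step arrow to produce a guaranteed path of length $n+1$, and this is iterated. I do not anticipate any real technical obstacle here; the only conceptual point worth flagging is the initial reinterpretation of $\statespacesub{PA}$ via $\upreachd$, which cleanly converts a statement about the non-linear operator $\uptranop^n$ into a combinatorial statement about accessibility that the three axioms for $\upreachda{\cdot}$ are tailor-made to handle.
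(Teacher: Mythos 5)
Your proof is correct. Note that the paper itself does not prove this proposition---it is imported verbatim from De~Cooman et al.\ (2009)---so there is no in-paper argument to compare against; your self-contained derivation is sound. The set equality is indeed the whole content (the non-emptiness clause then just restates the definition of top class regularity), the $\supseteq$ inclusion is the trivial specialisation $k=n$, and your induction for $\subseteq$ uses the two accessibility axioms exactly as they are meant to be used: A3 with one step supplies a successor $z$ of an arbitrary $y$, and \ref{def:AccesRelation:xyz} concatenates $y \upreachda{1} z$ with $z \upreachda{k} x$. One point worth making explicit, since it shows you chose the right variant of the padding trick: the prepending direction is essential. Appending a step at the end would require a state $w$ with $w \upreachda{1} x$ that is itself reachable from everywhere in $k$ steps, and the only candidate handed to you by the inductive hypothesis is $w = x$, i.e.\ it would need $[\uptranop \indic{x}](x) > 0$, which can fail. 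Prepending only needs \emph{some} one-step successor of each starting state, which A3 guarantees unconditionally.
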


\begin{lemma}
\label{lem:LowTranOp:TopClassRegularSpecialValues}
	If the lower transition operator $\lowtranop$ is top class regular, then for all $x \in \statespacesub{PA}$, all $y \in \statespacesub{PA}^{c}$ and all $k \in \nats$,
	\begin{align*}
		[\uptranop^k \indic{y}](x)
		&= 0
		& \text{and} & &
		[\lowtranop^k \indic{\statespacesub{PA}}](x)
		&= 1.
	\end{align*}
\end{lemma}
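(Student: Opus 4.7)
The plan is to prove the two equalities in order: first the vanishing statement $[\uptranop^{k} \indic{y}](x) = 0$, and then deduce $[\lowtranop^{k} \indic{\statespacesub{PA}}](x) = 1$ from it via the standard conjugacy trick.

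For the first equality, I would argue by contradiction: suppose there exist $x \in \statespacesub{PA}$, $y \in \statespacesub{PA}^{c}$ and $k \in \nats$ with $[\uptranop^{k} \indic{y}](x) > 0$, which by Definition~\ref{def:LowTranOp:PossiblyAccesible} means $x \upreachda{k} y$. The goal is to show that this forces $y \in \statespacesub{PA}$ (contradicting the choice of $y$), using the characterisation supplied by Proposition~\ref{prop:LowTranOp:AlternativeDefinitionOfTopClassRegularity}. Concretely, since $x \in \statespacesub{PA}$, there is some $n \in \nats$ such that $z \upreachda{m} x$ for all $m \geq n$ and all $z \in \statespace$. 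Setting $n' \coloneqq n + k$, for any $m' \geq n'$ we have $m' - k \geq n$, hence $z \upreachda{m'-k} x$ for every $z$; combining this with $x \upreachda{k} y$ via the transitivity property \ref{def:AccesRelation:xyz} of the accessibility relation yields $z \upreachda{m'} y$. Since this holds for all $m' \geq n'$ and all $z \in \statespace$, Proposition~\ref{prop:LowTranOp:AlternativeDefinitionOfTopClassRegularity} places $y$ in $\statespacesub{PA}$, which is the desired contradiction.

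For the second equality, I would first rewrite $\indic{\statespacesub{PA}} = 1 - \indic{\statespacesub{PA}^{c}}$ and use \ref{prop:LTO:AdditionOfConstant} together with conjugacy to obtain $\lowtranop^{k} \indic{\statespacesub{PA}} = 1 - \uptranop^{k} \indic{\statespacesub{PA}^{c}}$ (the identity $\lowtranop(1-g) = 1 - \uptranop g$ extends to $k$-fold compositions by a straightforward induction). It then suffices to show $[\uptranop^{k} \indic{\statespacesub{PA}^{c}}](x) = 0$ for $x \in \statespacesub{PA}$. Since $\indic{\statespacesub{PA}^{c}} = \sum_{y \in \statespacesub{PA}^{c}} \indic{y}$ and $\uptranop^{k}$ is subadditive (being the conjugate of the superadditive $\lowtranop^{k}$), the first part of the lemma gives
\[
    0 \leq [\uptranop^{k} \indic{\statespacesub{PA}^{c}}](x) \leq \sum_{y \in \statespacesub{PA}^{c}} [\uptranop^{k} \indic{y}](x) = 0,
\]
where the lower bound uses that $\indic{\statespacesub{PA}^{c}} \geq 0$ implies $\uptranop^{k} \indic{\statespacesub{PA}^{c}} \geq 0$ via \ref{prop:LTO:BoundedByMinAndMax}. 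This yields the desired value $1$.

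The only subtle step is the first one, where one must be careful with the direction of the accessibility relation (since $y \upreachda{n} x$ encodes reachability from $y$ to $x$, not the reverse). Everything else is routine bookkeeping based on conjugacy, subadditivity, and the alternative characterisation of $\statespacesub{PA}$ from Proposition~\ref{prop:LowTranOp:AlternativeDefinitionOfTopClassRegularity}.
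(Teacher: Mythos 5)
Your proof is correct and follows essentially the same route as the paper's: an ex-absurdo argument via Proposition~\ref{prop:LowTranOp:AlternativeDefinitionOfTopClassRegularity} and the transitivity property \ref{def:AccesRelation:xyz} for the first equality, and conjugacy together with the singleton decomposition of $\indic{\statespacesub{PA}^{c}}$ for the second. The only (harmless) difference is that you apply the singleton argument directly at level $k$, using the already-established first equality for all $k$, whereas the paper does it only for $k=1$ and then inducts on $k$ via monotonicity; your variant is marginally more direct.
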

\begin{proof}
	Let $\lowtranop$ be a top class regular lower transition operator with regular top class $\statespacesub{PA}$.
	We first prove the first equality.
	To this end, we fix some arbitrary $x \in \statespacesub{PA}$ and $y \in \statespacesub{PA}^{c}$.
	Assume ex-absurdo that there is some $k \in \nats$ such that $[\uptranop^{k} \indic{y}](x) > 0$.
	By Definition~\ref{def:LowTranOp:PossiblyAccesible}, this assumption is equivalent to $x \upreachda{k} y$.
	By Proposition~\ref{prop:LowTranOp:AlternativeDefinitionOfTopClassRegularity}, there is some $n \in \nats$ such that for all $n \leq \ell \in \nats$ and $z \in \statespace$, $z \upreachda{\ell} x$.
	As a consequence of \ref{def:AccesRelation:xyz}, we find that for all $z \in \statespace$, $z \upreachda{\ell+k} y$, which in turn implies that $y \in \statespacesub{PA}$.
	However, this obviously contradicts our initial assumption, such that for all $k \in \nats$, $[\uptranop^{k} \indic{y}](x) = 0$.

	Next, we prove the second statement.
	From the conjugacy of $\lowtranop$ and $\uptranop$ and \ref{prop:LTO:AdditionOfConstant}, it follows that
	\begin{align*}
		\lowtranop \indic{\statespacesub{PA}}
		&= - \uptranop (-\indic{\statespacesub{PA}})
		= 1 - \uptranop (1 - \indic{\statespacesub{PA}})
		= 1 - \uptranop \indic{\statespacesub{PA}^{c}}.
	\end{align*}
	From the conjugacy of $\lowtranop$ and $\uptranop$ and \ref{def:LTO:SuperAdditive}, it follows that
	\[
		\uptranop \indic{\statespacesub{PA}^{c}}
		\leq \sum_{z \in \statespacesub{PA}^{c}} \uptranop \indic{z}.
	\]
	From the---already proven---first equality of the statement, we know that $\sum_{z \in \statespacesub{PA}^{c}} [\uptranop \indic{z}](x) = 0$, hence
	\[
		[\lowtranop \indic{\statespacesub{PA}}](x) = 1 - [\uptranop \indic{\statespacesub{PA}^{c}}](x)
		\geq 1 - \sum_{z \in \statespacesub{PA}^{c}} [\uptranop \indic{z}](x) = 1.
	\]
	Note that by \ref{prop:LTO:BoundedByMinAndMax}, $[\lowtranop \indic{\statespacesub{PA}}](x) \leq \max \indic{\statespacesub{PA}} = 1$.
	By combining the two obtained inequalities, we find that the the second equality of the statement holds for $k = 1$: $[\lowtranop \indic{\statespacesub{PA}}](x) = 1$.
	Next, fix some $k > 1$, and assume that the second equality holds for all $1 \leq \ell \leq k-1$.
	Then by the induction hypothesis and \ref{prop:LTO:BoundedByMinAndMax}, $\indic{\statespacesub{PA}} \leq \lowtranop^{k-1} \indic{\statespacesub{PA}}$.
	By \ref{prop:LTO:Monotonicity}, this implies that $\lowtranop \indic{\statespacesub{PA}} \leq \lowtranop^{k} \indic{\statespacesub{PA}}$.
	As by the induction hypothesis $[\lowtranop \indic{\statespacesub{PA}}](x) = 1$, we find that $[\lowtranop^{k} \indic{\statespacesub{PA}}](x) \geq 1$.
	It immediately follows from \ref{prop:LTO:BoundedByMinAndMax} and \ref{prop:LTO:CompositionIsAlsoLTO} that $\lowtranop^k \indic{\statespacesub{PA}} \leq 1$.
	Hence, we have shown that $[\lowtranop^{k} \indic{\statespacesub{PA}}](x) = 1$, which finalises the proof.
\end{proof}

The following proposition is an altered statement of \cite[Proposition~6]{2012Hermans}.
\begin{proposition}
\label{prop:LowTranOp:TopClassAbsorbingWithRecursion}
	Let $\lowtranop$ be a top class regular lower transition operator.
	Then $\lowtranop$ is top class absorbing if and only if $B_{n} = \statespace$, where $\{ B_k \}_{k\in\natz}$ is the sequence defined by the initial condition $B_0 \coloneqq \statespacesub{PA}$ and, for all $k \in \natz$, by the recursive relation
	\[
		B_{k+1}
		\coloneqq B_{k} \cup \big\{ x \in \statespace \setminus B_{k} \colon [\lowtranop \indic{B_{k}}](x) > 0 \big\} = \left\{ x \in \statespace \colon [\lowtranop \indic{B_{k}}](x) > 0 \right\},
	\]
	and where $n \leq \card{\statespace\setminus\statespacesub{PA}}$ is the first index such that $B_{n} = B_{n+1}$.
\end{proposition}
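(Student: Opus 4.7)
The plan is to prove the stronger statement that $B_k = C_k := \{y \in \statespace \colon [\lowtranop^k \indic{\statespacesub{PA}}](y) > 0\}$ for every $k \in \natz$, from which the equivalence with the top-class-absorbing property will follow with little extra work. Indeed, the top-class-absorbing condition is exactly $\statespace = \bigcup_{k \in \natz} C_k$, so under the identification $B_k = C_k$ it reduces to $\bigcup_k B_k = \statespace$; since $\{B_k\}$ is a non-decreasing chain in the finite power set of $\statespace$, this is in turn equivalent to $B_n = \statespace$ at the first stabilization index $n$. The cardinality bound $n \leq \card{\statespace \setminus \statespacesub{PA}}$ then follows since each strict inclusion $B_k \subsetneq B_{k+1}$ adds at least one state to a chain that starts at $B_0 = \statespacesub{PA}$.

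Before invoking that identification, I would verify that the two defining expressions for $B_{k+1}$ coincide, i.e., that $B_k \subseteq \{x \in \statespace \colon [\lowtranop \indic{B_k}](x) > 0\}$. For $x \in \statespacesub{PA}$, Lemma~\ref{lem:LowTranOp:TopClassRegularSpecialValues} gives $[\lowtranop \indic{\statespacesub{PA}}](x) = 1$, and since $B_k \supseteq \statespacesub{PA}$, monotonicity \ref{prop:LTO:Monotonicity} lifts this to $[\lowtranop \indic{B_k}](x) \geq 1$. For $x \in B_j \setminus B_{j-1}$ with $1 \leq j \leq k$, the very definition of $B_j$ gives $[\lowtranop \indic{B_{j-1}}](x) > 0$, and monotonicity again yields $[\lowtranop \indic{B_k}](x) \geq [\lowtranop \indic{B_{j-1}}](x) > 0$.

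The heart of the argument is the inductive claim that there exist constants $\alpha_k > 0$ such that
\[
	\alpha_k \indic{B_k} \leq \lowtranop^k \indic{\statespacesub{PA}} \leq \indic{B_k},
\]
since agreement of supports then yields $B_k = C_k$ at once. The base case $k=0$ holds with $\alpha_0 = 1$. For the inductive step, writing $v_k := \lowtranop^k \indic{\statespacesub{PA}}$ and applying $\lowtranop$ to the induction hypothesis via non-negative homogeneity \ref{def:LTO:NonNegativelyHom} and monotonicity yields $\alpha_k \lowtranop \indic{B_k} \leq v_{k+1} \leq \lowtranop \indic{B_k}$. The function $\lowtranop \indic{B_k}$ lies in $[0,1]$ by \ref{prop:LTO:BoundedByMinAndMax}, is strictly positive exactly on $B_{k+1}$ by the second defining expression of $B_{k+1}$, and hence satisfies $\beta_{k+1} \indic{B_{k+1}} \leq \lowtranop \indic{B_k} \leq \indic{B_{k+1}}$ with $\beta_{k+1} := \min\{[\lowtranop \indic{B_k}](y) \colon y \in B_{k+1}\} > 0$. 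Chaining the two sandwiches closes the induction with $\alpha_{k+1} := \alpha_k \beta_{k+1}$.

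The main obstacle is precisely this inductive step: since $\lowtranop$ is only super-additive and not linear, ordinary linearity-based manipulations are unavailable, and the argument hinges on exploiting exactly the two properties \ref{def:LTO:NonNegativelyHom} and \ref{prop:LTO:Monotonicity} to transport the upper and lower envelopes of $v_k$ through the iteration. Non-negative homogeneity is what lets us pull the positive scalar $\alpha_k$ out of $\lowtranop$ and thereby preserve the lower envelope, a step that would silently fail without it.
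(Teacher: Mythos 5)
Your proof is correct, but it takes a genuinely different route from the paper's. The paper proves this proposition by reduction: it invokes Proposition~6 of \citet{2012Hermans} (top class absorption $\Leftrightarrow$ a complementary sequence $A_k$ shrinks to $\emptyset$), passes to complements $B_k = \statespace\setminus A_k$ using the conjugacy $[\uptranop\indic{A_k}](x)=1 \Leftrightarrow [\lowtranop\indic{\statespace\setminus A_k}](x)=0$, and then checks by induction that the union form of the recursion equals the support form. You instead give a self-contained argument directly from Definition~\ref{def:LowTranOp:RegularlyAbsorbing}, by proving the stronger identity $B_k = \{y\in\statespace \colon [\lowtranop^k\indic{\statespacesub{PA}}](y)>0\}$ via the sandwich $\alpha_k\indic{B_k}\leq\lowtranop^k\indic{\statespacesub{PA}}\leq\indic{B_k}$ with $\alpha_k>0$; all the steps check out — the base case, the use of \ref{def:LTO:NonNegativelyHom} and \ref{prop:LTO:Monotonicity} to transport the envelopes, the positivity of $\beta_{k+1}$ (guaranteed because $B_{k+1}\supseteq\statespacesub{PA}\neq\emptyset$ by top class regularity), and the reduction of the absorption condition to $B_n=\statespace$ via monotone stabilisation, including the bound $n\leq\card{\statespace\setminus\statespacesub{PA}}$. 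What your approach buys is independence from the external result of \citet{2012Hermans} plus, as a by-product, exactly the support characterisation and the sandwich bounds that the paper only establishes later, in Lemma~\ref{lem:LowTranOp:SignOfCompositionWithEvent} and the ``alternate, slightly extended version'' Lemma~\ref{lem:LowTranOp:StrongerNecAndSuffTopClassAbsorption}; in effect you prove that stronger lemma first and read the proposition off from it. What the paper's route buys is brevity at the level of this proposition, at the cost of an external dependency and of redoing essentially your argument later anyway.
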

\begin{proof}
	Let $\lowtranop$ be a top class regular lower transition operator with regular top class $\statespacesub{PA}$.
	By \cite[Proposition~6]{2012Hermans}, $\lowtranop$ is top class absorbing if and only if $A_n = \emptyset$, where $A_n$ is the set determined by the initial condition $A_0 \coloneqq \statespace\setminus\statespacesub{PA}$ and, for all $k \in \natz$, by the recursive relation
	\begin{align*}
		A_{n+1}
		\coloneqq \{ x \in A_{k} \colon [\uptranop \indic{A_{k}}](x) = 1 \},
	\end{align*}
	and where $n \leq \card{\statespace\setminus\statespacesub{PA}}$ is the first index for which $A_{n} = A_{n+1}$.
	For any $k \in \natz$,
	\[
		\uptranop \indic{A_{k}} = - \lowtranop (- \indic{A_{k}}) =1 - \lowtranop (1 - \indic{A_{k}}) = 1 - \lowtranop \indic{\statespace\setminus A_{k}},
	\]
	where the first equality follows from the conjugacy of $\lowtranop$ and $\uptranop$ and the second equality follows from \ref{prop:LTO:AdditionOfConstant}.
	Therefore, for all $x \in A_k$, $[\uptranop \indic{A_{k}}](x) = 1$ if and only if $[\lowtranop \indic{\statespace\setminus A_{k}}](x) = 0$.
	Observe that $A_{k+1} \subseteq A_{k}$ and define $B_{k} \coloneqq \statespace \setminus A_{k}$ for all $k \in \natz$.
	Note that for all $k \in \natz$, $B_{k} \subseteq B_{k+1}$ and
	\[
		B_{k+1} \setminus B_{k}
		= A_{k} \setminus A_{k+1}
		= \{ x \in A_{k} \colon [\lowtranop \indic{\statespace\setminus A_{k}}](x) > 0 \}
		= \{ x \in \statespace \setminus B_{k} \colon [\lowtranop \indic{B_{k}}](x) > 0 \}.
	\]
	Observe that $B_0 = \statespace \setminus A_0 = \statespacesub{PA}$ and by the previous equality, for all $k \in \natz$,
	\[
		B_{k+1}
		= B_{k} \cup \{ x \in \statespace \setminus B_{k} \colon [\lowtranop \indic{B_{k}}](x) > 0 \}.
	\]

	We now prove by induction that
	\[
		B_{k+1}
		= \left\{ x \in \statespace \colon [\lowtranop \indic{B_{k}}](x) > 0 \right\} ~\text{for all}~k \in \natz.
	\]
	First, we consider the case $k = 0$.
	Recall from Lemma~\ref{lem:LowTranOp:TopClassRegularSpecialValues} that $[\lowtranop \indic{\statespacesub{PA}}](x_0) > 0$ for all $x_0 \in \statespacesub{PA}$.
	Hence,
	\begin{align*}
		B_{1}
		&= B_{0} \cup \{ x \in \statespace \setminus B_{0} \colon [\lowtranop \indic{B_{0}}](x) > 0 \} \\
		&= \{ x \in B_{0} \colon [\lowtranop \indic{B_{0}}](x) > 0 \} \cup \{ x \in \statespace \setminus B_{0} \colon [\lowtranop \indic{B_{0}}](x) > 0 \} \\
		&= \{ x \in \statespace \colon [\lowtranop \indic{B_{0}}](x) > 0 \}.
	\end{align*}
	Next, we fix some $i \in \nats$ and assume that the equality holds for all $k < i$.
	We now prove that the equality then also holds for $k = i$.
	Observe that $B_{k-1} \subseteq B_{k}$ implies $\indic{B_{k-1}} \leq \indic{B_{k}}$, which by \ref{prop:LTO:Monotonicity} implies that $\lowtranop \indic{B_{k-1}} \leq \lowtranop \indic{B_{k}}$.
	Therefore, for all $x\in B_{k}$, since the induction hypothesis implies that $[\lowtranop \indic{B_{k-1}}](x) > 0$, we find $[\lowtranop \indic{B_{k}}](x) \geq [\lowtranop \indic{B_{k-1}}](x) > 0$.
	Hence,
	\begin{align*}
		B_{k+1}
		&= B_{k} \cup \{ x \in \statespace \setminus B_{k} \colon [\lowtranop \indic{B_{k}}](x) > 0 \} \\
		&= \{ x \in B_{k} \colon [\lowtranop \indic{B_{k}}](x) > 0 \} \cup \{ x \in \statespace \setminus B_{k} \colon [\lowtranop \indic{B_{k}}](x) > 0 \} \\
		&= \{ x \in \statespace \colon [\lowtranop \indic{B_{k}}](x) > 0 \}. \qedhere
	\end{align*}
\end{proof}

The observant reader might have noticed that our definitions of top class regularity and top class absorption differ slightly from those in \cite{2012Hermans}, but they are actually entirely equivalent.
For top class regularity, we demand that there is some $n\in\nats$ such that $\uptranop^n \indic{x} > 0$.
By \ref{prop:LTO:BoundedByMinAndMax}, for any $k \geq n$ it then holds that $\smash{\uptranop^k \indic{x} > 0}$, which is what \citet{2012Hermans} demand.
For top class absorption, \citet{2012Hermans} demand that
\[
	(\forall y \in \statespacesub{PA}^{c})(\exists n\in\nats)~[\uptranop^n \indic{\statespacesub{PA}^{c}}](y) < 1,
\]
where $\statespacesub{PA}^{c}\coloneqq\statespace\setminus\statespacesub{PA}$.
Note that $[\uptranop^n \indic{\statespacesub{PA}^{c}}](y) = 1 - [\lowtranop^n \indic{\statespacesub{PA}}](y)$, such that their demand is equivalent to our demand
\[
	(\forall y \in \statespacesub{PA}^{c})(\exists n\in\nats)~[\lowtranop^n \indic{\statespacesub{PA}}](y) > 0.
\]
By Lemma~\ref{lem:LowTranOp:TopClassRegularSpecialValues}, for all $n\in\nats$ and all $y\in\statespacesub{PA}$, $[\lowtranop^n \indic{\statespacesub{PA}}](y) > 0$, such that we could actually demand that
\[
	(\forall y \in \statespace)(\exists n\in\nats)~[\lowtranop^n \indic{\statespacesub{PA}}](y) > 0.
\]

	\subsection{Qualitatively characterising ergodicity of lower transition rate operators}
We now turn to the ergodicity of imprecise continuous-time Markov chains.
A first and thorough study of the quantitative aspects concerning ergodicity was conducted by \citet{2017DeBock}.
We only recall the definitions and results from \citep{2017DeBock} that will be relevant to us in the remainder.

\begin{definition}
\label{def:LowRateOp:UpperReachable}
	A state $x\in\statespace$ is upper reachable from the state $y\in\statespace$, denoted by $y \upreachc x$, if (i) $x = y$, or (ii) there is some sequence $y=x_0, \dots, x_{n}=x$ in $\statespace$ of length $n+1 \geq 2$ such that for all $k\in\{1,\dots,n\}$, $[\uprateop \indic{x_{k}}](x_{k-1}) > 0$.
\end{definition}
Note that a state $x$ is always upper reachable from itself!
Rather remarkably, this definition of upper reachability is strikingly similar to the alternative condition of Lemma~\ref{lem:LowTranOp:PossiblyAccesibleSequence} for possible accessibility.
The links between these two definition will be made more explicit later.

\begin{lemma}
\label{lem:LowRateOp:UpperReachableShorterSequence}
	Let $\lowrateop$ be a lower rate operator, and $x,y\in\statespace$ such that $x \neq y$.
	Then $x$ is upper reachable from $y$ if and only if there is some sequence $y=x_0, \dots, x_{n}=x$ in $\statespace$ in which every state occurs at most once and for all $k\in\{1,\dots,n\}$, $[\uprateop \indic{x_{k}}](x_{k-1}) > 0$.
	Consequently, $n < \card{\statespace}$.
\end{lemma}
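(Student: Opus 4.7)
The proof plan is to establish the forward implication by the trivial observation that a sequence with each state at most once is a particular instance of the sequences allowed by Definition~\ref{def:LowRateOp:UpperReachable}(ii), since we are in the case $x \neq y$. The content is therefore in the other direction, and I would handle it by a loop-excision argument that is standard for paths in directed graphs.

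For the ``only if'' direction, assume $x$ is upper reachable from $y$ with $x \neq y$. Since case~(i) of Definition~\ref{def:LowRateOp:UpperReachable} is ruled out by $x\neq y$, case~(ii) gives some sequence $y = x_0, x_1, \dots, x_n = x$ of length $n+1 \geq 2$ such that $[\uprateop \indic{x_{k}}](x_{k-1}) > 0$ for all $k \in \{1,\dots,n\}$. Among all such sequences, fix one of minimal length. The plan is to show that in this minimal sequence every state occurs at most once.

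Suppose for contradiction that some state appears twice, say $x_i = x_j$ with $0 \leq i < j \leq n$. Consider the truncated sequence $y = x_0, \dots, x_i, x_{j+1}, \dots, x_n = x$, obtained by excising $x_{i+1}, \dots, x_j$. This sequence is strictly shorter than the original, it still starts in $y$ and ends in $x$, and the inequalities $[\uprateop \indic{x_k}](x_{k-1})>0$ are preserved for the indices that remain unchanged. The only new adjacency occurs at the splice point, where we need $[\uprateop \indic{x_{j+1}}](x_i) > 0$; but since $x_i = x_j$, this quantity equals $[\uprateop \indic{x_{j+1}}](x_j)$, which is strictly positive by the hypothesis on the original sequence. (If $j=n$, then $x_i = x_j = x_n = x$, and the truncated sequence $y=x_0,\dots,x_i=x$ still ends in $x$ and is valid; since $x\neq y$, its length is at least $2$.) This contradicts the minimality of the original sequence.

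Hence every state occurs at most once in the minimal sequence, so it contains at most $\card{\statespace}$ distinct entries; thus $n + 1 \leq \card{\statespace}$, which gives $n < \card{\statespace}$. The only subtlety in executing this plan is the bookkeeping at the splice—in particular ensuring the case $j = n$ still yields a valid sequence of length at least $2$—but this is handled by the argument above.
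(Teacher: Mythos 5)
Your proof is correct and uses essentially the same loop-excision idea as the paper: the paper iteratively deletes the segment between the first and last occurrence of any repeated state, while you phrase the same excision as a minimality-plus-contradiction argument, and you verify the splice adjacency (which the paper leaves implicit). Both routes give the same bound $n+1\leq\card{\statespace}$.
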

\begin{proof}
	The forward implication follows almost immediately from Definition~\ref{def:LowRateOp:UpperReachable}.
	Assume that $y \upreachc x$, then by Definition~\ref{def:LowRateOp:UpperReachable} there is some sequence $y=x_0, \dots, x_{n} = x$ in $\statespace$ such that for all $k\in\{1,\dots,n\}$, $[\uprateop \indic{x_{k}}](x_{k-1}) > 0$.
	Assume that there is a state $z \in \statespace$ that occurs more than once in this sequence.
	Then we can simply delete every element of the sequence from right after the the first occurrence of $z$ up to and including the last occurrence of $z$, and still have a valid sequence.
	If we continue this way, then we end up with a sequence in which every state occurs at most once.
	As every state occurs at most once, the length $n+1$ of the sequence is lower than or equal to $\card{\statespace}$.
	Consequently, $n < \card{\statespace}$.

	The reverse implication follows from the fact that the requirements if Definition~\ref{def:LowRateOp:UpperReachable} are trivially satisfied.
\end{proof}

\begin{lemma}
\label{lem:LowRateOp:PathOfArbitraryLength}
	Let $\lowrateop$ be a lower transition rate operator, and $x,y \in \statespace$ such that $y \upreachc x$.
	Then there is an integer $n < \card{\statespace}$ such that for all $k \geq n$ and all $\delta_1, \dots, \delta_k \in \strictlyposreals$ such that $\delta_{i} \norm{\lowrateop} < 2$ for all $i \in \{ 1, \dots, k \}$, there is a sequence $y = x_0, \dots, x_{k} = x$ in $\statespace$ such that $[(I + \delta_{i} \uprateop) \indic{x_{i}}](x_{i-1}) > 0$ for all $i \in \{ 1, \dots, k \}$.
\end{lemma}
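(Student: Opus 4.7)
The plan is to reduce the problem to padding a minimal-length upper-reachability path with self-loops at the initial state $y$. First, by Definition~\ref{def:LowRateOp:UpperReachable} and Lemma~\ref{lem:LowRateOp:UpperReachableShorterSequence}, there exists a sequence $y = z_0, z_1, \dots, z_m = x$ in $\statespace$ with $m < \card{\statespace}$ such that $[\uprateop \indic{z_j}](z_{j-1}) > 0$ for all $j \in \{1, \dots, m\}$ (when $x = y$, take $m = 0$ and the trivial single-element sequence). I would set $n \coloneqq m$.

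Next, for any $k \geq n$ and any $\delta_1, \dots, \delta_k \in \strictlyposreals$ with $\delta_i \norm{\lowrateop} < 2$, I would construct the desired sequence by defining $x_i \coloneqq z_0 = y$ for $i \in \{0, 1, \dots, k-m\}$ and $x_i \coloneqq z_{i-(k-m)}$ for $i \in \{k-m, \dots, k\}$. This gives $x_0 = y$ and $x_k = z_m = x$ as required, and splits the $k$ transitions into $k-m$ self-loops at $y$ followed by the $m$ path transitions.

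For the self-loop transitions, where $x_i = x_{i-1} = y$, I would use that
\[
	[(I + \delta_i \uprateop) \indic{y}](y) = 1 + \delta_i [\uprateop \indic{y}](y) \geq 1 - \delta_i \frac{\norm{\lowrateop}}{2} > 0,
\]
where the first inequality follows from \ref{prop:LTRO:Ixx} and the second from $\delta_i \norm{\lowrateop} < 2$. For the path transitions, where $x_i = z_j$ and $x_{i-1} = z_{j-1}$ are distinct states,
\[
	[(I + \delta_i \uprateop) \indic{z_j}](z_{j-1}) = \indic{z_j}(z_{j-1}) + \delta_i [\uprateop \indic{z_j}](z_{j-1}) = \delta_i [\uprateop \indic{z_j}](z_{j-1}) > 0,
\]
which is strictly positive because $\delta_i > 0$ and $[\uprateop \indic{z_j}](z_{j-1}) > 0$ by the choice of the path.

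There is no serious obstacle here; the only points requiring care are handling the degenerate case $x = y$ (where $m = 0$ and all $k$ transitions are self-loops), and ensuring that the self-loops strictly satisfy the positivity condition — this is precisely why the hypothesis uses the strict bound $\delta_i \norm{\lowrateop} < 2$ rather than the non-strict bound $\delta_i \norm{\lowrateop} \leq 2$ that appears in Proposition~\ref{prop:IPlusDeltaQLowTranOp}.
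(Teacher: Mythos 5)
Your proof is correct and follows essentially the same route as the paper's: extract a repetition-free path of length $m<\card{\statespace}$ via Lemma~\ref{lem:LowRateOp:UpperReachableShorterSequence}, pad it with self-loops whose positivity is guaranteed by \ref{prop:LTRO:Ixx} together with $\delta_i\norm{\lowrateop}<2$, and use $\delta_i>0$ plus distinctness of consecutive path states for the remaining transitions. The only (immaterial) difference is that you place the self-loops at $y$ at the start of the sequence, whereas the paper appends them at $x$ at the end.
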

\begin{proof}
	We first consider the special case $x = y$.
	For all $\delta \in \strictlyposreals$ such that $\delta \norm{\lowrateop} < 2$,
	\[
		[(I + \delta \uprateop) \indic{x}](x)
		= \indic{x}(x) + \delta [\uprateop \indic{x}](x)
		= 1 + \delta [\uprateop \indic{x}](x)
		> 0,
	\]
	where the inequality follows from \ref{prop:LTRO:Ixx}.
	Therefore, for all $k \in \nats$ and all $\delta_1, \dots, \delta_k \in \strictlyposreals$ such that for all $i \in \{ 1, \dots, k \}$, $\delta_{i} \norm{\lowrateop} < 2$, we find that $[(I + \delta_{i} \uprateop)\indic{x}](x) > 0$ for all $i \in \{ 1, \dots, k \}$.

	Next, we consider the case $y \neq x$.
	From Lemma~\ref{lem:LowRateOp:UpperReachableShorterSequence} we know that there is a sequence $S_y \coloneqq (y = x_0, \dots, x_{n} = x)$ in $\statespace$ such that every state occurs at most once---i.e. $n < \card{\statespace}$---and for all $i \in \{ 1,\dots,n \}$, $[\uprateop \indic{x_{i}}](x_{i-1}) > 0$.
	We fix an arbitrary $k \geq n$ and an arbitrary sequence $\delta_1, \dots, \delta_{k}$ in $\strictlyposreals$ such that for all $i \in \{ 1, \dots, k \}$, $\delta_{i} \norm{\lowrateop} < 2$.
	Note that for all $i \in \{ 1, \dots, n \}$,
	\[
		0 < \delta_i [\uprateop \indic{x_{i}}](x_{i-1})
		= \indic{x_{i}}(x_{i-1}) + \delta_{i} [\uprateop \indic{x_{i}}](x_{i-1})
		= [(I + \delta_i \uprateop) \indic{x_{i}}](x_{i-1}),
	\]
	where the inequality follows from $0 < \delta_i$ and the first equality is true because---by construction---$x_{i} \neq x_{i-1}$.
	Also, from the previous we know that for all $i \in \{ n+1, \dots, k \}$, $[(I + \delta_{i} \uprateop)\indic{x}](x) > 0$.
	Hence, appending the sequence $S_y$ with $(k - n)$ times $x$ yields a sequence $y = x_0, \dots, x_{k} = x$ in $\statespace$ such that for all $i \in \{ 1, \dots, k \}$, $[(I + \delta_{i} \uprateop) \indic{x_{i}}](x_{i-1}) > 0$.
\end{proof}

\begin{definition}
\label{def:LowRateOp:LowerReachable}
	A (non-empty) set of states $A \subseteq \statespace$ is lower reachable from the state $x$, denoted by $x \lowreachc A$, if $x \in B_n$, where $\{ B_k \}_{k \in \natz}$ is the sequence that is defined by the initial condition $B_0 \coloneqq A$ and for all $k \in \natz$ by the recursive relation
		\[
			B_{k+1}
			\coloneqq B_{k} \cup \left\{ y \in \statespace \setminus B_{k} \colon [\lowrateop \indic{B_{k}}](y) > 0 \right\},
		\]
		and $n \leq \card{\statespace \setminus A}$ is the first index for which $B_{k} = B_{k+1}$.
\end{definition}
Again, remark the striking similarity between Definition~\ref{def:LowRateOp:LowerReachable} and Proposition~\ref{prop:LowTranOp:TopClassAbsorbingWithRecursion}.

\begin{definition}
\label{def:LowRateOp:RegularlyAbsorbing}
	A lower transition rate operator $\lowrateop$ is \emph{regularly absorbing} if it is (i) \emph{top class regular}, i.e.
	\[
		\statespacesub{R}
		\coloneqq \left\{ x \in \statespace \colon (\forall y \in \statespace)~y \upreachc x \right\} \neq 0,
	\]
	and (ii) \emph{top class absorbing}, i.e.
	\[
		(\forall y \in \statespace\setminus\statespacesub{R})~y \lowreachc \statespace_R.
	\]
\end{definition}

\begin{theorem}[Theorem~19 in \citep{2017DeBock}]
\label{the:ContinuousErgodicity:NecessaryAndSufficient}
	A lower transition rate operator $\lowrateop$ is ergodic if and only if it is regularly absorbing.
\end{theorem}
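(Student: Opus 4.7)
The plan is to reduce this continuous-time equivalence to the discrete-time equivalence of Proposition~\ref{prop:DiscreteErgodicity:NecAndSuff}, using the approximating lower transition operators $\Phi(\delta_1, \dots, \delta_k)$ as a bridge. This is the natural route because, unlike $\lowtranopa{t}$, the operators $\Phi(\delta_1, \dots, \delta_k)$ are genuine lower transition operators built directly from $\lowrateop$, so the discrete reachability definitions can be related to the continuous ones essentially state by state.

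First I would invoke Theorem~\ref{the:ContinuousErgodicity:CoefficientOfErgodicityOfApproximation}, which tells us that $\lowrateop$ is ergodic if and only if there is some $n<\card{\statespace}$ such that $\coefferga{\Phi(\delta_1,\dots,\delta_k)}<1$ for one (hence every) $k\geq n$ and one (hence every) valid step sequence with $\delta_i\norm{\lowrateop}<2$. Next, I would prove the general fact that a lower transition operator $\lowtranop$ is ergodic if and only if $\coefferga{\lowtranop^m}<1$ for some $m\in\nats$: the forward direction uses that $\norm{\lowtranop^n f}_v\to 0$ for every $f$ in the compact set $\{0\leq f\leq 1\}$, while the backward direction uses $\coefferga{\lowtranop^m}<1$ to show $\norm{\lowtranop^{km}f}_v\to 0$ geometrically, then interpolates to all iterates via the non-expansiveness in \ref{prop:LTO:VarNormTf}. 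Combined with Proposition~\ref{prop:DiscreteErgodicity:NecAndSuff}, this reduces the problem to: $\Phi(\delta_1,\dots,\delta_k)$ is regularly absorbing (in the discrete sense) iff $\lowrateop$ is regularly absorbing (in the continuous sense).

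For the remaining equivalence I would argue the two halves of regular absorption separately. For top class regularity, Lemma~\ref{lem:LowRateOp:PathOfArbitraryLength} already aligns the two notions: for every pair $x,y$ with $y\upreachc x$ and every sufficiently large $k$ and valid $\delta_i$, one obtains a sequence along which every factor $(I+\delta_i\uprateop)\indic{x_i}$ is strictly positive at $x_{i-1}$, and by Lemma~\ref{lem:LowTranOp:PossiblyAccesibleSequence} this is exactly $y\upreachda{1}x$ under $\Phi$. Conversely, a positive entry in the expansion of $\Phi$ forces at least one of the factors $\delta_i[\uprateop\indic{x_i}](x_{i-1})$ to be strictly positive along some path, yielding $y\upreachc x$. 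For top class absorption, the recursive construction in Definition~\ref{def:LowRateOp:LowerReachable} has to be matched with the recursion in Proposition~\ref{prop:LowTranOp:TopClassAbsorbingWithRecursion}. The key pointwise observation is that for $y\notin B$ and $0<\delta\norm{\lowrateop}<2$, one has $[(I+\delta\lowrateop)\indic{B}](y)>0$ iff $[\lowrateop\indic{B}](y)>0$; together with monotonicity (\ref{prop:LTO:Monotonicity}) and the fact that $\Phi$ is a product of $k$ such factors, this lets me show that the sets $B_j$ produced by the two recursions stabilise to the same limit.

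The main obstacle is this last alignment of the absorption recursions: because $\Phi$ is a $k$-fold composition rather than a single step, the discrete set $B_j$ grows by a whole block of rate-operator steps per iteration, and one has to verify (by induction on $j$, using $\delta_i\norm{\lowrateop}<2$ and \ref{def:LTRO:Sign}) that one iteration of $\Phi$ can reach exactly the states that are reachable in at most $k$ steps of the continuous recursion, and no others. Once this is established, top class absorption of $\Phi$ and top class absorption of $\lowrateop$ both reduce to the condition $B_n=\statespace$ for $n\leq\card{\statespace\setminus\statespacesub{R}}$, completing the chain of equivalences.
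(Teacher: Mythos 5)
The paper does not actually prove this statement: it is imported verbatim as Theorem~19 of \citep{2017DeBock}, and the appendix treats it as a given throughout. So there is no in-paper proof to compare yours against; the relevant question is whether your argument is sound relative to the results this paper does establish.

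It is not, because of a circularity. Your very first step invokes Theorem~\ref{the:ContinuousErgodicity:CoefficientOfErgodicityOfApproximation} to convert ergodicity of $\lowrateop$ into the condition $\coefferga{\Phi(\delta_1,\dots,\delta_k)}<1$. But the paper's proof of that theorem uses Theorem~\ref{the:ContinuousErgodicity:NecessaryAndSufficient} in both directions: in the forward direction it assumes $\lowrateop$ is ergodic and immediately passes to the regularly absorbing structure ($x^*\in\statespacesub{R}$, $\statespacesub{R}^c\subseteq B_k$) ``by Theorem~\ref{the:ContinuousErgodicity:NecessaryAndSufficient}'', and in the reverse direction it concludes ergodicity from regular absorption ``as an immediate consequence of Theorem~\ref{the:ContinuousErgodicity:NecessaryAndSufficient}''. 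Hence your chain (ergodic $\Leftrightarrow$ $\coefferga{\Phi}<1$ $\Leftrightarrow$ $\Phi$ ergodic $\Leftrightarrow$ $\Phi$ regularly absorbing $\Leftrightarrow$ $\lowrateop$ regularly absorbing) presupposes its own conclusion at the first link. The remaining links are fine: Theorem~\ref{the:CoeffOfErg:StrictlySmallerThanOneIsNecAndSuffForErg} and Proposition~\ref{prop:DiscreteErgodicity:NecAndSuff} are external discrete-time results, and your alignment of the reachability and absorption recursions is essentially what the paper itself carries out inside the reverse implication of Theorem~\ref{the:ContinuousErgodicity:CoefficientOfErgodicityOfApproximation}. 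What is missing --- and what constitutes the real content of the theorem --- is an independent argument transferring the continuous-time limit behaviour of $\lowtranopa{t}$ to the discrete operators: for instance, that $\lim_{t\to\infty}\lowtranopa{t}f$ being constant for all $f$ forces $\coefferga{\Phi(\delta_1,\dots,\delta_k)^m}<1$ for some $m$, or alternatively forces $\coefferga{\lowtranopa{s}}<1$ for some $s>0$ together with a sign analysis showing $[\uptranopa{s}\indic{x}](y)>0$ if and only if $y\upreachc x$. That is the route taken in \citep{2017DeBock}, and it cannot be replaced by an appeal to Theorem~\ref{the:ContinuousErgodicity:CoefficientOfErgodicityOfApproximation} as stated in this paper.
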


Not surprisingly, these necessary and sufficient conditions for the ergodicity of lower rate matrices are rather similar to the necessary and sufficient conditions for ergodicity of lower transition operators given in Proposition~\ref{prop:DiscreteErgodicity:NecAndSuff}.

	\section{Extra material and proofs for Section~\ref{sec:ergodicity}}
\label{app:ergodicity}
Before we give any proofs, we first define the coefficient of ergodicity of an upper transition operator $\uptranop$:
\begin{equation}
\label{eqn:CoeffOfErgod:UpTranOpDefinition}
	\coefferga{\uptranop}
	\coloneqq \max \{ \norm{\uptranop f}_{v} \colon f \in \setoffna, 0 \leq f \leq 1 \}.
\end{equation}
\begin{proposition}
\label{prop:CoeffOfErgod:Properties}
	Let $\lowtranop$ and $\underline{S}$ be lower transition operators.
	For any $f \in \setoffna$,
	\begin{enumerate}[twocol, label=C\arabic*:, ref=(C\arabic*), series=CoeffErg]
		\item \label{prop:CoeffOfErgod:Bounds}
			$0 \leq \coefferga{\lowtranop} \leq 1$,
		\item \label{prop:CoeffOfErgod:BoundOnNormTf}
			$\norm{\lowtranop f}_{v} \leq \coefferga{\lowtranop} \norm{f}_{v}$,
		\item \label{prop:CoeffOfErgod:LowerEqualsUpper}
			$\coefferga{\uptranop} = \coefferga{\lowtranop}$,
		\item \label{prop:CoeffOfErgod:Composition}
			$\coefferga{\lowtranop \, \underline{S}} \leq \coefferga{\lowtranop} \coefferga{\underline{S}}$,
	\end{enumerate}
\end{proposition}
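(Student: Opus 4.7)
The plan is to prove the four properties in the order they are stated, starting from the definition \eqref{eqn:CoeffOfErgod} and the established properties of lower transition operators collected in Proposition~\ref{prop:LowerTransitionOperator:Properties}.

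For \ref{prop:CoeffOfErgod:Bounds}, the lower bound is immediate since $\norm{\cdot}_{v}$ takes values in $\nonnegreals$. For the upper bound, I would use \ref{prop:LTO:BoundedByMinAndMax}: if $0 \leq f \leq 1$, then $0 \leq \min f \leq \lowtranop f \leq \max f \leq 1$, so $\norm{\lowtranop f}_{v} \leq 1$, and the maximum over all such $f$ in the definition of $\coefferga{\lowtranop}$ is therefore at most one.

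For \ref{prop:CoeffOfErgod:BoundOnNormTf}, I would handle the trivial case $\norm{f}_{v} = 0$ separately (then $f$ is constant, so $\lowtranop f = f$ by \ref{prop:LTO:BoundedByMinAndMax} and both sides vanish) and otherwise normalise by setting $g \coloneqq (f - \min f)/\norm{f}_{v}$, which satisfies $0 \leq g \leq 1$. Using \ref{prop:LTO:AdditionOfConstant} and \ref{def:LTO:NonNegativelyHom}, one computes $\lowtranop g = (\lowtranop f - \min f)/\norm{f}_{v}$, and then \ref{prop:norm:VarAddConstant} gives $\norm{\lowtranop g}_{v} = \norm{\lowtranop f}_{v}/\norm{f}_{v}$. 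Since $\norm{\lowtranop g}_{v} \leq \coefferga{\lowtranop}$ by definition, rearranging gives the desired bound. For \ref{prop:CoeffOfErgod:LowerEqualsUpper}, I would substitute $\uptranop f = - \lowtranop(-f)$ into \eqref{eqn:CoeffOfErgod:UpTranOpDefinition} and observe that $\norm{-h}_{v} = \norm{h}_{v}$ for all $h$. Writing $-f = (1 - f) - 1$ and again using \ref{prop:LTO:AdditionOfConstant} together with \ref{prop:norm:VarAddConstant}, the expression becomes $\max\{\norm{\lowtranop(1 - f)}_{v} \colon 0 \leq f \leq 1\}$; since the change of variable $g \coloneqq 1 - f$ is a bijection of $[0,1]$-valued gambles onto itself, this maximum equals $\coefferga{\lowtranop}$.

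Finally, for \ref{prop:CoeffOfErgod:Composition} I would fix $f$ with $0 \leq f \leq 1$ and apply \ref{prop:CoeffOfErgod:BoundOnNormTf} twice: first $\norm{\lowtranop(\underline{S} f)}_{v} \leq \coefferga{\lowtranop} \norm{\underline{S} f}_{v}$, then $\norm{\underline{S} f}_{v} \leq \coefferga{\underline{S}} \norm{f}_{v} \leq \coefferga{\underline{S}}$, where the last step uses $\norm{f}_{v} \leq 1$ for such $f$. Taking the maximum over $f$ yields $\coefferga{\lowtranop\, \underline{S}} \leq \coefferga{\lowtranop} \coefferga{\underline{S}}$. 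None of these steps look genuinely difficult; the main subtlety is to carry out the normalisation in \ref{prop:CoeffOfErgod:BoundOnNormTf} carefully enough that the chain through \ref{prop:CoeffOfErgod:LowerEqualsUpper} and \ref{prop:CoeffOfErgod:Composition} goes through by reuse, avoiding any accidental reliance on linearity that lower transition operators do not enjoy.
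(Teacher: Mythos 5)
Your proposal is correct and follows essentially the same route as the paper's proof: \ref{prop:CoeffOfErgod:Bounds} from \ref{prop:LTO:BoundedByMinAndMax}, the normalisation $g=(f-\min f)/\norm{f}_{v}$ via \ref{prop:LTO:AdditionOfConstant}, \ref{def:LTO:NonNegativelyHom} and \ref{prop:norm:VarAddConstant} for \ref{prop:CoeffOfErgod:BoundOnNormTf}, conjugacy plus the bijection $f\mapsto 1-f$ for \ref{prop:CoeffOfErgod:LowerEqualsUpper}, and \ref{prop:CoeffOfErgod:BoundOnNormTf} applied to the composition for \ref{prop:CoeffOfErgod:Composition}. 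The only cosmetic differences are that you run the chain for \ref{prop:CoeffOfErgod:LowerEqualsUpper} in the opposite direction and invoke \ref{prop:CoeffOfErgod:BoundOnNormTf} a second time in \ref{prop:CoeffOfErgod:Composition} where the paper simply recognises the definition of $\coefferga{\underline{S}}$ after taking the maximum.
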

\begin{proof}
	\begin{enumerate}[label=C\arabic*:]
		\item
			Follows immediately from \ref{prop:LTO:BoundedByMinAndMax}.
		\item
			If $\norm{f}_{v} = 0$, then by \ref{prop:LTO:BoundedByMinAndMax} $\norm{\lowtranop f}_{v} = 0$, such that the stated holds.
			Therefore, we now assume---without loss of generality---that $\norm{f}_{v} > 0$.
			Note that $0 \leq (f - \min{f})/\norm{f}_{v} \leq 1$.
			Combining this with---in that order---\ref{prop:norm:VarAddConstant}, \ref{prop:LTO:AdditionOfConstant}, \ref{def:LTO:NonNegativelyHom}, \ref{def:Norm:ScalarMult} and Eqn.~\eqref{eqn:CoeffOfErgod}, we find that
			\begin{align*}
				\norm{\lowtranop f}_{v}
				&= \norm{\lowtranop f - \min{f}}_{v}
				= \norm{\lowtranop (f - \min{f})}_{v}
				= \norm{\norm{f}_{v} \lowtranop \left(\frac{f - \min{f}}{\norm{f}_{v}}\right)}_{v} \\
				&= \norm{\lowtranop \left(\frac{f - \min{f}}{\norm{f}_{v}}\right) }_{v} \norm{f}_{v} \\
				&\leq \coefferga{\lowtranop} \norm{f}_{v}.
			\end{align*}
		\item
			By Eqn.~\eqref{eqn:CoeffOfErgod},
			\begin{align*}
				\coefferga{\lowtranop}
				&= \max \left\{ \norm{\lowtranop f}_{v} \colon f \in \setoffna, 0 \leq f \leq 1 \right\} \\
				&= \max \left\{ \norm{1 - \lowtranop f}_{v} \colon f \in \setoffna, 0 \leq f \leq 1 \right\} \\
				&= \max \left\{ \norm{1 + \uptranop (-f)}_{v} \colon f \in \setoffna, 0 \leq f \leq 1 \right\} \\
				&= \max \left\{ \norm{\uptranop (1 - f)}_{v} \colon f \in \setoffna, 0 \leq f \leq 1 \right\} \\
				&= \max \left\{ \norm{\uptranop g}_{v} \colon g \in \setoffna, 0 \leq g \leq 1 \right\} \\
				&= \coefferga{\uptranop},
			\end{align*}
			where the second equality follows from \ref{prop:norm:VarAddConstant}, the third equality follows from the conjugacy of $\lowtranop$ and $\uptranop$, the fourth equality follows from \ref{prop:LTO:AdditionOfConstant}, the fifth equality follows from the fact that $0 \leq f \leq 1$ if and only if $0 \leq 1 - f \leq 1$, and the final equality follows from Eqn.~\eqref{eqn:CoeffOfErgod:UpTranOpDefinition}.
		\item
			By Eqn.~\eqref{eqn:CoeffOfErgod} and \ref{prop:CoeffOfErgod:BoundOnNormTf},
			\begin{align*}
				\coefferga{\lowtranop \, \underline{S}}
				&= \max \{ \norm{\lowtranop \, \underline{S} f}_{v} \colon f \in \setoffna, 0 \leq f \leq 1 \} \\
				&\leq \max \{ \coefferga{\lowtranop} \norm{\underline{S} f}_{v} \colon f \in \setoffna, 0 \leq f \leq 1 \} \\
				&= \coefferga{\lowtranop} \max \{ \norm{\underline{S} f}_{v} \colon f \in \setoffna, 0 \leq f \leq 1 \} = \coefferga{\lowtranop} \coefferga{\underline{S}}. \qedhere
			\end{align*}
	\end{enumerate}
\end{proof}

Theorem~21 in \cite{2013Skulj} highlights the usefulness of the coefficient of ergodicity.
\begin{theorem}[Theorem~21 in \citep{2013Skulj}]
\label{the:CoeffOfErg:StrictlySmallerThanOneIsNecAndSuffForErg}
	A lower transition operator $\lowtranop$ is ergodic if and only if there is some $k\in\nats$ such that $\coefferga{\lowtranop^k} < 1$.
\end{theorem}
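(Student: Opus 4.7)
The plan is to prove the two implications separately, exploiting the submultiplicativity of $\coefferg$ and the compactness of the unit cube in $\setoffna$.

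For sufficiency, suppose that $\beta \coloneqq \coefferga{\lowtranop^{k}} < 1$ for some $k \in \nats$. Using \ref{prop:CoeffOfErgod:Composition} and the bound $\coefferga{\lowtranop^{r}} \leq 1$ from \ref{prop:CoeffOfErgod:Bounds}, I would write any $n \in \nats$ as $n = q k + r$ with $0 \leq r < k$ to deduce $\coefferga{\lowtranop^{n}} \leq \beta^{q}$, which tends to $0$. Fix any $f \in \setoffna$. By \ref{prop:CoeffOfErgod:BoundOnNormTf} this forces $\norm{\lowtranop^{n} f}_{v} \to 0$. Next, iterating \ref{prop:LTO:BoundedByMinAndMax} shows that $\min \lowtranop^{n} f$ is non-decreasing and bounded above while $\max \lowtranop^{n} f$ is non-increasing and bounded below; both sequences therefore converge, and since their difference $\norm{\lowtranop^{n} f}_{v}$ tends to $0$, they share a common limit $c \in \reals$. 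Because $\lowtranop^{n} f$ is sandwiched pointwise between these two scalar sequences, $\lowtranop^{n} f \to c$ uniformly, so $\lowtranop$ is ergodic.

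For necessity, suppose $\lowtranop$ is ergodic. I would first observe that $\rho_{n} \coloneqq \coefferga{\lowtranop^{n}}$ is non-increasing by \ref{prop:CoeffOfErgod:Composition} combined with $\rho_{1} \leq 1$ from \ref{prop:CoeffOfErgod:Bounds}, so it suffices to rule out the degenerate case $\rho_{n} = 1$ for every $n$. I argue by contradiction: assume $\rho_{n} = 1$ for all $n$. Since $K \coloneqq \{ f \in \setoffna \colon 0 \leq f \leq 1 \}$ is compact in the finite-dimensional space $\setoffna$ and $f \mapsto \norm{\lowtranop^{n} f}_{v}$ is continuous, for each $n$ there exists $f_{n} \in K$ with $\norm{\lowtranop^{n} f_{n}}_{v} = 1$. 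Extracting a convergent subsequence $f_{n_{j}} \to f^{*} \in K$ and combining the reverse triangle inequality for the seminorm $\norm{\cdot}_{v}$, the elementary estimate $\norm{g}_{v} \leq 2 \norm{g}$, and the non-expansiveness \ref{prop:LTO:NonExpansiveness}, I obtain
\[
	\norm{\lowtranop^{n_{j}} f^{*}}_{v}
	\geq \norm{\lowtranop^{n_{j}} f_{n_{j}}}_{v} - 2 \norm{f^{*} - f_{n_{j}}}
	\to 1.
\]
Ergodicity at $f^{*}$ however implies that $\lowtranop^{n} f^{*}$ converges to a constant function, hence $\norm{\lowtranop^{n} f^{*}}_{v} \to 0$, a contradiction.

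The main obstacle is the necessity direction: the witnesses $f_{n}$ depend on $n$, so they cannot be fed directly into the ergodicity assumption at a fixed function. The fix is precisely to combine compactness of $K$ with non-expansiveness of $\lowtranop$, which allows the lower bound on the variation to be transferred from $\lowtranop^{n_{j}} f_{n_{j}}$ to $\lowtranop^{n_{j}} f^{*}$ at the cost of an error term that vanishes along the extracted subsequence.
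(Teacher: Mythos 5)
Your argument is correct, but note that the paper itself does not prove this statement at all: it is imported verbatim as Theorem~21 of \citet{2013Skulj}, so there is no in-paper proof to compare against. What your write-up supplies is a self-contained derivation using only the machinery the paper does establish, and both halves check out. The sufficiency direction is the standard contraction argument: submultiplicativity \ref{prop:CoeffOfErgod:Composition} together with \ref{prop:CoeffOfErgod:Bounds} gives $\coefferga{\lowtranop^{n}} \leq \beta^{\lfloor n/k\rfloor} \to 0$, and the monotone sandwich $\min \lowtranop^{n} f \nearrow$, $\max \lowtranop^{n} f \searrow$ from \ref{prop:LTO:BoundedByMinAndMax} upgrades $\norm{\lowtranop^{n} f}_{v} \to 0$ to uniform convergence to a constant. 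The necessity direction is where you add genuine content: the witnesses $f_{n}$ achieving $\norm{\lowtranop^{n} f_{n}}_{v} = 1$ vary with $n$, and your compactness-plus-non-expansiveness transfer (via \ref{prop:LTO:NonExpansiveness}, the reverse triangle inequality for the seminorm, and $\norm{g}_{v} \leq 2\norm{g}$) correctly moves the obstruction onto a single fixed $f^{*}$, where ergodicity forces $\norm{\lowtranop^{n} f^{*}}_{v} \to 0$ and yields the contradiction. This is a more elementary route than Škulj's original, which proceeds through the qualitative regularly-absorbing characterisation (cf.\ Proposition~\ref{prop:DiscreteErgodicity:NecAndSuff}); what your approach buys is independence from that accessibility-structure analysis, at the cost of a non-constructive compactness step, whereas the structural route additionally yields the bound $k < \card{\statespace}$ that the paper exploits elsewhere (e.g.\ in Theorem~\ref{the:ContinuousErgodicity:CoefficientOfErgodicityOfApproximation}).
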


\begin{proposition}
\label{prop:CoeffOfErgod:AlternativeFunctions}
	Let $\lowtranop$ be a a lower transition operator.
	Then
	\begin{align}
		\coefferga{\lowtranop}
		&= \max \left\{ \norm{\lowtranop f}_{v} \colon f\in\setoffna, \max f = 1, \min f = 0 \right\} \label{eqn:CoeffOfErgod:WithMax} \\
		&= \max \left\{ \norm{\lowtranop f}_{c} \colon f\in\setoffna, -1 \leq f \leq 1 \right\} \label{eqn:CoeffOfErgod:WithCentered} \\
		&= \max \left\{ \norm{\lowtranop f}_{c} \colon f\in\setoffna, \max f = 1, \min f = -1 \right\}. \label{eqn:CoeffOfErgod:WithCenteredAndMax}
	\end{align}
\end{proposition}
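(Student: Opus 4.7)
The plan is to verify each of the three equalities by sandwiching both sides with $\coefferga{\lowtranop}$, exploiting the translation property \ref{prop:LTO:AdditionOfConstant} and non-negative homogeneity \ref{def:LTO:NonNegativelyHom} of $\lowtranop$, together with the translation-invariance \ref{prop:norm:VarAddConstant} and positive homogeneity of $\norm{\cdot}_{v}$ (and the analogous behaviour of $\norm{\cdot}_{c}$, which by \ref{prop:norm:CenteredEqVar} is simply half the former). Concretely, the recurring identity is $\lowtranop(\lambda f + \gamma) = \lambda \lowtranop f + \gamma$ for all $\lambda \in \nonnegreals$ and $\gamma \in \reals$.

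For Eqn.~\eqref{eqn:CoeffOfErgod:WithMax}, one direction is immediate from Eqn.~\eqref{eqn:CoeffOfErgod}, since $\{f \in \setoffna \colon \max f = 1, \min f = 0\}$ is contained in $\{f \in \setoffna \colon 0 \leq f \leq 1\}$. For the converse, I would take any $f$ with $0 \leq f \leq 1$. If $\norm{f}_{v} = 0$ then $f$ is constant and by \ref{prop:LTO:BoundedByMinAndMax} so is $\lowtranop f$, giving $\norm{\lowtranop f}_{v} = 0$. Otherwise, set $g \coloneqq (f - \min f)/\norm{f}_{v}$, which satisfies $\max g = 1$ and $\min g = 0$; mimicking the proof of \ref{prop:CoeffOfErgod:BoundOnNormTf} yields $\norm{\lowtranop g}_{v} = \norm{\lowtranop f}_{v}/\norm{f}_{v} \geq \norm{\lowtranop f}_{v}$ because $\norm{f}_{v} \leq 1$.

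For Eqn.~\eqref{eqn:CoeffOfErgod:WithCentered}, the map $f \mapsto 2f - 1$ is a bijection between $\{f \in \setoffna \colon 0 \leq f \leq 1\}$ and $\{g \in \setoffna \colon -1 \leq g \leq 1\}$. Combining \ref{def:LTO:NonNegativelyHom} with \ref{prop:LTO:AdditionOfConstant} yields $\lowtranop(2f - 1) = 2\lowtranop f - 1$, so that translation-invariance and positive homogeneity of $\norm{\cdot}_{c}$, together with \ref{prop:norm:CenteredEqVar}, give
\[
	\norm{\lowtranop(2f - 1)}_{c} = 2\norm{\lowtranop f}_{c} = \norm{\lowtranop f}_{v}.
\]
Hence the maximum of $\norm{\lowtranop f}_{v}$ over the first parameter set coincides with the maximum of $\norm{\lowtranop g}_{c}$ over the second.

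For Eqn.~\eqref{eqn:CoeffOfErgod:WithCenteredAndMax}, one direction is again immediate from inclusion of the parameter sets. For the converse, given $g$ with $-1 \leq g \leq 1$: if $\norm{g}_{c} = 0$ then $\norm{\lowtranop g}_{c} = 0$; otherwise the function $h \coloneqq (g - \cent{g})/\norm{g}_{c}$, with $\cent{g} \coloneqq (\max g + \min g)/2$, satisfies $\max h = 1$ and $\min h = -1$, and the same translation-invariance and homogeneity calculation produces $\norm{\lowtranop h}_{c} = \norm{\lowtranop g}_{c}/\norm{g}_{c} \geq \norm{\lowtranop g}_{c}$ since $\norm{g}_{c} \leq 1$. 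None of these three steps is conceptually difficult; the main care needed is bookkeeping to verify that the proposed normalizations actually land in the prescribed parameter sets and that the identity above propagates correctly through each seminorm.
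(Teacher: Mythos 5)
Your proof is correct and follows essentially the same route as the paper: the middle equality via the substitution $f\mapsto 2f-1$ together with $\lowtranop(2f-1)=2\lowtranop f-1$ and the homogeneity/translation-invariance of the seminorms, and the first and third equalities via normalisation arguments resting on the identity $\norm{\lowtranop((f-\min f)/\norm{f}_{v})}_{v}=\norm{\lowtranop f}_{v}/\norm{f}_{v}$. The only cosmetic difference is that the paper establishes the first equality by showing any maximiser over $\{f\colon 0\leq f\leq 1\}$ must already satisfy $\norm{f}_{v}=1$ (using \ref{prop:CoeffOfErgod:BoundOnNormTf}), whereas you normalise an arbitrary $f$ and check the value does not decrease --- the same idea in a slightly different order.
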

\begin{proof}[Proof of Proposition~\ref{prop:CoeffOfErgod:AlternativeFunctions}]
	Because of Eqn.~\eqref{eqn:CoeffOfErgod}, there is some $g\in\setoffna$ such that $0 \leq g \leq 1$ and $\norm{\lowtranop g}_{v} = \coefferga{\lowtranop}$.
	By \ref{prop:CoeffOfErgod:BoundOnNormTf}, $\norm{\lowtranop g}_{v} \leq \coefferga{\lowtranop} \norm{g}_{v}$, such that $\norm{g}_{v} = 1$, or equivalently $\max g = 1$ and $\min g = 0$.
	Hence, it follows from Eqn.~\eqref{eqn:CoeffOfErgod} that
	\begin{align*}
		\coefferga{\lowtranop}
		&= \max \{ \norm{\lowtranop f}_{v} \colon f \in \setoffna, \max f = 1, \min f = 0 \}.
	\intertext{Next, manipulating Eqn.~\eqref{eqn:CoeffOfErgod} yields}
		\coefferga{\lowtranop}
		&= \max \{ \norm{\lowtranop f}_{v} \colon f \in \setoffna, 0 \leq f \leq 1 \} \\
		&= \max \left\{ \norm{\lowtranop \left(f - \frac{1}{2}\right)}_{v} \colon f \in \setoffna, 0 \leq f \leq 1 \right\} \\
		&= \max \left\{ \frac{2}{2} \norm{\lowtranop \left(f - \frac{1}{2}\right)}_{v} \colon f \in \setoffna, 0 \leq f \leq 1 \right\} \\
		&= \max \left\{ \frac{1}{2} \norm{\lowtranop \left(2 f - 1\right)}_{v} \colon f \in \setoffna, 0 \leq f \leq 1 \right \} \\
		&= \max \left\{ \norm{\lowtranop \left(2 f - 1 \right)}_{c} \colon f \in \setoffna, 0 \leq f \leq 1 \right\},
	\intertext{%
	where the second equality follows from \ref{prop:norm:VarAddConstant} and \ref{prop:LTO:AdditionOfConstant}, the fourth equality follows from \ref{def:Norm:ScalarMult} and \ref{def:LTO:NonNegativelyHom}, and the final equality follows from Eqn.~\eqref{eqn:CentredNorm}.
	Note that for all $f\in\setoffna$, $0 \leq f \leq 1$ is equivalent to $-1 \leq (2f - 1) \leq 1$.
	Hence,
	}
		\coefferga{\lowtranop}
		&= \max \{ \norm{\lowtranop f}_{c} \colon f \in \setoffna, -1 \leq f \leq 1 \}.
	\end{align*}

	The proof of the final equality of the statement is now similar to that of the first.
\end{proof}

The following lemma is a more general version of Lemma~\ref{lem:LowTranOp:PossiblyAccesibleSequence}.
\begin{lemma}
\label{lem:LowTranOp:SignOfCompositionWithIndicator}
	Let $k \in \nats$ and $x,y\in \statespace$.
	For all arbitrary upper transition operators $\uptranopa{1}, \dots, \uptranopa{k}$, we define $\uptranopa{1:k} \coloneqq \uptranopa{k} \cdots \uptranopa{1}$.
	Then
	\[
		[\uptranopa{1:k} \indic{x}](y) \geq [\uptranopa{1} \indic{z_{1}}](z_{2}) \cdots [\uptranopa{k} \indic{z_{k}}](z_{k+1}),
	\]
	for any sequence $y = z_{k+1}, \dots, z_{1} = x$ in $\statespace$.
	Furthermore, $[\uptranopa{1:k} \indic{x}](y) > 0$ if and only if there is some sequence $y = z_{k+1}, \dots, z_{1} = x$ in $\statespace$ such that for all $i \in \{ 1, \dots, k \}$, $[\uptranopa{i} \indic{z_{i}}](z_{i+1}) > 0$.
\end{lemma}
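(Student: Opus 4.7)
The plan is to establish both assertions by induction on $k \in \nats$, treating the inequality first and then the equivalence. The key properties I will lean on are that every upper transition operator $\uptranop = -\lowtranop(-\cdot)$ inherits by conjugacy the following counterparts of \ref{def:LTO:DominatesMin}--\ref{def:LTO:NonNegativelyHom}: $\uptranop f \leq \max f$ (so $\uptranop$ preserves non-negativity), sub-additivity $\uptranop(f+g) \leq \uptranop f + \uptranop g$, non-negative homogeneity, and monotonicity (via \ref{prop:LTO:Monotonicity}).

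For the inequality, let $f_{k} \coloneqq \uptranopa{1:k} \indic{x}$ and note that $f_{k} \geq 0$ for every $k$, by repeated application of the dual of \ref{prop:LTO:BoundedByMinAndMax} starting from $f_{0} = \indic{x} \geq 0$. The base case $k=1$ is immediate. For the inductive step, fix any sequence $y = z_{k+2}, z_{k+1}, \dots, z_{1} = x$ in $\statespace$. Since $f_{k}$ is non-negative, we have the pointwise inequality $f_{k} \geq f_{k}(z_{k+1}) \indic{z_{k+1}}$. Applying the monotone, non-negatively homogeneous operator $\uptranopa{k+1}$ and evaluating at $z_{k+2}$ yields
\[
	[\uptranopa{k+1} f_{k}](z_{k+2})
	\geq f_{k}(z_{k+1}) \, [\uptranopa{k+1} \indic{z_{k+1}}](z_{k+2}).
\]
The induction hypothesis applied to the truncated sequence $z_{k+1}, \dots, z_{1} = x$ bounds $f_{k}(z_{k+1})$ below by $[\uptranopa{1} \indic{z_{1}}](z_{2}) \cdots [\uptranopa{k} \indic{z_{k}}](z_{k+1})$, and combining this with the previous display (together with $z_{k+2} = y$) closes the induction.

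For the equivalence, the direction $\Leftarrow$ follows at once from the inequality: if every factor $[\uptranopa{i} \indic{z_{i}}](z_{i+1})$ is strictly positive, the right-hand side is strictly positive, and so is $[\uptranopa{1:k} \indic{x}](y)$. For $\Rightarrow$, induct on $k$ once more. The case $k=1$ is trivial, with the sequence $(z_{1},z_{2}) = (x,y)$. For the inductive step, assume $[\uptranopa{1:k+1} \indic{x}](y) > 0$, and write $f_{k} = \sum_{z \in \statespace} f_{k}(z) \indic{z}$, where all coefficients $f_{k}(z) \geq 0$. Sub-additivity and non-negative homogeneity of $\uptranopa{k+1}$ then give
\[
	0 < [\uptranopa{k+1} f_{k}](y)
	\leq \sum_{z \in \statespace} f_{k}(z) \, [\uptranopa{k+1} \indic{z}](y),
\]
so at least one $z_{k+1} \in \statespace$ satisfies both $f_{k}(z_{k+1}) > 0$ and $[\uptranopa{k+1} \indic{z_{k+1}}](y) > 0$. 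Applying the induction hypothesis to $[\uptranopa{1:k} \indic{x}](z_{k+1}) = f_{k}(z_{k+1}) > 0$ produces a sequence $x = z_{1}, \dots, z_{k+1}$ with strictly positive factors, and appending $z_{k+2} \coloneqq y$ completes the required sequence.

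There is no conceptual obstacle here; the whole argument is a bookkeeping exercise in combining non-negativity, monotonicity, sub-additivity and non-negative homogeneity. The only genuinely subtle point is the reversed indexing in the statement (the sequence is listed from $y = z_{k+1}$ down to $z_{1} = x$), while the natural induction proceeds from the $x$-end outward; being careful to match subscripts so that the subsequence used by the induction hypothesis is exactly $z_{k+1}, \dots, z_{1}$ is the one place where it is easy to slip up.
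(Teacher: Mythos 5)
Your proof is correct and follows essentially the same route as the paper's: both arguments expand $\uptranopa{1:k}\indic{x}$ via indicator functions and combine non-negativity, monotonicity, non-negative homogeneity and sub-additivity of upper transition operators, the only cosmetic difference being that you organise the "repeated application" as an explicit induction and peel off the outermost operator rather than the innermost. No gaps.
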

\begin{proof}
	This proof is a straightforward generalisation of the proof of Proposition~4 in \citep{2012Hermans}.
	Fix some $k \in \nats$, some $x, y \in \statespace$ and some arbitrary upper transition operators $\uptranopa{1}, \dots, \uptranopa{k}$.
	We also define $\uptranopa{1:k} \coloneqq \uptranopa{k} \cdots \uptranopa{1}$, and note that by \ref{prop:LTO:CompositionIsAlsoLTO} this is also an upper transition operator.

	To prove the first part of the statement, we note that for all $i \in \{ 1, \dots, k \}$ and all $z_{i}, z_{i+1} \in \statespace$,
	\[
		\uptranopa{i} \indic{z_{i}}
		= \sum_{z \in \statespace} [\uptranopa{i} \indic{z_{i}}](z) \indic{z}
		\geq [\uptranopa{i} \indic{z_{i}}](z_{i-1}) \indic{z_{i+1}},
	\]
	where the inequality is allowed because by \ref{prop:LTO:BoundedByMinAndMax} the sum contains only non-negative terms.
	We fix any $z_{2} \in \statespace$, and use \ref{prop:LTO:Monotonicity} and this inequality to yield
	\begin{align*}
		\uptranopa{1:k} \indic{x}
		&= \uptranopa{1:k-1} \uptranopa{1} \indic{x}
		\geq \uptranopa{1:k-1} \left([\uptranopa{1} \indic{x}](z_{2}) \indic{z_{2}}\right)
		= [\uptranopa{1} \indic{x}](z_{2}) \uptranopa{1:k-1} \indic{z_{2}},
	\end{align*}
	where $\uptranopa{1:k-1} \coloneqq \uptranopa{k} \cdots \uptranopa{2}$---which by \ref{prop:LTO:CompositionIsAlsoLTO} is also an upper transition operator---and the final equality follows from \ref{def:LTO:NonNegativelyHom} and \ref{prop:LTO:BoundedByMinAndMax}.
	Repeated application of the same reasoning yields
	\[
		[\uptranopa{1:k} \indic{x}](y)
		\geq [\uptranopa{1} \indic{z_1}](z_{2}) \cdots [\uptranopa{k} \indic{z_{k}}](z_{k+1}),
	\]
	where $z_{k+1} \coloneqq y$, $z_{1} \coloneqq x$, and $z_{2}, \dots, z_{k}$ are arbitrary elements of $\statespace$.
	This proves the first part of the statement.

	The reverse implication of the second part of the statement follows immediately from the first part.
	We therefore only need to prove that the forward implication holds as well.
	To that end, we first note that
	\[
		[\uptranopa{1:k} \indic{x}](y) = \left[\uptranopa{k} \left( \sum_{z_{k} \in \statespace} [ \uptranopa{2:k} \indic{x}](z_{k}) \indic{z_{k}} \right) \right](y)
		\leq \sum_{z_{k} \in \statespace} [\uptranopa{2:k} \indic{x}](z_{k}) [\uptranopa{1} \indic{z_{k}}](y),
	\]
	where $\uptranopa{2:k} \coloneqq \uptranopa{k} \cdots \uptranopa{2}$ and the inequality follows from \ref{def:LTO:SuperAdditive}.
	Repeating this same reasoning another $(k-2)$ times yields
	\[
		[\uptranopa{1:k} \indic{x}](y) \leq \sum_{z_{2} \in \statespace} \sum_{z_{3} \in \statespace} \cdots \sum_{z_{k} \in \statespace} [\uptranopa{1} \indic{x}](z_{2}) [\uptranopa{2} \indic{z_{2}}](z_{3}) \cdots [\uptranopa{k} \indic{z_{k}}](y).
	\]
	If now $[\uptranopa{1:k} \indic{x}](y) > 0$, then---because all terms are non-negative due to \ref{prop:LTO:BoundedByMinAndMax}---at least one of the terms of the sum on the right hand side has to be strictly positive.
	Therefore, $[\uptranopa{1:k} \indic{x}](y) > 0$ implies that there is at least one sequence $y = z_{k+1}, \dots, z_{1} = x$ in $\statespace$ such that for all $i \in \{ 1, \dots, k\}$, $[\uptranopa{i} \indic{z_{i}}](z_{i+1}) > 0$.
\end{proof}

\begin{lemma}
\label{lem:LowTranOp:SignOfCompositionWithEvent}
	Let $k \in \nats$ and $A \subseteq \statespace$.
	For all arbitrary lower transition operators $\lowtranopa{1}, \dots, \lowtranopa{k}$, we define $\lowtranopa{1:k} \coloneqq \lowtranopa{k} \cdots \lowtranopa{1}$.
	Then
	\[
		c_1 \cdots c_k \indic{A_k} \leq \lowtranopa{1:k} \indic{A} \leq \indic{A_k}.
	\]
	In this expression, $A_k \subseteq \statespace$ is derived from the initial condition $A_{0} \coloneqq A$ and, for all $i \in \{ 1, \dots, k \}$, from the recursive relation
	\[
		A_{i}
		\coloneqq \{ x \in \statespace \colon [\lowtranopa{i} \indic{A_{i-1}}](x) > 0 \}.
	\]
	The non-negative real numbers $c_1, \dots, c_{k}$ are defined as
	\[
		c_{i}
		\coloneqq \min \left\{ [\lowtranopa{i} \indic{A_{i-1}}](x) \colon x \in A_{i} \right\} \text{ for all } i \in \{ 1, \dots, k \},
	\]
	with the convention that the minimum of an empty set is zero.
	Also, $A_k = \emptyset$ if and only if $c_i = 0$ for some $i \in \{ 1, \dots, k \}$.
\end{lemma}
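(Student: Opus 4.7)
I plan to prove the lemma by induction on $k$, after first establishing the $k=1$ case as the essential building block.

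For $k=1$, the inequalities follow directly from the defining properties of lower transition operators. For any $x \in \statespace$, property \ref{def:LTO:DominatesMin} gives $[\lowtranopa{1} \indic{A}](x) \geq \min \indic{A} \geq 0$, while \ref{prop:LTO:BoundedByMinAndMax} gives $[\lowtranopa{1} \indic{A}](x) \leq \max \indic{A} \leq 1$. If $x \notin A_1$, then by the definition of $A_1$ we have $[\lowtranopa{1} \indic{A}](x) \leq 0$, hence $[\lowtranopa{1} \indic{A}](x) = 0 = \indic{A_1}(x) = c_1 \indic{A_1}(x)$. If $x \in A_1$, then trivially $[\lowtranopa{1} \indic{A}](x) \leq 1 = \indic{A_1}(x)$ and, by definition of $c_1$ as the minimum over $A_1$, $[\lowtranopa{1} \indic{A}](x) \geq c_1 = c_1 \indic{A_1}(x)$. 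This covers the $k=1$ case.

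For the induction step, assume the statement for some $k-1 \geq 1$ and write $\lowtranopa{1:k} \indic{A} = \lowtranopa{k}\bigl(\lowtranopa{1:k-1} \indic{A}\bigr)$. The induction hypothesis provides $c_1 \cdots c_{k-1} \indic{A_{k-1}} \leq \lowtranopa{1:k-1} \indic{A} \leq \indic{A_{k-1}}$. Applying $\lowtranopa{k}$ and using monotonicity \ref{prop:LTO:Monotonicity} together with non-negative homogeneity \ref{def:LTO:NonNegativelyHom} (recall that each $c_i$ is non-negative), we obtain
\[
  c_1 \cdots c_{k-1}\, \lowtranopa{k} \indic{A_{k-1}} \leq \lowtranopa{1:k} \indic{A} \leq \lowtranopa{k} \indic{A_{k-1}}.
\]
The $k=1$ argument applied to $\lowtranopa{k} \indic{A_{k-1}}$ yields $c_k \indic{A_k} \leq \lowtranopa{k} \indic{A_{k-1}} \leq \indic{A_k}$, and combining these two chains (again using $c_1 \cdots c_{k-1} \geq 0$ for the lower bound) completes the induction.

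For the final claim, I would argue that $c_i = 0$ if and only if $A_i = \emptyset$. Indeed, if $A_i \neq \emptyset$, then by the very definition of $A_i$ every $x \in A_i$ satisfies $[\lowtranopa{i} \indic{A_{i-1}}](x) > 0$, so the minimum $c_i$ over this non-empty finite set is strictly positive; conversely, if $A_i = \emptyset$, the convention gives $c_i = 0$. It then remains to observe that once $A_i = \emptyset$ for some $i \leq k$, we have $\lowtranopa{i+1} \indic{A_i} = \lowtranopa{i+1}\, 0 = 0$ by \ref{def:LTO:NonNegativelyHom}, so $A_{i+1} = \emptyset$, and by iteration $A_k = \emptyset$. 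Hence ``some $c_i = 0$'' is equivalent to ``some $A_i = \emptyset$'', which in turn is equivalent to $A_k = \emptyset$. No step here is particularly delicate; the only care needed is to keep the non-negativity of the $c_i$ and the indicators in mind so that monotonicity can legitimately be invoked in the inductive step.
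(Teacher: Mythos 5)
Your proof is correct and follows essentially the same route as the paper: establish the one-step sandwich $c\,\indic{A'} \leq \lowtranop \indic{A} \leq \indic{A'}$ from \ref{def:LTO:DominatesMin}/\ref{prop:LTO:BoundedByMinAndMax} and the definitions of $A'$ and $c$, then propagate it through the composition using \ref{prop:LTO:Monotonicity} and \ref{def:LTO:NonNegativelyHom}, and handle the final equivalence by noting that $c_i=0$ iff $A_i=\emptyset$ and that emptiness propagates forward. Your explicit induction merely formalises the paper's ``apply both inequalities $k$ times''; there is no substantive difference.
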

\begin{proof}
	Let $\lowtranop$ be an arbitrary lower transition operator, and fix an arbitrary $A \subset \statespace$.
	We define the set $A' \coloneqq \{ x \in \statespace \colon [\lowtranop \indic{A}](x) > 0 \}$.
	On the one hand, from \ref{prop:LTO:BoundedByMinAndMax} it follows that $\lowtranop \indic{A} \leq \indic{A'}$.
	On the other hand, $\lowtranop \indic{A} \geq c \indic{A'}$, where we let
	\[
		c
		\coloneqq \min \left\{ [\lowtranop \indic{A}](x) \colon x \in A' \right\},
	\]
	with the convention that the minimum of an empty set is zero.
	Note that by \ref{prop:LTO:BoundedByMinAndMax}, $0 \leq c \leq 1$.
	Combining these two inequalities yields $c \indic{A'} \leq \lowtranop \indic{A} \leq \indic{A'}$.
	Proving the first part of the statement is now fairly trivial; we simply need to apply both inequalities and \ref{prop:LTO:Monotonicity} $k$ times.

	To prove the second part of the statement, we observe that $c_i = 0$ is equivalent to $A_{i} = \emptyset$.
	Therefore, we assume that there is some $i \in \{ 1, \dots, k \}$ for which $c_i = 0$ and $A_{i} = \emptyset$.
	If $c_k = 0$, then obviously $A_k = \emptyset$ and the stated is true.
	We therefore assume that $i < k$, and observe that by \ref{prop:LTO:BoundedByMinAndMax}, $\lowtranopa{i+1} \indic{A_{i}} = \lowtranopa{i+1} \indic{\emptyset} = 0$, and therefore $A_{i+1} = \emptyset$.
	Repeating the same reasoning, we find that $A_{j} = \emptyset$ and $c_{j} = 0$ for all $j \in \{ i, \dots, k \}$, which proves the stated.
\end{proof}

The following lemma is an alternate, slightly extended version of Proposition~\ref{prop:LowTranOp:TopClassAbsorbingWithRecursion}.
\begin{lemma}
\label{lem:LowTranOp:StrongerNecAndSuffTopClassAbsorption}
	Let $\lowtranop$ be a top class regular lower transition operator.
	Then $\lowtranop$ is top class absorbing if and only if $B_{n} = \statespace$, where $\{ B_{i} \}_{i \in \natz}$ is the sequence defined by the initial condition $B_{0} \coloneqq \statespacesub{PA}$ and the recursive relation
	\[
		B_{i}
		= B_{i-1} \cup \left\{ x \in \statespace \setminus B_{i-1} \colon [\lowtranop \indic{B_{i-1}}](x) > 0 \right\} ~~\text{for all}~i \in \nats,
	\]
	and where $n \leq \card{\statespace \setminus \statespacesub{PA}}$ is the first index such that $B_{n} = B_{n+1}$.
	Alternatively, $\lowtranop$ is top class absorbing if and only if there is some $m \in \natz$ such that $\lowtranop^{m} \indic{\statespacesub{PA}} > 0$, and in this case $n$ is the lowest such $m$.
\end{lemma}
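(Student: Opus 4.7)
The plan is to reduce the lemma to two already-established facts: Proposition~\ref{prop:LowTranOp:TopClassAbsorbingWithRecursion} yields the first equivalence directly (its conclusion is exactly the recursive characterisation in the current statement), and Lemma~\ref{lem:LowTranOp:SignOfCompositionWithEvent}, specialised to a constant sequence of operators, will bridge that recursion with the powers $\lowtranop^m \indic{\statespacesub{PA}}$.

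For the second, alternative equivalence I would apply Lemma~\ref{lem:LowTranOp:SignOfCompositionWithEvent} with $\lowtranopa{i} \coloneqq \lowtranop$ for every $i$ and $A \coloneqq \statespacesub{PA}$. The auxiliary sets $A_k$ produced by that lemma satisfy exactly the same recursion, with the same initial value, as the sets $B_k$ of the current statement, so a trivial induction gives $A_k = B_k$ for every $k \in \natz$. Moreover, the sequence $\{B_k\}$ is non-decreasing and $B_0 = \statespacesub{PA}$ is non-empty by top class regularity, hence $B_k \neq \emptyset$ for all $k$; by the last part of Lemma~\ref{lem:LowTranOp:SignOfCompositionWithEvent}, the constants $c_1, \dots, c_k$ are therefore all strictly positive. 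The sandwich $c_1 \cdots c_k \indic{B_k} \leq \lowtranop^k \indic{\statespacesub{PA}} \leq \indic{B_k}$, combined with $\lowtranop^k \indic{\statespacesub{PA}} \geq 0$ from \ref{prop:LTO:BoundedByMinAndMax}, then yields the key identity
\[
  \{x \in \statespace \colon [\lowtranop^k \indic{\statespacesub{PA}}](x) > 0\} = B_k \quad \text{for every } k \in \natz.
\]

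Consequently, $\lowtranop^m \indic{\statespacesub{PA}} > 0$ holds if and only if $B_m = \statespace$, and the alternative equivalence follows from the first. To verify the claim about $n$, the monotonicity of $\{B_k\}$ together with its stabilisation at index $n$ implies that $B_m = \statespace$ for some $m \in \natz$ exactly when $B_n = \statespace$; minimality of $n$ as the first stabilisation index then forces $n$ to be the smallest such $m$, since any smaller $m$ with $B_m = \statespace$ would satisfy $B_m = B_{m+1} = \statespace$ and contradict the choice of $n$.

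The only real subtlety—hardly an obstacle—is noticing that $B_k \supseteq \statespacesub{PA} \neq \emptyset$ for all $k$, which is precisely what guarantees that the constants $c_i$ in Lemma~\ref{lem:LowTranOp:SignOfCompositionWithEvent} are strictly positive and thus allows the sandwich inequality to pin down the support of $\lowtranop^k \indic{\statespacesub{PA}}$ exactly.
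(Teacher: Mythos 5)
Your overall route is the paper's: the first equivalence is read off from Proposition~\ref{prop:LowTranOp:TopClassAbsorbingWithRecursion}, and the alternative one is obtained by specialising Lemma~\ref{lem:LowTranOp:SignOfCompositionWithEvent} to the constant sequence $\lowtranopa{i} = \lowtranop$ with $A = \statespacesub{PA}$, using the sandwich $c_1 \cdots c_k \indic{B_k} \leq \lowtranop^k \indic{\statespacesub{PA}} \leq \indic{B_k}$ and the strict positivity of the $c_i$ to identify the support of $\lowtranop^k \indic{\statespacesub{PA}}$ with $B_k$; the minimality argument for $n$ is also the same.

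There is, however, one assertion that is false as written and that hides the only non-trivial step: the sets $A_k$ of Lemma~\ref{lem:LowTranOp:SignOfCompositionWithEvent} do \emph{not} satisfy ``exactly the same recursion'' as the $B_k$. The former obey $A_k = \{x \in \statespace \colon [\lowtranop \indic{A_{k-1}}](x) > 0\}$, whereas the latter obey $B_k = B_{k-1} \cup \{x \in \statespace \setminus B_{k-1} \colon [\lowtranop \indic{B_{k-1}}](x) > 0\}$; these produce the same sets only if every $x \in B_{k-1}$ satisfies $[\lowtranop \indic{B_{k-1}}](x) > 0$, which is not automatic for a general lower transition operator (a priori a point of $B_{k-1}$ could drop out of $A_k$). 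Establishing $A_k = B_k$ is precisely where top class regularity enters: for the base case one needs $[\lowtranop \indic{\statespacesub{PA}}](x) > 0$ for all $x \in \statespacesub{PA}$, which is Lemma~\ref{lem:LowTranOp:TopClassRegularSpecialValues}, and for the induction step one uses the inclusion $B_{k-1} \subseteq B_k$ together with \ref{prop:LTO:Monotonicity} to propagate strict positivity. Your write-up invokes top class regularity only to get $B_0 \neq \emptyset$, so this step is genuinely missing rather than merely compressed; once it is supplied (as the paper does, via a short induction showing $B_k = \{x \in \statespace \colon [\lowtranop \indic{B_{k-1}}](x) > 0\}$ for all $k$), the rest of your argument goes through.
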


\begin{proof}
	We first prove the forward implication.
	By Proposition~\ref{prop:LowTranOp:TopClassAbsorbingWithRecursion}, if $\lowtranop$ is top class absorbing then $B_{n} = \statespace$, where the sequence $\{B_{i}\}_{i \in \natz}$ is defined from the initial condition $B_{0} \coloneqq \statespacesub{PA}$ and, for all $i \in \nats$, from the recursive relation
	\[
		B_{i} \coloneqq B_{i-1} \cup \left\{ x \in \statespace \setminus B_{i-1} \colon [\lowtranop \indic{B_{i-1}}](x) > 0 \right\} = \left\{ x \in \statespace \colon [\lowtranop \indic{B_{i-1}}](x) > 0 \right\},
	\]
	and where $n \leq \card{\statespace \setminus \statespacesub{PA}}$ is the first index such that $B_{n} = B_{n+1}$.
	We can immediately verify that $\statespacesub{PA} = B_{0} \subseteq B_{1} \subseteq \cdots \subseteq B_{n} = \statespace$ and $B_{i} \setminus B_{i-1} \neq \emptyset$ for all $i \in \{ 1, \dots, n \}$.

	Observe that the sequence $B_{0}, \dots, B_{n}$ satisfies the conditions of Lemma~\ref{lem:LowTranOp:SignOfCompositionWithEvent}, such that or all $i \in \{ 1, \dots, n \}$,
	\[
		c_{1} \cdots c_{i} \indic{B_{i}} \leq \lowtranop^{i} \indic{\statespacesub{PA}} \leq \indic{B_{i}},
	\]
	where $c_1, \dots, c_{n}$ are strictly positive real numbers because $\emptyset \neq B_{1}, \dots, B_{n}$.
	From this we infer that $\min \lowtranop^{i} \indic{\statespacesub{PA}} > 0$ if and only if $B_{i} = \statespace$.
	As $B_{n} = \statespace$ and $B_{0}, \dots, B_{n-1} \neq B_{n}$, this confirms that indeed $\lowtranop^{n} \indic{\statespacesub{PA}} > 0$ and that $n$ is the lowest non-negative natural number for which this holds.

	Next, we prove the reverse implication.
	Let $B_{0}, \dots, B_{n}$ and $n$ be defined as in the statement.
	From the definition, it is obvious that $B_{i-1} \subseteq B_{i}$ for all $i \in \nats$.
	Also, if $n$ is the first index such that $B_{n} = B_{n+1}$, then $B_{i-1} \neq B_{i}$ for all $i \in \{ 1, \dots, n \}$ and $B_{n} = B_{n+i}$ for all $i \in \nats$.
	From $B_{0} = \statespacesub{PA}$ and $B_{i} \setminus B_{i-1} \neq \emptyset$ for all $i \in \{ 1, \dots, n \}$, we infer that indeed $n \leq \card{\statespace \setminus \statespacesub{PA}}$.
	If $B_{n} = \statespace$, then the sequence $B_0, \dots, B_{n}$ satisfies the conditions of Proposition~\ref{prop:LowTranOp:TopClassAbsorbingWithRecursion}, such that $\lowtranop$ is indeed top class absorbing.

	Let $B_{0}, \dots, B_{n},\dots$ be the sequence as defined in the statement.
	Similar to what we did in the proof of Proposition~\ref{prop:LowTranOp:TopClassAbsorbingWithRecursion}, we now verify using induction that
	\[
		B_{i} = \left\{ x \in \statespace \colon [\lowtranop \indic{B_{i-1}}](x) > 0 \right\} ~\text{for all}~i \in\nats.
	\]
	We first consider the case $i = 1$.
	By Lemma~\ref{lem:LowTranOp:TopClassRegularSpecialValues}, we know that $[\lowtranop \indic{\statespacesub{PA}}](x) > 0$ for all $x \in \statespacesub{PA}$.
	Hence,
	\begin{align*}
		B_{1}
		&= B_{0} \cup \left\{ x \in \statespace \setminus B_{0} \colon [\lowtranopa{\indic{B_{0}}}](x) > 0 \right\} \\
		&= \left\{ x \in B_{0} \colon [\lowtranopa{\indic{B_{0}}}](x) > 0 \right\} \cup \left\{ x \in \statespace \setminus B_{0} \colon [\lowtranopa{\indic{B_{0}}}](x) > 0 \right\} \\
		&= \left\{ x \in \statespace \colon [\lowtranop \indic{B_{0}}](x) > 0 \right\},
	\end{align*}
	where the second equality follows from the initial condition $B_{0} = \statespacesub{PA}$.
	Fix some $k \in \{ 1, \dots, n-1\}$, and assume that the alternate definition holds for all $i \leq k$.
	We now argue that in that case the stated also holds for $i = k+1$.
	By the induction hypothesis, $B_{k}$ contains all $x \in \statespace$ for which $[\lowtranop \indic{B_{k-1}}](x) > 0$.
	Also, it holds by definition that $B_{k-1} \subseteq B_{k}$.
	Using \ref{prop:LTO:Monotonicity}, we infer from $\indic{B_{k}} \geq \indic{B_{k-1}}$ that $[\lowtranop \indic{B_{k}}](x) \geq [\lowtranop \indic{B_{k-1}}](x) > 0$ for all $x\in B_{k}$.
	Hence,
	\begin{align*}
		B_{k+1}
		&= B_{k} \cup \left\{ x \in \statespace \setminus B_{k} \colon [\lowtranop \indic{B_{k}}](x) > 0 \right\} \\
		&= \left\{ x \in B_{k} \colon [\lowtranop \indic{B_{k}}](x) > 0 \right\} \cup \left\{ x \in \statespace \setminus B_{k} \colon [\lowtranop \indic{B_{k}}](x) > 0 \right\} \\
		&= \left\{ x \in \statespace \colon [\lowtranop \indic{B_{k}}](x) > 0 \right\}.
	\end{align*}

	Now that we know that
	\[
		B_{i} = \left\{ x \in \statespace \colon [\lowtranop \indic{B_{i-1}}](x) > 0 \right\} ~\text{for all}~i \in \nats,
	\]
	we observe that this equivalent definition of the sequence satisfies the conditions of the sequence in Lemma~\ref{lem:LowTranOp:SignOfCompositionWithEvent}.
	Moreover, as $\emptyset \neq B_{0} \subseteq B_{1} \subseteq \dots$, it follows from the second part of Lemma~\ref{lem:LowTranOp:SignOfCompositionWithEvent} that $c_{i} > 0$ for all $i \in \nats$.
	Also from Lemma~\ref{lem:LowTranOp:SignOfCompositionWithEvent}, we know that
	\[
		c_1 \cdots c_{i} \indic{B_{i}} \leq \lowtranop^{i} \indic{\statespacesub{PA}} \leq \indic{B_{i}} ~\text{for all}~ i \in \nats.
	\]
	Assume now that there is some $m \in \natz$ such that $\lowtranop^{m} \indic{\statespacesub{PA}} > 0$, and let $n$ be the lowest such $m$.
	Then for all $y \in \statespace \setminus \statespacesub{PA}$, $[\lowtranop^{n} \indic{\statespacesub{PA}}](y) > 0$, such that the second condition of Definition~\ref{def:LowTranOp:RegularlyAbsorbing} is satisfied and $\lowtranop$ is indeed top class absorbing.
	If $n = 0$, then $\statespacesub{PA} = \statespace = B_{0}$, and $n$ is indeed the first index for which $B_{n} = B_{n+1}$.
	If $n > 0$, then from the strict positivity of $c_{1}, \dots, c_{n}$ and the lower and upper bound for $\lowtranop^{i} \indic{\statespacesub{PA}}$ we infer that $B_{1}, \dots, B_{n-1} \neq \statespace$ and $B_{n} = \statespace$.
	We deduce from the recursive relation between $B_{0}, \dots, B_{n}, B_{n+1}$ that $n$ is indeed the first index for which $B_{n} = B_{n+1}$, which finalises this proof.
\end{proof}

\begin{proof}[Proof of Theorem~\ref{the:ContinuousErgodicity:CoefficientOfErgodicityOfApproximation}]
	We first prove the forward implication.
	To this end, we let $\lowrateop$ be an ergodic lower transition rate operator, and $n \coloneqq \card{\statespace} - 1$---we ignore the case $\card{\statespace} = 1$, as this case is trivially ergodic.
	We furthermore fix some $k \geq n$ and some $\delta_1, \dots, \delta_k$ in $\strictlyposreals$ such that for all $i \in \{ 1, \dots, k \}$, $\delta_{i} \norm{\lowrateop} < 2$.
	For all $i \in \{ 1, \dots, k\}$, we define $\lowtranopa{i} \coloneqq (I + \delta_{i} \lowrateop)$.
	By Proposition~\ref{prop:IPlusDeltaQLowTranOp}, the operators $\lowtranopa{1}, \dots, \lowtranopa{k}$ are lower transition operators, such that by \ref{prop:LTO:CompositionIsAlsoLTO} their composition $\lowtranopa{1:k} \coloneqq \lowtranopa{k} \cdots \lowtranopa{1}$ is also a lower transition operator.
	Note that the same holds for their conjugate upper transition operators, defined as $\uptranopa{i} \coloneqq (I + \delta_i \uprateop)$ and $\uptranopa{1:k} \coloneqq \uptranopa{k} \cdots \uptranopa{1}$.

	We now assume ex-absurdo that $\coefferga{\Phi(\delta_{1}, \dots, \delta_{k})} = \coefferga{\lowtranopa{1:k}} = 1$.
	As a consequence of Proposition~\ref{prop:CoeffOfErgod:AlternativeFunctions}, there is some $f^{*} \in \setoffna$ with $\min f^{*} = 0$ and $\max f^{*} = 1$ such that $\norm{\lowtranopa{1:k} f^{*}}_{v} = 1$.
	By construction and \ref{prop:LTO:BoundedByMinAndMax}, there are now some $y_0, y_1 \in \statespace$ such that $[\lowtranopa{1:k} f^{*}](y_0) = 0$ and $[\lowtranopa{1:k} f^{*}](y_1) = 1$.

	We define the---obviously non-empty---set
	\[
		\statespace^{*}
		\coloneqq \left\{ x \in \statespace \colon f^{*}(x) = 0 \right\},
	\]
	and distinguish two cases: either $\statespacesub{R} \cap \statespace^{*} \neq \emptyset$ or $\statespacesub{R} \cap \statespace^{*} = \emptyset$.

	We first consider the case $\statespacesub{R} \cap \statespace^{*} \neq \emptyset$, and fix any arbitrary $x^{*} \in \statespacesub{R} \cap \statespace^{*}$.
	Note that, by construction, $\indic{x^{*}} \leq 1 - f^{*}$.
	Using the conjugacy of $\lowtranopa{1:k}$ and $\uptranopa{1:k}$ and \ref{prop:LTO:Monotonicity}, we find that
	\[
		\uptranopa{1:k} \indic{x^{*}} \leq \uptranopa{1:k} (1 - f^{*}) = 1 + \uptranopa{1:k} (- f^{*}) = 1 - \lowtranopa{1:k} f^{*},
	\]
	where the first equality follows from \ref{prop:LTO:AdditionOfConstant} and the second equality follows from the conjugacy.
	From the previous inequality and \ref{prop:LTO:BoundedByMinAndMax}, it follows that
	\[
		0 \leq [\uptranopa{1:k} \indic{x^{*}}](y_1) \leq 1 - [\lowtranopa{1:k} f^{*}](y_1) = 0,
	\]
	and hence $[\uptranopa{1:k} \indic{x^{*}}](y_1) = 0$.
	From Lemma~\ref{lem:LowTranOp:SignOfCompositionWithIndicator}, it now follows that that
	\begin{equation}
	\label{eqn:ErogidictyOfApproximation:Contradiction1}
		0 = [\uptranopa{1:k} \indic{x^{*}}](y_1) \geq \prod_{i=1}^{k} \uptranopa{i} \indic{z_{i}}(z_{i+1}) = \prod_{i=1}^{k} [(I + \delta_{i} \uprateop) \indic{z_{i}}](z_{i+1})
	\end{equation}
	for any arbitrary sequence $y_1 = z_{k+1}, z_2, \dots, z_1 = x^{*}$ in $\statespace$.
	On the other hand, as $k \geq n = \card{\statespace} - 1$ and $x^{*} \in \statespacesub{R}$ it follows from Lemma~\ref{lem:LowRateOp:PathOfArbitraryLength} that there exists a sequence $y_1 = x_{k+1}, x_{k}, \dots, x_{1} = x^{*}$ in $\statespace$ such that $[(I + \delta_{i} \uprateop) \indic{x_{i}}](x_{i+1}) > 0$ for all $i \in \{1, \dots, k \}$.
	This obviously contradicts Eqn.~\eqref{eqn:ErogidictyOfApproximation:Contradiction1}.

	Next, we consider the case $\statespacesub{R} \cap \statespace^{*} = \emptyset$.
	In this case, $c \indic{\statespacesub{R}} \leq f^{*}$, where we let
	\[
		c \coloneqq \min \{ f^{*}(x) \colon x \in \statespacesub{R} \} > 0.
	\]
	From Lemma~\ref{lem:LowTranOp:SignOfCompositionWithEvent}, we know that
	\[
		c_{1} \cdots c_{k} \indic{A_k} \leq \lowtranopa{1:k} \indic{\statespacesub{R}},
	\]
	where $A_{0} \coloneqq \statespacesub{R}$ and, for all $i \in \{ 1, \dots, k \}$,
	\begin{align*}
		A_{i}
		&\coloneqq \{ x \in \statespace \colon [\lowtranopa{i} \indic{A_{i-1}}](x) > 0\}
		&\text{and} & &
		c_{i}
		&\coloneqq \min \{ [\lowtranopa{i} \indic{A_{i-1}}](x) \colon x \in A_{i} \}.
	\end{align*}
	As $c > 0$ and $c \indic{\statespacesub{R}} \leq f^{*}$, it follows from \ref{def:LTO:NonNegativelyHom} and  \ref{prop:LTO:Monotonicity} that $c \lowtranopa{1:k} \indic{\statespacesub{R}} \leq \lowtranopa{1:k} f^{*}$.
	Combining the two obtained inequalities yields
	\[
		c c_1 \cdots c_{k} \indica{A_{k}}{y_0} \leq c [\lowtranopa{1:k} \indic{\statespacesub{R}}](y_0) \leq [\lowtranopa{1:k} f^{*}](y_0) = 0.
	\]
	From the second part of Lemma~\ref{lem:LowTranOp:SignOfCompositionWithEvent}, it now follows that $y_{0} \notin A_{k}$.

	Nonetheless, we now prove that $A_{k} = \statespace$, an obvious contradiction.
	To that end, observe that for all $i \in \{ 1, \dots, k \}$,
	\begin{align*}
		A_{i}
		&= \{ x \in \statespace \colon [(I + \delta_{i} \lowrateop) \indic{A_{i-1}}](x) > 0 \} \\
		&= \{ x \in A_{i-1} \colon [(I + \delta_{i} \lowrateop) \indic{A_{i-1}}](x) > 0 \} \cup \{ x \in \statespace \setminus A_{i-1} \colon [(I + \delta_{i} \lowrateop) \indic{A_{i-1}}](x) > 0 \}.
	\intertext{%
		Note that for all $x_{i-1} \in A_{i-1}$, $\indic{A_{i-1}} \geq \indic{x_{i-1}}$.
		Also, from \ref{prop:LTRO:Ixx} it follows that $[(I + \delta_i \lowrateop) \indic{x_{i-1}}](x_{i-1}) > 0$.
		Using \ref{prop:LTO:Monotonicity} allows us to conclude that for all $x_{i-1} \in A_{i-1}$, $[(I + \delta_{i} \lowrateop) \indic{A_{i-1}}](x_{i-1}) > 0$.
		Therefore,
	}
		A_{i}
		&= A_{i-1} \cup \{ x \in \statespace \setminus A_{i-1} \colon [(I + \delta_{i} \lowrateop) \indic{A_{i-1}}](x) > 0 \} \\
		&= A_{i-1} \cup \{ x \in \statespace \setminus A_{i-1} \colon \indica{A_{i-1}}{x} + \delta_{i} [\lowrateop \indic{A_{i-1}}](x) > 0 \} \\
		&= A_{i-1} \cup \{ x \in \statespace \setminus A_{i-1} \colon [\lowrateop \indic{A_{i-1}}](x) > 0 \},
	\end{align*}
	where the third equality is allowed because $\delta_i > 0$.
	From this recursive relation, it is obvious that $\statespacesub{R} \subseteq A_{k}$.
	Even more, we can prove that $\statespacesub{R}^{c} \subseteq A_{k}$, which implies that $\statespacesub{R} \cup \statespacesub{R}^{c} = \statespace \subseteq A_{k} \subseteq \statespace$, and consequently $A_{k} = \statespace$.
	Indeed, note that the sequence $A_{0}, \dots, A_{k}$ is equal to the first $(k+1)$ terms of the sequence $\{B_{i}\}_{i \in \natz}$ that is defined in Definition~\ref{def:LowRateOp:LowerReachable} for $B_{0} = \statespacesub{R}$.
	As $\lowrateop$ was assumed to be ergodic and $k \geq \card{\statespace} - 1 \geq \card{\statespace \setminus \statespacesub{R}}$, it follows from Definitions~\ref{def:LowRateOp:LowerReachable} and \ref{def:LowRateOp:RegularlyAbsorbing} and Theorem~\ref{the:ContinuousErgodicity:NecessaryAndSufficient} that $\statespacesub{R}^{c} \subseteq B_{k}$.

	For both $\statespacesub{R} \cap \statespace^{*} \neq \emptyset$ and $\statespacesub{R} \cap \statespace^{*} = \emptyset$ we have obtained a contradiction, such that the ergodicity of $\lowrateop$ indeed implies the stated.

	Next, we prove the reverse implication.
	Fix some lower transition rate operator $\lowrateop$, and assume that there is some $k < \card{\statespace}$ and some $\delta_1, \dots, \delta_k \in \strictlyposreals$ such that $\delta_{i} \norm{\lowrateop} < 2$ for all $i \in \{ 1, \dots, k \}$ and
	\[
		\coefferga{\Phi(\delta_{1}, \dots, \delta_{k})} < 1.
	\]
	By Proposition~\ref{the:CoeffOfErg:StrictlySmallerThanOneIsNecAndSuffForErg} this implies that the lower transition operator $\lowtranopa{1:k} \coloneqq (I + \delta_{k} \lowrateop) \cdots (I + \delta_{1} \lowrateop)$ is ergodic.
	By Proposition \ref{prop:DiscreteErgodicity:NecAndSuff}, the ergodicity of $\lowtranopa{1:k}$ is equivalent to $\lowtranopa{1:k}$ being regularly absorbing, in the sense that
	\begin{enumerate}[label=(\roman*)]
		\item $\statespace_{1:k} \coloneqq \left\{ x \in \statespace \colon (\exists n \in \nats)(\forall y \in \statespace)~[(\uptranopa{1:k})^{n} \indic{x}](y) > 0 \right\} \neq \emptyset$;
		\item $(\forall y \in \statespace \setminus \statespace_{1:k})(\exists n \in \nats)~[(\lowtranopa{1:k})^{n} \indic{\statespace_{1:k}}](y) > 0$.
	\end{enumerate}

	Fix some $x^{*} \in \statespace_{1:k}$, and let $m \in \nats$ such that $(\uptranopa{1:k})^{m} \indic{x^{*}} > 0$.
	Fix an arbitrary $y \in \statespace$.
	Then by Lemma~\ref{lem:LowTranOp:SignOfCompositionWithIndicator}, there exists a sequence $y = x_{m+1}, \dots, x_{1} = x^{*}$ in $\statespace$ such that for all $i \in \{1, \dots, m\}$, $[\uptranopa{1:k} \indic{x_{i}}](x_{i+1}) > 0$.
	Again using Lemma~\ref{lem:LowTranOp:SignOfCompositionWithIndicator}, this implies that for all $i \in \{ 1, \dots, m \}$ there is a sequence $x_{i+1} = x_{i,k+1}, \dots, x_{i,1} = x_{i}$ in $\statespace$ such that for all $j \in \{ 1, \dots, k \}$,
	\[
		[(I + \delta_{j} \uprateop) \indic{x_{i,j}}](x_{i,j+1}) > 0.
	\]
	As such, we have now constructed one long sequence
	\[
		y = x_{m, k+1}, x_{m,k} \dots, x_{m,1} = x_{m-1, k+1}, x_{m-1,k}, \dots, x_{m-1,1} = x_{m-2, k+1} \dots, x_{1, 1} = x
	\]
	in $\statespace$.
	From this sequence we remove all ``loops'' (as we previously did in the proof of Lemma~\ref{lem:LowRateOp:UpperReachableShorterSequence}), and denote this shortened sequence by $y = z_{n'+1}, \dots, z_{1} = x^{*}$ with corresponding time steps $\delta_{n'}', \dots, \delta_{1}'$.
	Then for all $i \in \{ 1, \dots, n' \}$,
	\[
		0 < [(I + \delta_{i}' \uprateop) \indic{z_{i}}](z_{i+1}) = \indic{z_{i}}(z_{i+1}) + \delta_{i}' [\uprateop \indic{z_{i}}](z_{i+1}) = \delta_{i}' [\uprateop \indic{z_{i}}](z_{i+1}).
	\]
	As all $\delta_{i}'$ are strictly positive, we find that for all $i \in \{ 1, \dots, n' \}$, $[\uprateop \indic{z_{i}}](z_{i+1}) > 0$.
	By Definition~\ref{def:LowRateOp:UpperReachable}, this means that $y \upreachc x^{*}$.
	As $y$ was an arbitrary element of $\statespace$ and $x^{*}$ an arbitrary element of $\statespace_{1:k}$, $\statespace_{1:k} \subseteq \statespacesub{R}$ and hence $\lowrateop$ is top class regular.
	Furthermore, we can show that $\statespacesub{R} \subseteq \statespace_{1:k}$, such that $\statespacesub{R} = \statespace_{1:k}$.
	To that end, assume that $\statespacesub{R} \setminus \statespace_{1:k} \neq \emptyset$ and fix some arbitrary $x^{*} \in \statespacesub{R} \setminus \statespace_{1:k}$.
	Then by Definition~\ref{def:LowRateOp:RegularlyAbsorbing}, $y \upreachc x^{*}$ for all $y \in \statespace$.
	By Lemmas~\ref{lem:LowRateOp:PathOfArbitraryLength} and \ref{lem:LowTranOp:SignOfCompositionWithIndicator}, for all $y \in \statespace$ there is an integer $n_{y}$ such that for all $\ell \geq n_{y}$, $[(\uptranopa{1:k})^{\ell} \indic{x^{*}}](y) > 0$.
	Hence, if we let $m \coloneqq \max \{ n_{y} \colon y \in \statespace \}$, then $[(\uptranopa{1:k})^{m} \indic{x^{*}}](y) > 0$ for all $y \in \statespace$.
	By Definition~\ref{def:LowTranOp:RegularlyAbsorbing}, this implies that $x^{*} \in \statespace_{1:k}$.
	However, this contradicts our assumption that $x^{*} \in \statespacesub{R} \setminus \statespace_{1:k}$, such that $\statespacesub{R} \setminus \statespace_{1:k} = \emptyset$ and hence indeed $\statespacesub{R} \subseteq \statespace_{1:k}$.

	We now show that (ii) implies that $\lowrateop$ is top class absorbing.
	Since $\lowtranopa{1:k}$ is top class regular and absorbing, and because $\statespace_{1:k}=\statespace_{R}$, it follows from Lemma~\ref{lem:LowTranOp:StrongerNecAndSuffTopClassAbsorption} that there is some $m \in \natz$ such that $(\lowtranopa{1:k})^{m} \indic{\statespacesub{R}} > 0$.
	Also, we know that $B_{m} = \statespace$, where $B_{0} = \statespacesub{R}$ and
	\[
		B_{i+1}
		\coloneqq B_{i} \cup \left\{ x \in \statespace \setminus B_{i} \colon [\lowtranopa{1:k} \indic{B_{i}}](x) > 0 \right\} ~\text{for all}~i \in \{ 0, \dots, m-1 \}.
	\]
	For any $i \in \{ 0, \dots, m-1 \}$ and any $x \in \statespace$, it follows from Lemma~\ref{lem:LowTranOp:SignOfCompositionWithEvent} that $[\lowtranopa{1:k} \indic{B_{i}}](x) > 0$ if and only if $x \in B_{i,k}$, where $B_{i,k}$ is derived from the initial condition $B_{i,0} \coloneqq B_{i}$ and, for all $j \in \{ 1, \dots, k \}$, from the recursive relation
	\begin{align*}
		B_{i,j}
		&= \left\{ x \in \statespace \colon [(I + \delta_{j} \lowrateop) \indic{B_{i,j-1}}](z) > 0 \right\}.
	\intertext{Similar to what we did before, we can rewrite this recursive relation as}
		B_{i,j}
		&= \left\{ x \in B_{i,j-1} \colon [(I + \delta_{j} \lowrateop) \indic{B_{i,j-1}}](z)>0 \right\} \cup \left\{ x \in \statespace \setminus B_{i,j-1} \colon [(I + \delta_{j} \lowrateop) \indic{B_{i,j-1}}](z) > 0 \right\} \\
		&= \left\{ x \in B_{i,j-1} \colon 1 + \delta_{j} [\lowrateop \indic{B_{i,j-1}}](z) > 0 \right\} \cup \left\{ x \in \statespace \setminus B_{i,j-1} \colon \delta_{j} [\lowrateop \indic{B_{i,j-1}}](z) > 0 \right\}.
	\intertext{As before, we can verify that $1 + \delta_{j} [\lowrateop \indic{B_{i,j-1}}](z) > 0$ for all $x \in B_{i,j-1}$.
	Hence,}
		B_{i,j}
		&= B_{i,j-1} \cup \left\{ x \in \statespace \setminus B_{i,j-1} \colon [\lowrateop \indic{B_{i,j-1}}](z) > 0 \right\}.
	\end{align*}
	This way, we have constructed a sequence of sets
	\[
		B_{0} = B_{0, 0}, B_{0, 1}, \dots, B_{0,k} = B_1 = B_{1,0}, B_{1,1}, \dots, B_{1,k} = B_{2} = B_{2,0}, \dots, B_{m-1, k} = B_{m}
	\]
	with $B_{0} = \statespacesub{R}$ and $B_{m} = \statespace$.
	Denote this sequence by $A_{0}, \dots, A_{mk + 1}$ and let $A_{mk+2}\coloneqq\statespace$.
	Then $A_{0} = \statespacesub{R}$, $A_{mk + 1} =A_{mk + 2}=\statespace$ and for all $i \in \{0, \dots, mk+1\}$,
	\[
		A_{i+1} = A_{i} \cup \left\{ x \in \statespace \setminus A_{i} \colon [\lowrateop \indic{A_{i}}](z) > 0 \right\}.
	\]
	Let $n\in\{0,\dots,mk+1\}$ be the first index for which $A_{n} = A_{n+1}$.
	From the recursive relation between $A_{n}, \dots, A_{m k + 1},A_{m k + 2}$, we infer that $A_{n} = A_{n+1} = \cdots = A_{m k + 2} = \statespace$.
	Fix an arbitrary $y{*} \in \statespace \setminus \statespacesub{R}$.
	Then the sequence $\statespacesub{R} = A_{0}, \dots, A_{n}, A_{n+1}$ satisfies the recursive relation of Definition~\ref{def:LowRateOp:LowerReachable} and $y^{*}\in\statespace=A_n$, so $y^{*} \lowreachc \statespacesub{R}$.
	As $y^{*}$ was an arbitrary element of $\statespace \setminus \statespacesub{R}$, it follows that $\lowrateop$ is top class absorbing.

	We have proven that if there is some $k < \card{\statespace}$ and some sequence $\delta_1, \dots, \delta_k$ in $\strictlyposreals$ such that $\delta_{i} \norm{\lowrateop} < 2$ for all $i \in \{ 1, \dots, k \}$ and $\coefferga{\Phi(\delta_{1},\dots,\delta_{k})} < 1$, then $\lowrateop$ is both top class regular and top class absorbing.
	As an immediate consequence of Theorem~\ref{the:ContinuousErgodicity:NecessaryAndSufficient}, this implies that $\lowrateop$ is ergodic.
\end{proof}

\begin{proof}[Proof of Proposition~\ref{prop:UniformApproximationErgodicError}]
	From the requirements on $\delta$, \ref{prop:LTO:CompositionIsAlsoLTO} and Proposition~\ref{prop:IPlusDeltaQLowTranOp}, it follows that $(I + \delta \lowrateop)^{i}$ is a lower transition operator for all $i \in \nats$.
	By Lemma~\ref{lem:ExplicitErrorBound},
	\begin{align*}
		\norm{\lowtranopa{t} f - \Psi_{t}(n)}
		&\leq \delta^2 \norm{\lowrateop}^{2} \sum_{i = 0}^{n-1} \norm{(I + \delta \lowrateop)^{i} f}_{c} \\
		&= \delta^2 \norm{\lowrateop}^{2} \sum_{i = 0}^{k-1} \sum_{j = 0}^{m-1} \norm{(I + \delta \lowrateop)^{j}(I + \delta \lowrateop)^{m i} f}_{c}.
	\intertext{%
		We use \ref{prop:LTO:VarNormTf} to yield
	}
		\norm{\lowtranopa{t} f - \Psi_{t}(n)}
		&\leq m \delta^2 \norm{\lowrateop}^{2} \sum_{i = 0}^{k-1} \norm{(I + \delta \lowrateop)^{m i} f}_{c}.
	\intertext{Next, we simply use \ref{prop:CoeffOfErgod:BoundOnNormTf} and \ref{prop:CoeffOfErgod:Composition} to yield}
		\norm{\lowtranopa{t} f - \Psi_{t}(n)}
		&\leq m \delta^2 \norm{\lowrateop}^{2} \norm{f}_{c} \sum_{i = 0}^{k-1} \coefferga{(I + \delta \lowrateop)^{m}}^{i}.
	\end{align*}
	For any $a \in [0, 1)$ and any $\ell \in \nats$, it is well known that
	\[
		\sum_{i = 0}^{\ell} a^{i} = \frac{1 - a^{\ell+1}}{1 - a} \leq \frac{1}{1-a}.
	\]
	If $\beta \coloneqq \coefferga{(I + \delta \lowrateop)^{m}} < 1$, then we can use this well-known relation to yield
	\begin{align*}
		\norm{\lowtranopa{t} f - \Psi_{t}(n)}
		&\leq m \delta^2 \norm{\lowrateop}^{2} \norm{f}_{c} \frac{1 - \beta^{k}}{1 - \beta}
		\leq \frac{m \delta^2 \norm{\lowrateop}^{2} \norm{f}_{c}}{1 - \beta}.
	\end{align*}

	The proof for $\beta = \coefferga{\lowtranopa{m \delta}}$ is entirely analoguous.
	We can use the second inequality of Lemma~\ref{lem:ExplicitErrorBound}, the semi-group property and \ref{prop:LTO:VarNormTf}, which yields
	\[
		\norm{\lowtranopa{t} f - \Psi_{t}(n)}
		\leq m \delta^2 \norm{\lowrateop}^{2} \sum_{i = 0}^{k-1}\norm{(\lowtranopa{m \delta})^{i} f}_{c}.
	\]
	Next, we again use \ref{prop:CoeffOfErgod:BoundOnNormTf} and \ref{prop:CoeffOfErgod:Composition} to yield
	\[
		\norm{\lowtranopa{t} f - \Psi_{t}(n)}
		\leq m \delta^2 \norm{\lowrateop}^{2} \norm{f}_{c} \sum_{i = 0}^{k-1} \coefferga{\lowtranopa{m \delta}}^{i}. \qedhere
	\]

\end{proof}

\begin{proof}[Proof of Example~\ref{binex:UniformErgodic}]
	Let $\delta \in \nonnegreals$ such that $\delta \norm{\lowrateop} \leq 2$.
	Using Proposition~\ref{prop:CoeffOfErgod:AlternativeFunctions} yields
	\begin{align*}
		\coefferga{\Phi(\delta)}
		&= \max \{ \norm{\Phi(\delta) f}_{v} \colon f \in \setoffna, \max f = 1, \min f = 0 \}.
	\end{align*}
	In the special case of a binary state space, only two functions satisfy this requirement: $\indic{0}$ and $\indic{1}$.
	Therefore
	\begin{align*}
		\coefferga{\Phi(\delta)}
		&= \max \big\{ \abs{[\Phi(\delta) \indic{0}](0) - [\Phi(\delta) \indic{0}](1)}, \abs{[\Phi(\delta) \indic{1}](0) - [\Phi(\delta) \indic{1}](1)} \big\}.
	\end{align*}
	Recall that in the Proof of Example~\ref{binex:AnalyticalExpressionsForAppliedLTO} we proved that for all $\delta \in \nonnegreals$ such that $\delta \norm{\lowrateop} \leq 2$ and all $f \in \setoffna$,
	\[
		[\Phi(\delta) f](0) - [\Phi(\delta) f](1)
		= \begin{cases}
			\norm{f}_{v} (1 - \delta (\upq{0} + \lowq{1})) &\text{if } f(0) \geq f(1), \\
			\norm{f}_{v} (1 - \delta (\lowq{0} + \upq{1})) &\text{if } f(0) \leq f(1).
		\end{cases}
	\]
	As $\norm{\indic{0}}_{v} = 1 = \norm{\indic{1}}_{v}$, this yields
	\begin{equation*}
		\coefferga{I + \delta \lowrateop}
		= \coefferga{\Phi(\delta)}
		= \max \left\{ \abs{1 - \delta (\upq{0} + \lowq{1})}, \abs{1 - \delta (\lowq{0} + \upq{1})} \right\}. \qedhere
	\end{equation*}
\end{proof}

For the proof of Theorem~\ref{the:CoeffOfErgod:Approximation}, we need some definitions and results from the theory of imprecise probabilities.
The reason for this is that, as \cite{decooman2009} already mention, the functional $[\lowtranop \cdot](x)$ is actually a coherent (conditional) lower expectation.
For a more thorough discussion of coherent lower expectations---often also called coherent lower previsions---we refer to the seminal work of \cite{1991Walley} and the more recent treatment of \cite{2014LowPrev}.
\begin{definition}
	A functional $\lowprev$ that maps $\setoffna$ to $\reals$ is a \emph{coherent lower expectation} if for all $f, g \in \setoffna$ and all $\mu \in \nonnegreals$:
	\begin{enumerate}[label=\underline{E}\arabic*:, ref=(\underline{E}\arabic*)]
		\item $\lowpreva{f} \geq \min f$; \label{def:CLP:BoundedByMin}
		\item $\lowpreva{f + g} \geq \lowpreva{f} + \lowpreva{g}$; \label{def:CLP:subadditive}
		\item $\lowpreva{\mu f} = \mu \lowpreva{f}$. \label{def:CLP:PositiveHomogeneity}
	\end{enumerate}
\end{definition}
The conjugate \emph{coherent upper expectation} is defined for all $f \in \setoffna$ as
\[
	\uppreva{f} = - \lowpreva{-f}.
\]
If for all $f \in \setoffna$, $\uppreva{f} = \lowpreva{f} = \linpreva{f}$, then we call $\prev$ a \emph{linear expectation}.
The reason for this terminology is that the inequality in \ref{def:CLP:subadditive} can then be replaced by an equality, and the condition $\mu \in \nonnegreals$ for \ref{def:CLP:PositiveHomogeneity} can be relaxed to $\mu \in \reals$.

The following corollary highlights the link between the components of a lower transition operator and coherent lower previsions.
\begin{corollary}
\label{cor:LTOisCLP}
	Let $\lowtranop$ be a lower transition operator and $x \in \statespace$.
	Then the functional $[\lowtranop \cdot](x) \colon f \in \setoffna \mapsto [\lowtranop f](x)$ is a coherent lower prevision.
\end{corollary}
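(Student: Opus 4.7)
The proof should be essentially immediate by pointwise evaluation at $x$ of the three defining properties of a lower transition operator. The plan is to verify each of \ref{def:CLP:BoundedByMin}--\ref{def:CLP:PositiveHomogeneity} for the functional $\lowpreva{\cdot} \coloneqq [\lowtranop \cdot](x)$ by invoking the corresponding defining property of $\lowtranop$ from Definition~\ref{def:LowerTransitionOperator}.

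Concretely, fix any $f, g \in \setoffna$ and any $\mu \in \nonnegreals$. For \ref{def:CLP:BoundedByMin}, applying \ref{def:LTO:DominatesMin} yields $\lowtranop f \geq \min f$ as an inequality between functions on $\statespace$, so in particular $[\lowtranop f](x) \geq \min f$, which is exactly $\lowpreva{f} \geq \min f$. For \ref{def:CLP:subadditive}, the super-additivity \ref{def:LTO:SuperAdditive} gives $\lowtranop(f+g) \geq \lowtranop f + \lowtranop g$ pointwise, and evaluating at $x$ gives $\lowpreva{f+g} \geq \lowpreva{f} + \lowpreva{g}$. For \ref{def:CLP:PositiveHomogeneity}, the non-negative homogeneity \ref{def:LTO:NonNegativelyHom} gives $\lowtranop(\mu f) = \mu \lowtranop f$, and again evaluating at $x$ gives $\lowpreva{\mu f} = \mu \lowpreva{f}$.

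Since all three properties follow by a one-line pointwise read-off from the corresponding properties of $\lowtranop$, there is no substantive obstacle; the only thing to note is that the direction of the inequalities and the restriction $\mu \in \nonnegreals$ match up exactly between Definitions~\ref{def:LowerTransitionOperator} and the definition of a coherent lower expectation, so no additional argument (e.g., to handle negative scalars or to reverse an inequality) is required.
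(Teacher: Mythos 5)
Your proposal is correct and matches the paper's own proof exactly: both verify \ref{def:CLP:BoundedByMin}--\ref{def:CLP:PositiveHomogeneity} by pointwise evaluation at $x$ of the corresponding properties \ref{def:LTO:DominatesMin}--\ref{def:LTO:NonNegativelyHom} of the lower transition operator. No gaps; your write-up is simply a slightly more explicit version of the same one-line read-off.
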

\begin{proof}
	The operator $[\lowtranop \cdot](x)$ indeed maps $\setoffna$ to $\reals$.
	Furthermore, \ref{def:CLP:BoundedByMin} follows from \ref{def:LTO:DominatesMin}, \ref{def:CLP:subadditive} follows from \ref{def:LTO:SuperAdditive} and \ref{def:CLP:PositiveHomogeneity} follows from \ref{def:LTO:NonNegativelyHom}.
	Hence, the operator is indeed a coherent lower prevision.
\end{proof}

For any coherent lower expectation $\lowprev$, the set $\credseta{\lowprev}$ of dominating linear expectations, defined as
\[
	\credseta{\lowprev} \coloneqq \{ \prev \text{ a linear expectation operator} \colon (\forall f \in \setoffna)~\lowpreva{f} \leq \linpreva{f} \},
\]
is non-empty.
Moreover, from \cite[Section~3.3.3]{1991Walley} it follows that $\lowprev$ is the lower envelope of $\credseta{\lowprev}$, in the sense that for all $f \in \setoffna$,
\[
	\lowpreva{f} = \min \{ \linpreva{f} \colon \prev \in \credseta{\lowprev} \}.
\]

\begin{lemma}[Alternative statement of Proposition~1 in \cite{2013Skulj}]
\label{lem:MaxDifferenceBetweenLinearPrevisionsForIndicators}
	If\/ $\prev_1$ and $\prev_2$ are two linear expectation operators, then
	\[
		\max \{ \prev_{1}(f) - \prev_{2}(f) \colon f \in \setoffna, 0 \leq f \leq 1 \}
		= \max \{ \prev_{1}(\indic{A}) - \prev_{2}(\indic{A}) \colon f \in \setoffna, \emptyset \neq A \subset \statespace \}.
	\]
\end{lemma}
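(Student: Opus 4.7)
The inequality $\geq$ is immediate: each indicator $\indic{A}$ with $\emptyset \neq A \subset \statespace$ satisfies $0 \leq \indic{A} \leq 1$, so every term appearing in the right-hand maximum is also a candidate in the left-hand one.

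For the converse, the plan is to use a layer-cake representation: any $f \in \setoffna$ with $0 \leq f \leq 1$ can be written as a convex combination of indicators plus a constant. Concretely, fix such an $f$ and let $w_1 < w_2 < \cdots < w_k$ be its distinct values, which form a finite set since $\statespace$ is finite. Setting $A_i \coloneqq \{x \in \statespace \colon f(x) \geq w_i\}$ for $i \in \{1, \ldots, k\}$, we have $A_1 = \statespace$ and $A_2, \ldots, A_k$ are non-empty proper subsets of $\statespace$, and the telescoping identity
\[
	f = w_1 + \sum_{i=2}^{k} (w_i - w_{i-1}) \indic{A_i}
\]
holds, with non-negative weights $w_i - w_{i-1}$ summing to $w_k - w_1 \leq 1$.

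Applying the linearity of $\prev_1$ and $\prev_2$ and subtracting cancels the constant $w_1$, leaving
\[
	\prev_1(f) - \prev_2(f) = \sum_{i=2}^{k} (w_i - w_{i-1})\bigl[\prev_1(\indic{A_i}) - \prev_2(\indic{A_i})\bigr].
\]
Bounding each bracketed term by the right-hand maximum $R$, and using that the weights are non-negative and sum to at most one, yields $\prev_1(f) - \prev_2(f) \leq \max\{0, R\}$. Taking the supremum over $f$ gives $L \leq \max\{0, R\}$, where $L$ denotes the left-hand maximum.

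The only remaining point, and what I anticipate to be the main piece of bookkeeping, is to rule out $R < 0$ so that $\max\{0,R\} = R$. If $R < 0$, then for every non-empty proper $A \subset \statespace$ the complement $A^c$ is also non-empty and proper, and the linearity of $\prev_1$ and $\prev_2$ applied to $\indic{A^c} = 1 - \indic{A}$ gives $\prev_1(\indic{A^c}) - \prev_2(\indic{A^c}) = -[\prev_1(\indic{A}) - \prev_2(\indic{A})] > 0$, contradicting $R < 0$. Hence $R \geq 0$, so $L \leq R$; combined with the trivial $L \geq R$ we conclude $L = R$. The degenerate case $\card{\statespace} = 1$, in which no non-empty proper subset exists, can be handled separately (or absorbed by allowing $A = \statespace$ on the right, which contributes zero).
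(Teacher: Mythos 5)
Your proof is correct, but it takes a genuinely different route from the paper's. The paper expands $f=\sum_{x\in\statespace}f(x)\indic{x}$ by linearity, so that $\prev_1(f)-\prev_2(f)=\sum_{x}c_x f(x)$ with $c_x\coloneqq\prev_1(\indic{x})-\prev_2(\indic{x})$, and observes that this linear functional is maximised over $\{f\colon 0\leq f\leq 1\}$ at the vertex $f=\indic{A^*}$ with $A^*\coloneqq\{x\colon c_x>0\}$; the maximiser is thus itself an indicator, and both sides coincide at it. You instead use the layer-cake (comonotone) decomposition of $f$ into a positive combination of its upper level sets plus a constant, bound each indicator term by the right-hand maximum $R$, and then need the extra complementation step $\prev_1(\indic{A^c})-\prev_2(\indic{A^c})=-[\prev_1(\indic{A})-\prev_2(\indic{A})]$ to rule out $R<0$. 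The paper's argument is shorter and produces the maximiser explicitly, but it relies entirely on linearity in $f$ of $\prev_1(f)-\prev_2(f)$; your decomposition only applies the expectations to indicators of level sets, so it is the kind of argument that survives when the operators are merely Choquet integrals of 2-monotone capacities (which is in fact the setting of Eqn.~\eqref{eqn:CoeffOfErgod:Choquet} later in the appendix). One small remark on each: the paper's proof silently assumes $A^*$ is a non-empty proper subset (if $A^*=\emptyset$ one must note that $\sum_x c_x=\prev_1(1)-\prev_2(1)=0$ then forces all $c_x=0$, so both sides vanish), and your proof correctly flags the degenerate case $\card{\statespace}=1$, where the right-hand maximum ranges over an empty collection and the statement is vacuous; neither issue affects the use of the lemma in the paper.
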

\begin{proof}
	Let $\prev_1$ and $\prev_2$ be any two linear expectation operators on $\setoffna$.
	Then
	\begin{align*}
		&\max \{ \prev_{1}(f) - \prev_{2}(f) \colon f \in \setoffna, 0 \leq f \leq 1 \} \\
		&\qquad = \max \left\{ \sum_{x \in \statespace} (\prev_{1}(\indic{x}) - \prev_{2}(\indic{x})) f(x) \colon f \in \setoffna, 0 \leq f \leq 1 \right\} \\
		&\qquad = \sum_{x \in A^*} (\prev_{1}(\indic{x}) - \prev_{2}(\indic{x}))
		= \prev_{1}(\indic{A^*}) - \prev_{2}(\indic{A^*}),
	\end{align*}
	where $A^* \subset \statespace$ is defined as $A^* \coloneqq \left\{ x \in \statespace \colon \prev_{1}(\indic{x}) > \prev_{2}(\indic{x}) \right\}$.
\end{proof}

\begin{lemma}
\label{lem:MaxDifferenceBetweenLowerPrevisionsUpperBoundWithIndicators}
	If\/ $\lowprev_{1}$ and $\lowprev_{2}$ are two coherent lower expectations on $\setoffna$, then
	\[
		\max \{ \lowprev_1(f) - \lowprev_2(f) \colon f \in \setoffna, 0 \leq f \leq 1 \}
		\leq \max \{ \upprev_1(\indic{A}) - \lowprev_2(\indic{A}) \colon 0 \neq A \subset \statespace \}.
	\]
\end{lemma}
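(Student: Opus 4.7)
The plan is to combine the credal set representation of coherent lower/upper expectations with \cref{lem:MaxDifferenceBetweenLinearPrevisionsForIndicators} in order to reduce the statement to the case of linear expectations applied to indicators. Concretely, for any $f \in \setoffna$ with $0 \leq f \leq 1$, I would first observe that $\lowpreva[1]{f} \leq \uppreva[1]{f}$ (this follows from conjugacy together with \ref{def:CLP:subadditive}, exactly as in \ref{prop:LTRO:LowUp}), so
\[
	\lowpreva[1]{f} - \lowpreva[2]{f} \leq \uppreva[1]{f} - \lowpreva[2]{f}.
\]

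Next, I would invoke the fact that $\lowprev_1$ and $\lowprev_2$ are the lower envelopes of their (non-empty, compact) credal sets $\credseta{\lowprev_1}$ and $\credseta{\lowprev_2}$, and that $\upprev_1$ is correspondingly the upper envelope of $\credseta{\lowprev_1}$. This yields
\[
	\uppreva[1]{f} - \lowpreva[2]{f}
	= \max \{ \linpreva[1]{f} - \linpreva[2]{f} \colon \prev_1 \in \credseta{\lowprev_1}, \prev_2 \in \credseta{\lowprev_2} \},
\]
where the max is attained because both credal sets are compact and $\linpreva{\cdot}$ is continuous on a finite-dimensional space.

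Taking a max over all $f$ with $0 \leq f \leq 1$ on both sides and swapping the order of maximisation (allowed because all sets are non-empty), I obtain
\begin{align*}
	\max \{ \lowpreva[1]{f} - \lowpreva[2]{f} \colon 0 \leq f \leq 1 \}
	&\leq \max_{\prev_1, \prev_2} \max \{ \linpreva[1]{f} - \linpreva[2]{f} \colon 0 \leq f \leq 1 \} \\
	&= \max_{\prev_1, \prev_2} \max \{ \linpreva[1]{\indic{A}} - \linpreva[2]{\indic{A}} \colon \emptyset \neq A \subset \statespace \},
\end{align*}
where the equality is precisely \cref{lem:MaxDifferenceBetweenLinearPrevisionsForIndicators} applied pointwise in $(\prev_1, \prev_2)$.

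Finally, swapping the maxima back and using the envelope identities $\uppreva[1]{\indic{A}} = \max_{\prev_1 \in \credseta{\lowprev_1}} \linpreva[1]{\indic{A}}$ and $\lowpreva[2]{\indic{A}} = \min_{\prev_2 \in \credseta{\lowprev_2}} \linpreva[2]{\indic{A}}$ yields the desired upper bound $\max \{ \uppreva[1]{\indic{A}} - \lowpreva[2]{\indic{A}} \colon \emptyset \neq A \subset \statespace \}$. I don't anticipate any real obstacle: everything is finite-dimensional, so existence of maximisers and swapping of maxima is automatic; the only delicate point is to correctly apply \cref{lem:MaxDifferenceBetweenLinearPrevisionsForIndicators}, which only gives equality of \emph{maxima} over $f$ and over $A$ rather than an inequality for a single $f$, and that is exactly why I first maximise over $f$ before invoking the lemma.
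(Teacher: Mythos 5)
Your proposal is correct and follows essentially the same route as the paper's proof: first bound $\lowprev_1(f)-\lowprev_2(f)$ by $\upprev_1(f)-\lowprev_2(f)$, then pass to the credal sets $\credseta{\lowprev_1}$ and $\credseta{\lowprev_2}$, swap the maximisations so that Lemma~\ref{lem:MaxDifferenceBetweenLinearPrevisionsForIndicators} can be applied pointwise in $(\prev_1,\prev_2)$, and swap back to recover the envelope expressions $\upprev_1(\indic{A})$ and $\lowprev_2(\indic{A})$. Your observation that the linear-expectation lemma only equates \emph{maxima} over $f$ and over $A$, so that one must maximise over $f$ before invoking it, is exactly the point the paper's proof also relies on.
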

\begin{proof}
	Define $\credset_1 \coloneqq \credseta{\lowprev_1}$ and $\credset_2 \coloneqq \credseta{\lowprev_2}$.
	Note that for all $f \in \setoffna$,
	\[
		0 =\lowprev_{1}(0) = \lowprev_{1}(f - f) \geq \lowprev_{1}(f) + \lowprev_{1}(-f),
	\]
	where the first equality follows from \ref{def:CLP:PositiveHomogeneity}---with $\mu = 0$ and $f = 0$---and the first inequality follows from \ref{def:CLP:subadditive}.
	Bringing the second term to the left hand side and using the conjugacy relation between $\lowprev_{1}$ and $\upprev_{1}$, we find $\upprev_{1}(f) \geq \lowprev_{1}(f)$.
	Hence
	\[
		\lowprev_1(f) - \lowprev_2(f)
		\leq \upprev_1(f) - \lowprev_2(f),
	\]
	and consequently
	\[
		\max \{ \lowprev_1(f) - \lowprev_2(f) \colon f \in \setoffna, 0 \leq f \leq 1 \}
		\leq \max \{ \upprev_1(f) - \lowprev_2(f) \colon f \in \setoffna, 0 \leq f \leq 1 \}.
	\]
	Recall that for any $f \in \setoffna$, $\lowprev_i(f) = \min_{\prev_i \in \credset_{i}} \prev_i(f)$, so
	\begin{align*}
		\upprev_1(f) - \lowprev_2(f)
		&= \max_{\prev_1 \in \credset_{1}} \prev_1(f) - \min_{\prev_2 \in \credset_{2}} \prev_2(f)
		= \max_{\prev_1 \in \credset_{1}} \max_{\prev_2 \in \credset_{2}} \prev_1(f)-\prev_2(f).
	\end{align*}
	We use the previous equality to rewrite the right hand side of the previous inequality:
	\begin{align*}
		&\max \{ \upprev_1(f) - \lowprev_2(f) \colon f \in \setoffna, 0 \leq f \leq 1 \} \\
		&\qquad = \max \big\{ \max_{\prev_1 \in \credset_{1}} \max_{\prev_2 \in \credset_{2}} (\prev_{1}(f) - \prev_{2}(f)) \colon f \in \setoffna, 0 \leq f \leq 1 \big\} \\
		&\qquad = \max_{\prev_1 \in \credset_{1}} \max_{\prev_2 \in \credset_{2}} \max \big\{ (\prev_{1}(f) - \prev_{2}(f)) \colon f \in \setoffna, 0 \leq f \leq 1 \big\}.
	\end{align*}
	Next, we use Lemma~\ref{lem:MaxDifferenceBetweenLinearPrevisionsForIndicators} to yield
	\begin{align*}
		&\max \{ \lowprev_1(f) - \lowprev_2(f) \colon f \in \setoffna, 0 \leq f \leq 1 \} \\
		&\qquad \leq \max_{\prev_1 \in \credset_{1}} \max_{\prev_2 \in \credset_{2}} \max \big\{ (\prev_{1}(f) - \prev_{2}(f)) \colon f \in \setoffna, 0 \leq f \leq 1 \big\} \\
		&\qquad = \max_{\prev_1 \in \credset_{1}} \max_{\prev_2 \in \credset_{2}} \max \big\{ (\prev_{1}(\indic{A}) - \prev_{2}(\indic{A})) \colon 0 \neq A \subset \statespace \big\} \\
		&\qquad = \max \big\{ \max_{\prev_1 \in \credset_{1}} \max_{\prev_2 \in \credset_{2}} (\prev_{1}(\indic{A}) - \prev_{2}(\indic{A})) \colon 0 \neq A \subset \statespace \big\} \\
		&\qquad = \max \{ \upprev_{1}(\indic{A}) - \lowprev_{2}(\indic{A}) \colon 0 \neq A \subset \statespace \}. \qedhere
	\end{align*}
\end{proof}

\begin{proof}[Proof of \cref{the:CoeffOfErgod:Approximation}]
	Fix some lower transition operator $\lowtranop$.
	The lower bound on $\coefferga{\lowtranop}$ follows from the fact that for any $\emptyset \neq A \subset \statespace$, $0 \leq \indic{A} \leq 1$.
	Recall from Corollary~\ref{cor:LTOisCLP} that for any $x\in\statespace$, $[\lowtranop \cdot](x)$ is a coherent lower prevision.
	Therefore, we can use Lemma~\ref{lem:MaxDifferenceBetweenLowerPrevisionsUpperBoundWithIndicators} to yield the upper bound:
	\begin{align*}
		\coefferga{\lowtranop}
		&= \max \{ \norm{\lowtranop f}_{v} \colon f \in \setoffna, 0 \leq f \leq 1 \} \\
		&= \max \big\{ \max \{ [\lowtranop f](x) - [\lowtranop f](y) \colon x,y \in \statespace \} \colon f \in \setoffna, 0 \leq f \leq 1 \big\} \\
		&= \max \big\{ \max \{ [\lowtranop f](x) - [\lowtranop f](y) \colon f \in \setoffna, 0 \leq f \leq 1 \} \colon x,y \in \statespace \big\} \\
		&\leq \max \big\{ \max \{ [\uptranop \indic{A}](x) - [\lowtranop \indic{A}](y) \colon \emptyset \neq A \subseteq \statespace \} \colon x,y \in \statespace \big\} \\
		&= \max \big\{ \max \{ [\uptranop \indic{A}](x) - [\lowtranop \indic{A}](y) \colon x,y \in \statespace \} \colon \emptyset \neq A \subseteq \statespace\big\}. \qedhere
	\end{align*}
\end{proof}

\begin{proof}[Proof of the counterexample for \cref{prop:CoeffOfErgod:ErgodicUpperBound}]
	We first verify that \(\lowrateop\) is ergodic.
	Since \([\uprateop \indic{1}](0) = \upq{0} > 0\) and \([\uprateop \indic{0}](1) = \upq{1} > 0\), it follows from \cref{def:LowRateOp:UpperReachable,def:LowRateOp:RegularlyAbsorbing} that \(\statespacesub{R} = \statespace\), such that \(\lowrateop\) is regularly absorbing.
	Hence, by also invoking \cref{the:ContinuousErgodicity:NecessaryAndSufficient} we can conclude that \(\lowrateop\) is ergodic.

	Next, we fix some \(\delta \in \strictlyposreals{}\) such that \(\delta \norm{\lowrateop} < 2\).
	Recall from \cref{prop:IPlusDeltaQLowTranOp} that \((I + \delta \lowrateop)\) is a lower transition operator.
	Consequently, we can use \cref{eqn:CoeffOfErgod:UpperBound} to compute an upper bound for \(\coefferga{I + \delta \lowrateop}\).
	In this case, there are clearly only two possibilities for \(A\) in the optimisation of \cref{eqn:CoeffOfErgod:UpperBound}: \(A = \{0\}\) and \(A = \{1\}\).
	For \(A = \{0\}\), some straightforward calculations yield
	\begin{align*}
		[(I + \delta \uprateop) \indic{0}](0)
		&= 1, &
		[(I + \delta \uprateop) \indic{0}](1)
		&= \delta \upq{1}, \\
		[(I + \delta \lowrateop) \indic{0}](0)
		&= 1 - \delta \upq{0}, &
		[(I + \delta \lowrateop) \indic{0}](1)
		&= 0.
	\end{align*}
	This implies that
	\begin{multline*}
		\max \big\{ \max \{ [(I + \delta \uprateop) \indic{A}](x) - [(I + \delta \lowrateop) \indic{A}](y) \colon x,y \in \statespace \} \colon \emptyset \neq A \subset \statespace\big\} \\
		\geq [(I + \delta \uprateop) \indic{0}](0) - [(I + \delta \lowrateop) \indic{0}](1)
		= 1.
		\qedhere
	\end{multline*}
\end{proof}

If the lower transition operator is linear, then the lower and upper bounds of Theorem~\ref{the:CoeffOfErgod:Approximation} are equal.
Moreover, from this special case we can immediately verify that the ergodic coefficient we use is a proper generalisation of an ergodic coefficient---the delta coefficient $\delta$ of \cite{1991Anderson}, which is equivalent to $\tau_1$, one of the proper coefficients of ergodicity discussed by \cite{1981Seneta}---used in the study of precise Markov chains.
\begin{corollary}
\label{the:LowTranOp:ApproximationOfCoeffOfErgod}
	Let $T$ be a transition matrix, then
	\begin{align*}
		\coefferga{T}
		&= \max \left\{ \frac{1}{2} \sum_{z \in \statespace} \abs{T(x,z) - T(y,z)} \colon x,y \in \statespace \right\}.
	\end{align*}
\end{corollary}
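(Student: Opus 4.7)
The plan is to specialize Theorem~\ref{the:CoeffOfErgod:Approximation} to the linear case and then perform the combinatorial calculation that turns the ``best subset'' formulation into the total variation formula. Because $T$ is linear, the conjugate lower transition operator coincides with $T$ itself: $\uptranop = \lowtranop = T$. Therefore the upper and lower bounds in Theorem~\ref{the:CoeffOfErgod:Approximation} collapse to a single equality,
\[
	\coefferga{T} = \max \big\{ \max \{ [T \indic{A}](x) - [T \indic{A}](y) \colon x,y \in \statespace \} \colon \emptyset \neq A \subset \statespace\big\}.
\]

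Next, I would fix $x, y \in \statespace$ and analyse the inner maximum over $A$. By linearity,
\[
	[T \indic{A}](x) - [T \indic{A}](y) = \sum_{z \in A} \bigl(T(x,z) - T(y,z)\bigr),
\]
and this sum is maximised by choosing $A^{*}_{x,y} \coloneqq \{ z \in \statespace \colon T(x,z) > T(y,z) \}$, since only strictly positive terms should be included. Hence the inner maximum equals $\sum_{z \in A^{*}_{x,y}} (T(x,z) - T(y,z))$.

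The key identity I would then exploit is that $T$ is a (stochastic) transition matrix, so $\sum_{z \in \statespace} T(x,z) = 1 = \sum_{z \in \statespace} T(y,z)$, and therefore $\sum_{z \in \statespace} \bigl(T(x,z) - T(y,z)\bigr) = 0$. Splitting this sum into its positive and negative parts yields
\[
	\sum_{z \in A^{*}_{x,y}} \bigl(T(x,z) - T(y,z)\bigr) = \sum_{z \notin A^{*}_{x,y}} \bigl(T(y,z) - T(x,z)\bigr) = \tfrac{1}{2} \sum_{z \in \statespace} \abs{T(x,z) - T(y,z)}.
\]
Taking the maximum over $x, y \in \statespace$ gives the claimed formula. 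I would also briefly check the boundary cases: if $T(x,\cdot) = T(y,\cdot)$ then $A^{*}_{x,y} = \emptyset$ and both sides of the identity equal $0$; otherwise the stochasticity constraint forces $\emptyset \neq A^{*}_{x,y} \subsetneq \statespace$, so $A^{*}_{x,y}$ is admissible in the outer optimisation.

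There is no real obstacle here; the only delicate point is ensuring that the maximiser $A^{*}_{x,y}$ is a proper non-empty subset of $\statespace$, which follows directly from the row-sum constraint on $T$. The rest is routine linear bookkeeping enabled by the collapse of the bounds in Theorem~\ref{the:CoeffOfErgod:Approximation} in the linear case.
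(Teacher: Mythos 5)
Your proof is correct and follows essentially the same route as the paper's: both specialise Theorem~\ref{the:CoeffOfErgod:Approximation}, whose upper and lower bounds coincide for a linear $T$, and then solve the inner maximisation over $A$ explicitly. The only cosmetic difference lies in the final step---the paper rewrites the objective using $2\indic{A}-1$ taking values $\pm 1$, whereas you use the row-sum identity $\sum_{z\in\statespace}\bigl(T(x,z)-T(y,z)\bigr)=0$ to equate the positive part with half the $\ell^{1}$ distance---but these amount to the same computation.
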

\begin{proof}
	For a transition matrix, the the upper bound of Eqn.~\eqref{eqn:CoeffOfErgod:UpperBound} and the lower bound of Eqn.~\eqref{eqn:CoeffOfErgod:LowerBound} in Theorem~\ref{the:CoeffOfErgod:Approximation} are equal.
	Therefore
	\begin{align*}
		\coefferga{T}
		&= \max \left\{ \max \{ [T \indic{A}](x) - [T \indic{A}](y) \colon x,y \in \statespace \} \colon \emptyset \neq A \subset \statespace \right\} \\
		&= \max \left\{ \max \left\{ \frac{1}{2} [T (2 \indic{A})](x) - \frac{1}{2} [T (2 \indic{A})](y) \colon x,y \in \statespace \right\} \colon \emptyset \neq A \subset \statespace \right\} \\
		&= \max \left\{ \max \left\{ \frac{1}{2} [T (2 \indic{A} - 1)](x) - [T (2 \indic{A} - 1)](y) \colon x,y \in \statespace \right\} \colon \emptyset \neq A \subset \statespace \right\},
		\intertext{%
			where the first equality follows from Theorem~\ref{the:CoeffOfErgod:Approximation}, the second equality follows from \ref{def:LTO:NonNegativelyHom} and the third equality follows from \ref{prop:LTO:AdditionOfConstant}.
			From the linearity of $T$, it follows that $[T f](x) = \sum_{z \in \statespace} f(z) [T \indic{z}](x) = \sum_{z \in \statespace} f(z) T(x,z)$, such that
		}
		\coefferga{T}
		&= \max \left\{ \max \left\{ \frac{1}{2} \sum_{z \in \statespace} [2 \indic{A} - 1](z) \left(T(x,z) - T(y,z) \right) \colon x,y \in \statespace \right\} \colon \emptyset \neq A \subset \statespace \right\}\\
		&= \max \left\{ \max \left\{ \frac{1}{2} \sum_{z \in \statespace} [2 \indic{A} - 1](z) \left(T(x,z) - T(y,z) \right) \colon \emptyset \neq A \subset \statespace \right\} \colon x,y \in \statespace \right\}.
		\intertext{%
			Solving the inner maximisation problem for some fixed $x, y \in \statespace$ is trivial: the maximising $A$ is $\{ z \in \statespace \colon T(x,z) \geq T(y,z) \}$ as for all $z \in \statespace$, $[2\indic{A} - 1](z)$ is $1$ if $z \in A$ or $-1$ if $z \notin A$.
			This results in
		}
		\coefferga{T}
		&= \max \left\{ \frac{1}{2} \sum_{z \in \statespace} \abs{T(x,z) - T(y,z)} \colon x,y \in \statespace \right\},
	\end{align*}
	which proves that $\coefferga{T}$ is indeed equal to $\delta(T)$ of \cite{1991Anderson} or $\tau_1(T)$ of \cite{1981Seneta}.
\end{proof}

Linear transition operators are not the only lower transition operators for which the lower bound of Theorem~\ref{the:CoeffOfErgod:Approximation} is the actual value of the coefficient of ergodicity.
\cite{2013Skulj} show that this is also the case for lower transition operators defined using Choquet integrals.
Let $\{ L_x \}_{x\in\statespace}$ be a family of Choquet capacities, and assume that for all $x\in\statespace$, $[\lowtranop \cdot ](x)$ is the Choquet integral with respect to $L_x$.
By \cite[Corollary~23]{2013Skulj},
\begin{align}
	\coefferga{\lowtranop}
	&= \max \big\{ \max\{ L_{x}(A) - L_{y}(A) \colon x,y \in \statespace \} \colon 0 \neq A \subset \statespace \big\}.
	\label{eqn:CoeffOfErgod:Choquet}
\end{align}
This result allows us to exactly compute $\coefferga{\lowtranop}$.
However, we are often interested in $\coefferga{\lowtranop^{k}}$, where $k > 1$ is an integer.
Let $k\in\nats$ and $x\in\statespace$, then we define the Choquet capacity $L^{k}_{x}$ for all $A \subseteq \statespace$ as $L^{k}_{x}(A) \coloneqq [\lowtranop^k \indic{A}](x)$.
In general, the coherent lower expectation $[\lowtranop^k \cdot](x)$ is \emph{not} a Choquet integral with respect to the Choquet capacity $L^{k}_{x}$, a fact that is seemingly overlooked in \cite[Section~5.5]{2013Skulj}.
What is definitely true is that
\[
	\max \{ \max \{ L^{k}_{x}(A) - L^{k}_{y}(A) \colon x,y \in \statespace \} \colon \emptyset \neq A \subset \statespace \}
\]
is a lower bound of $\coefferga{\lowtranop^{k}}$, as it is equal to the lower bound of Theorem~\ref{the:CoeffOfErgod:Approximation}.

\begin{lemma}
\label{lem:StoppigCriterionWithConvergence}
	Let $\lowrateop$ be a lower transition rate operator and assume that $f$ is an element of $\setoffna$ such that $\lowtranopa{\infty} f \coloneqq \lim_{t \to \infty} \lowtranopa{t} f$ is a constant function.
	We let $t \in \nonnegreals$, $\epsilon \in \strictlyposreals$ and $\delta_1, \dots, \delta_k \in \nonnegreals$ such that $\sum_{i=1}^{k} \delta_i = t$ and for all $i \in \{ 1, \dots, k \}$, $\delta_{i} \norm{\lowrateop} \leq 2$, and define $g \coloneqq \Phi(\delta_1,\dots,\delta_k) f$.
	If $\norm{\lowtranopa{t} f - g} \leq \epsilon$ and $\norm{g}_{c} \leq \epsilon$, then
	\[
		\norm{\lowtranopa{\infty} f - \tilde{g}} \leq 2 \epsilon
	\]
	and for all $\Delta \in \nonnegreals$,
	\[
		\norm{\lowtranopa{t+\Delta} f - \tilde{g}} \leq 2 \epsilon,
	\]
	where $\tilde{g} \coloneqq (\max \Phi(\delta_1, \dots, \delta_k) f + \min \Phi(\delta_1, \dots, \delta_k) f) / 2$.
\end{lemma}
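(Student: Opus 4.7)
The plan is to exploit the fact that $\tilde{g}$ is a constant function, combined with the non-expansiveness of lower transition operators and the semi-group property. The proof essentially reduces to two triangle inequalities.

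First I would observe the key identity $\norm{g - \tilde{g}} = \norm{g}_{c}$, which follows directly from the definition of the centred seminorm in Eqn.~\eqref{eqn:CentredNorm}: for every $x \in \statespace$, we have $\abs{g(x) - \tilde{g}} \leq (\max g - \min g)/2 = \norm{g}_{c}$, with equality attained. Then, for any $\Delta \in \nonnegreals$, I would use the semi-group property (Eqn.~\eqref{eqn:TDLTO:SemiGroup}) to write $\lowtranopa{t+\Delta} f = \lowtranopa{\Delta} \lowtranopa{t} f$, and exploit that $\tilde{g}$ is a constant, so by \ref{prop:LTO:AdditionOfConstant} (with $f \equiv 0$) we have $\lowtranopa{\Delta} \tilde{g} = \tilde{g}$. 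Combining with the non-expansiveness property \ref{prop:LTO:NonExpansiveness} yields
\[
	\norm{\lowtranopa{t+\Delta} f - \tilde{g}} = \norm{\lowtranopa{\Delta} \lowtranopa{t} f - \lowtranopa{\Delta} \tilde{g}} \leq \norm{\lowtranopa{t} f - \tilde{g}}.
\]

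Next I would apply the triangle inequality together with the two hypotheses:
\[
	\norm{\lowtranopa{t} f - \tilde{g}} \leq \norm{\lowtranopa{t} f - g} + \norm{g - \tilde{g}} \leq \epsilon + \norm{g}_{c} \leq 2 \epsilon,
\]
which immediately gives the bound $\norm{\lowtranopa{t+\Delta} f - \tilde{g}} \leq 2\epsilon$ for all $\Delta \in \nonnegreals$. The bound for $\lowtranopa{\infty} f$ then follows by taking the limit $\Delta \to \infty$; since $\lowtranopa{\infty} f$ is assumed to exist (and is in fact constant), the maximum norm is continuous, so $\norm{\lowtranopa{\infty} f - \tilde{g}} = \lim_{\Delta \to \infty} \norm{\lowtranopa{t+\Delta} f - \tilde{g}} \leq 2\epsilon$.

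I do not anticipate a genuine obstacle here: the proof is essentially a direct calculation once one recognises that the centred constant $\tilde{g}$ is the optimal constant approximation of $g$ in maximum norm, and that lower transition operators fix constants and contract differences. The only subtlety to check carefully is the interplay between the semi-group property and the additive-constant property \ref{prop:LTO:AdditionOfConstant}, but both are already part of the available toolkit.
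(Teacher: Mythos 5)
Your proof is correct, and it takes a genuinely different route from the paper's. The paper argues order-theoretically: by \ref{prop:LTO:BoundedByMinAndMax} and the semi-group property, $\min \lowtranopa{t} f \leq \lowtranopa{t+\Delta} f \leq \max \lowtranopa{t} f$ and the same bounds hold for $\lowtranopa{\infty} f$, after which the hypothesis $\norm{\lowtranopa{t} f - g} \leq \epsilon$ sandwiches everything inside $[\min g - \epsilon, \max g + \epsilon]$, an interval of half-width $\epsilon + \norm{g}_{c}$ around $\tilde{g}$. You instead argue metrically: $\tilde{g}$ is a constant, hence a fixed point of $\lowtranopa{\Delta}$ by \ref{prop:LTO:AdditionOfConstant}, so non-expansiveness \ref{prop:LTO:NonExpansiveness} gives $\norm{\lowtranopa{t+\Delta} f - \tilde{g}} \leq \norm{\lowtranopa{t} f - \tilde{g}}$, and the triangle inequality through $g$ together with the identity $\norm{g - \tilde{g}} = \norm{g}_{c}$ (which is exactly Eqn.~\eqref{eqn:CentredNorm}) yields the same bound $\epsilon + \norm{g}_{c} \leq 2\epsilon$; the $\lowtranopa{\infty}$ statement follows by continuity of the norm. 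Both arguments are sound and produce the identical intermediate bound $\epsilon + \norm{g}_{c}$; yours makes explicit that $\tilde{g}$ is the best constant approximation of $g$ in maximum norm and, like the paper's, never actually uses the hypothesis that $\lowtranopa{\infty} f$ is constant---only that the limit exists. The only cosmetic caveat is the order of quantifiers in the semi-group property: you need $\lowtranopa{t+\Delta} = \lowtranopa{\Delta}\lowtranopa{t}$, which is Eqn.~\eqref{eqn:TDLTO:SemiGroup} with $t_1 = \Delta$ and $t_2 = t$, so no issue arises.
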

\begin{proof}
Note that by \ref{prop:LTO:BoundedByMinAndMax},
	\[
		\min \lowtranopa{t} f \leq \min \lowtranopa{t + \Delta} f \leq \lowtranopa{\infty} f \leq \max \lowtranopa{t + \Delta} f \leq \lowtranopa{t} f.
	\]
	If we let $g \coloneqq \Phi(\delta_1,\dots,\delta_k) f$ and assume that $\norm{\lowtranopa{t} f - g } \leq \epsilon$, then
	\[
		\min g - \epsilon \leq \min \lowtranopa{t} f \leq \min \lowtranopa{t + \Delta} \leq \lowtranopa{\infty} f \leq \max \lowtranopa{t + \Delta} f \leq \lowtranopa{t} f \leq \max g + \epsilon.
	\]
	Hence,
	\[
		\lowtranopa{\infty} f - \tilde{g} = \lowtranopa{\infty} f - \max g + \frac{\max g - \min g}{2} \leq \epsilon + \norm{g}_{c},
	\]
	and
	\[
		\lowtranopa{\infty} f - \tilde{g} = \lowtranopa{\infty} f - \min g - \frac{\max g - \min g}{2} \geq - \epsilon- \norm{g}_{c},
	\]
	where $\tilde{g} \coloneqq (\max g + \min g) / 2$.
	Therefore, if $\norm{g}_{c} \leq \epsilon$, then
	\[
		\norm{\lowtranopa{\infty} f - \tilde{g}} \leq 2 \epsilon,
	\]
	which proves the first inequality of the statement.
	The proof of the second inequality of the statement is almost entirely similar.
\end{proof}

\begin{proof}[Proof of Proposition~\ref{prop:StoppingCriterionWithErgodicity}]
	If $\lowrateop$ is ergodic, then by definition $\lim_{t \to \infty} \lowtranopa{t} f$ is a constant function for all $f \in \setoffna$.
	Therefore, the stated follows immediately from Lemma~\ref{lem:StoppigCriterionWithConvergence}.
\end{proof}

In Example~\ref{binex:AdaptiveApproximation}, we have observed that keeping track of $\epsilon'$ increases the duration of the computations.
The following proposition shows that, even if one is not really interested in the value of $\epsilon'$, there is still a reason why one nevertheless would want to keep track of $\epsilon'$: it could be that we can stop the approximation because we have already attained the desired maximal error.
\begin{proposition}
\label{prop:StoppingCriterionWithRemainingCost}
	Let $\lowrateop$ be a lower transition rate operator, $f \in \setoffna$ and $t, \epsilon \in \strictlyposreals$.
	Let $s$ denote some sequence $\delta_1, \dots, \delta_k$ in $\nonnegreals$ such that $\smash{t' \coloneqq \sum_{i = i}^{k} \delta_i \leq t}$ and, for all $i \in \{ 1, \dots, k \}$, $\smash{\delta_i \norm{\lowrateop} \leq 2}$.
	If $\epsilon' \leq \epsilon$ is an upper bound for $\norm{\lowtranopa{t'} f - \Phi(s) f}$ and $\norm{\Phi(s) f}_{v} \leq \epsilon - \epsilon'$, then
	\[
		\norm{\lowtranopa{t} f - \Phi(s) f}
		\leq \epsilon.
	\]
\end{proposition}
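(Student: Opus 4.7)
The plan is to decompose $\norm{\lowtranopa{t} f - \Phi(s) f}$ via the triangle inequality in a way that isolates the error already incurred on the interval $[0,t']$ from the drift that would occur on the remaining interval $[t',t]$. Since $t = t' + (t-t')$, the semi-group property \eqref{eqn:TDLTO:SemiGroup} gives $\lowtranopa{t} f = \lowtranopa{t-t'} \lowtranopa{t'} f$, and hence
\[
	\norm{\lowtranopa{t} f - \Phi(s) f}
	\leq \norm{\lowtranopa{t-t'} \lowtranopa{t'} f - \lowtranopa{t-t'} \Phi(s) f} + \norm{\lowtranopa{t-t'} \Phi(s) f - \Phi(s) f}.
\]

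For the first term, I would invoke non-expansiveness of the lower transition operator $\lowtranopa{t-t'}$, namely \ref{prop:LTO:NonExpansiveness}, to obtain the bound $\norm{\lowtranopa{t'} f - \Phi(s) f}$, which is at most $\epsilon'$ by assumption. For the second term, the key observation is that $\lowtranopa{t-t'}$ is a lower transition operator, so by \ref{prop:LTO:BoundedByMinAndMax} the function $\lowtranopa{t-t'} \Phi(s) f$ lies pointwise between $\min \Phi(s) f$ and $\max \Phi(s) f$; the same is of course true of $\Phi(s) f$ itself. Therefore the pointwise difference between the two is bounded in absolute value by $\max \Phi(s) f - \min \Phi(s) f = \norm{\Phi(s) f}_{v}$, which by assumption is at most $\epsilon - \epsilon'$.

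Adding the two bounds gives $\norm{\lowtranopa{t} f - \Phi(s) f} \leq \epsilon' + (\epsilon - \epsilon') = \epsilon$, as desired. No real obstacle is anticipated: the argument essentially consists of invoking the semi-group property, non-expansiveness \ref{prop:LTO:NonExpansiveness}, and the min/max bound \ref{prop:LTO:BoundedByMinAndMax}, all of which are already available in the appendix. The only point to state carefully is that one must apply \ref{prop:LTO:BoundedByMinAndMax} uniformly at every $x \in \statespace$ to turn the pointwise estimate into a bound in the maximum norm.
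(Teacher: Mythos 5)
Your proof is correct, but your decomposition differs from the one in the paper. You insert the intermediate term $\lowtranopa{t-t'}\Phi(s)f$ and split the error as
\[
	\norm{\lowtranopa{t-t'}\lowtranopa{t'}f - \lowtranopa{t-t'}\Phi(s)f} + \norm{\lowtranopa{t-t'}\Phi(s)f - \Phi(s)f},
\]
handling the first term with the non-expansiveness property \ref{prop:LTO:NonExpansiveness} and the second by observing that both $\lowtranopa{t-t'}\Phi(s)f$ and $\Phi(s)f$ lie in $[\min\Phi(s)f,\max\Phi(s)f]$ by \ref{prop:LTO:BoundedByMinAndMax}. The paper instead never invokes non-expansiveness: it first sandwiches $\lowtranopa{t}f = \lowtranopa{t-t'}\lowtranopa{t'}f$ between $\min\lowtranopa{t'}f$ and $\max\lowtranopa{t'}f$ using \ref{prop:LTO:BoundedByMinAndMax}, and then relates these extrema back to $\Phi(s)f$ by a pointwise triangle inequality through the maximising and minimising states $x^{+}$ and $x^{-}$ of $\lowtranopa{t'}f$. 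Both routes land on the same intermediate estimate $\norm{\lowtranopa{t'}f - \Phi(s)f} + \norm{\Phi(s)f}_{v} \leq \epsilon' + (\epsilon - \epsilon') = \epsilon$; your version is arguably the cleaner of the two, since the split into ``propagated past error'' plus ``future drift'' is made explicit at the operator level rather than recovered pointwise, at the modest cost of invoking one additional property of lower transition operators. Your closing remark about applying \ref{prop:LTO:BoundedByMinAndMax} uniformly over all $x\in\statespace$ to pass to the maximum norm is exactly the care needed, so there is no gap.
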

\begin{proof}
First, note that by the semi-group property $\lowtranopa{t} f = \lowtranopa{t - t'} \lowtranopa{t'} f$.
	Using \ref{prop:LTO:BoundedByMinAndMax} yields
	\[
		\min \lowtranopa{t'} f \leq \lowtranopa{t} f \leq \max \lowtranopa{t'} f.
	\]
	Hence
	\begin{align*}
		\norm{\lowtranopa{t} f - \Phi(s)f}
		&= \max \{ \abs{[\lowtranopa{t} f](x) - [\Phi(s)f](x) } \colon x \in \statespace \} \\
		&\leq \max \{ \max \{\abs{\max \lowtranopa{t'} f - [\Phi(s)f](x) }, \abs{\min \lowtranopa{t'} f - [\Phi(s)f](x) } \} \colon x \in \statespace \},
	\end{align*}
	where the inequality follows from the obtained bounds on $\lowtranopa{t} f$.
	Let $x^{+} \in \statespace$ such that $[\lowtranopa{t'} f](x^{+}) = \max \lowtranopa{t'} f$.
	Then for all $x \in \statespace$,
	\begin{align*}
		\abs{\max \lowtranopa{t'} f - [\Phi(s)f](x) }
		&= \abs{[\lowtranopa{t'} f](x^{+}) - [\Phi(s)f](x) - [\Phi(s)f](x^{+}) + [\Phi(s)f](x^{+}) } \\
		&\leq \abs{[\lowtranopa{t'} f](x^{+}) - [\Phi(s)f](x^{+}) } + \abs{[\Phi(s)f](x) - [\Phi(s)f](x^{+})} \\
		&\leq \norm{\lowtranopa{t'} f - \Phi(s) f} + \norm{\Phi(s) f}_{v}.
	\intertext{Similarly, 
	}
		\abs{\min \lowtranopa{t'} f - [\Phi(s)f](x) }
		&\leq \norm{\lowtranopa{t'} f - \Phi(s) f} + \norm{\Phi(s) f}_{v}.
	\end{align*}
	Therefore,
	\begin{align*}
		\norm{\lowtranopa{t} f - \Phi(s)f}
		&\leq \max \{ \max \{\abs{\max \lowtranopa{t'} f - [\Phi(s)](x) }, \abs{\min \lowtranopa{t'} f - [\Phi(s)](x) } \} \colon x \in \statespace \} \\
		&\leq \max \{ \norm{\lowtranopa{t'} f - \Phi(s) f} + \norm{\Phi(s) f}_{v}  \colon x \in \statespace \} \\
		&= \norm{\lowtranopa{t'} f - \Phi(s) f} + \norm{\Phi(s) f}_{v}.
	\end{align*}

	If we now assume that $\norm{\lowtranopa{t'} f - \Phi(s) f} \leq \epsilon' \leq \epsilon$ and $\norm{\Phi(s) f}_{v} \leq \epsilon - \epsilon'$, then
	\[
		\norm{\lowtranopa{t} f - \Phi(s)f}
		\leq \epsilon. \qedhere
	\]
\end{proof}

\end{document}